\titleclass{\subsubsubsection}{straight}[\subsection]
\newcommand{\CBXQ}{\mathcal{C}_{Q}^{\operatorname{b}}(X)}
\newcommand\RR{\mathbf{R}}
\newcommand\CC{\mathbf{C}}
\newcommand\ZZ{\mathbf{Z}}
\newcommand\QQ{\mathbf{Q}}
\newcommand\NN{\mathbf{N}}
\newcommand\PP{\mathbf{P}}
\newcommand\cab{C_{a,b}}
\newcommand\dist{\operatorname{d}}
\newcommand\ep{e_{\mathfrak{p}}}
\newcommand\mukp{\mathcal{U}_D^+}
\newcommand\mukt{\tilde{\mu}_K}
\newcommand\muktp{\mathcal{U}_D^*}
\newcommand\VD{\varepsilon_D}
\newcommand\evbpd{E(\varepsilon,\eta,B)}
\newcommand\VDT{\varepsilon_D^*}
\newcommand{\ZZD}{\mathbf{Z}+\mathbf{Z}\sqrt{D}}
\DeclareMathOperator{\pgcd}{\operatorname{pgcd}}
\DeclareMathOperator{\ppcm}{\operatorname{ppcm}}
\DeclareMathOperator{\h}{H}
\newcommand\IM{\mathcal{I}_{|m|}}
\newcommand\IMM{\mathcal{I}_m^*}
\newcommand\IMP{\mathcal{I}_m^+}
\newcommand\OK{\mathcal{O}_K}
\newcommand\SMP{S_{D,m}^*}
\newcommand\SM{S_{D,m}}
\newcommand\evbm{E(\varepsilon,B)_m}
\newcommand\gal{\operatorname{Gal}}
\newcommand\nkq{N_{k/\QQ}}
\newcommand\be{\mathbf{e}}
\DeclareMathOperator{\aess}{\alpha_{\operatorname{ess}}}
\newcommand{\SABC}{S^*_{a,b,c}}
\newtheorem{theorem}{Théorème}[section]
\newtheorem{lemma}[theorem]{Lemme}
\newtheorem{proposition}[theorem]{Proposition}
\newtheorem{corollary}[theorem]{Corollaire}
\newtheorem{prop-def}[theorem]{Proposition-Définition}
\theoremstyle{definition}
\newtheorem{definition}[theorem]{Définition}
\newtheorem{notation}[theorem]{Notation}
\newtheorem{remark}[theorem]{Remarque}
\newtheorem{conjecture}[theorem]{Conjecture}
\newtheorem*{remark*}{Remarque}
\newcounter{subsubsubsection}[subsubsection]
\renewcommand\thesubsubsubsection{\thesubsubsection.\arabic{subsubsubsection}}
\renewcommand\paragraph{\@startsection{paragraph}{5}{\z@}%
	{3.25ex \@plus1ex \@minus.2ex}%
	{-1em}%
	{\normalfont\normalsize\bfseries}}
\renewcommand\subparagraph{\@startsection{subparagraph}{6}{\parindent}
	{3.25ex \@plus1ex \@minus .2ex}%
	{-1em}%
	{\normalfont\normalsize\bfseries}}
\def\toclevel@subsubsubsection{4}
\def\toclevel@paragraph{5}
\def\toclevel@paragraph{6}
\def\l@subsubsubsection{\@dottedtocline{4}{7em}{4em}}
\def\l@paragraph{\@dottedtocline{5}{10em}{5em}}
\def\l@subparagraph{\@dottedtocline{6}{14em}{6em}}
\title{\bf Approximation diophantienne et distribution locale sur une surface torique}
\author{\scshape{Zhizhong Huang}\\ 
\multicolumn{1}{p{.7\textwidth}}
{\centering{\textit{Institut Fourier, UMR 5582}\\\textit{Université Grenoble Alpes}\\ \textit{38610 Gières, France}\\ \texttt{zhizhong.huang@yahoo.com}}}}
\date{}
\begin{document}
	\numberwithin{equation}{section}
	\maketitle
	\setcounter{tocdepth}{1} 
	\renewcommand{\abstractname}{Résumé}
	\begin{abstract}
		Nous étudions dans ce texte l'approximation diophantienne et la distribution locale en un point rationnel sur une surface torique obtenue comme un éclatement de $\PP^1\times\PP^1$. Il s'avère qu'en dehors d'un fermé de Zariski les meilleures approximations s'obtiennent à l'aide d'une famille de courbes nodales. L'étude se ramène donc à la question de la distribution locale en un point quadratique sur la droite projective.
	\end{abstract}
\renewcommand{\abstractname}{Abstract}
\begin{abstract}
In this article we study Diophantine approximation and local distribution of a rational point on a toric surface obtained as a blow-up of $\mathbf{P}^1\times\mathbf{P}^1$. It turns out that outside a Zariski closed subset the best approximations are achieved through a family of nodal curves. Hence the investigation is reduced to the question of local distribution of a quadratic point on the projective line.
\end{abstract}

	\section{Introduction}
	\subsection{Motivation et heuristique}
	L'étude de la distribution des points rationnels sur les variétés algébriques est un sujet classique de la géométrie diophantienne.
	Beaucoup de variétés dont le fibré anticanonique est gros vérifient le principe de Batyrev-Manin \cite{B-M} avec la constante de Peyre \cite{Peyre1}, y compris les variétés toriques lisses projectives \cite{B-T2}, voir \cite{Browningbook} pour des exemples.
	Plus précisément, soit $X$ une \og bonne\fg\ variété (dans le sens de \cite[\S 3]{Peyre2}) définie sur un corps de nombres $k$ munie d'une hauteur de Weil $H$ associée au fibré anticanonique $\omega_{X}^{-1}$. On dit que $X$ vérifie \emph{le principe de Batyrev-Manin} s'il existe un ouvert dense $U$ de $X$ tel que
	$$\sharp U_{H\leqslant B}=\sharp\{x\in U(k):H(x)\leqslant B\}\sim C(X) B (\log B)^{\operatorname{rg}(\operatorname{Pic}(X))-1},$$
	où la constante $C(X)$ a reçu une interprétation géométrique (cf. \cite{Peyre1},\cite{Peyre2}).
	La raison pour laquelle on ne considère pas tous les points de la variété est que souvent il existe des sous-variétés, que l'on appelle ici \textit{globalement accumulatrices}, dont la croissance du nombre de points rationnels domine celle d'un ouvert dense. On peut définir certaines constantes (par exemple, la constante $\beta$ dans \cite{B-M}) pour détecter de telles variétés. Soit $U$ l'ouvert privé de toutes les sous-variétés globalement accumulatrices (supposons qu'un tel ouvert existe). Une extension naturelle de ce problème est de considérer la convergence au sens vague de la suite de mesures de probabilité 
	\begin{equation}\label{eq:diracmeasure2}
	\mu_{U_{H\leqslant B}}=\frac{1}{\sharp U_{H\leqslant B}}\sum_{x\in U(k),H(x)\leqslant B}\delta_x,
	\end{equation}
	où $\delta_x$ est la mesure de Dirac en $x$. Si cette suite $(\mu_{U_{H\leqslant B}})_B$ converge, elle reflète un phénomène d'équidistribution. Peyre \cite{Peyre1} a conjecturé une mesure limite, appelée mesure de Tamagawa.  
	
	Ce texte est concerné par le problème de la distribution \textit{locale} des points rationnels autour d'un point rationnel fixé. Le terme \og local\fg\ est au sens de l'approximation diophantienne systématiquement formulé en premier par D. McKinnon et M. Roth (\cite{Mc}, \cite{M-R}). Une différence majeure par rapport aux problèmes globaux est que même s'il n'existe pas de sous-variétés globalement accumulatrices, il peut y avoir des sous-variétés qui sont \textit{localement accumulatrices}. Comme dans le problème global, on peut aussi définir certaines constantes pour les caractériser. L'une de ces constantes, notée $\alpha(Q,Y)$ (Définition \ref{pd:appconstant}), appelée \textit{la constante d'approximation} pour la sous-variété $Y$ de $X$ et due à McKinnon et Roth, caractérise \emph{de meilleures approximations} pour un point rationnel $Q$ sur la variété $Y$. On dira qu'une sous-variété $Y$ donne de meilleures approximations si $\alpha(Q,Y)=\alpha(Q,X)$. 
	La deuxième constante, appelée \textit{la constante essentielle} $\aess(Q)$ (Définition \ref{df:essential}) et due premièrement à Pagelot, décrit des \emph{approximations génériques}. Elle est définie comme le supremum des $\alpha(Q,U)$ pour $U$ parcourant tous les ouverts denses de $X$.
	Au niveau de la distribution plus fine, S. Pagelot \cite{Pagelot} a étudié en premier \textit{la distribution locale} des points rationnels sur certaines variétés algébriques via une opération appelée \og zoom\fg. Le but est de décrire le comportement asymptotique des points rationnels proches d'un point rationnel fixé par un dénombrement fin.
	Cette opération nous permet d'obtenir plus d'informations sur l'accumulation locale des points rationnels que les constantes d'approximation car il existe des sous-variétés ayant la même valeur de constante d'approximation, sur lesquelles le nombre d'approximants qui se trouvent étant d'ordre de grandeur différente. On propose la notion \emph{localement faiblement accumulateur} (Définition \ref{df:weaklocal}) basée sur ce fait pour les caractériser.
	
	Avant d'esquisser la formulation de la distribution locale afin pour énoncer notre théorème principal, nous considérons d'abord un exemple particulier $\PP^2_\QQ$. On fixe le point $Q=[0:0:1]$ et la hauteur de Weil associée au fibré $\mathcal{O}(1)$ comme $$H([x:y:z])=\max(|x|,|y|,|z|),\quad x,y,z\in\ZZ,\quad \pgcd(x,y,z)=1.$$
	On s'intéresse à l'ensemble des points rationnels de hauteur bornée $$(\PP^2_\QQ)_{H\leqslant B}=\{P=[x:y:z]\in\PP^2(\QQ):H(P)\leqslant B\}.$$
	Un calcul nous dit que les points le plus proches de $Q$ sont à une distance d'ordre $B^{-1}$.
	Nous invitons les lecteurs à consulter \cite[FIGURE 1]{Peyre2} pour un dessin de la répartition des points de hauteur bornée autour de $Q$, où nous observons que les points se répartissent apparemment de préférence sur les droites rationnelles passant par $Q$. Ce qui se passe sur la variété  de produit $\PP^1\times\PP^1$ (\cite[FIGURE 3]{Peyre2}), munie d'une hauteur de Weil associée à $\mathcal{O}(1,1)$, est différent, puisqu'il s'avère que, la diminution de distance au point $Q^\prime=[1:1]\times[1:1]$ est d'ordre $B^{-1}$ sur les droites horizontale et verticale mais elle est d'ordre $B^{-2}$ ailleurs. En effet ces deux droites ont la constante d'approximation égale à $\alpha(Q^\prime,\PP^1\times\PP^1)$ et sont localement accumulatrices.
	L'idée est qu'après une manipulation de zoom avec un facteur bien choisi, on peut définir une suite de mesures dénombrant le nombre et décrivant le lieu des points rationnels proches de $Q$.
	
	Soit $X$ une \og bonne\fg\ variété (\cite{Peyre2} op. cit.) munie d'une hauteur de Weil $H$. On fixe un point lisse $Q\in X(\bar{k})$ défini sur $\RR$ à approcher et un difféomorphisme local $\rho$ de la variété réelle $X(\RR)$ en $Q$ sur le plan tangent $(T_Q X)_\RR$ qui envoie $Q$ sur l'origine. Pour $U$ un ouvert de Zariski, $r>0$ et $B\gg 1$, on définit $(\delta_{U,Q,B,r})_B$ la suite de mesures de Dirac par la formule suivante:
	\begin{equation}\label{eq:deltameasure}
	\delta_{U,Q,B,r}=\sum_{x\in U(k);H(x)\leqslant B}\delta_{B^{\frac{1}{r}}\rho(x)}.
	\end{equation}
	L'exposant $r$ est appelé \emph{facteur de zoom}. En général on ne prend que les $r\geqslant \aess(Q)$ (Proposition \ref{po:weakzoom}). 
	On fixe une distance euclidienne $d$ sur $(T_Q X)_\RR$ (qui est équivalente localement à toute distance projective définie sur $X$).
	On note $\mathbb{B}(0,\varepsilon)$ la boule centrée en l'origine de rayon $\varepsilon>0$ et $\chi(\varepsilon)$ la fonction caractéristique associée. S'il existe $\gamma\geqslant0,\beta\geqslant0$ tels que
	\begin{equation}\label{eq:growthorder}
	B^\gamma (\log B)^\beta \ll_\varepsilon \int \chi(\varepsilon) \operatorname{d} \delta_{U,Q,B,r} \ll_\varepsilon B^\gamma (\log B)^\beta,
	\end{equation}
	pour une infinité de $B\to\infty$,
	ils décrivent l'ordre de grandeur d'accumulation des points rationnels autour de $Q$. 
	On renormalise en considérant la suite de mesures 
	\begin{equation}\label{eq:normalisedmeasures}
	\left(\frac{1}{B^\gamma (\log B)^\beta} \delta_{U,Q,B,r}\right)_B.
	\end{equation}
	Si elles convergent (au sens vague) vers une mesure non-nulle, on l'appellera \textit{mesure limite}. En regardant la densité de la mesure limite on peut obtenir une caractérisation de la \og concentration locale\fg\ des points rationnels. 
	 Le phénomène général serait que quand $r>\aess(Q)$, (le zoom est \textit{sous-critique}, Définition \ref{def:criticalzoom}), $\int\chi(\varepsilon) \operatorname{d} \delta_{U,Q,B,r}$ aurait l'ordre de grandeur attendu et la répartition des points rationnels serait plus uniforme. Le cas où $r=\aess$ (le zoom est \textit{critique}) est particulièrement intéressant. S. Pagelot a étudié dans \cite{Pagelot} la distribution locale d'un $\QQ$-point sur la droite projective et sur des surfaces de del Pezzo (toriques) de degré $\geqslant 7$. En particulier pour $\PP^2_\QQ$ il a affirmé (pour le zoom critique) l'existence de la mesure limite attendue de masse concentrée sur les droites (ceci est redémontré dans \cite{Huang1}). Alors que pour $\PP^1\times\PP^1$ la mesure limite du zoom critique  existe uniquement en ayant retiré les deux droites particulières. Il est naturel de se poser la même question pour les points algébriques et pour les variétés de dimension supérieure. L'auteur a étudié dans \cite{Huang1} ce problème pour une surface de del Pezzo torique de degré $6$ et il a obtenu pour le cas $r=\aess(Q)$ une mesure limite.
	 
	 \subsection{Énoncés des résultats}
	 Dans ce texte on se placera sur une surface torique définie sur $\QQ$ que l'on notera toujours $Y_4$. Elle est obtenue en éclatant les $4$ points invariants de $\PP^1\times\PP^1$, ce qui correspond aux éventails en Figure \ref{fig:y4}.
	 \begin{figure}[h]
	 	\centering
	 	\includegraphics[scale=0.8]{Y4P1.mps}
	 	\caption{L'éclatement $Y_4\to\PP^1\times \PP^1$}
	 	\label{fig:y4}
	 \end{figure}
	 
	 On choisit le relevé du point $Q=[1:1]\times[1:1]$ de l'orbite ouverte que l'on va approcher. Comme conjecturé par D. McKinnon (Conjecture \ref{conj:mckinnon} \textit{infra}), les courbes rationnelles donnent des meilleurs approximants. En effet, nous démontrons qu'il existe $4$ courbes rationnelles $Z_i $ $(1\leqslant i\leqslant 4)$ passant par $Q$ de degré anticanonique minimal qui donnent des meilleures approximations. Le phénomène nouveau est qu'en dehors des $Z_i$, les approximants optimaux sont peu nombreux et se situent sur une famille de courbes nodales en $Q$ de degré bas dont la réunion est dense dans $Y_4$ et les tangentes au point $Q$ ont des pentes réelles et irrationnelles. Autrement dit, les points au-dessus de la désingularisation de la courbe en $Q$ ne sont pas définis sur $\QQ$ (en fait ils sont définis sur une extension quadratique réelle de $\QQ$).
	 
    \begin{notation}\label{notationcqbx}
    On note $\CBXQ$ l'espace vectoriel des fonctions continues de support compact définie sur $(T_Q X)_\RR$ à valeurs réelles. Pour $f\in\CBXQ$, on note désormais $\delta_{U,Q,B,r}(f)$ pour $\int f \operatorname{d} \delta_{U,Q,B,r}$.
\end{notation} 
    On énonce le théorème principal du texte.
    \begin{theorem}[cf. Théorèmes \ref{th:approximationessential}, \ref{th:finitenessofcriticzoom}, \ref{th:lowerbound} \textit{infra}]\label{th:maintheorem}
    		On a $\alpha(Q,Y_4)=\aess(Q)=2$. Soit $U=Y_4\setminus \cup_{i=1}^4Z_i$. Soit $r\geqslant 2$. Alors
    	\begin{enumerate}
    		\item Si $r=2$, on a que pour toute $f\in \CBXQ$, $$\delta_{U,Q,B,r}(f)=O_{f}(1).$$ 
    		Il n'existe pas de mesure limite dans ce cas.
    		\item Si $2<r<\frac{144}{55}$, alors il existe une constante $\eta\in\mathopen]0,1\mathclose[$ telle que pour toute positive $f\in\CBXQ$,
\begin{equation}\label{eq:lowerboundbegin}
    		\delta_{U,Q,B,r}(f)\gg \left(\int f \operatorname{d}\delta_r\right) B^{(1+\eta)(\frac{1}{2}-\frac{1}{r})}(\log B)^3
\end{equation}
    		où $\delta_r$ est une mesure absolument continue par rapport à la mesure de Lebesgue planaire.
    	\end{enumerate}
    \end{theorem}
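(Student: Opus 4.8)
The statement assembles Théorèmes~\ref{th:approximationessential}, \ref{th:finitenessofcriticzoom} and \ref{th:lowerbound}, so the plan is to prove each in turn. For the constants $\alpha(Q,Y_4)=\aess(Q,Y_4)=2$: the bounds $\alpha\le 2$, $\aess\le 2$ I would obtain by exhibiting explicit families of approximants realising $\dist(x,Q)\asymp H(x)^{-1/2}$, the relation that makes $2$ the critical zoom factor. Each $Z_i$ is a rational curve of anticanonical degree $2$ through the \emph{rational} point $Q$, so on the normalisation $\PP^1\to Z_i$ the points $[1:q]$ in a chart centred at $Q$ give $\dist(x,Q)\asymp q^{-1}$, $H(x)\asymp q^2$, whence $\alpha\le 2$; for $\aess$ the $Z_i$ are deleted from $U$, so one uses instead the one-parameter families of nodal curves $C$ of anticanonical degree $4$, nodal at $Q$ with conjugate quadratic-irrational branch slopes, on whose normalisation $\PP^1\to C$ the node becomes a closed point of degree $2$, say a quadratic irrational $\theta$; a convergent $p/q$ of $\theta$ then gives $\dist(x,Q)\asymp q^{-2}$, $H(x)\asymp q^4$, again $\dist(x,Q)\asymp H(x)^{-1/2}$, and since these curves are Zariski dense in $Y_4$ this yields $\aess\le 2$. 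The reverse bounds $\alpha\ge 2$, $\aess\ge 2$ are the arithmetic core: one must show $\dist(x,Q)\gg_\varepsilon H(x)^{-1/2-\varepsilon}$ for all but finitely many rational $x$, which I would deduce either from the universal torsor / Cox-ring presentation of $Y_4$ (write $H$ as a monomial in coprime Cox coordinates and $\dist(x,Q)$ in the same coordinates; the bound becomes an elementary inequality between monomials) or from the structural fact that $x\in U$ lies on one of the nodal curves above, on which $\dist(x,Q)\gg_C q^{-2}$ with $q^4\asymp H(x)$ --- the difficulty being to control the implied constant uniformly over the family, which is intertwined with Part~(1).

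\emph{Part~(1), the critical zoom $r=2$.} I would reduce, curve by curve, to counting rational points of $\PP^1$ near a fixed quadratic point. A point $x\in U$ with $\dist(x,Q)\le\varepsilon B^{-1/2}$ and $H(x)\le B$ corresponds, on some admissible nodal curve, to a fraction $p/q$ with $q\le N\asymp B^{1/4}$ and $|p/q-\theta|\le\varepsilon B^{-1/2}\asymp\varepsilon N^{-2}$. Two such fractions on the same curve would satisfy $|p_1/q_1-p_2/q_2|\ge(q_1q_2)^{-1}\ge N^{-2}$ yet also $\le 2\varepsilon N^{-2}$, which is impossible once $\varepsilon<\tfrac12$; so there is at most one contributing point per curve. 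Moreover a curve contributes at all only if its best rational approximation to $\theta$ is already within $\varepsilon B^{-1/2}$, and the bad approximability of quadratic irrationals then forces the arithmetic complexity of $C$ to be bounded purely in terms of $\varepsilon$, leaving $O_\varepsilon(1)$ admissible curves. Hence $\delta_{U,Q,B,2}(f)=O_{\|f\|_\infty}(1)$, and as this count does not grow with $B$ no non-zero limiting measure exists. The main obstacle is the uniformity in the last step: bounding the number of curves in the family whose attached quadratic irrational is $\varepsilon$-badly-approximable (with the height of $C$ incorporated correctly) is a genuine estimate over the whole family, not over a single curve.

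\emph{Part~(2), the range $2<r<\tfrac{144}{55}$.} For $r>2$ the strip $\{|p/q-\theta|<\varepsilon B^{-1/r}\}$ has ``area'' $\asymp\varepsilon B^{-1/r}N^2\asymp\varepsilon B^{1/2-1/r}\to\infty$, so on a single admissible nodal curve one expects $\gg B^{1/2-1/r}$ rational points of height $\le B$ within $\varepsilon B^{-1/r}$ of $Q$, and the equidistribution of $q\theta$ modulo $1$ (with the small discrepancy valid for badly approximable $\theta$) shows they fill the strip with a density; transported to $T_{Y_4}Q$ this density is the absolutely continuous measure $\delta_r$. Summing this main term over the admissible nodal curves --- with the necessary multiple summation over their complexity parameters, over denominators, and over the units of the quadratic orders $\ZZD$ attached to the curves --- multiplies the count by a further factor $B^{\eta(1/2-1/r)}(\log B)^3$ for a suitable $\eta\in\mathopen]0,1\mathclose[$, yielding $\delta_{U,Q,B,r}(f)\gg(\int f\,\operatorname{d}\delta_r)\,B^{(1+\eta)(1/2-1/r)}(\log B)^3$ for positive $f$. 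The main obstacle is again the family: one must show that the ranges of admissible curves and of denominators overlap favourably, that the main term dominates the boundary and discrepancy errors, and that collisions (the same rational point lying on several of the curves) and the contribution of non-convergent fractions are negligible; the upper threshold $r<\tfrac{144}{55}$ is precisely the range in which this aggregation still closes, the extremal configurations being governed by the most badly approximable (golden-section-type) quadratic irrationals in the family. Combining the three parts gives Théorème~\ref{th:maintheorem}.
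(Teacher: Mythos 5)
Your plan reproduces the paper's architecture (reduction to Diophantine approximation of the quadratic points $\pm\sqrt{b/a}$ on the nodal curves $\cab$; finitely many curves with $O_\varepsilon(1)$ points each at the critical zoom; per-curve equidistribution plus summation over the family for $2<r<\frac{144}{55}$), and your Farey-spacing argument for "$O_\varepsilon(1)$ points per curve" is fine. But the decisive arithmetic step of Part (1) is missing, and the substitute you propose does not work. With the parametrization $[u:v]\mapsto\psi_{a,b}([u:v])$ the height on $\cab$ is $b\bigl(ua(u-v)\bigr)^2/(d_1d_2d_3)^2$ with $d_1=\pgcd(u,b)$, $d_2=\pgcd(v,a)$, $d_3=\pgcd(u-v,b-a)$; combining the two zoom conditions gives only $\sqrt{b}\,|au^2-bv^2|\leqslant \varepsilon\, d_1d_2d_3\, B^{\frac12-\frac1r}$, so the Liouville input $|au^2-bv^2|\geqslant 1$ yields, at $r=2$, merely $b\leqslant\varepsilon^2(d_1d_2d_3)^2$, which is vacuous since $d_1d_2d_3$ can be as large as $ab(b-a)$ (and your relation "$q\leqslant N\asymp B^{1/4}$" has exactly these gcds hidden in the implied constant). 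The paper's key observation (Théorème \ref{th:finitenessofcriticzoom}) is the divisibility $d_1d_2d_3\mid au^2-bv^2$, whence $|au^2-bv^2|/(d_1d_2d_3)\geqslant 1$ and $b\leqslant\varepsilon^2$: only $O_\varepsilon(1)$ curves can contribute. Your proposed mechanism, "bad approximability of $\theta$ bounds the complexity of $C$", in fact points the wrong way: the Liouville constant of $\sqrt{b/a}$ is $\asymp(ab)^{-1/2}$, so large parameters make the Liouville obstruction weaker, not stronger. Separately, "the count does not grow with $B$, hence no limit measure" is not a proof: a bounded sequence can converge weakly to a non-zero measure. The paper excludes this by an oscillation argument coming from Pell--Fermat structure (Théorème \ref{th:criticzoomforquadratics}): sequences $B_n$ for which the annular window contains no solution of the finitely many equations $au^2-bv^2=m$ (so $\liminf=0$), against sequences tracking a primitive solution family generated by the fundamental unit (so $\limsup\geqslant1$).

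For Part (2) your per-curve step is in the paper's spirit (Théorème \ref{th:zoomingonlattices}: Koksma--Denjoy with the explicit discrepancy bound available for bounded partial quotients), but the aggregation is not mere bookkeeping over "denominators and units": the per-curve main term carries the factor $\Psi(a)\Psi(b)\Psi(b-a)/(ba^{1/2})$ produced by the gcd structure above (Proposition \ref{po:zoompointsoncab}), a divisor-type function of the binary cubic form $x_1x_2(x_2-x_1)$, and the $(\log B)^3$ together with the constant in $\delta_r$ come from the de la Bretèche--Browning average-order theorem (Proposition \ref{po:problemofdivisor}); this ingredient is absent from your sketch. The thresholds $r<\frac{144}{55}$ and $\eta<\frac1{35}$ arise from keeping the error terms, which carry explicit powers of $b$, uniformly dominated while $b$ ranges up to $B^{\eta(1-\frac2r)}$ -- not from golden-section-type extremal irrationals, since every $\theta$ occurring here is $\sqrt{b/a}$ with Liouville constant explicitly $\asymp(ab)^{-1/2}$. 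Finally, $\delta_r$ is absolutely continuous with respect to planar Lebesgue measure because it is the saturation of the one-dimensional uniform measures over the continuum of slopes $b/a$ (it is comparable to $\mathrm{d}x\,\mathrm{d}y/y$ on the sector), not the transport of a single curve's density, which would be singular.
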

    En combinant avec le résultat de Pagelot (Théorème \ref{th:pagelot}), on trouve que $\delta_{U,Q,B,2}(f)=o(\delta_{Z_i,Q,B,2}(f))$ pour toute fonction positive $f$. Ceci implique le nombre d'approximants sur les $Z_i$ domine celui dans le complémentaire bien que leur valeurs de constante d'approximation coïncident. La variété $\cup_{i=1}^4 Z_i$ est donc localement faiblement accumulatrice. 
    Le deuxième énoncé du Théorème \ref{th:maintheorem} assure que, quand on élargit légèrement le facteur de zoom $r$, l'ordre de croissance de la distribution locale est minoré par une puissance positive de $B$ avec une mesure uniforme $\delta_r$. 
    
    Notre stratégie s'appui sur le paramétrage par des courbes nodales mentionnées précédemment, ce qui est différente de celle pour traiter l'exemple de la surface del Pezzo torique de degré $6$ étudiée dans \cite{Huang1}, où pour le zoom critique, la mesure limite existe, obtenue en utilisant le paramétrage par des droites. 
    Quand on restreint aux courbes nodales, l'approximation du point $Q$ revient à l'approximation d'un point \textit{algébrique quadratique}, ce qui nous amène à étudier plus soigneusement la distribution locale pour les points algébriques sur la droite projective. Notons qu'en dimension $1$, il n'y a pas de sous-variétés localement accumulatrices. Donc la constante d'approximation vaut toujours la constante essentielle.
    	\begin{theorem}[cf. Théorèmes \ref{th:criticzoomforquadratics}, \ref{th:localdistributionofrealnumbers} \textit{infra}]\label{th:secondtheorem}
    	On munit le fibré $\mathcal{O}(1)$ sur $\PP^1$ d'une hauteur de Weil (cf. \eqref{eq:absoluteWeil}). 
    	Soit $\theta$ un nombre algébrique irrationnel et réel. On identifie $\theta$ à un point rationnel $[\theta:1]$ de $\PP^1$.
    	Alors $\alpha(\theta,\PP^1)=\frac{1}{2}$.
    	\begin{itemize}
    		\item Si $r=\frac{1}{2}$ et $\theta$ est un nombre quadratique, alors pour toute fonction $f\in\mathcal{C}_\theta^{\operatorname{b}}(\PP^1)$,
    		$$\delta_{\PP^1,\theta,B,\frac{1}{2}}(f)=O_{\theta,f }(1)$$ et il existe certaines fonctions $g\in\mathcal{C}_\theta^{\operatorname{b}}(\PP^1)$ telles que $$\liminf_B \delta_{\PP^1,\theta,B,\frac{1}{2}}(g)<\limsup_B\delta_{\PP^1,\theta,B,\frac{1}{2}}(g).$$ 
    	    En d'autres termes, il n'existe pas de mesure limite pour le zoom critique.
    			\item Si $r>\frac{1}{2}$. Alors pour toute $f\in\mathcal{C}_\theta^{\operatorname{b}}(\PP^1)$, $$\delta_{\PP^1,\theta,B,r}(f)= B^{2-\frac{1}{r}}\left(\frac{6}{\pi^2 \sup(1,|\theta|)^2}\int f(x)\operatorname{d}x+o_{\theta,f}(1)\right).$$ 
    			En particulier, la mesure limite est proportionnelle à la mesure de Lebesgue.
    	\end{itemize}
    \end{theorem}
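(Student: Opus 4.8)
Voici comment je m'y prendrais.

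L'idée directrice est de traduire l'énoncé en un problème de dénombrement de fractions de Farey de dénominateur borné dans un petit intervalle centré en $\theta$, puis d'y injecter les théorèmes de Dirichlet et de Roth ainsi que l'équirépartition de la suite $(q\theta\bmod 1)_{q\geqslant1}$. En écrivant un point rationnel de $\PP^1$ sous la forme $[p:q]$ avec $p,q\in\ZZ$ et $\pgcd(p,q)=1$, la hauteur est $H([p:q])=\max(|p|,|q|)$, et dans la carte affine autour de $\theta$ on peut prendre pour difféomorphisme local $\rho([p:q])=p/q-\theta$, de sorte que
\[
\delta_{\PP^1,\theta,B,r}(f)=\sum_{\substack{(p,q)\in\ZZ\times\ZZ_{>0},\ \pgcd(p,q)=1\\ \max(|p|,|q|)\leqslant B}} f\!\left(B^{1/r}\left(\tfrac pq-\theta\right)\right).
\]
Si $\operatorname{supp}f\subseteq[-R,R]$, seules contribuent les fractions avec $|p/q-\theta|\leqslant RB^{-1/r}$ ; pour celles-ci $p=q\theta\,(1+O(B^{-1/r}))$, donc $\max(|p|,|q|)=\sup(1,|\theta|)\,q\,(1+O(B^{-1/r}))$ et la condition de hauteur se réduit à $q\leqslant N:=B/\sup(1,|\theta|)$ à $O(B^{1-1/r})$ dénominateurs-frontière près. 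Ainsi $\delta_{\PP^1,\theta,B,r}(\mathbf 1_J)$ égale, à $O(\|f\|_\infty B^{1-1/r})$ près, le nombre $\mathcal N(B,J)$ de fractions de Farey d'ordre $N$ contenues dans $I_B:=\theta+B^{-1/r}J$, intervalle de longueur $|J|B^{-1/r}$. Pour la constante d'approximation, Dirichlet fournit une infinité de $p/q$ avec $|p/q-\theta|\leqslant q^{-2}\ll H([p:q])^{-2}$ et Roth donne $|p/q-\theta|\gg_\varepsilon q^{-2-\varepsilon}\gg_\varepsilon H([p:q])^{-2-\varepsilon}$ pour tout $p/q$ ; comme la meilleure approximation vérifie alors $\operatorname{d}(\cdot,\theta)\asymp H^{-1/\alpha}$, on obtient $\alpha(\theta,\PP^1)=\tfrac12$, et il n'y a pas de sous-variété localement accumulatrice en dimension $1$, d'où aussi $\aess(\theta,\PP^1)=\tfrac12$.

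\textit{Zoom critique $r=\tfrac12$, $\theta$ quadratique.} Ici $|I_B|\asymp B^{-2}$, donc une fraction contribuante vérifie $|q\theta-p|\leqslant RqB^{-2}\leqslant RB^{-1}$ ; $\theta$ étant quadratique il est mal approché, $|q\theta-p|\gg q^{-1}$, d'où $q\gg B$, et avec $q\leqslant N$ cela force $q\asymp B$. Toute fraction $p/q$ avec $q\asymp B$ et $|q\theta-p|=O(1/q)$ est une réduite ou une fraction intermédiaire du développement en fraction continue de $\theta$ ; celui-ci étant périodique à partir d'un certain rang, les $q_n$ croissent géométriquement et les quotients partiels sont bornés, donc il n'y en a que $O_{\theta,\|f\|_\infty}(1)$ dont le dénominateur tombe dans la plage dyadique pertinente. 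D'où $\delta_{\PP^1,\theta,B,\frac12}(f)=O_{\theta,\|f\|_\infty}(1)$. Pour l'oscillation, on fixe une réduite $p_n/q_n$ : l'atome $B^2(p_n/q_n-\theta)=-(B/q_n)^2\,q_n(q_n\theta-p_n)$ vaut, pour $B=q_n$, une quantité non nulle tendant vers une limite le long de chaque classe de congruence modulo la période, et lorsque $B$ croît cet atome sort puis rentre continûment dans toute boule fixée, les instants de traversée étant espacés géométriquement. En prenant $g\in\mathcal{C}_\theta^{\operatorname{b}}(\PP^1)$ une petite bosse sur une boule traversée par une infinité de ces atomes mais ne contenant aucun atome pour une suite de valeurs intermédiaires de $B$, on obtient $\liminf_B\delta_{\PP^1,\theta,B,\frac12}(g)=0<\limsup_B\delta_{\PP^1,\theta,B,\frac12}(g)$ : pas de mesure limite.

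\textit{Zoom sous-critique $r>\tfrac12$.} Il faut montrer $\mathcal N(B,J)=\tfrac3{\pi^2}N^2|J|B^{-1/r}(1+o(1))$. En écrivant $\mathcal N(B,J)=\sum_{q\leqslant N}\#\{p\in qI_B:\pgcd(p,q)=1\}$, le développement $\#\{p\in qI_B:\pgcd(p,q)=1\}=\varphi(q)|I_B|+O(2^{\omega(q)})$ et la formule $\sum_{q\leqslant N}\varphi(q)=\tfrac3{\pi^2}N^2+O(N\log N)$ livrent le terme principal $\tfrac3{\pi^2}N^2|I_B|=\tfrac{3|J|}{\pi^2\sup(1,|\theta|)^2}B^{2-1/r}$, soit exactement $B^{2-1/r}\int\mathbf 1_J$ à la constante annoncée près. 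Lorsque $r>1$, l'erreur triviale $O(N\log N)=O(B\log B)$ est déjà $o(B^{2-1/r})$. Pour $\tfrac12<r\leqslant1$ on exploite l'algébricité de $\theta$ : Roth force tout dénominateur contribuant à vérifier $q\gg_\varepsilon B^{1/(r(2+\varepsilon))}$ (de sorte que, pour $r$ voisin de $\tfrac12$, les $q$ restent confinés dans un intervalle $[B^{1-c(r)},N]$ avec $c(r)\to0$), et, les quotients partiels de $\theta$ vérifiant $a_{n+1}\ll_\varepsilon q_n^\varepsilon$ toujours par Roth, la suite $(q\theta\bmod1)$ a une discrépance $D_M\ll_\varepsilon M^{-1+\varepsilon}$, la suite de Farey d'ordre $N$ ayant alors des trous quasi minimaux près de $\theta$. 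En séparant la coprimalité par inversion de Möbius et en estimant les sommes incomplètes de discrépance qui en résultent — du type $\sum_{m\leqslant y}\gamma_m$, où $\gamma_m$ est la discrépance d'un intervalle de longueur $\asymp mB^{-1/r}$ voisin de $m\theta$ — au moyen de l'inégalité d'Erd\H os--Tur\'an alimentée par la borne de Roth sur les sommes d'exponentielles $\sum_{m\leqslant y}e(hm\theta)\ll_\varepsilon|h|^{1+\varepsilon}$, on obtient un terme d'erreur qui est $o(B^{2-1/r})$ dès que $r>\tfrac12=\aess$ — c'est exactement le seuil à partir duquel le terme principal, de taille $N^2|I_B|$, l'emporte sur l'erreur diophantienne. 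Enfin on encadre $f\in\mathcal{C}_\theta^{\operatorname{b}}(\PP^1)$ quelconque entre fonctions en escalier (intégrabilité au sens de Riemann), d'où $\delta_{\PP^1,\theta,B,r}(f)=B^{2-1/r}\big(\tfrac3{\pi^2\sup(1,|\theta|)^2}\int f(x)\,\operatorname{d}x+o(1)\big)$ : la mesure limite est proportionnelle à celle de Lebesgue.

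\textit{Principal obstacle.} Le point délicat est le comptage sous-critique pour $r$ proche de $\tfrac12$ : l'intervalle $I_B$ est alors bien plus court que $1/N$, si bien que l'asymptotique $\mathcal N(B,J)\sim\tfrac3{\pi^2}N^2|I_B|$ dénombre un événement rare d'ordre $B^{2-1/r}$ parmi $\asymp N$ dénominateurs, et la majoration triviale en $O(N)$ des sommes de discrépance pondérées par Möbius est inutilisable ; il faut extraire de la qualité diophantienne de $\theta$ une véritable compensation pour rabaisser l'erreur sous le terme principal, l'uniformité de ces estimations sur l'intervalle $I_B$ mobile et rétrécissant étant ce qui exige le plus de soin. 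Dans le cas critique, cette subtilité est remplacée par la combinatoire du développement en fraction continue ultimement périodique de $\theta$, qu'il faut dérouler pour exhiber une fonction-test témoignant d'une oscillation effective.
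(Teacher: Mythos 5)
Votre schéma recoupe pour l'essentiel celui du texte : $\alpha(\theta,\PP^1)=\frac{1}{2}$ par Dirichlet et Roth ; pour $r>\frac{1}{2}$, réduction à un comptage de points primitifs dans un triangle très fin, inversion de Möbius, puis équirépartition de $(q\theta)$ modulo $1$ via Koksma-Denjoy et Erdös-Tur\'an, le théorème de Roth servant à la fois à borner les modules issus de l'inversion de Möbius et à garantir la discrépance $D_M\ll_\varepsilon M^{-1+\varepsilon}$ (c'est la démarche du théorème \ref{th:localdistributionofrealnumbers}). Deux imprécisions à corriger dans cette partie : injecter directement la borne $\sum_{m\leqslant y}e(hm\theta)\ll_\varepsilon |h|^{1+\varepsilon}$ dans Erdös-Tur\'an ne donne que $D_M\ll_\varepsilon M^{-1/2+\varepsilon}$, ce qui ne suffit plus lorsque $r$ est proche de $\frac{1}{2}$ ; il faut la version fine passant par la bornitude (à $\varepsilon$ près) des quotients partiels, que vous mentionnez par ailleurs et qui est l'ingrédient réellement utilisé (théorème \ref{co:controlforthediscrepancy}). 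De même, au zoom critique, l'affirmation qu'une fraction vérifiant $|q\theta-p|=O(1/q)$ est une réduite ou une fraction intermédiaire est fausse pour une constante arbitraire ; pour conclure à la finitude il faut soit un raffinement de type Worley, soit la réduction du texte : $|ap^2-bq^2|\ll_{\varepsilon}1$, donc un nombre fini de valeurs de $m$, chaque ensemble de solutions se décomposant en un nombre fini d'orbites sous l'unité fondamentale, dans chacune desquelles les dénominateurs croissent géométriquement (propositions \ref{co:structuretheorem1} et \ref{po:controlofepsilon}) — argument équivalent au vôtre une fois cette structure établie.

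La vraie lacune concerne la non-existence de la mesure limite. Vous suivez l'atome d'une seule réduite et affirmez qu'on peut choisir une boule visitée par une infinité de ces atomes mais vide pour des valeurs intermédiaires de $B$. Or les points du zoom critique proviennent de \emph{toutes} les solutions de $ap^2-bq^2=m$ pour tous les $|m|\leqslant 2\varepsilon\sqrt{ab}+O(1)$ : il y a en général plusieurs familles, dont les dénominateurs croissent avec la même raison géométrique mais avec des phases différentes, de sorte que lorsque l'atome que vous suivez quitte la boule, rien n'empêche a priori un atome d'une autre famille d'y entrer ; l'inégalité $\liminf_B\delta(g)<\limsup_B\delta(g)$ exige donc un contrôle simultané de toutes les familles. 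C'est précisément ce que fait le texte : les images $\log|p+q\sqrt{D}|/\log(\varepsilon_D^*)$ des solutions forment une réunion finie de translatés de $\ZZ$, on introduit l'écart minimal $F(\varepsilon)$ de cette réunion, on prend $\eta$ assez proche de $\varepsilon$ pour que la fenêtre logarithmique correspondant à l'anneau $\chi(\varepsilon,\eta)$ soit plus courte que cet écart, ce qui permet de choisir $B_n\to\infty$ évitant toutes les familles à la fois (proposition \ref{po:avoidance}) ; la minoration du $\limsup$ demande en outre de savoir qu'un $m$ de l'intervalle prescrit admet une solution primitive et d'utiliser l'action du groupe des unités pour en produire une suite adaptée aux bornes $B_n$ (proposition \ref{po:trackpoints}, remarque \ref{rmk:inftysol}) ; enfin le couple $(\varepsilon,\eta)$ doit satisfaire simultanément les deux contraintes, ce qui est l'objet du calcul final de la preuve du théorème \ref{th:criticzoomforquadratics}. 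Sans cette étape, votre fonction test $g$ n'est pas construite et l'oscillation annoncée n'est pas établie.
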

    La constante d'approximation de $\theta$ est déduite des théorèmes classiques de Dirichlet et Roth (ou de l'inégalité de Liouville). Le premier énoncé implique que des approximants contribuant au zoom critique sont peu nombreux, de sorte qu'ils ne donnent pas de comportement uniforme. 
    Alors que la formule asymptotique pour $r>\frac{1}{2}$ dit qu'il existe beaucoup d'approximants réalisant le facteur de zoom proche de $\frac{1}{2}$ et ils se répartissent de façon assez uniforme. 
    Nous comparons ce résultat avec ceux antérieurs (Remarques \ref{rmk:Lang1} et \ref{rmk:Lang2}) s'agissant, d'après S. Lang \cite{Lang2}, \og Asymptotic Approximations\fg{}.
    
    \subsection{Méthodes, résultats auxiliaires et structure du texte}
    Au cours de la démonstration du Théorème \ref{th:maintheorem}, un certain nombre de résultats portant d'intérêt indépendant sont également achevés et méritent d'explications que nous fournissons ici.
    
    Tout d'abord, dans la Section \ref{se:approximationconstants} on introduit la notion de constante d'approximation et l'on précise l'opération de zoom. 
    
     Pour les nombres quadratiques réels, le zoom critique est directement lié aux équations de Pell-Fermat de la forme
     \begin{equation}\label{eq:diophantineequation}
     ax^2-by^2=c, \quad a,b\in\NN^*,c\in\ZZ,\quad \pgcd(a,b)=1,
     \end{equation}
     dont la structure des solutions constituent le contenu de la Section \ref{se:theoryalgebraic}.
     On note $\SABC$ l'ensemble des solutions $(x,y)\in\ZZ^2$ de \eqref{eq:diophantineequation}. 
     Trouver des points dans le zoom équivaut plus ou moins à trouver une infinité de solutions \textit{primitives}, i.e. $(x,y)\in \SABC$ telle que $\pgcd(x,y)=1$.
     S'appuyant sur la théorie algébrique des nombres pour le corps quadratique réel $\QQ(\sqrt{ab})$,  l'on démontre que l'ensemble $\SABC$ se décompose en des orbites disjointes par l'action de $S^*_{a,b,a}$, qui est un sous-groupe du groupe d'unités engendré par une puissance de l'unité fondamentale. En particulier, cela nous permet de construire une suite de solutions primitives de l'équation \eqref{eq:diophantineequation} à partir de n'importe quelle solution primitive fixée. Les résultats principaux sont la Proposition \ref{po:pgcddonotchange} et son Corollaire \ref{rmk:inftysol}, démontrés à l'aide d'un résultat classique de Dirichlet \cite{Dirichlet}. Ils étant explicites et effectifs, la constante implicite dans le premier énoncé du Théorème \ref{th:secondtheorem} est alors explicitement calculable. 
     
     Fixons $\theta$ comme dans le théorème \ref{th:secondtheorem}. Alors le calcul de la limite de  $(\delta_{\PP^1,\theta,B,r}(\chi(\varepsilon)))_B$ revient au comptage des ensembles qui consistent en les $[u:v]\in\PP^1(\QQ)$ vérifiant les conditions suivantes
     \begin{equation}\label{eq:Eebr2}
     u,v\in\ZZ,v\neq 0,\quad \pgcd(u,v)=1,\quad\max(|u|,|v|)\leqslant B,
     \end{equation}
     \begin{equation}\label{eq:Eebr1}
     \left|\frac{u}{v}-\theta\right|\leqslant \varepsilon B^{-\frac{1}{r}}.
     \end{equation}
     Le cas où $\theta\in\PP^1(\QQ)$ et $r=1$ est dû à Pagelot \cite{Pagelot}. Nous donnons une démonstration au appendice \ref{se:app1} pour la complétude. Pour $\theta$ irrationnel,
     la difficulté majeure pour le comptage est que 
     les points que l'on veut dénombrer sont des points entiers primitifs dans une région très fine. La formule empirique (le terme principal espéré est le volume et le terme d'erreur est d'ordre de grandeur majoré par la longueur du bord) n'est plus applicable pour $r$ petit. Le zoom critique ($r=\frac{1}{2}$ Théorème \ref{th:criticzoomforquadratics}) est traité dans la Section \ref{se:criticalquadratic} en utilisant les faits établis dans la Section \ref{se:theoryalgebraic}. 
     Pour les zoom sous-critiques (i.e. $r>\frac{1}{2}$), nous allons démontrer le deuxième énoncé du Théorème \ref{th:secondtheorem} pour tous les nombres réels ayant la mesure d'irrationalité $2$ (Définition \ref{def:irrationality}) en adaptant la théorie d'équirépartition modulo $1$, avec un contrôle des termes d'erreur à l'aide de l'inégalité de Koksma-Denjoy (Théorème \ref{th:koksma-denjoy}), en dépit de l'ineffectivité (du terme d'erreur) à cause de celle du théorème de Roth. Notre méthode s'applique avec succès aux cas où l'on imposent dans \eqref{eq:Eebr2} que $(u,v)$ soit sur un réseau fixé. Le terme principal fait apparaître le déterminant du réseau. Voir les détails dans la Section \ref{se:gerelattice}.

    L'étude de la distribution locale de la surface torique $Y_4$ (à tout niveau) est faite dans la dernière Section \ref{se:toricsurface}. La démonstration du Théorème \ref{th:maintheorem} utilise le paramétrage par des courbes nodales, puisqu'elles réalisent la constante essentielle $\aess(Q)$. La première partie se déduit de celle du Théorème \ref{th:secondtheorem} en remarquant que dans un voisinage fixé du point $Q$, il n'y a qu'un nombre fini de courbes nodales qui interviennent pour le zoom critique ($r=2$, Théorème \ref{th:finitenessofcriticzoom}). Pour le zoom sous-critique ($r>2$), chacune de ces courbes donne des points d'ordre de grandeur égale à une puissance de $B$, sur lesquelles on trouve une mesure limite de dimension $1$ uniforme au sens du deuxième énoncé du Théorème \ref{th:secondtheorem} (cf. Proposition \ref{po:zoompointsoncab} \textit{infra}). Afin d'intégrer toutes ces contributions, la prochaine étape est de les sommer. Plus concrètement nous avons besoin d'estimer la somme (voir les fonctions arithmétiques $\Psi$ \eqref{eq:arithmeticfunctionkey} et $\Phi$ \eqref{eq:arithmeticfunctionPhi})
\begin{equation}\label{eq:sumquestion}
    \sum_{\substack{(x_1,x_2)\in X\mathcal{R}\\\pgcd(x_1,x_2)=1}}\frac{\Phi(x_1)\Phi(x_2)\Psi(x_2-x_1)}{x_2x_1^\frac{1}{2}},
\end{equation}
    où $\mathcal{R}\subset ]0,1[^2\subset \RR^2$ est une région. La fonction $\Psi$ ressemblant à $\tau$ \eqref{eq:thefunctiontau}, nous bornons cette somme par 
    \begin{equation}\label{eq:sumbound}
    \sum_{\substack{(x_1,x_2)\in X\mathcal{R}\\\pgcd(x_1,x_2)=1}}\frac{\Psi(x_1)\Psi(x_2)\Psi(x_2-x_1)}{x_2x_1^\frac{1}{2}},
    \end{equation}
    ce qui nous permet d'utiliser la technique développée par R. de la Bretèche et T. D. Browning dans \cite{B-B1}, \cite{B-B2}, \cite{Browning} pour traiter l'ordre moyen des diviseurs pour des forme binaires, dont une présentation se trouve au appendice \ref{se:app2}, qui nous entraîne une formule asymptotique pour le dénombrement sur les paramètres qui sont petits pour des courbes nodales, ainsi que le deuxième énoncé du Théorème \ref{th:maintheorem}.
    Avec les techniques utilisées dans ce texte, on n'est pas capable de trouver une formule asymptotique pour le zoom sous-critique de la surface $Y_4$ (c'est-à-dire ``$\gg$'' remplacé par ``$\sim$''). Une raison est le passage de \eqref{eq:sumquestion} à \eqref{eq:sumbound}. L'existence d'une formule asymptotique pour \eqref{eq:sumquestion} pourrait signaler une \og meilleure\fg\ forme de la mesure $\delta_r$. Remarquons aussi que d'après l'heuristique naïve du principe de Batyrev-Manin (cf. \eqref{eq:prediction}), on souhaiterait que dans \eqref{eq:growthorder} la puissance de $B$ soit $2-\frac{1}{r}$. C'est-à-dire que l'on voudrait pourvoir prendre $\eta=1$. Nous démontrons la minoration \eqref{eq:lowerboundbegin} uniquement pour tous les $\eta<\frac{1}{35}$ (cf. Théorème \ref{th:lowerbound}).
    
    Dans la Section \ref{se:thinset}, nous proposons une façon d'interpréter les résultats pour $Y_4$ en voyant $Y_4(\QQ)$ comme un sous-ensemble de $(\PP^1\times\PP^1)(\QQ)$ qui est \emph{mince}, dont l'apparence dans le principe de Batyrev-Manin a reçu beaucoup d'attentions. Le but de la Section \ref{se:othervarieties} est de souligner qu'en construisant des variétés comme produit de $Y_4$ avec les espaces projectifs, le phénomène que des courbes nodales donnent les meilleures approximations génériques est très général.
	\subsection{Notations}
 	La lettre $p$ désigne un nombre premier. 
    On note $v_p$ la fonction d'ordre $p$-adique, $\varphi$ la fonction indicatrice d'Euler,
	$\tau$ le fonction arithmétique donnant le nombre total de diviseurs donnée par:
	\begin{equation}\label{eq:thefunctiontau}
	\tau(n)=\sum_{d|n} 1,\quad \text{pour } n\geqslant 1,
	\end{equation}
	et $\mu$ la fonction de Möbius.
	On définit quelques fonctions arithmétiques, pour $n\geqslant 1$, 
	\begin{equation}\label{eq:thefunctionphi}
	\phi(n)=\prod_{p|n}\left(1-\frac{1}{p}\right),
	\end{equation}
	\begin{equation}\label{eq:arithmeticfunction 1}
	\Psi_1(n)=\prod_{p|n}\left(1+\frac{1}{p}\right)^{-1},
	\end{equation}
	\begin{equation}\label{eq:functiong}
	g(n)=\prod_{p}p^{\lceil \frac{v_p(n)}{2}\rceil},\quad n\in\NN,
	\end{equation}
	\begin{equation}\label{eq:arithmeticfunctionkey}
	\Psi(n)=\sum_{d|n}\Psi_1(d)\sum_{e|d}\frac{\mu(e)}{e}=\sum_{d|n}\Psi_1(d)\phi(d),
	\end{equation}
	\begin{equation}\label{eq:arithmeticfunctionPhi}
	\Phi(n)=\sum_{d|n}\frac{d\Psi_1(g(d))}{g(d)}\sum_{e|d}\frac{\mu(e)}{e}=\sum_{d|n}\frac{d\Psi_1(g(d))}{g(d)}\phi(d).
	\end{equation}
	Alors l'indicatrice d'Euler est donnée par $\varphi(n)=n\phi(n)$.
	
	Pour un nombre réel $x$, on note $\lfloor x\rfloor$ \textit{la partie entière} de $x$ qui est le plus grand entier $\leqslant x$, $\lceil x \rceil$ le plus petit entier $\geqslant x$, et $\{x\}=x-\lfloor x\rfloor\in\mathopen [0,1\mathclose[ $ \textit{la partie fractionnaire} de $x$.
	
	Fixons $E$ un ensemble. Pour $A,B\subseteq E$, on note $A\Delta B$ le sous-ensemble de $E$ défini par
\begin{equation}\label{eq:delta}
	A \Delta B=\left(A\setminus B \right)\bigcup \left(B\setminus A\right)=(A\bigcup B)\setminus(A\bigcap B).
\end{equation}
Pour $F$ un sous-ensemble de $\ZZ^2$, on note
$$F_{\operatorname{prem}}=\{\mathbf{x}=(x_1,x_2)\in\ZZ^2:\pgcd(x_1,x_2)=1\}$$
l'ensemble des points primitifs de $F$.
	\tableofcontents
	\newpage
 \section{Constantes d'approximation}\label{se:approximationconstants}
 \subsection{Constantes d'approximation et constante essentielle}
 On définit plusieurs \textit{constantes d'approximation} d'un point rationnel sur une variété algébrique généralisant la notion de \textit{mesure d'irrationalité} venant de l'approximation diophantienne classique.
 Dans cet article, nous considérons uniquement les distances archimédiennes réelles. Une définition analogue pourrait s'appliquer aux places non-archimédiennes.
 Soit $X$ une variété projective définie sur un corps de nombres $k$.  On fixe un point rationnel $Q\in X(\bar{k})$, $\nu$ une place réelle et $d=d_\nu$ une distance projective (cf. \cite[(2.1)]{M-R}). Par exemple, si $X\hookrightarrow \PP^n$, $d$ peut être la restriction à $X$ de la distance projective usuelle sur $\PP^n$. Soit $L$ un fibré en droites sur $X$ muni d'une hauteur de Weil absolue $H_L$.
 Pour $V$ une partie constructible de $X$, on considère les ensembles 
 $$A(Q,V)=\{\gamma>0|\exists (y_i)\in (V(k)\setminus \{Q\})^\NN,\exists C>0,d(Q,y_i)\to 0 \text{ et } d(Q,y_i)^\gamma H_L(y_i)<C,\forall i\},$$
 $$ B(Q,V)=\{\gamma>0| \exists C>0 , d(Q,y)^\gamma H_{L}(y)\geqslant C, \forall y\in  V(k)\setminus\{Q\}\}.$$
 Chacun est un intervalle: si $\gamma_0 \in B(Q,V)$, tout $0<\gamma<\gamma_0$ appartient à $B(Q,V)$; 
 de même, si $\gamma_0 \in A(Q,V)$, tout $\gamma>\gamma_0$ appartient à $A(Q,V)$. On suppose que $k(Q)$ le corps résiduel de $Q$ est contenu dans $k_\nu$, la complété de $k$.
 \begin{definition}\label{def:Northcott}
 	Soit $U$ un ouvert de $X$. On dit que $L$ vérifie \emph{la propriété de Northcott} pour $U$ si pour tout $B>0$,
 	$$\sharp\{x\in U(k):H_L(x)\leqslant B\}<\infty.$$
 \end{definition}
 \begin{prop-def}[\cite{M-R}, Proposition 2.11]\label{pd:appconstant}
 	Soient $Q\in X(\bar{k})$ et $V$ une partie constructible de $X$. Supposons que $L$ vérifie la propriété de Northcott pour un ouvert de Zariski $U$ contenant $Q$. Alors on a
 	$$\inf A(Q,U\cap V)=\sup B(Q,U\cap V).$$
 	Cette quantité est appelée \emph{constante d'approximation} en $Q$ dans $V$, notée $\alpha_L(Q,V)$, ou simplement $\alpha(Q,V)$ si le fibré $L$ est fixé dans la considération.
 	S'il existe une sous-variété fermée $Z$ contenue dans $V$ tel que $\alpha(Q,Z)=\alpha(Q,V)$, on dit que $\alpha(Q,V)$ peut être calculée sur $Z$.
 \end{prop-def}
 Cette définition ne dépend pas du choix de $U$. Dans la suite, lorsque l'on parle de la constante d'approximation, 
 on admet que $L$ vérifie la propriété de Northcott pour un ouvert dense contenant $Q$.
 \begin{definition}[\cite{Pagelot}]\label{df:essential}
 	On définit \emph{la constante essentielle} en $Q$ comme la quantité
 	\begin{equation}
 	\aess_L(Q)=\aess(Q)=\sup_V \alpha(Q,V)
 	\end{equation}
 	où $V$ parcourt tous les parties constructibles denses de $X$.
 	S'il existe une sous-variété $Z$ de $X$ de sorte que pour tout ouvert dense $W\subset Z$, $\alpha(Q,W)<\aess(Q)$, on dit que $Z$ est \textit{localement accumulatrice}.
 \end{definition}
 Ces notions ne dépendent ni du choix de la hauteur associée à fibré $L$, ni du choix de la distance projective.
 
   Puisque le problème que l'on étudie est local et que le point que l'on va approcher est dans l'orbite ouverte, on peut se restreindre à des ouverts de $X$. Dans la suite on suppose que le fibré en droites $L$ est gros, c'est-à-dire sa classe est dans l'intérieur du cône pseudoeffectif de $X$. Alors il existe un entier $m$ tel que l'application rationnelle $\Upsilon_m:X\dashrightarrow \PP(H^0(X,L^{\otimes m}))$ est birationnelle vers son image. En particulier on peut choisir un ouvert $U$ tel que $U\simeq\Upsilon_m(U)\subset \PP(H^0(X,L^{\otimes m}))$ et donc 
   $$\sharp\{x\in U(k):H_L(x)\leqslant B\}=\sharp\{y\in\Upsilon_m(U)(k):H_{\mathcal{O}(1)}(y)\leqslant B\}<\infty$$
   grâce au théorème de Northcott (cf. \cite[\S 2.4]{Serre}).
   Dans ce qui va suivre, on suppose toujours que $L$ vérifie cette propriété, ce qui est le cas pour le fibré anticanonique des variétés toriques complètes lisses (parce que le polytope associé à $-K_X$ est le reflété de l'éventail de $X$ contenant l'origine, cf. \cite{fulton} \S3.4 p. 66). En outre le fibré $-K_X$ est sans lieu de base sur l'orbite ouverte et donc la propriété de Northcott est vérifiée en tout point de l'ouvert.
 
 Toutes ces constantes sont connues en dimension $1$.
 \begin{theorem}[Théorème de Roth, Principe de Dirichlet, \cite{M-R} Lemma 2.15]\label{th:approximationtheorem1}
 	Soient $x\in\PP^1(\bar{\QQ})\cap \PP^1(\RR)$ et $d\in\NN$. On fixe une hauteur de Weil absolue associée à $\mathcal{O}(d)$.
 	Alors
 	\begin{equation}
 	\alpha(x,\PP^1)=
 	\begin{cases}
 	d \mbox{ si } x\in \PP^1(\QQ);\\
 	\frac{d}{2} \mbox{ si } x \mbox{ est irrationnel}.
 	\end{cases}
 	\end{equation}
 \end{theorem}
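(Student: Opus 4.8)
The plan is to translate the classical theorems of Dirichlet and Roth (and Bézout, in the rational case) into the language of the sets $A(x,\PP^1)$ and $B(x,\PP^1)$, after reducing to $d=1$ and replacing the intrinsic projective data by explicit quantities. Concretely, since any two Weil heights attached to $\mathcal{O}(d)$ differ by bounded multiplicative factors and $H_{\mathcal{O}(d)}$ agrees with $H_{\mathcal{O}(1)}^{d}$ up to such factors, one has for every $y\in\PP^1(\QQ)$ and every $\gamma>0$
$$\dist(x,y)^{\gamma}H_{\mathcal{O}(d)}(y)\asymp\bigl(\dist(x,y)^{\gamma/d}H_{\mathcal{O}(1)}(y)\bigr)^{d},$$
so $\gamma$ lies in $A(x,\PP^1)$ (resp.\ $B(x,\PP^1)$) for $\mathcal{O}(d)$ iff $\gamma/d$ lies in the corresponding set for $\mathcal{O}(1)$; by Proposition-Définition \ref{pd:appconstant} (Northcott holds on all of $\PP^1$) this gives $\alpha_{\mathcal{O}(d)}(x,\PP^1)=d\cdot\alpha_{\mathcal{O}(1)}(x,\PP^1)$, and it suffices to treat $d=1$. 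I normalise $H_{\mathcal{O}(1)}([u:v])=\max(|u|,|v|)\asymp\sqrt{u^2+v^2}$ for coprime $u,v$; a projective distance to a finite point $x=[\theta:1]$ is comparable to $|u-\theta v|/\sqrt{u^2+v^2}$, so globally $\dist(x,[u:v])\,H_{\mathcal{O}(1)}([u:v])\asymp|u-\theta v|$, while inside a fixed neighbourhood of $x$ one has in addition $H_{\mathcal{O}(1)}([u:v])\asymp|v|$ and $\dist(x,[u:v])\asymp|u/v-\theta|$. (If $x=[1:0]$, which is rational, move it to a finite position by an automorphism of $\PP^1$.)

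\emph{The rational case} $x=[a:b]$ with $\pgcd(a,b)=1$. For any $y=[u:v]\neq x$ with $\pgcd(u,v)=1$ the integer $ub-va$ is nonzero, hence $\dist(x,y)\,H_{\mathcal{O}(1)}(y)\asymp|u-\theta v|=|ub-va|/b\geqslant 1/b$; thus $1\in B(x,\PP^1)$. Conversely, Bézout's identity produces infinitely many coprime pairs $(u,v)$ with $ub-va=1$ and $|v|\to\infty$; along the resulting sequence $y_i\to x$ one has $\dist(x,y_i)\,H_{\mathcal{O}(1)}(y_i)\asymp 1/b$ bounded, so $1\in A(x,\PP^1)$. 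Hence $\alpha_{\mathcal{O}(1)}(x,\PP^1)=1$, i.e.\ $\alpha_{\mathcal{O}(d)}(x,\PP^1)=d$.

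\emph{The algebraic irrational case} $x=[\theta:1]$. Upper bound: Dirichlet's theorem yields infinitely many coprime $p/q$ with $|p/q-\theta|<q^{-2}$, and for these $\dist(x,y_i)^{\gamma}H_{\mathcal{O}(1)}(y_i)\asymp|p/q-\theta|^{\gamma}|q|\ll q^{\,1-2\gamma}\to 0$ once $\gamma>\tfrac12$; so $(\tfrac12,\infty)\subseteq A(x,\PP^1)$ and $\alpha_{\mathcal{O}(1)}(x,\PP^1)\leqslant\tfrac12$. Lower bound: fix $\gamma<\tfrac12$ and pick $\varepsilon>0$ with $(2+\varepsilon)\gamma<1$. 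For $y$ in a fixed neighbourhood of $x$, Roth's theorem gives $\dist(x,y)^{\gamma}H_{\mathcal{O}(1)}(y)\asymp|u/v-\theta|^{\gamma}|v|\gg|v|^{\,1-(2+\varepsilon)\gamma}\geqslant 1$ since $|v|\geqslant1$ and the exponent is nonnegative; for $y$ outside that neighbourhood, $\dist(x,y)$ is bounded below and $H_{\mathcal{O}(1)}(y)\geqslant1$. So $\dist(x,y)^{\gamma}H_{\mathcal{O}(1)}(y)\geqslant C>0$ uniformly in $y\neq x$, whence $(0,\tfrac12)\subseteq B(x,\PP^1)$ and $\alpha_{\mathcal{O}(1)}(x,\PP^1)\geqslant\tfrac12$. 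Combining, $\alpha_{\mathcal{O}(1)}(x,\PP^1)=\tfrac12$ and $\alpha_{\mathcal{O}(d)}(x,\PP^1)=d/2$. (When $\theta$ is quadratic one may use Liouville's inequality $|u/v-\theta|\gg|v|^{-2}$ instead of Roth, which even places $\tfrac12$ itself in $B(x,\PP^1)$.)

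\emph{Expected difficulty.} There is essentially none beyond invoking the cited theorems: the arithmetic content is carried entirely by Dirichlet and Roth. The only points needing mild care are the comparison, up to constants, between the intrinsic projective distance and the explicit expression $|u/v-\theta|$, and the treatment of rational points lying outside a fixed neighbourhood of $x$ in the uniform lower bound; both are routine. The conclusion is in any case insensitive to the chosen Weil height and projective distance, as recorded after Proposition-Définition \ref{pd:appconstant}, so the normalisations above lose no generality.
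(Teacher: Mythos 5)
Your proposal is correct; the paper itself gives no proof of this statement but simply cites Dirichlet's principle, Roth's theorem and McKinnon--Roth Lemma 2.16, and your argument is exactly the standard deduction those citations intend: reduce to $d=1$ by the homogeneity $H_{\mathcal{O}(d)}\asymp H_{\mathcal{O}(1)}^{d}$, handle the rational case by the lower bound $|ub-va|\geqslant 1$ together with Bézout, and the irrational case by Dirichlet (upper bound for $\alpha$) and Roth or Liouville (lower bound), all translated into the sets $A(x,\PP^1)$ and $B(x,\PP^1)$ of Proposition-Définition \ref{pd:appconstant}. The only implicit hypothesis worth keeping in mind is that $x$ is real (the paper restricts to real archimedean distances; for $k(x)\not\subset\RR$ one is in the case $r_P=0$ of Remarque \ref{rmk:thmofmckinnon}), which your proof, like the paper's statement, tacitly assumes.
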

 \begin{remark}[\cite{M-R} Theorem 2.16]\label{rmk:thmofmckinnon}
 	La constante d'approximation est sensible aux singularités. Prenons une courbe rationnelle $C$ définie sur $k$ et $L$ un faisceau inversible ample sur $C$. On fixe $Q\in C(\bar{k})$. Soit $f:\PP^1\to C$ le morphisme de normalisation. Alors 
 	$$\alpha(Q,C)=\min_{P\in f^{-1}(Q)}\frac{d}{m_P r_P},$$
 	où
 	$d=\deg_C(L)$, $m_P$ est la multiplicité de la branche de $C$ passant par $Q$ correspondant à $P$ et 
 	\begin{equation*}
 		r_P=\begin{cases}
 		0 \text{ si } k(P)\not\subset k_\nu;\\
 		1 \text{ si } k(P)=k;\\
 		2 \text{ sinon}.
 		\end{cases}
 	\end{equation*}
 	Ici $r_P=0$ veut dire que $\frac{d}{m_Pr_P}=\infty$, qui arrive par exemple pour $k=\QQ$ et $Q$ un point imaginaire $k(Q)\not\subset \RR=\QQ_\infty$ qu'il est donc impossible d'approcher par des nombres rationnels. 
 	
 	Dans ce texte, on s'intéresse particulièrement aux cas où $C$ est une section irréductible de $\mathcal{O}(2,2)$ définie sur $\QQ$ dans $\PP^1\times \PP^1$ qui est nodale en $Q\in C(\QQ)$. Soient $\tau_i(Q)\in \bar{\QQ},i=1,2$ les pentes des deux branches de $C$ en $Q$. Conservant les notations ci-dessus, le pré-image $f^{-1}(Q)$ contient deux points $Q_1,Q_2$, dont les corps de définition $k(Q_1)=k(Q_2)$ qui valent $\QQ(\tau_i(Q))$. Notons que $[\QQ(\tau_i(Q)):\QQ]\leqslant 2$.
 	Alors la formule ci-dessus s'écrit
 	$$\alpha(Q,C)=\frac{d}{\max(r_{Q_1},r_{Q_2})}=\begin{cases}
 	d \text{ si } k(Q_i)=\QQ;\\
 	\frac{d}{2}\text{ si } \QQ\subsetneq k(Q_i)\subset \RR;\\
 	\infty \text{ sinon}.
 	\end{cases}$$
 \end{remark}
C'est un fait empirique que les approximants proviennent essentiellement des courbes rationnelles. En effet D. McKinnon a proposé la conjecture suivante  \cite[Conjecture 2.7]{Mc}:
 \begin{conjecture}[McKinnon]\label{conj:mckinnon}
 	Soient $L$ un fibré ample sur $X$ avec une hauteur de Weil choisie et $Q\in X(k)$. Supposons qu'il y a une courbe rationnelle passant par $Q$ (ceci implique en particulier que $\alpha(Q,X)<\infty$). Alors il existe une courbe rationnelle $C$ telle que 
 	$$\alpha(Q,C)=\alpha(Q,X).$$
 \end{conjecture}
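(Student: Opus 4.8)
The plan is to reduce the conjecture to producing one rational curve through $Q$ of small degree‑to‑multiplicity ratio, to build that curve from a Seshadri‑extremal configuration on the blow‑up of $X$ at $Q$, and then to isolate the Diophantine input on which everything rests. \emph{Reductions.} Replacing $L$ by $L^{\otimes m}$ multiplies every approximation constant $\alpha(Q,-)$ by $m$ and leaves the set of rational curves through $Q$ unchanged, so we may assume $L$ very ample and fix $X\hookrightarrow\PP^N_k$ with $L=\mathcal{O}_X(1)$. For any integral curve $C\ni Q$ one has $C\subseteq X$, hence $A(Q,C)\subseteq A(Q,X)$ and $\alpha(Q,C)\geq\alpha(Q,X)$; thus the conjecture is equivalent to exhibiting a \emph{single} rational curve $C\ni Q$ with $\alpha(Q,C)\leq\alpha(Q,X)$. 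By Remark~\ref{rmk:thmofmckinnon}, $\alpha(Q,C)=\min_{P\in\phi^{-1}(Q)}\deg_C(L)/(m_Pr_P)$, so it suffices to find a rational curve through $Q$ possessing a branch $P$ over $Q$ with $k(P)\subseteq\RR$ and $\deg_C(L)/(m_Pr_P)\leq\alpha(Q,X)$. When $\dim X=1$ the hypothesis forces $X$ itself to be a rational curve and $C=X$ works; so assume $\dim X\geq 2$.

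\emph{Construction.} Let $\epsilon(Q,L)=\inf_{C'\ni Q}\deg_{C'}(L)/\operatorname{mult}_Q(C')$ be the Seshadri constant, equal on the blow‑up $\pi:\widetilde X\to X$ at $Q$ (exceptional divisor $E$, defined over $k(Q)$) to $\sup\{t\geq 0:\pi^{*}L-tE\ \text{nef}\}$. Roth's theorem applied to the linear systems $|m(\pi^{*}L-tE)|$ for $t<\epsilon(Q,L)$ — this is the McKinnon--Roth lower bound — gives $\alpha(Q,X)\geq\epsilon(Q,L)/c(Q)$, where $c(Q)\in\{1,2\}$ equals $1$ when $Q\in X(k)$ and $2$ otherwise (assuming $k(Q)\subseteq\RR$). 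The plan is then to produce a rational curve $C\ni Q$ that computes $\epsilon(Q,L)$ and passes through $Q$ as a unibranch point with residue field of that branch inside $\RR$ — a general member of the extremal family is unibranch at a prescribed point, with branch field $k(Q)$, so $r_P=c(Q)$. For such a $C$, Remark~\ref{rmk:thmofmckinnon} yields $\alpha(Q,C)=\deg_C(L)/(\operatorname{mult}_Q(C)\cdot r_P)=\epsilon(Q,L)/c(Q)\leq\alpha(Q,X)$, and with $\alpha(Q,C)\geq\alpha(Q,X)$ we obtain $\alpha(Q,C)=\alpha(Q,X)$. To build such a $C$: take an irreducible curve $\widetilde C\subset\widetilde X$ (or a limit of a moving family of them) with $(\pi^{*}L-\epsilon(Q,L)E)\cdot\widetilde C=0$, push it to $C_0=\pi(\widetilde C)\subset X$ through $Q$, and replace $C_0$ by a rational curve without increasing the ratio $\deg(L)/\operatorname{mult}_Q$: when $C_0$ moves in a family covering $X$ this follows from a bend‑and‑break argument using the hypothesis (which places $Q$ in the locus swept out by rational curves), resolving if necessary to make the branch at $Q$ unibranch with real residue field.

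\emph{The main obstacle.} The step that is not routine is the passage from the lower bound $\alpha(Q,X)\geq\epsilon(Q,L)/c(Q)$ to \emph{equality}, which (after the above) is precisely the statement that the optimal Diophantine approximants to $Q$ concentrate on a Seshadri‑extremal — hence, via the construction, rational — family of curves rather than spreading over $X$. This is the local, pointwise analogue of the principle that rational points accumulate on rational curves, and it carries the logical weight of Vojta's conjecture or of a Schmidt subspace theorem in dimension $\geq 2$; unconditionally nothing is available beyond $\dim X=1$, where the statement is vacuous. The obvious shortcuts fail: a Chow‑variety compactness argument applied to bounded‑degree curves $C_i$ joining $Q$ to the $i$‑th approximant is useless because the $C_i$ degenerate as $y_i\to Q$ and the limiting cycle may carry no good approximation; and a Mori‑theoretic covering of $X$ by rational curves of bounded degree controls geometry but not heights. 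The construction is also delicate when $\epsilon(Q,L)$ is irrational or is computed by no curve (a Nagata‑type phenomenon), and when the Seshadri‑extremal curves are not rational or not unibranch at $Q$: in these cases one must run the argument with $\epsilon(Q,L)-\varepsilon$, a Diophantine approximation of the extremal class, and a separate improvement of the McKinnon--Roth bound from $\epsilon(Q,L)$ to its rational analogue. For these reasons the concentration step is the decisive obstacle; in this paper the conjecture is confirmed for $Y_4$ by carrying out the above in the favorable case where a Seshadri‑extremal rational curve through $Q$ — one of the $Z_i$, smooth at $Q$ — is explicit and the relevant heights are computable by hand.
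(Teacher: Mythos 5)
This statement is McKinnon's conjecture, quoted by the paper from \cite{Mc} (Conjecture 2.7); the paper gives no proof of it and only observes that the explicit computation on $Y_4$ (where $\alpha(Q,Y_4)=2$ is attained on the curves $Z_i$ and on the nodal curves $C_{a,b}$) is consistent with it. Your text, by your own admission, is not a proof either: the step you isolate as ``the main obstacle'' --- passing from the lower bound $\alpha(Q,X)\geqslant \epsilon(Q,L)/c(Q)$ to equality, i.e.\ showing that optimal approximants concentrate on a Seshadri-extremal family --- is precisely the content of the conjecture, and you state that nothing unconditional is available beyond $\dim X=1$. So the proposal establishes nothing beyond the easy inclusion $A(Q,C)\subseteq A(Q,X)$, hence $\alpha(Q,C)\geqslant\alpha(Q,X)$ for every curve through $Q$; the reverse inequality for some rational curve is left entirely open.

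Beyond that structural point, several intermediate steps of the sketch are themselves unjustified and in places circular. The Seshadri constant need not be computed by any curve, and when it is, the extremal curve need not be rational --- assuming a Seshadri-extremal \emph{rational} curve through $Q$ exists is essentially assuming a strong form of the conjecture, and bend-and-break does not preserve the ratio $\deg_C(L)/\operatorname{mult}_Q(C)$ nor guarantee that the resulting rational curve still passes through $Q$ with a controlled branch. Moreover the lower bound you invoke is not of the form you need: McKinnon--Roth give $\alpha(Q,X)\geqslant \tfrac{1}{2}\epsilon(Q,L)$-type inequalities, and the factor $r_P\in\{0,1,2\}$ of Remark~\ref{rmk:thmofmckinnon} depends on the residue field of the \emph{branch} of $C$ at $Q$, not on $k(Q)$ (in the conjecture $Q\in X(k)$, yet the optimal curves may well have branches over a quadratic field, exactly as the nodal curves $C_{a,b}$ of this paper, for which $r_P=2$). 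So equating $r_P$ with your $c(Q)$ is incorrect as stated. In short: there is a genuine gap, acknowledged but not closed, and the paper itself offers no proof to compare against since the statement is an open conjecture used only as motivation.
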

Cette conjecture prédit que $\alpha(Q,X)$ peut être calculée sur des courbes rationnelles passant par $Q$, qui est accord avec notre cas.
 
 On utilise souvent la procédure suivante pour déterminer la constante essentielle. Elle consiste essentiellement en deux étapes: démontrer une borne inférieure uniforme valide dans un ouvert pour le produit d'une certaine puissance de la distance avec la hauteur et trouver une famille de courbes rationnelles dont la réunion dans la variété est dense pour la topologie de Zariski et chacune atteint la constante d'approximation souhaitée. Il convient de remarquer que cette procédure donne seulement une condition \og suffisante \fg{}\ pour trouver $\aess$. En général, il n'est pas clair que $\aess$ puisse toujours être atteinte sur une famille de courbes rationnelles.
 \subsection{Opération de zoom}\label{se:zoomoperation}
 Maintenant nous allons décrire l'opération de zoom en détail. On suppose toujours que $k_\nu=\RR$. On identifie localement (pour la topologie réelle) $X(\RR)$ avec l'espace tangent $T_Q X$ en envoyant $Q$ sur l'origine, en utilisant un système de coordonnées analytiques. Dans $T_Q X$, on définit une \og contraction\fg\ par une homothétie de rapport une puissance de $B$ (la lettre $B$ désigne ici la borne de la hauteur). Plus précisément, si l'on note le difféomorphisme local entre $X(\RR)$ et $T_Q X$ par $\rho$, alors \textit{l'opérateur de zoom} $\Psi_{r,B}$ avec \textit{le facteur} $r>0$ est défini par
 $$\Psi_{r,B}(P)=B^{\frac{1}{r}}\rho(P),$$
 pour tout $P\in X(\RR)$ dans un voisinage de $Q$ où $\rho$ est défini. 
 On note qu'avec cette convention, \emph{plus $r$ est grand, plus le zoom est faible}.
 Fixons $U$ une sous-variété de $X$.
 Rappelons la notation $\CBXQ$ (Notation \ref{notationcqbx}). Soit $f\in\CBXQ$, on introduit la mesure $\delta_{U,Q,B,r}$ \eqref{eq:deltameasure} définie par
 $$\delta_{U,Q,B,r}(f)=\sum_{x\in U(k):H_L(x)\leqslant B} f(B^{\frac{1}{r}}\rho(x)).$$
 Cette définition dépend de la différentielle en l'origine du difféomorphisme choisi.
  
Supposons que $L=\omega_{X}^{-1}$ et qu'il existe $\beta\geqslant0,\gamma\geqslant 0$ tels que \eqref{eq:growthorder} soit vérifiée pour une infinité de $B$.
 	On peut donner, à l'aide de la conjecture de Batyrev-Manin et celle de Peyre sur l'équidistribution des points rationnels, une prédiction naïve de cet ordre de grandeur. Soit $n=\dim X$.  Quand on calcul $(\delta_{U,Q,B,r}(\chi(\varepsilon)))_B$, on est en train de compter des points rationnels dans la boule $\mathbb{B}(0,B^{-\frac{1}{r}}\varepsilon)$ dont le volume est de grandeur $B^{-\frac{n}{r}}$. Supposons la validité de la conjecture de Batyrev-Manin pour l'ouvert $U$, c'est-à-dire (on note $\kappa=\operatorname{rg}(\operatorname{Pic}(X))$)
 $$\sharp U_{H\leqslant B}\sim C(X)\ B (\log B)^{\kappa-1},$$
 et la validité d'une forme forte de l'équidistribution, c'est-à-dire pour des voisinages réels dont la taille dépendent de $B$, à qui donnerait dans \eqref{eq:growthorder},
 $$\delta_{U,Q,B,r} (\chi(\varepsilon))\gg\ll_\varepsilon B^{-\frac{n}{r}}\times B (\log B)^{\kappa-1}=B^{1-\frac{n}{r}}(\log B)^{\kappa-1}.$$
 Autrement dit, on devrait avoir 
 \begin{equation}\label{eq:naivepredictionofpowers}
 \beta=1-\frac{n}{r},\quad \gamma=\kappa-1.
 \end{equation}
 En fait, dans les (rares) exemples connus, la valeur de $\beta$ est en accord avec cette prédiction, mais celle de $\gamma$ ne l'est pas. Il arrive parfois que $\beta=\gamma=0$, comme pour la variété considérée dans ce texte.
 Le comportement de la suite \eqref{eq:normalisedmeasures} décrit la \textit{distribution locale} autour du point $Q$ sur $X$.
 On appelle \textit{mesure limite} comme la limite de la suite \eqref{eq:normalisedmeasures} (si elle existe). Prenant du recul, une minoration du type
 $$\frac{1}{B^\beta (\log B)^\gamma}\delta_{U,Q,B,r}(\chi(\varepsilon)) \gg \int \chi(\varepsilon) \operatorname{d} \widetilde{\delta _r}, \quad \forall \varepsilon >0$$
 pour une mesure $\widetilde{\delta_r}$ définie sur $T_Q X$ veut dire que la distribution locale est uniformément minorée par une mesure, ce qui assure l'existence de \og beaucoup\fg\ de points dans le zoom. C'est ce que nous allons démontrer pour certains zooms de la surface $Y_4$.
 
 Le zoom d'un certain facteur va mettre en évidence les sous-variétés dont la constante d'approximation atteignent ce facteur et ignorer celles dont la constante d'approximation est plus grande. Donc le zoom d'un facteur plus petit que $\aess$ va nous ramener à des sous-variétés fermées localement accumulatrices. Cela plus la proposition suivante explique la raison pour laquelle on s'intéresse au zoom avec le facteur $\geqslant\aess$.
 \begin{proposition}\label{po:weakzoom}
 	Soit $U$ une partie constructible dense de $X$ pour laquelle $L$ vérifie la propriété de Northcott telle que $\alpha(Q,U)=\aess(Q)$. Alors pour tout $0<r<\aess$ et $f\in\CBXQ$, on a 
 	\begin{enumerate}
 		\item $\delta_{U,Q,B,r}(f)=f(Q)$ si $Q\in U(\bar{k})$;
 		\item  $\delta_{U,Q,B,r}(f)=0$ sinon,
 	\end{enumerate}
 	pour $B\gg _{f,r} 1$.
 \end{proposition}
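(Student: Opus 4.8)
The plan is to exploit that a zoom of factor $r$ strictly below $\alpha(Q,U)=\aess(Q)$ pushes every point of $U(k)$ distinct from $Q$ out of any fixed compact neighbourhood of the origin of $T_QX$ as soon as $B$ is large, so that the only term which can survive in $\int f\,\operatorname{d}\delta_{U,Q,B,r}$ is the one indexed by $Q$ itself.

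Fix $f\in\CBXQ$ and choose $R>0$ with $\operatorname{supp} f\subseteq\mathbb{B}(0,R)$. By the Northcott hypothesis on $U$, the sum $\int f\,\operatorname{d}\delta_{U,Q,B,r}=\sum_{x\in U(k),\,H_L(x)\leqslant B}f(B^{1/r}\rho(x))$ is finite, so it suffices to decide which of its terms vanish. The term indexed by $x$ is nonzero only if $B^{1/r}\rho(x)\in\mathbb{B}(0,R)$, that is $\|\rho(x)\|\leqslant RB^{-1/r}$. Since $\rho$ is a local diffeomorphism with $\rho(Q)=0$ and invertible differential at $Q$, once $B$ is large enough that $RB^{-1/r}$ drops below a fixed threshold this confines $x$ to a fixed neighbourhood of $Q$ on which the projective distance $d(Q,\cdot)$ is equivalent to $\|\rho(\cdot)\|$; hence $d(Q,x)\leqslant C_fB^{-1/r}$ for a constant $C_f$ depending only on $f$ and the fixed local data.

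Next I invoke the definition of the approximation constant. Since $r<\aess(Q)=\alpha(Q,U)=\sup B(Q,U)$ (Proposition-Définition~\ref{pd:appconstant}), there is $\gamma\in B(Q,U)$ with $\gamma>r$, hence a constant $C_0>0$ such that $d(Q,y)^\gamma H_L(y)\geqslant C_0$ for every $y\in U(k)$ with $y\neq Q$. If some $x\in U(k)\setminus\{Q\}$ with $H_L(x)\leqslant B$ contributed a nonzero term, then combining this lower bound with the distance estimate of the previous paragraph would give
\[
C_0\leqslant d(Q,x)^\gamma H_L(x)\leqslant (C_fB^{-1/r})^\gamma\,B=C_f^\gamma\,B^{1-\gamma/r}.
\]
As $\gamma>r$ the exponent $1-\gamma/r$ is negative, so the right-hand side tends to $0$ as $B\to\infty$ and is $<C_0$ for all $B\geqslant B_0(f,r)$, a contradiction. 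Therefore, for $B\geqslant B_0$, the index $x=Q$ is the only one that can contribute.

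It remains to read off the two cases. If $Q\in U(k)$, then for $B\geqslant\max(B_0,H_L(Q))$ the point $Q$ is counted in the sum and contributes $f(B^{1/r}\rho(Q))=f(0)=f(Q)$, all other terms being $0$, so $\int f\,\operatorname{d}\delta_{U,Q,B,r}=f(Q)$. If $Q\notin U(k)$ — in particular if $Q\in U(\bar k)$ is not $k$-rational, or if $Q\notin U$ — then no term survives and the integral equals $0$ for $B\geqslant B_0$. In both cases the conclusion holds for $B\gg_{f,r}1$, the thresholds $B_0$ and $H_L(Q)$ depending only on $f$, $r$ and the fixed point $Q$. I do not anticipate a genuine obstacle: the one place asking for a little care is the passage from ``$B^{1/r}\rho(x)\in\operatorname{supp} f$'' to ``$d(Q,x)\ll_f B^{-1/r}$'', through the local equivalence of $d$ with a Euclidean norm on $T_QX$; everything else is a direct application of the definition of $B(Q,U)$.
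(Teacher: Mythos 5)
Your argument is essentially the paper's own proof: both pick an exponent $\gamma$ strictly between $r$ and $\aess(Q)$ (the paper writes it as $r+\beta<\aess$), use the resulting Liouville-type bound $d(Q,x)\gg H_L(x)^{-1/\gamma}\gg B^{-1/\gamma}$ from Proposition-Définition \ref{pd:appconstant}, and play it against the support condition $d(Q,x)\ll B^{-1/r}$ to exclude every $x\neq Q$ once $B\gg_{f,r}1$. The only cosmetic differences are that the paper first reduces to characteristic functions $\chi(\varepsilon)$ whereas you work with a general $f$ supported in a ball, and that you spell out the distance--norm equivalence and the final case analysis (only a $k$-rational $Q$ actually contributes $f(0)$), which the paper leaves implicit.
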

 \begin{proof}
 	On peut supposer que $f$ est une fonction caractéristique $\chi(\varepsilon)$ pour $\varepsilon>0$. On choisit une distance $d$ sur $T_Q X$. D'après la Proposition-Définition \ref{pd:appconstant}, 
 	pour $\beta>0$ tel que $r+\beta<\aess$, il existe une $C>0$ telle que pour tout $P\in U(k)$ différent de $Q$, $d(\varrho(P),\varrho(Q))^{r+\beta}H_L(P)\geqslant C$.
 	Supposons que $H(P)\leqslant B$. Cela implique que
 	$$d(\varrho(P),\varrho(Q))\geqslant C^{\frac{1}{r+\beta}}H_L(P)^{-\frac{1}{r+\beta}}\geqslant C^\prime B^{-\frac{1}{r+\beta}}.$$
 	Les points après zoom doivent situer dans le support de $f$, donc
 	$d(\varrho(P),\varrho(Q))B^{\frac{1}{r}}\leqslant \varepsilon,$ d'où $d(\varrho(P),\varrho(Q))\leqslant\varepsilon B^{-\frac{1}{r}}.$
 	Donc un tel $P$ n'existe pas quand $B>(\varepsilon (C^\prime)^{-1})^{\frac{r(r+\beta)}{\beta}}$.
 \end{proof}
 \begin{definition}\label{def:criticalzoom}
 	On dit que le zoom est \textit{critique} (resp. \textit{sous-critique}) si son facteur $r=\aess$ (resp. $>\aess$).
 \end{definition}

 Inspiré par le fait que certaines courbes rationnelles contiennent beaucoup plus de points que les autres dans une même variété dans l'opération de zoom, bien que leur constantes d'approximation coïncident, nous proposons la définition suivante pour les distinguer. 
 \begin{definition}\label{df:weaklocal}
 	Supposons que $\aess(Q)<\infty$.
 	On dit qu'une sous-variété fermée stricte $W$ de $X$ est \textit{localement faiblement accumulatrice} si $\aess(Q)=\alpha(Q,W)$ et s'il existe $U_1$ ouvert dense de $X$ tel que pour tout $\varepsilon>0$ suffisamment grand et pour tout ouvert $U_2$ dense de $X$ satisfaisant à $\alpha(Q,U_2)=\aess(Q)$ et $U_1\cap U_2\cap W$ dense dans $W$, on ait, en notant $U=U_1\cap U_2$,
 	$$\delta_{U\setminus W,Q,B,\aess(Q)}(\chi(\varepsilon))=o\left(\delta_{U\cap W,Q,B,\aess(Q)}(\chi(\varepsilon))\right).$$
 \end{definition}
Autrement dit, les sous-variétés localement faiblement accumulatrices ne sont pas localement accumulatrices mais dominent leur complémentaire dans le zoom critique.

 Dans cet article on s'intéresse aux variétés toriques lisses et projectives dont le fibré en droites anticanonique est engendré pas ses sections globales et
 on utilise une hauteur de Weil associée. Les travaux \cite{B-T1} \cite{B-T2} \cite{Salberger} confirment le principe de Batyrev-Manin pour les variétés toriques munies d'une hauteur de Weil associée au fibré anticanonique. Ceci est en faveur de la prédiction précédente sur l'ordre de grandeur \eqref{eq:naivepredictionofpowers} (au mois pour $\beta$).
 
\section{Solutions des équations de Pell-Fermat}\label{se:theoryalgebraic}
Le but de cette section est de discuter la structure des solutions $(x,y)\in\ZZ^2$ des équations de Pell-Fermat généralisées \eqref{eq:diophantineequation}.
Le contenu de la Section \ref{se:pell-fermatsimple} est consacré aux solutions des équations de type $x^2-Dy^2=m$, dont la structure est classiquement connue. En effet l'ensemble des solutions est un espace homogène par l'action d'un sous-groupe $\muktp$ d'unités (cf. \eqref{eq:thegroupud*}). On donne aussi une majoration effective du nombre des orbites.
Le résultat principal est le Corollaire \ref{co:structuretheorem1}, qui sera utilisé dans la démonstration de la Proposition \ref{po:controlofepsilon} plus loin.
Deuxièmement dans la Section \ref{se:eqpellfermat}, on généralise des résultats classiques à \eqref{eq:diophantineequation}. Ceci sert comme préparation à la démonstration de la Proposition \ref{po:trackpoints}.
\subsection{Rappels sur la théorie algébrique des corps de nombres quadratiques}\label{se:pell-fermatsimple}
On rappelle des faits classiques sur les corps quadratiques réels. Pour les détails, voir par exemple (\cite{Samuel} 2.5, 4.6, 5.4). On fixe un entier  $D>0$ sans facteur carré.
Soit $K=\QQ(\sqrt{D})$. 
 On note $\varepsilon_D$ l'unité fondamentale de $\OK$ telle que $\varepsilon_D>1$. Alors le groupe d'unités $\OK^*=\{\pm 1\}\times\{\VD^n,n\in\ZZ\}$.
 On introduit les sous-groupes de $\OK^*$: 
\begin{equation}\label{eq:thegroupud+}
 \mukp=\{z\in\OK:N(z)=1\},
\end{equation}
\begin{equation}\label{eq:thegroupud*}
 \muktp=\{z\in\ZZ+\ZZ\sqrt{D}:N(z)=1\}.
\end{equation}
On note $\VD^+>1$ (resp. $\VDT>1$)  l'élément de $\mukp$ (resp. $\muktp$) dont la classe engendre le groupe $\mukp/\{\pm 1\}$ (resp. $\muktp/\{\pm 1\}$).
 \begin{lemma}[\cite{Samuel}, p. 78]\label{le:samuel}
 	\begin{equation}
 	\VDT=\begin{cases}
 	\VD^+ \text{ si } \VD\in\ZZD;\\
 	(\VD^+)^3 \text{ si } \VD\notin\ZZD.
 	\end{cases}
 	\end{equation}
 \end{lemma}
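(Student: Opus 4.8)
The plan is to reduce everything to an index computation in the free part of the unit group. First I would set $\Gamma=\muk/\{\pm1\}$, which by the description $\muk=\{\pm\VD^n:n\in\ZZ\}$ is infinite cyclic generated by the class of $\VD$; under the isomorphism $\Gamma\cong\ZZ$ sending $[\VD]$ to $1$, the subgroup $\mukp/\{\pm1\}$ corresponds to $\ell\ZZ$, where $\ell=1$ if $N(\VD)=1$ and $\ell=2$ if $N(\VD)=-1$ (so that $\VD^+=\VD^\ell$), while $\mukt/\{\pm1\}$ corresponds to $m\ZZ$, where $m=[\muk:\mukt]\in\{1,3\}$ is the index recalled just above. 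The one identity to record at the outset is $\muktp=\mukp\cap\mukt$: an element of $\ZZD$ of norm $1$ is in particular a unit of $\OK$ lying in $\ZZD$, hence in $\mukt$, which gives the inclusion $\muktp\subseteq\mukp\cap\mukt$, and conversely any element of $\mukp\cap\mukt$ lies in $\ZZD$ (as $\mukt\subseteq\ZZD$) and has norm $1$, so lies in $\muktp$. Since $\{\pm1\}$ is contained in both $\mukp$ and $\mukt$, passing to the quotient by $\{\pm1\}$ turns this into $\muktp/\{\pm1\}=(\mukp/\{\pm1\})\cap(\mukt/\{\pm1\})$ inside $\Gamma$, i.e. it corresponds to $\ell\ZZ\cap m\ZZ=\operatorname{lcm}(\ell,m)\,\ZZ$, and $\VDT$ is the positive generator of this group.

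Next I would treat the two cases. If $\VD\in\ZZD$, then $m=1$, so $\operatorname{lcm}(\ell,m)=\ell$ (equivalently $\mukt=\muk$ and $\mukp\subseteq\muk\subseteq\ZZD$ force $\muktp=\mukp$ directly); hence the positive generator of $\muktp/\{\pm1\}$ is $\VD^\ell=\VD^+$, which is the claim $\VDT=\VD^+$. If $\VD\notin\ZZD$ (which, as recalled, can only occur when $D\equiv1\,[4]$), then $m=3$; since $\ell\in\{1,2\}$ we have $\gcd(\ell,3)=1$, so $\operatorname{lcm}(\ell,3)=3\ell$. Therefore $\muktp/\{\pm1\}$ corresponds to $3\ell\,\ZZ$, i.e. is generated by the class of $\VD^{3\ell}=(\VD^\ell)^3=(\VD^+)^3$; as $(\VD^+)^3>1$ this element is $\VDT$, giving $\VDT=(\VD^+)^3$.

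There is no serious obstacle here: once the two sublattices $\mukp/\{\pm1\}$ and $\mukt/\{\pm1\}$ of $\muk/\{\pm1\}$ have been pinned down — the second being the only genuine input, and it is exactly the content of the discussion preceding the lemma, itself due to \cite{Samuel} — the statement is bookkeeping about intersecting two subgroups of $\ZZ$. The only point that requires a word is that $\ell$ is coprime to $3$, so that the least common multiple of $\ell$ and $3$ equals the product $3\ell$; this is what makes the exponent come out as the clean cube $(\VD^+)^3$ rather than something smaller.
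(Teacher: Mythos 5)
Your proof is correct and follows essentially the same route as the paper, which states the lemma as a consequence of the index facts from \cite{Samuel} (namely $[\mathcal{U}_D:\mathcal{U}_D^+]\in\{1,2\}$ according to $N(\varepsilon_D)=\pm1$ and $\varepsilon_D^3\in\ZZ+\ZZ\sqrt{D}$ when $\varepsilon_D\notin\ZZ+\ZZ\sqrt{D}$) and leaves the same case-by-case bookkeeping implicit. Your packaging of that bookkeeping as an intersection of subgroups $\ell\ZZ\cap m\ZZ=\operatorname{lcm}(\ell,m)\ZZ$ in $\mathcal{U}_D/\{\pm1\}\cong\ZZ$, with the remark that $\gcd(\ell,3)=1$, is a clean way to phrase exactly the argument the paper intends.
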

Pour $m\in\ZZ$, on désigne par $\SMP$ l'ensemble des solutions entières de l'équation Pell-Fermat \begin{equation}\label{eq:pell-fermat}
x^2-D y^2=m. 
\end{equation}
\begin{align*}
\SMP=\{z=x+y\sqrt{D}\in\ZZD:N(z)=m\}.
\end{align*}
On note aussi
$$\SM=\{z\in\OK:N(z)=m\}.$$
 La notation $I\triangleleft A$ signifie ici que $I$ est un idéal de l'anneau $A$. On utilise $N_{K/\QQ}(\cdot)$ pour désigner la norme des idéaux et $N(\cdot)$ pour désigner la norme des éléments.
On définit $$\mathcal{I}_{|m|}=\{I\triangleleft\mathcal{O}_K:N_{K/\QQ}(I)=|m|\}.$$
On note In (resp. De, Ra) l'ensemble des nombres premiers qui sont inertes (resp. sont décomposés, se ramifient) dans l'extension $\QQ(\sqrt{D})/\QQ$.  
Pour tout $z=x+y\sqrt{D}\in K$, sa conjugaison est définie par
$$\bar{z}=x-y\sqrt{D}.$$
On a besoin d'une estimation explicite du cardinal de l'ensemble $\IM$ ainsi que celui de $\IMM$.
\begin{proposition}
On fixe $m\in\ZZ$. Soient les entiers $m_1,m_2>0$ définis par
\begin{equation}\label{factorisationofm}
m_1=\prod_{\substack{p\in\operatorname{De}\\p|m}}p^{v_p(m)},\quad m_2=\prod_{\substack{p\notin\operatorname{De}\\p|m}} p^{v_p(m)}.
\end{equation}
Alors on a
\begin{equation*}
	\sharp \IM=\begin{cases}
	0 \text{ s'il existe } p\in\operatorname{In},p|m \text{ et } v_p(m) \text{ impair};\\
		\prod _{\substack{ p\in\operatorname{De}\\p|m}}(v_p(m)+1)=\tau(m_1) \text{ sinon};	
	\end{cases}
\end{equation*}
où $\tau$ désigne la fonction arithmétique de nombre de diviseurs (cf. \eqref{eq:thefunctiontau}).
\end{proposition}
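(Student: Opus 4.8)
The plan is to reduce the count to a purely local computation, prime by prime, using unique factorization of ideals in the Dedekind domain $\OK$ together with the classical splitting law for rational primes in $K=\QQ(\sqrt D)$ (as recalled in \cite{Samuel}).

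First I would recall that every nonzero ideal $I\triangleleft\OK$ factors uniquely as $I=\prod_{\mathfrak p}\mathfrak p^{a_{\mathfrak p}}$ over the prime ideals of $\OK$, and that the ideal norm is completely multiplicative, so $N_{K/\QQ}(I)=\prod_{\mathfrak p}N_{K/\QQ}(\mathfrak p)^{a_{\mathfrak p}}$. Grouping the primes $\mathfrak p$ by the rational prime $p$ lying below them and using that $N_{K/\QQ}(\mathfrak p)=p^{f_{\mathfrak p}}$ is a power of that same $p$, the single condition $N_{K/\QQ}(I)=|m|=\prod_p p^{v_p(m)}$ decouples into independent conditions, one for each $p\mid m$: no prime above a $p\nmid m$ may occur, and the primes above a given $p\mid m$ must contribute exactly $p^{v_p(m)}$ to the norm. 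Hence $\sharp\IM=\prod_{p\mid m}N_p$, where $N_p$ is the number of admissible exponent choices on the primes of $\OK$ above $p$.

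Then I would run the three-case analysis on the decomposition type of $p$. If $p\in\operatorname{In}$, there is a unique prime $\mathfrak p$ above $p$ with $N_{K/\QQ}(\mathfrak p)=p^2$, so the constraint is $2a=v_p(m)$: this is (uniquely) solvable iff $v_p(m)$ is even, so $N_p=1$ in that case and $N_p=0$ — whence $\sharp\IM=0$ — as soon as some inert $p\mid m$ has $v_p(m)$ odd. If $p\in\operatorname{Ra}$, then $p\OK=\mathfrak p^2$ with $N_{K/\QQ}(\mathfrak p)=p$, so $a=v_p(m)$ is forced and $N_p=1$. If $p\in\operatorname{De}$, then $p\OK=\mathfrak p_1\mathfrak p_2$ with $N_{K/\QQ}(\mathfrak p_i)=p$, so the constraint is $a_1+a_2=v_p(m)$ with $a_1,a_2\geqslant 0$, which has exactly $v_p(m)+1$ solutions, giving $N_p=v_p(m)+1$.

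Finally I would assemble the product: assuming no inert prime obstructs, $\sharp\IM=\prod_{p\in\operatorname{In},\,p\mid m}1\cdot\prod_{p\in\operatorname{Ra},\,p\mid m}1\cdot\prod_{p\in\operatorname{De},\,p\mid m}(v_p(m)+1)=\prod_{p\in\operatorname{De},\,p\mid m}(v_p(m)+1)$. Since $v_p(m_1)=v_p(m)$ for $p\in\operatorname{De}$ and $m_1$ is by definition precisely the $\operatorname{De}$-part of $|m|$ in \eqref{factorisationofm}, this last product is $\tau(m_1)$, which completes the proof. I do not expect a genuine obstacle: the only point demanding a little care is justifying that the local conditions at distinct rational primes are truly independent — this is exactly the unique-factorization structure of ideals — and correctly invoking the quadratic splitting law, both of which are standard.
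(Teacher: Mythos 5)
Your proposal is correct and follows essentially the same route as the paper: factor $I$ into prime ideals of $\OK$, use multiplicativity of the norm to match $p$-adic valuations prime by prime via the splitting law, deduce the parity obstruction at inert primes, and count the $v_p(m)+1$ exponent pairs at split primes. No discrepancies to report.
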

On observe que l'ensemble $\IM$ est parfois vide. 
\begin{corollary}\label{co:upperboundforimm}
\begin{equation*}
\sharp \IM\leqslant \tau(m_1)\leqslant\tau(|m|).
\end{equation*}
\end{corollary}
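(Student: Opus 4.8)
Le plan est de déduire les deux inégalités séparément : la première de la Proposition qui précède, la seconde d'une propriété élémentaire de la fonction $\tau$. Pour la première, je constaterais que la Proposition précédente donne la valeur exacte de $\sharp\IM$ : elle vaut $0$ lorsqu'il existe un premier inerte $p\in\operatorname{In}$ divisant $m$ avec $v_p(m)$ impair, et $\tau(m_1)=\prod_{p\in\operatorname{De},\,p\mid m}(v_p(m)+1)$ dans tous les autres cas. Dans les deux situations on a $\sharp\IM\leqslant\tau(m_1)$, ce qui établit la première inégalité.

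Pour la seconde inégalité, je partirais de la factorisation \eqref{factorisationofm}, qui s'écrit $|m|=m_1\cdot m_2$ avec $m_1,m_2$ des entiers strictement positifs ; en particulier $m_1\mid|m|$. Il restera alors à invoquer le fait classique que $\tau$ est croissante pour la relation de divisibilité : si $a\mid b$ avec $a,b\geqslant 1$, tout diviseur positif de $a$ est un diviseur positif de $b$, d'où $\tau(a)\leqslant\tau(b)$. Appliqué à $a=m_1$ et $b=|m|$, ceci fournit $\tau(m_1)\leqslant\tau(|m|)$ et achève l'argument.

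Il n'y a pas d'obstacle sérieux dans cette démonstration, qui est une conséquence immédiate de la Proposition ; le seul point sur lequel rester vigilant est de ne pas confondre la monotonie de $\tau$ vis-à-vis de la divisibilité (correcte et utilisée ici) avec une hypothétique monotonie vis-à-vis de l'ordre usuel sur $\NN$ (fausse : par exemple $\tau(6)=4>2=\tau(7)$ alors que $6<7$). J'ajouterais enfin l'observation que, l'inclusion $\IMM\subseteq\IM$ étant claire, la même majoration $\sharp\IMM\leqslant\tau(m_1)\leqslant\tau(|m|)$ vaut pour $\IMM$, ce qui justifie l'intitulé du corollaire.
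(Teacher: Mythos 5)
Votre démonstration est correcte et suit exactement l'argument implicite du papier : le corollaire découle immédiatement de la proposition précédente (qui montre que $\sharp\IM$ vaut soit $0$, soit $\tau(m_1)$), combinée à la monotonie de $\tau$ pour la divisibilité appliquée à $m_1\mid|m|$. Votre remarque finale sur l'inclusion $\IMM\subseteq\IM$ est juste et explique bien le nom du label, même si l'énoncé ne porte formellement que sur $\IM$.
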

\begin{proof}[Démonstration de la proposition]
	Quelque soit $I\in\IM$, on considère sa décomposition
	$$I=\prod_{\mathfrak{p}\text{ premier}\triangleleft\OK}\mathfrak{p}^{\ep}.$$
	Alors comme l'application $N_{K/\QQ}(\cdot)$ est multiplicative,
	$$|m|=N_{K/\QQ}(I)=\prod_{\mathfrak{p}\text{ premier}\triangleleft\OK}N_{K/\QQ}(\mathfrak{p})^{\ep}=\prod _{\substack{p\in\operatorname{De}\\p\OK=\mathfrak{p}\bar{\mathfrak{p}}}}p^{\ep+e_{\bar{\mathfrak{p}}}}\prod_{\substack{p\in\operatorname{In}\\p\OK=\mathfrak{p}}}p^{2\ep}\prod_{\substack{p\in\operatorname{Ra}\\p\OK=\mathfrak{p}^2}}p^{\ep}.$$
	La quantité à droite est en fait la factorisation de $m$ (\ref{factorisationofm}). On en conclut que
	\begin{equation*}v_p(m)=
	\begin{cases}
	e_{\bar{\mathfrak{p}}}+\ep \text{ si } p\in\operatorname{De};\\
	\ep \text{ si }p\in\operatorname{Ra};\\
	2\ep \text{ si }p\in\operatorname{In}.\\
	\end{cases}
	\end{equation*}
	En particulier $v_p(m)$ est pair pour $p\in\operatorname{In}$.
	Réciproquement, pour tout $p\in\operatorname{In}$, on choisit $m_p$ pair et pour tout $p\in\operatorname{De}$, donnons-nous un couple $(\ep^1,\ep^2)\in\NN^2$ tel que $\ep^1+\ep^2=m_p$. Le produit
	$$\prod _{\substack{p\in\operatorname{De}\\p\OK=\mathfrak{p}\bar{\mathfrak{p}}}}\mathfrak{p}^{\ep^1}\bar{\mathfrak{p}}^{\ep^2}\prod_{\substack{p\in\operatorname{In}\\p\OK=\mathfrak{p}}}\mathfrak{p}^{\frac{m_p}{2}}\prod_{\substack{p\in\operatorname{Ra}\\p\OK=\mathfrak{p}^2}}\mathfrak{p}^{m_p}.$$ est un idéal de norme $|m|$. 
	Pour chaque $p\in\operatorname{De}$, il y a $v_p(m)+1$ tels couples d'entiers. D'où l'énoncé.
\end{proof}
\begin{corollary}\label{co:structuretheorem1}
	Le groupe $\mukp$ \eqref{eq:thegroupud*} agit sur $\SM$ librement et l'ensemble des orbites est en bijection avec 
	$$\IMP=\{I\triangleleft\OK:\exists a_I\in\OK,N(a_I)=m,I=(a_I)\}.$$
	Le groupe $\muktp$ \eqref{eq:thegroupud+} agit sur l'ensemble $\SMP$ librement et on a la majoration suivante pour le cardinal de l'ensemble des orbites:
	$$\sharp(\SMP/\muktp)\leqslant 3\sharp \IMP\leqslant 3\sharp \IM\leqslant 3\tau(m).$$
\end{corollary}
\begin{proof}
	D'après le Lemme \ref{le:samuel} on a $[\mukp:\muktp]\leqslant 3$. Le cardinal des orbites de l'action de $\muktp$ sur $\SMP$, qui est aussi libre,  est majoré par
	$$\sharp(\SMP/\muktp)\leqslant \sharp(\SM/\muktp)\leqslant 3\sharp (\SM/\mukp)=3\sharp \IMP\leqslant 3\sharp \IM.$$
	La conclusion découle du Corollaire \ref{co:upperboundforimm}.
\end{proof}
\subsection{Théorème de Dirichlet et équations de Pell-Fermat généralisées}\label{se:eqpellfermat}
On rappelle \eqref{eq:diophantineequation} et la notation $\SABC$.
On factorise 
\begin{equation}\label{eq:factorise}
a=A^\prime (a^\prime)^2,\quad b=B^\prime (b^\prime)^2,
\end{equation}
avec $A^\prime,B^\prime$ sans facteur carré.
On rappelle qu'une solution $(x,y)\in\SABC$ est dite \textit{primitive} si $\pgcd(x,y)=1$.
Comme toute solution $(x,y)\in\ZZ^2$ de l'équation (on rappelle que $\pgcd(a,b)=1$)
\begin{equation}\label{eq:pellgeneralised}
ax^2-by^2=a
\end{equation}
vérifie $A^\prime a^\prime|y$, en écrivant $y=az,w=b^\prime z$ et en utilisant les notations  \eqref{eq:factorise}, on obtient l'équivalence suivante
\begin{equation}\label{eq:pell-fermatequiv}
\exists (x,y)\in\ZZ^2 ,ax^2-by^2=a\Leftrightarrow \exists (x,z)\in\ZZ^2 ,x^2-A^\prime b z^2=1\Leftrightarrow \exists (x,w)\in\ZZ\times  b^\prime \ZZ,x^2-A^\prime B^\prime w^2=1.
\end{equation}
Notons qu'ici $A^\prime B^\prime$ est un entier sans facteur carré.
Ainsi chercher des solutions de \eqref{eq:pellgeneralised} revient à résoudre une équation de Pell sous certaine condition de divisibilité, dont la résolubilité est assurée grâce à un théorème classique de Dirichlet. On en donne une preuve courte pour faciliter la lecture.
\begin{theorem}[Dirichlet \cite{Dirichlet}]\label{th:Dirichlet}
Soit $D$ en entier positif qui n'est pas un carré. On note $$D=ds^2,\quad d,s \in\NN^*, d \text{ sans facteur carré}.$$
Alors l'ensemble $\mathcal{A}_D^*$ des solutions $(x,y)\in\ZZ^2$ de l'équation
\begin{equation*}
x^2-Dy^2=1
\end{equation*}
est un sous-groupe de $\mathcal{O}_{\QQ(\sqrt{d})}^*$ d'indice fini.
\end{theorem}
\begin{proof}
	Notons comme précédemment $K=\QQ(\sqrt{D})$.
	En vue du Lemme \ref{le:samuel}, on peut supposer que $$\varepsilon_d=\varepsilon_d^*=u_1+v_1\sqrt{d}\in\ZZ+\ZZ\sqrt{d}.$$
	On identifie $\mathcal{A}_D^*$ avec 
	\begin{equation}\label{eq:pellgeneralised2}
	\{z=u+v\sqrt{d}\in\ZZ+\ZZ\sqrt{d}:s|v,N_{K/\QQ}(z)=1\},
	\end{equation}
	qui est un sous-groupe de $\mathcal{O}_{K}^*$. Il suffit de démontrer l'existence d'un élément de $\mathcal{A}_D^*$ tel que sa classe dans $\mathcal{A}_D^*/\{\pm1\}$ soit non-nulle. Le raisonnement suivant repose sur celui d'origine de Dirichlet.\\
	\textbf{Étape 1}. On suppose que $s=p$ est un nombre premier. On utilise dans cette partie la notation $u_n+v_n\sqrt{d}=\varepsilon_d^n$ pour $n\in\NN^*$. Si $p|v_1$, il n'y a rien à démontrer. Si $p=2$, on voit que l'élément 
	$(\varepsilon_d)^2=u_1^2+v_1^2 d+2u_1v_1\sqrt{d}$ suffit. Dans la suite, on suppose que $p\nmid v_1$ et $p$ impair. \\
	\textbf{Cas I}. $p\mid d$. Autrement dit, l'idéal $p\ZZ$ se ramifie dans $K$. Alors l'élément $u_p+v_p\sqrt{d}$ vérifie 
	$$v_p=\sum_{l \text{ impair},1\leqslant l\leqslant p} \binom{p}{l}u_1^{p-l} v_1^{l}d^{\frac{l-1}{2}}=\sum_{l \text{ impair},1\leqslant l < p} \binom{p}{l}u_1^{p-l} v_1^{l}d^{\frac{l-1}{2}}+v_1^pd^{\frac{p-1}{2}},$$
	et donc $p\mid v_p$.\\
	\textbf{Cas II}. $p\nmid d$. Si $p\mathcal{O}_{K}=\mathfrak{p}_1\mathfrak{p}_2$, alors on a $\mathcal{O}_{K}/\mathfrak{p}_i\simeq \mathbb{F}_{p}$ et l'automorphisme de Frobenius nous donne 
	$$\varepsilon_d^p\equiv \varepsilon_d\mod\mathfrak{p}_i\Leftrightarrow \mathfrak{p}_i\mid (\varepsilon_d^{p-1}-1)\mathcal{O}_{K}.$$
	Donc on obtient que
	$$p\mathcal{O}_{K}=\mathfrak{p}_1\mathfrak{p}_2\mid(\varepsilon_d^{p-1}-1)\mathcal{O}_{K}$$
	et donc $p\mid v_{p-1}$. Si $p\mathcal{O}_{K}$ est un idéal premier, on a $\mathcal{O}_{K}/p\mathcal{O}_{K}\simeq \mathbb{F}_{p^2}$. Dans ce cas on a
	$$p\mathcal{O}_{K}\mid(\varepsilon_d^{p^2-1}-1)\mathcal{O}_{K}$$
	et donc $p|v_{p^2-1}$.\\
	\textbf{Étape 2}. On suppose que $s=p^e$ où $e\geqslant 2$. Supposons que l'on a trouvé $z+w\sqrt{d}\in\mathcal{O}_{K}^*$ satisfaisant à $p^{e-1}|w$. Alors l'élément $z_p+w_p\sqrt{d}=(z+w\sqrt{d})^p$ vérifie que $p^e|w_p$.\\
	\textbf{Étape 3}. On suppose que $s=\prod_{i=1}^{m}p_i^{r_i}$ où $p_i$ sont des nombres premiers distincts. Supposons que l'on a trouvé un élément $z+w\sqrt{d}$ satisfaisant à $\prod_{i=1}^{m-1}p_i^{r_i}\mid w$. Puisque toute sa puissance $z_n+w_n\sqrt{d}=(z+w\sqrt{d})^n$ satisfait à $\prod_{i=1}^{m-1}p_i^{r_i}\mid w_n$, on reprend l'argument précédent en remplaçant $\varepsilon_d$ par $z+w\sqrt{d}$.
\end{proof}
\begin{proposition}\label{po:pgcddonotchange}
	Notons $\theta=\sqrt{\frac{b}{a}}$. Le groupe $\mathcal{A}_{ab}^*$ \eqref{eq:pellgeneralised2} agit librement sur l'ensemble des solutions de \eqref{eq:diophantineequation} de la manière suivante. Pour $(u,v)\in S^*_{a,b,a}$, et $(x,y)\in\ZZ^2$ une solution de \eqref{eq:diophantineequation}, $(x^\prime,y^\prime)\in\ZZ^2$ défini par 
\begin{equation}\label{eq:stepbystep}
		x^\prime+\theta y^\prime=(x+\theta y)(u+\theta v).
\end{equation}
	est une solution de \eqref{eq:diophantineequation}. De plus, $\pgcd(x^\prime,y^\prime)=\pgcd(x,y)$. Par conséquent, si $\SABC\neq \varnothing$, alors $\pgcd(x,y)$ ne dépend pas des éléments $(x,y)\in \SABC$ qui sont dans une même orbite. 
\end{proposition}
\begin{proof}
	La vérification que $(x^\prime,y^\prime)\in \SABC$ est immédiate. 
	On a évidemment $$\pgcd(x,y)|\pgcd(x^\prime,y^\prime).$$
	Puisque 
	$$x+\theta y=(x^\prime+\theta y^\prime)(u-\theta v),$$
	on en déduit que
	$$\pgcd(x^\prime,y^\prime)|\pgcd(x,y),$$
	d'où l'énoncé.
\end{proof}

\begin{corollary}\label{rmk:inftysol}
	Pour $a,b\in\NN^*,\gcd(a,b)=1$ fixés, alors
	$$\sharp\{c\in\ZZ: \exists (x,y)\in \SABC,\pgcd(x,y)=1\}=\infty.$$
	De plus, une fois que $\SABC\neq \varnothing$, il existe $(x_n,y_n)\in\SABC$ telle que $x_n,y_n\to\infty$.
\end{corollary}
\begin{proof}
	Les couples $(x_0,1)$ avec $x_0\in\ZZ$ quelconque donnent des valeurs arbitrairement grandes du polynôme $|ax^2-b|$, le premier énoncé en résulte compte tenu de la  Proposition \ref{po:pgcddonotchange}. 
	Pour le deuxième, soit $(x,y)\in\SABC$. On peut supposer que $x,y>0$. Le Théorème \ref{th:Dirichlet} assure l'existence d'un élément $v+w\sqrt{A^\prime B^\prime}\in\mathcal{A}^*_{ab}$ de sorte que $v,w\in\NN^*$. Alors d'après l'équivalence \eqref{eq:pell-fermatequiv}, l'élément $(v,\frac{a w}{a^\prime b^\prime})\in S^*_{a,b,a}$ et le processus \eqref{eq:stepbystep} dans la Proposition \ref{po:pgcddonotchange} donne une suite $(x_n,y_n)\in\SABC$ dont $x_n,y_n\to \infty$.
\end{proof}

\section{Approximation asymptotique locale des nombres algébriques}\label{se:zoomalgebraic}
Dans cette section on étudie la distribution locale d'un point $\theta\in\PP^1(\RR)\setminus\PP^1(\QQ)$. 
On identifie localement $\PP^1$ avec $T_{\theta}\PP^1$ via l'application définie pour $v\neq 0$,
$$[u:v]\longmapsto \frac{u}{v}-\theta\quad ([\theta:1]\longmapsto 0).$$
On utilise la hauteur de Weil canonique associée à $\mathcal{O}(1)$:
\begin{equation}\label{eq:absoluteWeil}
H([u:v])=\max(|u|,|v|), \quad(u,v)\in\ZZ^2,\quad \pgcd(u,v)=1.
\end{equation}
On utilise la valeur absolue comme distance.
Il convient de choisir, pour tout $\varepsilon>\eta>0$, les fonction caractéristiques $\chi(\varepsilon)$ et $\chi(\varepsilon,\eta)$ définies sur $T_\theta\PP^1$ par $$\chi(\varepsilon)=\chi(\{y\in\RR:|y|\leqslant \varepsilon\}),\quad\chi(\varepsilon,\eta)=\chi(\{y\in\RR:\eta<|y|\leqslant \varepsilon\})$$ pour tester la répartition asymptotique des rationnels autour du point $\theta$. On remarque que $\chi(\varepsilon,\eta)$ est en fait la fonction caractéristique de la réunion des intervalles $\mathopen]\eta,\varepsilon\mathclose], \mathopen[-\varepsilon,-\eta\mathclose[$.
On peut faire le même pour $\chi(\varepsilon)$.

\subsection{Cas critique pour des nombres quadratiques}\label{se:criticalquadratic}
Soient $\varepsilon>0,r>0$. On rappelle les conditions \eqref{eq:Eebr1} et \eqref{eq:Eebr2}.
 Si $[\QQ(\theta):\QQ]=2$, i.e. $\theta$ est quadratique, un tel nombre est représenté de façon unique sous la forme $(a,b,P,Q\in\ZZ,aQ\neq 0)$ 
 $$\theta=\frac{P}{Q}+\sqrt{\frac{b}{a}} \text{ ou } \frac{P}{Q}-\sqrt{\frac{b}{a}},\quad \pgcd(a,b)=\pgcd(P,Q)=1.$$ 
 En vertu de \eqref{eq:Eebr1}, il suffit de considérer les nombres de la forme $\theta=\sqrt{\frac{b}{a}}$ avec $a,b\in\ZZ,a\neq 0\pgcd(a,b)=1$. 
 D'après les théorèmes de Roth et Dirichlet (Théorème \ref{th:approximationtheorem1}), on a $\aess(\theta)=\alpha(\theta,\PP^1)=\frac{1}{2}$.
 Donc on prend $r=\frac{1}{2}$ correspondant au zoom critique.
 On réécrit l'inéquation \eqref{eq:Eebr1} comme
 \begin{equation}\label{eq:Eebr11}
 \left|\frac{u}{v}-\sqrt{\frac{b}{a}}\right|\leqslant \frac{\varepsilon}{B^2}.
 \end{equation}
 On a aussi l'inégalité de Liouville
 	\begin{equation}\label{eq:liouvillelemma}
 \left|\frac{u}{v}-\theta\right|\geqslant\frac{\Xi(\theta)}{v^2},
 \end{equation} 
 où la constante $\Xi(\theta)>0$ est calculable (cf. la démonstration de la Proposition \ref{po:effectiveliouville} \emph{infra}). En combinant \eqref{eq:Eebr11} et \eqref{eq:liouvillelemma},
 \begin{equation}\label{eq:lowerboundforepsilon}
 \frac{\varepsilon}{B^2}\geqslant \frac{\Xi(\theta)}{v^2}\geqslant\frac{\Xi(\theta)}{B^2},
 \end{equation}
 d'où $\varepsilon\geqslant \Xi(\theta)>0$. C'est-à-dire quand le diamètre du voisinage auquel la fonction caractéristique correspond est suffisamment petit, il n'y a pas de points rationnels pour tout $B$. 
 Le résultat principal de cette partie est le suivant, qui démontre la partie $r=\frac{1}{2}$ du Théorème \ref{th:secondtheorem}. 
 \begin{theorem}\label{th:criticzoomforquadratics}
	Pour $C_1,C_2>0$, il existe $\varepsilon>\eta>C_1$, $\varepsilon-\eta<C_2$ tels que 
 	$$0=\liminf_{B}\delta_{\PP^1,\theta,B,\frac{1}{2}}(\chi(\varepsilon,\eta))<\limsup_{B}\delta_{\PP^1,\theta,B,\frac{1}{2}}(\chi(\varepsilon,\eta))=O_{\varepsilon,\eta}(1).$$
 	Par conséquent, il n'existe pas de mesure limite pour le zoom critique.
 \end{theorem}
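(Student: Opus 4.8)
On se propose de ramener tout le comptage à la structure des orbites de solutions des équations de Pell–Fermat généralisées étudiées à la section \ref{se:theoryalgebraic}, puis de faire apparaître un phénomène de périodicité sur l'échelle logarithmique de $B$. Première étape: pour $B$ fixé, $\delta_{\PP^1,\theta,B,\frac{1}{2}}(\chi(\varepsilon,\eta))$ compte les couples primitifs $(u,v)\in\ZZ^2$, $v\neq 0$, avec $\max(|u|,|v|)\leqslant B$ et $\eta B^{-2}<|u/v-\theta|\leqslant\varepsilon B^{-2}$. En écrivant $\theta=\sqrt{b/a}$ et $c:=au^2-bv^2=a(u-v\theta)(u+v\theta)$, les majorations $|u-v\theta|=|v|\,|u/v-\theta|\leqslant\varepsilon/B$ et $|u+v\theta|\leqslant(1+\theta)B$ donnent $1\leqslant|c|\leqslant a(1+\theta)\varepsilon$ (et $c\neq 0$ car $\theta$ est irrationnel). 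Un tel point est donc une solution primitive de l'une des équations $ax^2-by^2=c$, en nombre fini. D'après le lemme \ref{le:passtogeneralequations} et les propositions \ref{co:structuretheorem1}, \ref{po:pgcddonotchange}, chaque $\SABC$ se décompose en un nombre fini d'orbites sous le groupe $\mathcal{A}_{ab}^*$, le $\pgcd$ étant constant le long de chaque orbite; et, point crucial, toutes ces orbites ont le même taux de croissance: si $\lambda>1$ est le générateur de $\mathcal{A}_{ab}^*/\{\pm1\}$ (une puissance d'unité fondamentale, par le théorème de Dirichlet), les membres d'une orbite, convenablement ordonnés, s'écrivent $z_n=x_n+\theta y_n=z_0\lambda^n$ avec $N(z_n)=c/a$, d'où $y_n\sim\frac{z_0}{2\theta}\lambda^n$, $\max(|x_n|,|y_n|)=x_n\sim\frac{z_0}{2}\lambda^n$ et $|x_n/y_n-\theta|=|\bar z_n|/y_n\sim\frac{2\theta|\bar z_0|}{z_0}\lambda^{-2n}$ quand $n\to\infty$.

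Deuxième étape: on pose $s=\log_\lambda B$. Les conditions de comptage, divisées par $\lambda^n$ et prises en logarithme, montrent que $(x_n,y_n)$ est compté pour la borne $B$ si et seulement si $s-n$ appartient à un intervalle fixe $J^{(c,j)}$ (à un $o(1)$ près lorsque $n\to\infty$), de longueur $\min\bigl(\tfrac{1}{2}\log_\lambda(\varepsilon/\eta),\,\tfrac{1}{2}\log_\lambda(\varepsilon/L_c)\bigr)$ où $L_c:=\frac{\theta|c|}{2a}=\frac{|c|}{2}\sqrt{b/a^3}$ (on vérifie que $L_c$ est exactement le produit $K\,H_0^{\,2}$ associé à l'orbite via $|N(z_0)|=|c|/a$). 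Ainsi $J^{(c,j)}$ est non vide précisément quand $L_c<\varepsilon$, et l'on a toujours $|J^{(c,j)}|\leqslant\tfrac{1}{2}\log_\lambda(\varepsilon/\eta)$. Puisque le nombre d'orbites « actives » (celles avec $L_c<\varepsilon$) est fini et que les fenêtres deviennent $1$-périodiques en $s$ pour $B$ grand, on obtient: d'une part $\delta_{\PP^1,\theta,B,\frac{1}{2}}(\chi(\varepsilon,\eta))$ est, pour tout $B$, majoré par (nombre d'orbites actives)$\times(1+\log_\lambda\sqrt{\varepsilon/\eta})=O_{\varepsilon,\eta}(1)$, ce qui donne $\limsup_B\delta_{\PP^1,\theta,B,\frac{1}{2}}(\chi(\varepsilon,\eta))=O_{\varepsilon,\eta}(1)$; d'autre part, pour $B$ assez grand, $\delta_{\PP^1,\theta,B,\frac{1}{2}}(\chi(\varepsilon,\eta))>0$ si et seulement si $\{\log_\lambda B\}$ appartient à $\mathcal{G}:=\bigcup (J^{(c,j)}\bmod 1)\subseteq[0,1\mathclose[$, réunion (finie) sur les orbites actives, de longueur totale $\leqslant(\text{nb.\ d'orbites actives})\cdot\tfrac{1}{2}\log_\lambda(\varepsilon/\eta)$.

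Troisième étape: le choix des paramètres. On fixe $\eta>\max(C,\theta/2)$ arbitrairement, on note $N_0$ le nombre (fini) d'orbites actives pour le seuil $2\eta$ — l'ensemble des orbites actives ne faisant que croître avec le seuil, il y en a au plus $N_0$ pour tout seuil dans $\mathopen]\eta,2\eta\mathclose]$ — et l'on pose $\varepsilon:=\eta\cdot\min\bigl(\lambda^{1/N_0},\,1+\tfrac{D}{2\eta}\bigr)$, quitte à perturber légèrement $\varepsilon$ pour éviter les (dénombrables) valeurs $L_c$. Alors $\eta<\varepsilon$, $\varepsilon-\eta<D$, $\eta>C$, et $\log_\lambda(\varepsilon/\eta)\leqslant 1/N_0$, donc la longueur totale de $\mathcal{G}$ est $\leqslant\tfrac12$; par suite $[0,1\mathclose[\,\setminus\,\mathcal{G}$ contient un intervalle ouvert non vide $I$, et pour des $B$ arbitrairement grands avec $\{\log_\lambda B\}\in I$ on a $\delta_{\PP^1,\theta,B,\frac{1}{2}}(\chi(\varepsilon,\eta))=0$, d'où $\liminf_B\delta_{\PP^1,\theta,B,\frac{1}{2}}(\chi(\varepsilon,\eta))=0$. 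Réciproquement, l'équation $ax^2-by^2=a$ admet l'orbite primitive engendrée par $(1,0)$, dont les membres $(x_n,y_n)$ ($n\geqslant 1$) sont de vrais approximants de $\theta$ (proposition \ref{po:pgcddonotchange}); comme $L_a=\theta/2<\eta<\varepsilon$, cette orbite est active et $|J^{(a)}|=\tfrac{1}{2}\log_\lambda(\varepsilon/\eta)>0$, si bien que $\mathcal{G}$ est d'intérieur non vide et $\{\log_\lambda B\}$ y entre pour des $B$ arbitrairement grands, où alors $\delta_{\PP^1,\theta,B,\frac{1}{2}}(\chi(\varepsilon,\eta))\geqslant 1$. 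D'où $0=\liminf<\limsup=O_{\varepsilon,\eta}(1)$, et l'inexistence d'une mesure limite pour le zoom critique.

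La principale difficulté sera de rendre rigoureuse la deuxième étape: établir que les orbites (en nombre fini) intervenant dans le comptage ont toutes une croissance géométrique \emph{commensurable}, régie par l'unique unité $\lambda$ — c'est ici que l'on utilise de façon essentielle la structure du groupe $\mathcal{A}_{ab}^*$ — de sorte que les fenêtres $J^{(c,j)}$ soient véritablement $1$-périodiques sur l'échelle $s=\log_\lambda B$, avec la borne uniforme $|J^{(c,j)}|\leqslant\tfrac12\log_\lambda(\varepsilon/\eta)$ indépendante de $c$. Il faudra aussi contrôler les termes $o(1)$ provenant du remplacement de la récurrence de Pell par son terme dominant $z_0\lambda^n$, afin de garantir que, la longueur totale de $\mathcal{G}$ restant en deçà de $1$ avec une marge fixe, un intervalle disjoint de $\mathcal{G}$ subsiste pour tout $B$ grand; ceci est de routine mais demande du soin au voisinage des extrémités des $J^{(c,j)}$.
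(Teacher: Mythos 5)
Your proposal follows essentially the same route as the paper (which deduces the theorem from Propositions \ref{po:controlofepsilon}, \ref{po:avoidance} and \ref{po:trackpoints}): reduce the critical zoom to finitely many Pell--Fermat equations, use the orbit structure under the unit group so that counted solutions correspond, on the scale $\log_\lambda B$, to finitely many $1$-periodic windows of length $\tfrac12\log_\lambda(\varepsilon/\eta)$, pick $B$ in a gap to force the count to vanish, and exhibit a primitive family coming from $ax^2-by^2=a$ to make the count $\geqslant 1$ along another sequence; the $O_{\varepsilon,\eta}(1)$ is the number of orbits times the window length. The one organizational difference is that you keep the generalized equations and the group $\mathcal{A}_{ab}^*$ throughout and produce the gap by a total-length (measure) argument, whereas the paper passes to $x^2-Dy^2=A'm$ and uses the minimal-gap step functions $F,G$ of Proposition \ref{po:avoidance}; your criterion ``total length of $\mathcal{G}$ bounded away from $1$'' is in fact the cleaner way to handle the simultaneous avoidance over all the equations, so this variation is welcome.

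Two concrete points need tightening. First, the claim that each $\SABC$ splits into finitely many $\mathcal{A}_{ab}^*$-orbits is not literally contained in Propositions \ref{co:structuretheorem1} and \ref{po:pgcddonotchange}: the former concerns the action of $\muktp$ on $\SMP$, the latter only gives you the action on $\SABC$. You must add the short argument: via Lemme \ref{le:passtogeneralequations} the map $(x,y)\mapsto(A'a'x,b'y)$ identifies the $\mathcal{A}_{ab}^*$-action with multiplication in $\ZZ[\sqrt{A'B'}]$ restricted to $S^*_{A'B',A'c}$, and since $\mathcal{A}_{ab}^*$ has finite index in the norm-one units of $\ZZ+\ZZ\sqrt{A'B'}$ while $S^*_{A'B',A'c}$ has at most $3\tau(A'c)$ orbits under the full group, the stable subset $\SABC$ has finitely many $\mathcal{A}_{ab}^*$-orbits (this also gives the commensurability of growth rates you rightly flag as the crux). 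Second, your parameter choice has a slip: with $\varepsilon:=\eta\min\bigl(\lambda^{1/N_0},1+\tfrac{D}{2\eta}\bigr)$ nothing forces $\varepsilon\leqslant 2\eta$ (both factors can exceed $2$ when $\lambda$ is large and $D>2\eta$), so the number of active orbits at threshold $\varepsilon$ may exceed $N_0$, which was counted at threshold $2\eta$, and the bound ``length of $\mathcal{G}\leqslant\tfrac12$'' can fail. The fix is immediate: either insert $2$ inside the minimum (so $\varepsilon\leqslant 2\eta$ and $N_0$ really dominates), or define $N_0$ as the number of orbits active at threshold $\lambda\eta$, which always dominates $\varepsilon\leqslant\eta\lambda^{1/N_0}\leqslant\lambda\eta$. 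With these repairs, and the $o(1)$ bookkeeping at the window endpoints that you already announce, the argument is complete and yields the same conclusion as the paper, including the non-existence of a limit measure.
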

Ce théorème est une conséquence des Propositions \ref{po:controlofepsilon}, \ref{po:avoidance} et \ref{po:trackpoints}.
\begin{remark}\label{rmk:Lang1}
	Dans \cite{Lang1} S. Lang a considéré aussi le dénombrement d'approximants de nombres quadratiques similaire mais différent du nôtre. 
	Plus précisément, il a démontré que
	\begin{equation*}
	\sharp\left\{
	\begin{aligned}
	(u,v)\in\ZZ^*\times \NN^*
	\end{aligned}
	\left|
	\begin{aligned}
	&\left|\frac{u}{v}-\theta\right|\leqslant  \frac{\varepsilon}{v^2}\\
	&\max(|u|,|v|)\leqslant B
	\end{aligned}
	\right\}\right. =c(\varepsilon)\log B +O(1).
	\end{equation*}
	 Ceci peut être recouverte par notre raisonnement dans la Section \ref{se:avoid} comme étant une conséquence directe des conclusions sur la structure des solutions des équations de Pell-Fermat dans la Section \ref{se:eqpellfermat}.
\end{remark}
\subsubsection{Lien avec des équations de Pell-Fermat}\label{se:thelinkbetweenfermat}
Nous trouvons d'abord le lien avec des équations de Pell-Fermat. Les équations \eqref{eq:Eebr11} et \eqref{eq:Eebr2} nous donnent 
\begin{equation}\label{eq:therangeofm}
\begin{split}
	|au^2-bv^2|&=av^2\left|\left(\frac{u}{v}+\sqrt{\frac{b}{a}}\right)\left(\frac{u}{v}-\sqrt{\frac{b}{a}}\right)\right|\\ &\leqslant av^2\left(2\sqrt{\frac{b}{a}}+\frac{\varepsilon}{B^2}\right)\frac{\varepsilon}{B^2}\\ &\leqslant 2\varepsilon \sqrt{ab}+\frac{a\varepsilon^2}{B^2}=2\varepsilon\sqrt{ab}+o(1).
\end{split}
\end{equation}
On en conclut que $|au^2-bv^2|$ ne prend qu'un nombre fini de valeurs entières et ce nombre est indépendant de $B$. 
\subsubsection{Suites  de bornes évitant les solutions}\label{se:avoid}
On suppose comme précédemment que $\theta=\sqrt{\frac{b}{a}}$. Le but de cette section est double. S'appuyant sur les résultats de la Section \ref{se:pell-fermatsimple}, on donne une borne effective pour $\delta_{\PP^1,\theta,B,r}(\chi(\varepsilon))$ et on construit des suites de bornes telles que $\delta_{\PP^1,\theta,B,r}(\chi(\varepsilon,\eta))=0$ pour toute fonction caractéristique $\chi(\varepsilon,\eta)$ suffisamment \og petite\fg. On conserve les notations dans \eqref{eq:factorise}.
\begin{proposition}\label{po:controlofepsilon}
	Pour tout $\varepsilon>\Xi(\theta)$ (cf. \eqref{eq:liouvillelemma}), on a, pour tout $B\gg_{\varepsilon}1$,
	$$\delta_{\PP^1,\theta,B,\frac{1}{2}}(\chi(\varepsilon))\leqslant 6\sum_{|m|\leqslant 2\varepsilon\sqrt{ab}+1}\tau(A^\prime m)\left(\left\lfloor\frac{\log(\varepsilon)-\log(\Xi(\theta))}{2\log(\varepsilon^*_{A^\prime B^\prime})}\right\rfloor+1\right),$$
	où $A^\prime,B^\prime$ sont définis par \eqref{eq:factorise} et $\varepsilon^*_{A^\prime B^\prime}$ par \eqref{le:samuel}.
\end{proposition}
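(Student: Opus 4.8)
The plan is to estimate, uniformly for $B\gg_\varepsilon 1$, the cardinality of each piece $\evbm$ occurring in the decomposition \eqref{eq:zoomcritiquechie}, and then sum over the $O_\varepsilon(1)$ many values $|m|\leqslant 2\varepsilon\sqrt{ab}+1$. Fix such an $m$; note $m\neq 0$, since $m=0$ would make $\sqrt{b/a}$ rational. Using the factorisation \eqref{eq:factorise}, Lemme \ref{le:passtogeneralequations} provides an injection $(u,v)\mapsto(A^\prime a^\prime u,\,b^\prime v)$ of $\evbm$ into $S^*_{A^\prime B^\prime,\,A^\prime m}$; here $D:=A^\prime B^\prime$ is squarefree and $\geqslant 2$ because $\pgcd(a,b)=1$ and $\theta\notin\QQ$, so it is a legitimate value of the parameter to which the structure results of \S\ref{pell-fermatsimple} apply. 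Hence $\sharp\evbm$ is at most the number of elements of $S^*_{A^\prime B^\prime,A^\prime m}$ arising in this way.

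Next I would bring in the orbit description. By Proposition \ref{co:structuretheorem1}, applied with the squarefree integer $D=A^\prime B^\prime$ and the value $A^\prime m$, the set $S^*_{A^\prime B^\prime,A^\prime m}$ is a union of at most $3\tau(A^\prime m)$ orbits under $\mathcal{U}^*_{A^\prime B^\prime}$, the positive representatives of each orbit forming a two-sided geometric sequence $z_0(\varepsilon^*_{A^\prime B^\prime})^n$, $n\in\ZZ$. Writing $z_0(\varepsilon^*_{A^\prime B^\prime})^n=x_n+y_n\sqrt{A^\prime B^\prime}$ and using $\overline{z_0(\varepsilon^*_{A^\prime B^\prime})^n}=\overline{z_0}\,(\varepsilon^*_{A^\prime B^\prime})^{-n}$, one gets $y_n\sqrt{A^\prime B^\prime}=\tfrac12\bigl(z_0(\varepsilon^*_{A^\prime B^\prime})^n-\overline{z_0}(\varepsilon^*_{A^\prime B^\prime})^{-n}\bigr)$, so $|y_n|\to\infty$ as $n\to\pm\infty$ with growth rate $\varepsilon^*_{A^\prime B^\prime}$ on each of the two branches. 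Consequently, for any window $[C_1,C_2]$ the set of indices $n$ with $|y_n|\in[C_1,C_2]$ meets at most two intervals of integers, each of length at most $\frac{\log(C_2/C_1)}{\log\varepsilon^*_{A^\prime B^\prime}}+1$ once $C_1$ is large.

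Now impose the two size constraints on a solution $(u,v)\in\evbm$: the height bound forces $v\leqslant B$, while the Liouville inequality in the form \eqref{eq:lowerboundforepsilon} forces $v\geqslant\sqrt{\Xi(\theta)/\varepsilon}\;B$. Thus $y_n=b^\prime v$ lies in a window whose ratio of endpoints is $\sqrt{\varepsilon/\Xi(\theta)}$, independently of $B$; taking $C_1=b^\prime\sqrt{\Xi(\theta)/\varepsilon}\,B$ (which is large for $B\gg_\varepsilon 1$, as $\varepsilon>\Xi(\theta)$), the previous paragraph bounds the number of admissible $n$ in one orbit by $2\bigl(\frac{\log\varepsilon-\log\Xi(\theta)}{2\log\varepsilon^*_{A^\prime B^\prime}}+1\bigr)$, hence by $2\bigl(\lfloor\frac{\log\varepsilon-\log\Xi(\theta)}{2\log\varepsilon^*_{A^\prime B^\prime}}\rfloor+1\bigr)$. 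Multiplying by the orbit bound $3\tau(A^\prime m)$ and summing over $|m|\leqslant 2\varepsilon\sqrt{ab}+1$ via \eqref{eq:zoomcritiquechie} yields the claimed estimate, the constant $6$ being $2\cdot 3$.

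The main obstacle is the per-orbit count: one must verify that the coordinate being tracked genuinely grows at rate $\geqslant\varepsilon^*_{A^\prime B^\prime}$ along the relevant branch. A short computation with the recurrence $x_{n+1}+y_{n+1}\sqrt{A^\prime B^\prime}=(x_n+y_n\sqrt{A^\prime B^\prime})\varepsilon^*_{A^\prime B^\prime}$ shows that, according to the sign of $A^\prime m$, exactly one of $|x_n|,|y_n|$ has ratio $\geqslant\varepsilon^*_{A^\prime B^\prime}$; since on $\evbm$ one has $u=\theta v(1+o(1))$ uniformly, one may switch to that coordinate at the cost of an $o(1)$ that is harmless for $B\gg_\varepsilon 1$. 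The factor $2$ in the final constant is exactly the slack that lets one be cavalier about the two branches and about the behaviour near the turning point of $(|y_n|)_n$, rather than optimizing.
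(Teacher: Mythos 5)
Your argument is correct and is essentially the paper's own proof: the same reduction via the lemme \ref{le:passtogeneralequations}, the same decomposition \eqref{eq:zoomcritiquechie} over $|m|\leqslant 2\varepsilon\sqrt{ab}+1$, the orbit bound $3\tau(A^\prime m)$ from the proposition \ref{co:structuretheorem1}, and a per-orbit count of $2\bigl(\lfloor\tfrac{\log\varepsilon-\log\Xi(\theta)}{2\log \varepsilon^*_{A^\prime B^\prime}}\rfloor+1\bigr)$ obtained from the geometric growth at rate $\varepsilon^*_{A^\prime B^\prime}$ inside each orbit together with the window $\sqrt{\Xi(\theta)/\varepsilon}\,B\leqslant v\leqslant B$. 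The only cosmetic difference is that you track the coordinate $v$ (only asymptotically geometric, whence your final remark), whereas the paper tracks $|u+v\sqrt{A^\prime B^\prime}|$, which is exactly geometric along an orbit and maps under $\Phi_D$ to a translate of $\ZZ$, giving the floor-function count directly.
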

\begin{proposition}\label{po:avoidance}
	Il existe une fonction d'escalier $G(\cdot):\RR_{>0}\to\RR_{>1}$ qui est décroissante et semi-continue inférieurement telle que pour tout $\varepsilon>0$ et tout $\eta\in\mathopen]\frac{\varepsilon}{G(\varepsilon)},\varepsilon\mathclose[$, on puisse choisir une suite de bornes $(B_n)$ qui tend vers $\infty$ de manière que
	$$\delta_{\PP^1,\theta,B_n,\frac{1}{2}}(\chi(\varepsilon,\eta))=0.$$
\end{proposition}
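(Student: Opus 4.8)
La démonstration repose sur la structure des solutions des équations de Pell--Fermat dégagée à la section \ref{se:theoryalgebraic}. On reprend les notations $\theta=\sqrt{b/a}$ et $A^\prime,B^\prime$ de \eqref{eq:factorise}; posons $\mathfrak{D}=A^\prime B^\prime$, entier $>1$ sans facteur carré car $\theta$ est irrationnel, et soit $\rho>1$ le générateur du groupe cyclique $\mathcal{A}_{ab}^*/\{\pm1\}$ (cf. le théorème de Dirichlet et \eqref{eq:pellgeneralised2}), quantité ne dépendant que de $a$ et $b$. On invoque d'abord la réduction de la section \ref{se:thelinkbetweenfermat}: d'après \eqref{eq:therangeofm}, tout $[u:v]$ avec $\pgcd(u,v)=1$ compté par $\delta_{\PP^1,\theta,B,\frac12}(\chi(\varepsilon,\eta))$ vérifie $au^2-bv^2=m$ pour un entier non nul $m$ (le cas $m=0$ est exclu par l'irrationalité de $\theta$) avec $|m|\leqslant 2\varepsilon\sqrt{ab}+1$, dès que $B\gg_\varepsilon 1$; par le lemme \ref{le:passtogeneralequations}, la proposition \ref{po:pgcddonotchange} et le corollaire \ref{co:upperboundforimm}, les solutions de chacune de ces équations se répartissent en un nombre fini d'orbites sous l'action de $\mathcal{A}_{ab}^*$. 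Notons $N(\varepsilon)$ le nombre total de ces orbites, la somme portant sur les $m$ avec $|m|\leqslant 2\varepsilon\sqrt{ab}+1$: c'est une fonction en escalier croissante (au sens large), finie, de $\varepsilon$. L'action de $\rho$ multipliant $v$ par $\rho(1+o(1))$, les solutions $(u_k,v_k)$ de grande taille d'une orbite donnée satisfont $\log v_k=k\log\rho+\beta+o(1)$ lorsque $k\to\infty$, le $o(1)$ étant uniforme sur le nombre fini d'orbites.

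Ensuite on décrit la \og fenêtre\fg{} imposée par chaque orbite. Pour une solution $(u,v)$ de $au^2-bv^2=m$ avec $v$ grand on a $u/v\to\sqrt{b/a}$ et $\left|\frac uv-\sqrt{\frac ba}\right|=\frac{|m|}{2\sqrt{ab}\,v^2}(1+o(1))$; les conditions $\eta B^{-2}<|u/v-\sqrt{b/a}|\leqslant\varepsilon B^{-2}$, la contrainte $\max(|u|,|v|)\leqslant B$ ne faisant que rétrécir cet ensemble, confinent donc $v$ dans un intervalle $[c_1(m)B,\,c_2(m)B](1+o(1))$ avec $c_2(m)/c_1(m)=\sqrt{\varepsilon/\eta}$. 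En coordonnées logarithmiques, $\log v$ parcourt un intervalle de longueur $\frac12\log(\varepsilon/\eta)+o(1)$; en le croisant avec la progression arithmétique $\{k\log\rho+\beta+o(1)\}$ de l'étape précédente, cette orbite ne peut contribuer un point que si $k$ appartient à un intervalle de longueur $\frac{\log(\varepsilon/\eta)}{2\log\rho}+o(1)$ dont une extrémité dépend, modulo $\ZZ$, de façon affine de $\log B$. Avec $t=\log B$, l'ensemble des $t$ pour lesquels l'orbite contribue est alors, à des erreurs $o(1)$ près, périodique de période $\log\rho$ et rencontre chaque période en une partie de mesure $\leqslant\frac12\log(\varepsilon/\eta)+o(1)$.

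Pour conclure: en sommant sur les $N(\varepsilon)$ orbites, l'ensemble $\mathcal{B}_{\varepsilon,\eta}$ des $t$ tels que $\delta_{\PP^1,\theta,e^t,\frac12}(\chi(\varepsilon,\eta))>0$ rencontre, pour $t$ grand, tout intervalle $[t,t+\log\rho]$ en une partie de mesure $\leqslant N(\varepsilon)\left(\frac12\log(\varepsilon/\eta)+o(1)\right)$. Dès que $\frac{N(\varepsilon)}{2}\log(\varepsilon/\eta)<\log\rho$, soit $\eta>\varepsilon\,\rho^{-2/N(\varepsilon)}$, la marge stricte absorbe les $o(1)$ pour $t$ grand et le complémentaire de $\mathcal{B}_{\varepsilon,\eta}$ est non borné; on y choisit $t_n\to\infty$ et l'on pose $B_n=e^{t_n}$. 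On définit enfin $G(\varepsilon)=\rho^{2/\max(N(\varepsilon),1)}$: c'est une fonction en escalier à valeurs dans $\mathopen]1,\rho^2\mathclose]$, décroissante au sens large (puisque $\varepsilon\mapsto\max(N(\varepsilon),1)$ est croissante et $\rho>1$) et continue inférieurement (puisque $\max(N(\cdot),1)$, fonction en escalier positive, croissante et continue à droite, est continue supérieurement, d'où $1/\max(N(\cdot),1)$ puis $G$ continues inférieurement). Pour $\eta\in\mathopen]\varepsilon/G(\varepsilon),\varepsilon\mathclose[$ on a $\eta>\varepsilon\rho^{-2/N(\varepsilon)}$ et la construction fournit la suite $(B_n)$ cherchée. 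Les cas dégénérés --- $N(\varepsilon)=0$, ou $\varepsilon<\Xi(\theta)$ où \eqref{eq:lowerboundforepsilon} donne déjà $\delta_{\PP^1,\theta,B,\frac12}(\chi(\varepsilon,\eta))=0$ pour tout $B$ --- sont triviaux.

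L'obstacle principal est le contrôle uniforme des termes $o(1)$ --- celui du développement $\log v_k=k\log\rho+\beta+o(1)$ (issu de l'estimation $O(v_k^{-2})=O(B^{-2})$ du conjugué dans la récurrence linéaire régissant les solutions, où l'on utilise $v_k\leqslant B$) et celui de l'approximation $|u/v-\sqrt{b/a}|\sim|m|/(2\sqrt{ab}\,v^2)$ --- afin que la marge stricte $\eta>\varepsilon\rho^{-2/N(\varepsilon)}$ les domine effectivement quand $B\to\infty$. Ce point acquis, le reste n'est qu'une comptabilité d'un nombre fini d'ensembles périodiques de la droite réelle.
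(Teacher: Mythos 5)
Votre stratégie est correcte et suit pour l'essentiel la même route que le texte : réduction aux équations de Pell--Fermat $au^2-bv^2=m$ avec $|m|\leqslant 2\varepsilon\sqrt{ab}+1$ via \eqref{eq:therangeofm}, structure des solutions en un nombre fini de familles formant des progressions géométriques (arithmétiques en échelle logarithmique), fenêtre de longueur logarithmique $\tfrac12\log(\varepsilon/\eta)$ dont la position glisse avec $\log B$, puis choix de $B_n$ évitant toutes les familles. La seule différence réelle est le critère final d'évitement et donc la définition de $G$ : le texte travaille avec $\Phi_D(x,y)=\log|x+y\sqrt D|/\log(\VDT)$ et pose $G(\varepsilon)=(\VDT)^{2F(\varepsilon)}$, où $F(\varepsilon)$ est l'écart minimal entre les images des solutions, de sorte que la fenêtre, plus courte que le plus petit trou, peut être glissée dans un trou ; vous utilisez à la place une borne de recouvrement (mesure) : les $N(\varepsilon)$ familles produisent un ensemble de mauvais $t=\log B$ de mesure au plus $N(\varepsilon)\bigl(\tfrac12\log(\varepsilon/\eta)+o(1)\bigr)$ par période $\log\rho$, d'où $G(\varepsilon)=\rho^{2/\max(N(\varepsilon),1)}$. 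Les deux critères sont des conditions suffisantes valables, et comme la proposition ne demande que l'existence d'une fonction $G$, votre variante convient ; elle a l'avantage de ne pas exiger de contrôler les positions mutuelles des différentes familles (le $F(\varepsilon)$ du texte, défini comme un minimum sur des paires d'images issues d'équations distinctes), au prix d'une bande admissible pour $\eta$ éventuellement plus étroite. Vos vérifications de monotonie et de semi-continuité inférieure de $G$ (via la croissance et la continuité à droite de $N$) sont correctes, et le contrôle uniforme des $o(1)$ que vous signalez comme point délicat est bien géré par la finitude des familles et la marge stricte $\eta>\varepsilon\rho^{-2/N(\varepsilon)}$ ; seul petit point à expliciter : la finitude du nombre d'orbites sous $\mathcal{A}_{ab}^*$ utilise que ce groupe est d'indice fini dans $\mathcal{U}_{A'B'}^*$, ce qui découle immédiatement du théorème de Dirichlet cité.
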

\begin{proof}[Démonstration des Propositions \ref{po:controlofepsilon} \& \ref{po:avoidance}]
		On rappelle d'abord la factorisation \eqref{eq:factorise}.
		Une première observation c'est qu'on a une bijection de $\SABC$ sur le sous-ensemble $\{(x,y)\in S^*_{1,A^\prime B^\prime,A^\prime c}:A^\prime a^\prime|x,b^\prime|y\}$ de $S^*_{1,A^\prime B^\prime,A^\prime c}$, ce qui nous ramène aux cas où $a=1$ pour borner le nombre de solutions de \eqref{eq:diophantineequation} ainsi que $\delta_{\PP^1,\theta,B,\frac{1}{2}}(\chi(\varepsilon))$. 
		Pour $\varepsilon>\eta>0$ et pour chaque entier $m$, on note $$\varepsilon^\prime=\frac{A^\prime a^\prime}{b^\prime}\varepsilon,\quad \eta^\prime=\frac{A^\prime a^\prime}{b^\prime}\eta,$$ et $E^\prime(\varepsilon,\eta,B)_m$ l'ensemble des points $(u,v)\in \ZZ^2$ vérifiant les conditions 
\begin{equation}\label{eq:Eebc3}
\eta^\prime B^{-2}<\left|\frac{u}{v}-\sqrt{D}\right|\leqslant \varepsilon^\prime B^{-2};
\end{equation}
\begin{equation}\label{eq:Eebc4}
u^2-v^2D=A^\prime m.
\end{equation}
L'encadrement \eqref{eq:boundforv} suivant montre que $E^\prime(\varepsilon,\eta,B)_m$ est fini.
Pour démontrer la Proposition \ref{po:avoidance}, il suffit de démontrer qu'il existe une suite $(B_n)$ telle que $\forall n,B_n^2\geqslant a\varepsilon^2$, et $\sharp E^\prime(\varepsilon,\eta,B_n)_m=0$ puisque
\begin{equation*}
\delta_{\PP^1,\theta,B,\frac{1}{2}}(\chi(\varepsilon,\eta))\leqslant\sum_{|m|\leqslant 2\varepsilon\sqrt{ab}+1}\sharp E^\prime(\varepsilon,\eta,B)_m,
\end{equation*}
pour tout $B^2\geqslant a\varepsilon^2$ en vertu de \eqref{eq:therangeofm}.
D'après \eqref{eq:lowerboundforepsilon}, on a que pour $\eta<\Xi(\theta)$, 
$$\delta_{\PP^1,\theta,B,\frac{1}{2}}(\chi(\varepsilon,\eta))=\delta_{\PP^1,\theta,B,\frac{1}{2}}(\chi(\varepsilon)).$$
Alors pour démontrer la Proposition \ref{po:controlofepsilon} il suffit de borner $\sharp E^\prime(\varepsilon,\eta,B)_m$ pour tout $m\leqslant 2\sqrt{ab}+1$.

On commence par démontrer la Proposition \ref{po:controlofepsilon}.
L'équation \eqref{eq:Eebc4} nous amène à un problème concernant des solutions des équations de Pell-Fermat.
On extrait un encadrement pour $v$. 
D'une part,
$$\frac{A^\prime |m|}{v^2}=\frac{|u^2-v^2D|}{v^2}=\left|\frac{u}{v}-\sqrt{D}\right|\left|\frac{u}{v}+\sqrt{D}\right|\leqslant \frac{\varepsilon^\prime}{B^2}\left(2\sqrt{D}+\frac{\varepsilon^\prime}{B^2}\right),$$
d'où
$$ v^2\geqslant \frac{A^\prime |m|}{\varepsilon^\prime(2\sqrt{D}+\frac{\varepsilon^\prime}{B^2})}B^2.$$
D'autre part, 
$$\frac{A^\prime |m|}{v^2}=\frac{|u^2-v^2D|}{v^2}\geqslant\frac{\eta^\prime}{B^2}\left(2\sqrt{D}-\frac{\varepsilon^\prime}{B^2}\right),$$
d'où
$$v^2\leqslant \frac{A^\prime |m|}{\eta^\prime(2\sqrt{D}-\frac{\varepsilon^\prime}{B^2})}B^2.$$
En résumé, on a 
\begin{equation}\label{eq:boundforv}
\sqrt{\frac{A^\prime |m|}{\varepsilon^\prime(2\sqrt{D}+\frac{\varepsilon^\prime}{B^2})}}B\leqslant |v|\leqslant  \sqrt{\frac{A^\prime |m|}{\eta^\prime(2\sqrt{D}-\frac{\varepsilon^\prime}{B^2})}}B.
\end{equation}
En reportant dans \eqref{eq:Eebc3}, on a
$$|u-v\sqrt{D}|\leqslant \frac{\varepsilon^\prime |v|}{B^2}=O_{\varepsilon,\eta}\left(\frac{1}{B}\right).$$
Donc
\begin{align*}
	|u+v\sqrt{D}|\leqslant |u-v\sqrt{D}|+2|v|\sqrt{D}
	=2|v|\sqrt{D}+O_{\varepsilon,\eta}\left(\frac{1}{B}\right)
	\leqslant  2\sqrt{\frac{A^\prime |m|D}{\eta^\prime(2\sqrt{D}-\frac{\varepsilon^\prime}{B^2})}}B+O_{\varepsilon,\eta}\left(\frac{1}{B}\right).
\end{align*}
De la même manière
\begin{align*}
|u+v\sqrt{D}|\geqslant 2|v|\sqrt{D}-|u-v\sqrt{D}|=2|v|\sqrt{D}+O_{\varepsilon,\eta}\left(\frac{1}{B}\right)
\geqslant  2\sqrt{\frac{A^\prime |m|D}{\varepsilon^\prime(2\sqrt{D}+\frac{\varepsilon^\prime}{B^2})}}B+O_{\varepsilon,\eta}\left(\frac{1}{B}\right).
\end{align*}
En prenant le logarithme, il en résulte que
\begin{equation}\label{boundforloguv1}
\begin{split}
\frac{\log|u+v\sqrt{D}|}{\log(\VDT)}\leqslant \frac{1}{\log(\VDT)}\left(\frac{1}{2}\log(4A^\prime |m|D)-\frac{1}{2}\log(2\eta^\prime\sqrt{D})+\log B\right)+O_{\varepsilon,\eta}\left(\frac{1}{B^2}\right)\\
\frac{\log|u+v\sqrt{D}|}{\log(\VDT)}\geqslant \frac{1}{\log(\VDT)}\left(\frac{1}{2}\log(4A^\prime |m|D)-\frac{1}{2}\log(2\varepsilon^\prime\sqrt{D})+\log B\right)+O_{\varepsilon,\eta}\left(\frac{1}{B^2}\right)
\end{split}
\end{equation}
Donc \eqref{boundforloguv1} entraîne une borne pour le nombre de solutions $(u,v)\in\ZZ^2$ vérifiant \eqref{eq:Eebc3} dans toute famille de solutions de l'équation 
\begin{equation}\label{eq:familyofpellequation}
u^2-Dv^2=A^\prime m, \quad |m|\leqslant 2\varepsilon\sqrt{ab}+1.
\end{equation}
Rappelons que le groupe $\{(\varepsilon_D^*)^n\}$ agit librement sur l'ensemble de solutions de \eqref{eq:familyofpellequation} pour $m$ fixé. Prenons une orbite $J=\{\pm U_J(\VDT)^n,n\in\ZZ\}$, où $U_J\in\ZZD$ est un représentant.
Considérons l'application $\Phi_D$ définie par $$\Phi_D(u,v)=\frac{\log(|u+v\sqrt{D}|)}{\log(\VDT)}.$$
L'image de $J$ 
$$\frac{\log(J)}{\log(\VDT)}=\left\{\frac{\log|U_J|}{\log(\VDT)}+n,n\in\ZZ\right\}$$
est un translaté de $\ZZ$ dans $\RR$.  On a donc (compte-tenu du signe) pour $B\gg_{\varepsilon,\eta}1$,
$$\sharp (J\cap E^\prime(\varepsilon,\Xi(\theta),B)_m)\leqslant 2\left(\left\lfloor\frac{\log(\varepsilon)-\log(\Xi(\theta))}{2\log(\varepsilon^*_{D})}\right\rfloor+1\right).$$
La Proposition \ref{co:structuretheorem1} donne une majoration du nombre d'orbites, on arrive donc, en sommant sur $m$, à la borne dans la Proposition \ref{po:controlofepsilon}.
	
Maintenant nous démontrons la Proposition \ref{po:avoidance}. Pour tout $\varepsilon$ fixé, l'ensemble des images de toutes les solutions de (\ref{eq:familyofpellequation}) par l'application $\Phi_D$, noté $\mathcal{P}(\varepsilon)$, est une réunion finie de réseaux de même période dans $\RR$. Donc la fonction $F:\RR_{>0}\to\RR$ définie par
	\begin{equation}\label{eq:functionF}
	F(\varepsilon)=\begin{cases}
	1 & \text{ si } \mathcal{P}(\varepsilon)=\varnothing;\\
\min_{\lambda,\beta\in\Phi_D( \mathcal{P}(\varepsilon))} |\lambda-\beta|  &\text{ sinon.}
	\end{cases}
	\end{equation}
	est évidemment décroissante et semi-continue inférieurement. Elle est constante sur tout intervalle
	$$\left[\frac{n-1}{2\sqrt{ab}},\frac{n}{2\sqrt{ab}}\right[,\quad (n\in\NN).$$
	On note $I(\varepsilon,\eta)$ l'intervalle
	$$\left[\frac{1}{\log(\VDT)}\left(\frac{1}{2}\log(4A^\prime |m|D)-\frac{1}{2}\log(2\varepsilon^\prime\sqrt{D})\right),\frac{1}{\log(\VDT)}\left(\frac{1}{2}\log(4A^\prime |m|D)-\frac{1}{2}\log(2\eta^\prime\sqrt{D})\right)\right].$$
Si l'on prend $\eta,\varepsilon$ assez proches tels que
$$0<\frac{\log(\varepsilon)-\log(\eta)}{2\log(\VDT)}=\frac{\log(2\varepsilon^\prime\sqrt{D})-\log(2\eta^\prime\sqrt{D})}{2\log(\VDT)}<F(\varepsilon),$$
à savoir \begin{equation}
\label{eq:functionG}
\frac{\varepsilon}{G(\varepsilon)}<\eta<\varepsilon, \quad G(\varepsilon)=(\VDT)^{2F(\varepsilon)},
\end{equation}
on peut choisir $(B_n)\to\infty$ de sorte qu'il existe une constant $H(\varepsilon)>0$ tel que l'intervalle
$I_n=I(\varepsilon,\eta)+\frac{\log (B_n)}{\log(\VDT)}$
vérifie $$\min_{\lambda\in\Phi_D(\mathcal{P}(\varepsilon))}\operatorname{dist}(\lambda,I_n)>H(\varepsilon)>0.$$
D'après (\ref{boundforloguv1}), ceci démontre que $\sharp E^\prime(\varepsilon,\eta,B_n)_m=0$ pour tout $n\geqslant 1,|m|\leqslant2\varepsilon\sqrt{ab}+1$. La fonction $G$ vérifie les même propriétés que celles de $F$, d'où la Proposition \ref{po:avoidance}.
\end{proof}
\subsubsection{Construction de suites de bornes avec des solutions}
En imitant cette méthode, quand la fonction caractéristique choisie permet d'avoir une solution d'une équation Pell-Fermat généralisée, on peut choisir la suite $(B_n)$ de la façon qu'elle donne au moins une famille de solutions. 

\begin{proposition}\label{po:trackpoints}
	Soit $\theta=\sqrt{\frac{b}{a}}>1$ avec $\pgcd(a,b)=1$. Soient $\varepsilon>\eta>0$, tel que
  l'intervalle $\mathopen ]2\sqrt{ab}\theta^{-2}\eta,2\sqrt{ab}\theta^{-2}\varepsilon\mathclose[$ contienne la valeur absolue d'un entier $m$ de sorte que $S^*_{a,b,m}$ contienne une solution primitive (cf. les notations au début de la Section \ref{se:theoryalgebraic}).
  Alors il existe une suite de bornes $(B_n)\to\infty$ tel que 
  $$\delta_{\PP^1,\theta,B_n,\frac{1}{2}}(\chi(\varepsilon,\eta))\geqslant 1, \quad \forall n\geqslant 1.$$
\end{proposition}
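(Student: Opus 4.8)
The plan is to build, from a single primitive solution of $ax^2-by^2=m$, an infinite family of primitive solutions whose coordinates tend to infinity, and then to take $B_n$ equal to the Weil height of the $n$-th point of this family. The point of this choice is that the constraint $\max(|u|,|v|)\leqslant B_n$ then becomes \emph{saturated} instead of being an inequality one must arrange by hand; the whole question reduces to checking that, along the family, the quantity $(\text{height})^{2}\cdot|\tfrac{u}{v}-\theta|$ converges to a value lying strictly between $\eta$ and $\varepsilon$, which is precisely what the hypothesis on $|m|$ encodes.

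First I would fix a primitive solution in $S^*_{a,b,m}$ furnished by the hypothesis; replacing it by the pair of absolute values of its coordinates, we may assume it lies in $\NN^2$. Proposition~\ref{po:pgcddonotchange} together with Remark~\ref{rmk:inftysol} then yields a sequence $(x_N,y_N)_{N\geqslant0}$ of primitive solutions of $ax^2-by^2=m$ in $\NN^2$ with $x_N,y_N\to\infty$. Since $ax_N^2-by_N^2=m$ stays bounded while $y_N\to\infty$, we get $(x_N/y_N)^2=b/a+m/(ay_N^2)\to\theta^2$, hence $x_N/y_N\to\theta$; in particular $x_N>y_N$ and $H([x_N:y_N])=x_N$ for $N$ large. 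The decisive computation is then, using $|\tfrac{x_N}{y_N}-\theta|=\tfrac{|m|}{ay_N^2(\tfrac{x_N}{y_N}+\theta)}$ and the identity $a\theta=\sqrt{ab}$,
$$
x_N^2\Bigl|\frac{x_N}{y_N}-\theta\Bigr|=\frac{|m|\,(x_N/y_N)^2}{a\,(x_N/y_N+\theta)}\xrightarrow[N\to\infty]{}\frac{|m|\,\theta^2}{2a\theta}=\frac{|m|}{2\sqrt{ab}\,\theta^{-2}}.
$$
By hypothesis this limit lies in $\mathopen]\eta,\varepsilon\mathclose[$, so there is an index $N_0$ with $x_N>y_N$ and $\eta<x_N^2|\tfrac{x_N}{y_N}-\theta|<\varepsilon$ for all $N\geqslant N_0$. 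Setting $B_N=x_N$ (so $B_N\to\infty$), the point $[x_N:y_N]$ then satisfies $\pgcd(x_N,y_N)=1$, $\max(x_N,y_N)=x_N=B_N$, and $\eta B_N^{-2}<|\tfrac{x_N}{y_N}-\theta|\leqslant\varepsilon B_N^{-2}$, so it is one of the points counted by $\delta_{\PP^1,\theta,B_N,\frac12}(\chi(\varepsilon,\eta))$; taking $B_n:=x_{N_0+n-1}$ for $n\geqslant 1$ then gives the asserted sequence.

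I do not expect a genuine obstacle here once Proposition~\ref{po:pgcddonotchange} and Remark~\ref{rmk:inftysol} are in hand. The only delicate points are bookkeeping: getting the constant in the limit exactly right — this is what forces the factor $2\sqrt{ab}\,\theta^{-2}$ in the statement, and why the interval must be \emph{open}, since the convergence above need not be monotone and a boundary value $\eta$ or $\varepsilon$ could then be missed — and the harmless fact that $x_N>y_N$, hence $H([x_N:y_N])=x_N$, holds only for $N$ large, which is why the conclusion is phrased for a sequence $(B_n)$ rather than for every admissible bound. One could also remark that $B_N=x_N$ is not forced: the strict inequality $x_N^2|\tfrac{x_N}{y_N}-\theta|<\varepsilon$ leaves a whole interval of admissible values of $B$ just above $x_N$, which would in particular allow $(B_n)$ to be taken integral.
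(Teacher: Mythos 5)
Your proof is correct and follows essentially the same route as the paper: both arguments start from one primitive solution of $ax^2-by^2=m$, use Remark~\ref{rmk:inftysol} together with Proposition~\ref{po:pgcddonotchange} to produce a sequence of primitive solutions with coordinates tending to infinity, and then take $B_n$ adapted to the $n$-th solution. The only difference is one of presentation: you compute the limit $x_N^2\bigl|\tfrac{x_N}{y_N}-\theta\bigr|\to\tfrac{|m|}{2\sqrt{ab}\theta^{-2}}$ directly and read off that it lies in $\mathopen]\eta,\varepsilon\mathclose[$, whereas the paper introduces an auxiliary $\lambda\in\mathopen]0,1\mathclose[$ and an intermediate set $F(\lambda,B,m)$ to establish the same bracketing of $\bigl|\tfrac{u}{v}-\theta\bigr|$ by $\eta B^{-2}$ and $\varepsilon B^{-2}$; your streamlining is cosmetically nicer and makes transparent why the factor $2\sqrt{ab}\theta^{-2}$ and the openness of the interval are exactly what is needed.
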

\begin{proof}
 On choisit $\lambda\in\mathopen]0,1\mathclose[$ tel que $|m|\in\mathopen] 2\sqrt{ab}\theta^{-2}\eta,2\sqrt{ab}\theta^{-2}\varepsilon \lambda^2\mathclose[$. On suppose dans la suite que $m>0$, la démonstration étant analogue lorsque $m<0$.
 Soit $B>0$ tel que
 \begin{equation}\label{eq:lowerboundform}
 (a\lambda^2-(2\varepsilon)^{-1}\theta m)B^2>m> \frac{2\sqrt{ab}\eta}{\theta^2}+\frac{a\varepsilon\eta}{\theta^2B^2}.
 \end{equation}
 On définit l'ensemble $F(\lambda,B,m)$ des $(u,v)\in\NN^2$ primitifs vérifiant 
\begin{equation}\label{eq:rangeofu}
 au^2-bv^2= m \text{ et }
 \lambda B\leqslant u\leqslant B.
\end{equation}
 Pour $(u,v)\in F(\lambda,B,m)$, on a, d'après \eqref{eq:rangeofu},
 $$v^2=\frac{au^2-m}{b}\leqslant \frac{a}{b}B^2-\frac{m}{b}<\frac{a}{b}B^2.$$
 En revanche, grâce à \eqref{eq:rangeofu} et à la deuxième inégalité de \eqref{eq:lowerboundform}, 
 $$v^2\geqslant \frac{a}{b}\lambda^2 B^2-\frac{m}{b}>\frac{m}{2\sqrt{ab}\varepsilon}B^2.$$
 On note $X=\frac{u}{v}-\theta$. Alors la positivité de $m$ implique celle de $X$. Notons que $\frac{u}{v}+\theta=X+2\theta$, on obtient, d'après l'hypothèse sur $m$,
 $$X=\frac{m}{av^2(X+2\theta)}<\frac{m}{2\theta av^2}<\frac{\varepsilon}{B^2}.$$
 Cela entraîne aussi la majoration
 $$X+2\theta \leqslant 2\theta +\frac{\varepsilon}{B^2},$$
 ainsi que la minoration pour $X$:
 $$X= \frac{m}{av^2(X+2\theta)}\geqslant \frac{m}{a\theta^{-2}B^2\left(2\theta+\frac{\varepsilon}{B^2}\right)}>\frac{\eta}{B^2}$$
 par la première inégalité de \eqref{eq:lowerboundform}. Cela montre que $$\delta_{\PP^1,\theta,B,\frac{1}{2}}(\chi(\varepsilon,\eta))\geqslant 
\sharp F(\lambda,B,m).$$
 On prend $(u_0,v_0)\in \NN^2$ une solution primitive de l'équation
 $$ax^2-by^2=m.$$ On définit $((u_n,v_n))_{n\in\NN}$ comme dans le Corollaire \ref{rmk:inftysol}. Comme $u_n\to \infty$, on peut choisir $(B_n)\to\infty$ de la manière que
$$\lambda B_n< u_n\leqslant B_n,$$
Donc pour tout $n$ suffisamment grand, on a $(u_n,v_n)\in F(\lambda,B_n,m)$,
d'où 
\begin{equation*}
\delta_{\PP^1,\theta,B_n,\frac{1}{2}}(\chi(\varepsilon,\eta))\geqslant \sharp F(\lambda,B_n,m)\geqslant 1.\qedhere
\end{equation*}
\end{proof}
\begin{proof}[Démonstration du Théorème \ref{th:criticzoomforquadratics}]
	Pour $C_1,C_2>0$ fixés, on peut choisir $m\in\NN$ tel que 
	$$\frac{m\theta^2}{2\sqrt{ab}}>2C_1,$$ et que $S^*_{a,b,m}$ contienne une solution primitive (voir le Corollaire \ref{rmk:inftysol}). Rappelons les propriétés des fonctions $F$ \eqref{eq:functionF} et $G$ \eqref{eq:functionG} dans la démonstration de la Proposition \ref{po:avoidance}. Il existe alors $\delta_0>0$ tel que 
	$$G(x)=G\left(\frac{m\theta^2}{2\sqrt{ab}}\right)\leqslant|\varepsilon_D^*|^2, \quad \forall x\in\left[\frac{m\theta^2}{2\sqrt{ab}},\frac{m\theta^2}{2\sqrt{ab}}+\delta_0\right[.$$
	On définit 
	$$\delta_1=\min\left(\frac{m\theta^2}{2\sqrt{ab}}\left(G\left(\frac{m\theta^2}{2\sqrt{ab}}\right)-1\right), \frac{C_2}{2},\delta_0,C_1\right)>0.$$
	Maintenant on prend 
	$$\varepsilon=\frac{m\theta^2}{2\sqrt{ab}}+\frac{\delta_1}{2},\quad \eta\in\left]\frac{m\theta^2}{2\sqrt{ab}}-\frac{\delta_1}{2|\varepsilon_D^*|^2},\frac{m\theta^2}{2\sqrt{ab}}\right[.$$
	Puisque
	\begin{align*}
		\frac{\varepsilon}{G(\varepsilon)}=\frac{\frac{m\theta^2}{2\sqrt{ab}}+\delta_1}{G(\frac{m\theta^2}{2\sqrt{ab}})}-\frac{\delta_1}{2G(\frac{m\theta^2}{2\sqrt{ab}})}
		\leqslant \frac{m\theta^2}{2\sqrt{ab}}-\frac{\delta_1}{2G(\frac{m\theta^2}{2\sqrt{ab}})}
		\leqslant\frac{m\theta^2}{2\sqrt{ab}}-\frac{\delta_1}{2|\varepsilon_D^*|^2}<\eta,
	\end{align*}
	Le couple $(\varepsilon,\eta)$ vérifie toutes les hypothèses des Propositions \ref{po:avoidance}, \ref{po:trackpoints}.
	Compte tenu de la majoration fournie par la Proposition \ref{po:controlofepsilon}, il en découle donc que 
\begin{equation*}
	\liminf_{B} \delta_{\PP^1,Q,B,\frac{1}{2}}(\chi(\varepsilon,\eta))=0<1\leqslant \limsup_{B} \delta_{\PP^1,Q,B,\frac{1}{2}}(\chi(\varepsilon,\eta))\leqslant \limsup_{B}\delta_{\PP^1,Q,B,\frac{1}{2}}(\chi(\varepsilon))=O_{\varepsilon}(1).\qedhere
\end{equation*}
\end{proof}
\subsection{Zoom sous-critique}\label{se:weakzoomofalgebraicnumbers}
La principale occupation de cette section est la partie $r>\frac{1}{2}$ du Théorème \ref{th:secondtheorem}. On va prouver un résultat sur la distribution locale en un point réel vérifiant des inégalités de type analogue à celle de Liouville, qui \textit{a fortiori} s'applique aux nombres algébriques. Pour cela on rappelle des notions standard d'approximation diophantienne des nombres réels. 
\begin{definition}\label{def:irrationality}
	Soit $t\in\RR_{>0}$. On dit qu'un nombre réel $\theta$ a \textit{un ordre d'irrationalité} $t$ s'il existe une constante $C=C(\theta,t)>0$ telle que 
\begin{equation}\label{eq:Ct}
	\left|\theta-\frac{m}{n}\right| \geqslant \frac{C}{n^t} \quad \forall (m,n)\in\ZZ\times \NN^*.
\end{equation}
On désigne par $\dist(\theta,\ZZ)=\min(\{\theta\},1-\{\theta\})$. 
	\textit{La mesure d'irrationalité} $m(\theta)$ est la borne inférieure de l'ensemble des ordres d'irrationalité de $\theta$. De manière équivalente, elle est égale au supremum des nombres réels $s$ tels que 
	$$\liminf_{q\in\NN}q^{s-1}\dist(q\theta,\ZZ)=0.$$
	Si $\theta$ est algébrique, $m(\theta)$ est rien d'autre que la constante d'approximation $\alpha(\theta,\PP^1)$ associée au fibré $\mathcal{O}(1)$.
\end{definition}
Résumons les théorèmes classiques suivants (voir par exemple \cite[Chapter 1]{Bugeaud} pour des détails). 
\begin{theorem}[Dirichlet, Liouville, Roth]
	Le nombre $2$ est un ordre d'irrationalité et la mesure d'irrationalité d'un nombre quadratique. Pour tout $\varepsilon>0$, le nombre $2+\varepsilon$ est un ordre d'irrationalité d'un nombre algébrique de degré $\geqslant 3$ dont la mesure d'irrationalité est $2$.
\end{theorem}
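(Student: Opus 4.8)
On établira séparément la minoration $m(\theta)\geqslant 2$, valable pour tout nombre réel irrationnel, et la majoration $m(\theta)\leqslant 2$, qui demandera un argument différent selon le degré. Pour la minoration, j'invoquerai le principe des tiroirs de Dirichlet, qui fournit une infinité de rationnels $p/q$ (écrits sous forme réduite, $q\geqslant 1$) vérifiant $|\theta-p/q|<q^{-2}$; si un réel $t<2$ était un ordre d'irrationalité de $\theta$, de constante $C>0$ dans \eqref{eq:Ct}, on aurait $Cq^{-t}\leqslant|\theta-p/q|<q^{-2}$ le long de cette suite, donc $C<q^{t-2}\to 0$, ce qui est impossible. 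Aucun $t<2$ n'est donc un ordre d'irrationalité, d'où $m(\theta)\geqslant 2$ par définition de la borne inférieure.

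Pour la majoration dans le cas quadratique, on utilisera l'inégalité de Liouville. On choisira un polynôme $P(X)=aX^2+bX+c\in\ZZ[X]$ de degré $2$ irréductible sur $\QQ$ annulant $\theta$, de seconde racine $\theta'$ (le conjugué de $\theta$). Pour tout $(m,n)\in\ZZ\times\NN^*$, l'entier $n^2P(m/n)=am^2+bmn+cn^2$ est non nul car $\theta\notin\QQ$, donc de valeur absolue $\geqslant 1$; en factorisant $P(X)=a(X-\theta)(X-\theta')$, on obtiendra
$$\left|\frac{m}{n}-\theta\right|\geqslant\frac{1}{|a|\,n^2\left|\frac{m}{n}-\theta'\right|}.$$
Si $|m/n-\theta|\geqslant 1$, l'inégalité \eqref{eq:Ct} avec $t=2$ est triviale; sinon $|m/n-\theta'|\leqslant 1+|\theta-\theta'|$, ce qui donne $|m/n-\theta|\geqslant Cn^{-2}$ avec $C=\min\bigl(1,|a|^{-1}(1+|\theta-\theta'|)^{-1}\bigr)>0$. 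Donc $2$ est un ordre d'irrationalité de $\theta$, d'où $m(\theta)\leqslant 2$, puis $m(\theta)=2$ en combinant avec Dirichlet. Appliqué à $\theta=\sqrt{b/a}$ (cas $P(X)=aX^2-b$, $\theta'=-\theta$), ce calcul redonnera une valeur admissible pour la constante $\Xi(\theta)$ de \eqref{eq:liouvillelemma}.

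Pour la majoration dans le cas d'un nombre algébrique de degré $\geqslant 3$, on s'appuiera sur le théorème de Roth tel qu'il est rappelé dans l'introduction: pour tout $\lambda>0$ il existe $C(\theta,\lambda)>0$ avec $|m/n-\theta|>C(\theta,\lambda)\,n^{-(2+\lambda)}$ pour tout rationnel $m/n$. C'est exactement dire que $2+\lambda$ est un ordre d'irrationalité de $\theta$ au sens de la définition \ref{def:irrationality}; on en tirera $m(\theta)\leqslant 2+\lambda$ pour tout $\lambda>0$, donc $m(\theta)\leqslant 2$, et finalement $m(\theta)=2$ avec la minoration de Dirichlet.

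L'unique point non élémentaire, et le principal obstacle, est la majoration en degré $\geqslant 3$: le théorème de Roth est profond, et surtout ineffectif — la constante $C(\theta,\lambda)$ n'est pas explicitée par sa preuve, à la différence de la constante de Liouville du cas quadratique. C'est précisément cette ineffectivité qui se répercutera sur le terme d'erreur du zoom sous-critique traité dans la section \ref{se:weakzoomofalgebraicnumbers}.
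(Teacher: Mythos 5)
The paper states this theorem with the attribution ``Dirichlet, Liouville, Roth'' but gives no proof — it is cited as classical background for the definition of irrationality measure, so there is no in-text argument to compare against. Your reconstruction is correct and is exactly the standard route one would take: Dirichlet's pigeonhole for the uniform lower bound $m(\theta)\geqslant 2$ (any $t<2$ would force $C<q^{t-2}\to 0$ along the approximating sequence), the Liouville factorisation $n^2P(m/n)\in\ZZ\setminus\{0\}$ for the effective constant in the quadratic case, and Roth's theorem for $m(\theta)\leqslant 2+\lambda$ in degree $\geqslant 3$. The one nuance worth underlining, which you do handle correctly, is the asymmetry in the statement itself: for a quadratic $\theta$ the exponent $2$ is itself an \emph{order} of irrationality (there is a positive Liouville constant $\Xi(\theta)$), whereas for degree $\geqslant 3$ the theorem only claims $2+\varepsilon$ is an order — whether $2$ itself is an order (equivalently, whether the partial quotients of $\theta$ are bounded) is open, and Roth gives no effective $C(\theta,\lambda)$. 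Your closing remark that this ineffectivity propagates to the error term in the sub-critical zoom is precisely the point the paper makes around \eqref{eq:weakalpha} and \eqref{eq:boundford}.
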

Le résultat principal de cette section est le suivant, qui est valide pour tout nombre algébrique de degré $\geqslant 2$. Pour ceux dont le degré est $2$, on a un meilleur contrôle du terme d'erreur.
\begin{theorem}\label{th:localdistributionofrealnumbers}
	Soient $\alpha$ un nombre réel et $r>\frac{1}{2}$. Alors pour tout $\varepsilon_1>\varepsilon_2\geqslant 0$ et $\tau>0$,
	\begin{itemize}
		\item si $ \frac{1}{2}<r\leqslant 1$, supposons que $m(\alpha)=2$, on a
	\begin{equation}\label{eq:weakalpha}
		\delta_{\PP^1,\alpha,B,r}(\chi(\varepsilon_1,\varepsilon_2))=
		B^{2-\frac{1}{r}}\frac{3}{\pi^2 \sup(1,\alpha^2)}\int\chi(\varepsilon_1,\varepsilon_2) \operatorname{d}x+O_{\alpha,\varepsilon_i,\tau}(B^{1-\frac{1}{2r}+\tau}).
	\end{equation}
	Si de plus $2$ est un ordre d'irrationalité de $\alpha$, à savoir $\exists \Xi(\alpha)\in\mathopen]0,1\mathclose[$ tel que pour tout nombre rationnel $\frac{p}{q}$ on ait
	\begin{equation}\label{eq:LiouvillealphaXi}
	\left|\alpha-\frac{p}{q}\right|\geqslant \frac{\Xi(\alpha)}{q^2},
	\end{equation}
	on a, en notant
	\begin{equation}\label{eq:deltalpha}
	\varDelta(\alpha)=\frac{\Xi(\alpha)^{-1}}{\log(\Xi(\alpha)^{-1})},
	\end{equation}
\begin{equation}\label{eq:strongalpha}
\begin{split}
	\delta_{\PP^1,\alpha,B,r}(\chi(\varepsilon_1,\varepsilon_2))&= B^{2-\frac{1}{r}}\frac{3}{\pi^2 \sup(1,\alpha^2)}\int\chi(\varepsilon_1,\varepsilon_2) \operatorname{d}x\\
	&\quad+O_{\varepsilon_i}\left(\varDelta(\alpha)\Xi(\alpha)^{-\frac{1}{2}}B^{1-\frac{1}{2r}}\log (B)\log (\Xi(\alpha)^{-\frac{1}{2}}B^{1-\frac{1}{2r}})\right).
	\end{split}
\end{equation}
\item si $r>1$, on a
$$\delta_{\PP^1,\alpha,B,r}(\chi(\varepsilon_1,\varepsilon_2))=B^{2-\frac{1}{r}}\frac{3}{\pi^2 \sup(1,\alpha^2)}\int\chi(\varepsilon_1,\varepsilon_2) \operatorname{d}x+O_{|\alpha|,\varepsilon_i}(B\log B).$$
	\end{itemize}
\end{theorem}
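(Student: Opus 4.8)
Le plan est de ramener le comptage à un dénombrement de points entiers dans une région mince, puis d'invoquer l'équirépartition modulo $1$. D'abord, l'involution $[u:v]\mapsto[v:u]$ de $\PP^1$ préserve la hauteur \eqref{eq:absoluteWeil}, échange $\alpha=[\alpha:1]$ et $1/\alpha=[1:\alpha]$, et transforme localement $u/v-\alpha$ en $-\alpha^{-2}(u/v-\alpha)+O((u/v-\alpha)^2)$; comme seuls interviennent les points à distance $O(B^{-1/r})$ de $\alpha$, on se ramène ainsi au cas $|\alpha|<1$, le facteur de Jacobien $\alpha^2=\sup(1,\alpha^2)$ n'apparaissant que pour $|\alpha|>1$. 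Pour $|\alpha|<1$ et $B\gg_\alpha 1$, tout $[u:v]$ proche de $\alpha$ vérifie $|u|<|v|$, donc $\max(|u|,|v|)\leqslant B$ équivaut à $0<v\leqslant B$; en posant $\delta_i=\varepsilon_i B^{-1/r}$ et en levant la condition $\pgcd(u,v)=1$ par l'inversion de Möbius, on obtient $\delta_{\PP^1,\alpha,B,r}(\chi(\varepsilon_1,\varepsilon_2))=\sum_{d\leqslant B}\mu(d)M(B/d)$, où
$$M(Y)=\sum_{1\leqslant b\leqslant Y}\#\{a\in\ZZ:\delta_2 b<|a-b\alpha|\leqslant\delta_1 b\}$$
compte les points entiers de deux secteurs angulaires d'ouverture $\delta_1-\delta_2$ autour de la droite de pente $\alpha$, tronqués à la hauteur $Y$, et dont l'aire vaut $(\delta_1-\delta_2)Y^2$. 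Tout revient à estimer $M(Y)$.

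Pour $r>1$, aucune hypothèse sur $\alpha$ n'est requise: le nombre d'entiers d'un intervalle de longueur $L$ étant $L+O(1)$, on a $M(Y)=(\delta_1-\delta_2)Y^2+O(Y)$, d'où
$$\delta_{\PP^1,\alpha,B,r}(\chi(\varepsilon_1,\varepsilon_2))=(\varepsilon_1-\varepsilon_2)B^{2-\frac1r}\sum_{d\leqslant B}\frac{\mu(d)}{d^2}+O(B\log B)=\frac{6}{\pi^2}(\varepsilon_1-\varepsilon_2)B^{2-\frac1r}+O(B\log B),$$
la queue $\sum_{d>B}\mu(d)d^{-2}=O(B^{-1})$ ne contribuant que $O(B^{1-\frac1r})=o(B)$; comme $\frac{6}{\pi^2}(\varepsilon_1-\varepsilon_2)=\frac{3}{\pi^2}\int\chi(\varepsilon_1,\varepsilon_2)\operatorname{d}x$ et $\sup(1,\alpha^2)=1$, c'est l'énoncé voulu.

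Pour $\frac12<r\leqslant1$, les secteurs sont partout minces ($\delta_1 b\leqslant\varepsilon_1 B^{1-\frac1r}\leqslant\varepsilon_1<1$ pour $B$ grand), donc chaque tranche contient au plus un entier. L'hypothèse $m(\alpha)=2$ (Définition \ref{def:irrationality}), ou la minoration explicite \eqref{eq:LiouvillealphaXi}, fournit un phénomène de \og trou\fg{}: $M(Y)=0$ tant que $Y\ll_\tau B^{\frac1{2r}-\tau}$ (resp. $Y<\sqrt{\Xi(\alpha)/\varepsilon_1}\,B^{\frac1{2r}}$), ce qui limite la somme aux $d\leqslant D_1$ avec $D_1\ll_\tau B^{1-\frac1{2r}+\tau}$ (resp. $D_1\asymp\Xi(\alpha)^{-1/2}B^{1-\frac1{2r}}$). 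Ensuite, l'existence d'un entier dans $(b\alpha+\delta_2 b,b\alpha+\delta_1 b]$, resp. dans $[b\alpha-\delta_1 b,b\alpha-\delta_2 b)$, équivaut à ce que $\{-b\alpha\}$, à un décalage de taille $\leqslant\varepsilon_2 B^{1-\frac1r}$ près, tombe dans un intervalle de longueur $(\delta_1-\delta_2)b$. Une découpe dyadique en $b$ rend ces intervalles essentiellement constants sur chaque bloc; combinée à une majoration de la discrépance $D_Y^\ast$ de $(\{b\alpha\})_{b\leqslant Y}$ obtenue via les convergents de $\alpha$ (à savoir $Y D_Y^\ast\ll_\tau Y^\tau$ lorsque $m(\alpha)=2$, et $Y D_Y^\ast\ll\varDelta(\alpha)\log Y$ lorsque $\alpha$ est mal approximable de constante $\Xi(\alpha)$, $\varDelta(\alpha)$ étant donné par \eqref{eq:deltalpha}), elle donne $M(Y)=(\delta_1-\delta_2)Y^2+E(Y)$ avec $|E(Y)|\ll_{\alpha,\varepsilon_i,\tau}Y^\tau$, resp. $|E(Y)|\ll_{\varepsilon_i}\varDelta(\alpha)(\log Y)^2$. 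La sommation sur $d\leqslant D_1$ du terme principal redonne $\frac{3}{\pi^2}B^{2-\frac1r}\int\chi(\varepsilon_1,\varepsilon_2)\operatorname{d}x$ à $O((\varepsilon_1-\varepsilon_2)B^{2-\frac1r}D_1^{-1})=O(B^{1-\frac1{2r}+\tau})$ près (issu de la queue de $\sum\mu(d)d^{-2}$), et celle des $E(B/d)$ vaut $\ll B^\tau\sum_{d\leqslant D_1}d^{-\tau}\ll B^\tau D_1^{1-\tau}\ll B^{1-\frac1{2r}+\tau}$, resp. $\ll\varDelta(\alpha)D_1(\log B)^2\ll\varDelta(\alpha)\Xi(\alpha)^{-1/2}B^{1-\frac1{2r}}(\log B)\log\big(\Xi(\alpha)^{-1/2}B^{1-\frac1{2r}}\big)$ en suivant soigneusement les constantes; on aboutit ainsi à \eqref{eq:weakalpha} et à \eqref{eq:strongalpha}.

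Le point délicat sera l'estimation équirépartie de $M(Y)$: il faut le gain $Y^\tau$, et non le seul $Y^{1/2+\tau}$ que fournit l'inégalité d'Erd\H{o}s-Tur\'an, ce qui oblige à passer par l'expansion en fraction continue de $\alpha$; c'est là que l'ineffectivité du théorème de Roth se répercute en celle de la constante de \eqref{eq:weakalpha}, et que, pour $\alpha$ quadratique, le contrôle du nombre $\ll\log Y/\log\Xi(\alpha)^{-1}$ de convergents $\leqslant Y$ produit le facteur $\varDelta(\alpha)$ de \eqref{eq:strongalpha}. L'autre difficulté technique, la variation avec $b$ de la longueur et de la position des intervalles-cibles, sera absorbée par la découpe dyadique.
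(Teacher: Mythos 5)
Votre schéma global est celui du texte (inversion de Möbius, phénomène de \og trou \fg{} issu de $m(\alpha)=2$ ou de \eqref{eq:LiouvillealphaXi} pour borner le paramètre $d$, réduction du secteur mince à l'équirépartition modulo $1$ de $(b\alpha)$ via Koksma--Denjoy avec les deux régimes de discrépance, comparaison d'aires pour $r>1$), mais l'étape centrale pour $\frac{1}{2}<r\leqslant 1$ comporte une lacune réelle. Vous affirmez qu'une découpe \emph{dyadique} en $b$ rend les intervalles-cibles \og essentiellement constants \fg{} et vous en déduisez $M(Y)=(\delta_1-\delta_2)Y^2+O_\tau(Y^\tau)$, resp. $O(\varDelta(\alpha)(\log Y)^2)$. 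Or, sur un bloc dyadique, la longueur $(\delta_1-\delta_2)b$ et le décalage $\delta_2 b$ de l'intervalle-cible varient d'un facteur $2$ (et le décalage est d'ordre $1$ lorsque $r=1$, l'intervalle pouvant chevaucher un entier, cf. les cas I/II de la démonstration de la proposition \ref{po:smallepsilon}) : remplacer l'intervalle mobile par un intervalle fixe coûte alors une erreur comparable au terme principal du bloc, et la découpe dyadique ne peut pas fournir l'asymptotique. De plus, la borne intermédiaire $|E(Y)|\ll Y^\tau$ est plus forte que ce que la méthode — et que l'énoncé lui-même pour $d=1$ — donne : en découpant en morceaux de longueur $L$, l'erreur d'échange due aux extrémités mobiles est $\approx \delta LY$ et l'erreur de discrépance cumulée $\approx (Y/L)\log$, d'où, après équilibrage, une erreur de l'ordre de $\sqrt{\delta}\,Y$ (à des logarithmes près), soit $\approx B^{-\frac{1}{2r}}(B/d)$ ; aucune compensation n'est proposée qui permettrait de descendre à $Y^\tau$, et l'on ne peut pas non plus invoquer un comptage de points entiers dans un triangle à bords de pentes $\alpha\pm\varepsilon_iB^{-1/r}$, ces pentes dépendant de $B$ et n'héritant d'aucune propriété diophantienne de $\alpha$.

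La réparation est précisément ce que fait le texte : découper $[1,Y]$ en $N$ morceaux avec $N$ une puissance de $B$ bien choisie, remplacer sur chaque morceau l'intervalle mobile par un intervalle fixe $I_k$, puis majorer les régions échangées par une seconde application de Koksma--Denjoy (termes $E_{\varepsilon,d,k,B}$), ce qui donne une erreur par $d$ en $\approx B^{1-\frac{1}{2r}}/d$ à des facteurs $B^\tau$ ou logarithmiques près ; la sommation sur $d\leqslant D_1$ redonne alors exactement \eqref{eq:weakalpha} et \eqref{eq:strongalpha} — votre comptabilité finale tombe juste, mais à partir d'un intrant non justifié. Signalons enfin que la réduction à $|\alpha|<1$ par l'involution $[u:v]\mapsto[v:u]$ introduit, à cause du terme quadratique, un flou de bord de largeur $O(B^{-2/r})$ dont la contribution doit être majorée par le même argument d'équirépartition (le texte évite ce point en traitant directement $\alpha>1$ puis $0<\alpha<1$) ; le cas $r>1$ et la restriction de la somme en $d$ sont, eux, corrects.
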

\begin{remark*}
	La dépendance du terme d'erreur dans \eqref{eq:weakalpha} sur $\alpha$ et $\tau$ est intrinsèque en un sens similaire au théorème de Roth. En effet, elle dépende des constantes $C(\alpha,t)$ dans \eqref{eq:Ct} avec $t$ proche de $2$ (cf. aussi \eqref{eq:boundford} \textit{infra}). Alors que le terme d'erreur dans \eqref{eq:strongalpha} ne dépende que de la taille de $\alpha$.
\end{remark*}
\begin{remark}\label{rmk:Lang2}
	Il convient de noter qu'un dénombrement similaire dans la direction de l'approximation diophantienne métrique fut considéré notamment par Erd\H{o}s, Lang, Leveque, Schmidt. Un cas particulier est le suivant (voir par exemple \cite[II \S3 Theorem 8]{Lang2}). Soit $r>\frac{1}{2}$, alors si $m(\alpha)=2$ (ceci implique que $\alpha$ est du type $g(t)=t^\delta$ pour tout $\delta>0$, cf. \cite[p. 20]{Lang2})
	$$\sharp\left\{(p,q)\in\NN^{*2}:\frac{\varepsilon_2}{q^{\frac{1}{r}}}<\alpha-\frac{p}{q}\leqslant \frac{\varepsilon_1}{q^{\frac{1}{r}}},1\leqslant q\leqslant B\right\}\sim B^{2-\frac{1}{r}} \left(2-\frac{1}{r}\right)^{-1}\int\chi(\varepsilon_1,\varepsilon_2) \operatorname{d}x.$$
	Observons que la constante $\left(2-\frac{1}{r}\right)^{-1}$ n'apparaît pas dans notre résultat \eqref{eq:strongalpha}.
\end{remark}

Tout d'abord on reformule ce problème de zoom en un problème de dénombrement.
Pour $\varepsilon_1>\varepsilon_2\geqslant 0,B>0$, on définit $S(\varepsilon_1,\varepsilon_2,B)$ l'ensemble des point $(u,v)\in\ZZ\times\NN^*$ vérifiant 
\begin{equation}\label{eq:thesetS}
\pgcd(u,v)=1,\quad \varepsilon_2B^{-\frac{1}{r}}<\left|\frac{u}{v}-\alpha\right|\leqslant \varepsilon_1 B^{-\frac{1}{r}},\quad \max(|u|,|v|)\leqslant B 
\end{equation}
\begin{figure}[h]
	\centering
	\includegraphics[scale=0.7]{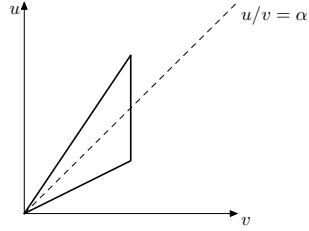}
	\caption{La région triangulaire}
	\label{fig:area}
\end{figure}
Graphiquement, si l'on prend $\varepsilon_2=0$, c'est-à-dire on prend la fonction de test $\chi(\varepsilon)$ et on calcule $\delta_{\PP^1,\alpha,B,r}(\chi(\varepsilon))$, on compte des points entiers primitifs à l'intérieur du triangle dont l'aire est d'ordre de grandeur $B^{2-\frac{1}{r}}$ et celle de la longueur du bord est $B$ (cf. Figure \ref{fig:area}). Donc la comparaison classique avec l'aire du triangle n'est utilisable que pour les cas où $r>1$. Ceux où $r\leqslant 1$ nécessitent un travail supplémentaire. Notre approche s'appuie sur la théorie d'équirépartition modulo $1$ initiée dans \cite{Pagelot}. 

	\begin{definition}\label{de:discrépance}
		Soit $(x_n)$ une suite de nombres réels dans $[0,1[$. 
		Soit $N>0$. La \textit{discrépance} (à  $N$) $D_{(x_n)}(N)$ de cette suite est définie par 
		$$  D_{(x_n)}(N)=\sup _{\lambda\in\mathclose[0,1\mathclose]}\left|\frac{\sharp\{n\leqslant N:x_n\in\mathopen[0,\lambda\mathclose]\}}{N}-\lambda\right|.$$
		Si la suite $(x_n)=(\{n\alpha\})$ pour un nombre $\alpha$ réel fixé, on notera $D_\alpha(N)=D_{(n\alpha)}(N)$.
	\end{definition}
	On rappelle l'inégalité de Koksma-Denjoy (voir, par exemple, \cite{K-N}  p. 143).
	\begin{theorem}[Koksma-Denjoy]\label{th:koksma-denjoy}
		Soient $(x_n)$ une suite de nombres réels dans $[0,1[$ et $N\geqslant 1$. Soit $\phi$ une fonction mesurable à variation bornée définie sur $\mathopen[0,1\mathclose]$ (on note $V(\phi)$ la variation totale de $\phi$). Alors
		$$\left|\frac{1}{N}\sum_{n=1}^{N}\phi(x_n)-\int_{0}^{1}\phi(t)\operatorname{d}t\right|\leqslant V(\phi)D_{(x_n)}(N).$$
	\end{theorem}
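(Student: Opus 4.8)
\emph{Plan de démonstration.} L'idée est de se ramener par intégration par parties à la fonction de discrépance signée, puis de conclure par une majoration triviale. Je commencerais par remplacer chaque $x_n$ par sa partie fractionnaire $\{x_n\}$ : cela ne modifie ni les valeurs $\phi(\{x_n\})$ ni la discrépance, de sorte que l'on peut supposer $x_n\in[0,1[$ pour tout $n$. J'introduirais alors la fonction de discrépance signée
\[
R_N(t)=\frac1N\,\sharp\{1\leqslant n\leqslant N:x_n\leqslant t\}-t,\qquad t\in[0,1],
\]
qui est à variation bornée sur $[0,1]$ et vérifie $R_N(1)=0$ ; par définition (la discrépance étant prise ici avec des intervalles fermés $[0,\lambda]$) on a $D_{(x_n)}(N)=\sup_{t\in[0,1]}|R_N(t)|$, d'où aussi $|R_N(t^-)|\leqslant D_{(x_n)}(N)$ pour tout $t\in[0,1]$ en passant à la limite à gauche.

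Ensuite, la fonction $\phi$ étant à variation bornée, je lui associerais la mesure de Borel signée finie $\nu$ sur $]0,1]$ caractérisée par $\phi(t)=\phi(0^+)+\nu(]0,t])$, laquelle satisfait $\lvert\nu\rvert(]0,1])\leqslant V(\phi)$. En écrivant $\phi(x_n)-\phi(0^+)=\int_{]0,1]}\mathbf{1}_{\{s\leqslant x_n\}}\,d\nu(s)$ et en échangeant la somme finie avec l'intégrale (Fubini), on obtiendrait l'identité
\[
\frac1N\sum_{n=1}^N\phi(x_n)-\int_0^1\phi(t)\,dt=\int_{]0,1]}\Big(\tfrac1N\sharp\{n\leqslant N:x_n\geqslant s\}-(1-s)\Big)\,d\nu(s)=-\int_{]0,1]}R_N(s^-)\,d\nu(s),
\]
où l'on a utilisé $\tfrac1N\sharp\{x_n\geqslant s\}-(1-s)=-\big(\tfrac1N\sharp\{x_n<s\}-s\big)=-R_N(s^-)$. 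Ce n'est rien d'autre que la forme mesurée de l'intégration par parties de Riemann--Stieltjes $\int_0^1\phi\,dR_N=-\int_0^1 R_N\,d\phi$, les termes de bord s'annulant grâce à $R_N(1)=0$. La majoration finale serait alors immédiate :
\[
\left|\frac1N\sum_{n=1}^N\phi(x_n)-\int_0^1\phi(t)\,dt\right|\leqslant\sup_{s\in]0,1]}\lvert R_N(s^-)\rvert\cdot\lvert\nu\rvert(]0,1])\leqslant D_{(x_n)}(N)\,V(\phi),
\]
ce qui est l'inégalité voulue.

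Le point à traiter avec le plus de soin — et la seule véritable difficulté de ce qui est par ailleurs un argument classique (cf. \cite{K-N}) — est la gestion des discontinuités de $\phi$, qui peuvent coïncider avec celles de $R_N$ (situées aux points $x_n$) et rendre ambigu le sens de l'intégrale de Riemann--Stieltjes ainsi que la représentation de $\phi$ par $\nu$. Je contournerais cette difficulté en décomposant $\phi=\phi_{\mathrm{c}}+\phi_{\mathrm{s}}$ en sa partie continue (à variation bornée) et sa partie de sauts (fonction de saltus), avec $V(\phi)=V(\phi_{\mathrm{c}})+V(\phi_{\mathrm{s}})$ : pour $\phi_{\mathrm{c}}$ l'intégration par parties ci-dessus est licite sans réserve, tandis que $\phi_{\mathrm{s}}$ se traite saut par saut, chaque saut élémentaire étant une fonction indicatrice du type $c\,\mathbf{1}_{[p,1]}$, $c\,\mathbf{1}_{]p,1]}$ ou $c\,\mathbf{1}_{\{p\}}$, pour laquelle la différence $\frac1N\sum-\int$ vaut respectivement $c\,R_N(p^-)$, $-c\,R_N(p)$ ou $c\,(R_N(p)-R_N(p^-))$ et est donc majorée en valeur absolue par la variation ($|c|$, $|c|$ ou $2|c|$ respectivement) fois $D_{(x_n)}(N)$. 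En sommant ces contributions, pondérées par les variations dont la somme est $V(\phi_{\mathrm{s}})$, on retrouve la borne annoncée ; les cas particuliers où un $x_n$ vaut $0$ se traitent de même en faisant intervenir $\phi(0)$ au lieu de $\phi(0^+)$.
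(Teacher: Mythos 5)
Le théorème est cité dans le texte comme résultat classique, renvoyé à \cite{K-N} p.~143, sans démonstration propre ; il n'y a donc pas d'argument du papier auquel comparer le vôtre. Cela dit, votre démonstration est correcte et reproduit, sous forme mesurée, l'argument standard d'intégration par parties de Riemann--Stieltjes de Kuipers--Niederreiter : introduire la discrépance signée $R_N$, écrire la différence somme-intégrale comme $\int_0^1\phi\,dR_N=-\int_0^1 R_N\,d\phi$ (en exploitant $R_N(1)=0$), puis majorer par $\sup|R_N|\cdot V(\phi)$. Les identités intermédiaires sont justes, en particulier $\tfrac1N\sharp\{x_n\geqslant s\}-(1-s)=-R_N(s^-)$ et la domination $|R_N(s^-)|\leqslant D_{(x_n)}(N)$ (limite à gauche de quantités toutes majorées par la discrépance, prise avec intervalles fermés $[0,\lambda]$ conformément à la Définition~\ref{de:discrépance}). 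Le point le plus délicat — les discontinuités de $\phi$ pouvant coïncider avec celles de $R_N$, ce qui rend l'intégrale de Riemann--Stieltjes classique éventuellement mal définie — est correctement identifié et traité par décomposition $\phi=\phi_{\mathrm c}+\phi_{\mathrm s}$, avec vérification élémentaire de l'inégalité pour chacun des trois sauts prototypes et additivité des variations $V(\phi)=V(\phi_{\mathrm c})+V(\phi_{\mathrm s})$. C'est une rédaction propre et complète d'une preuve classique.
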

\begin{remark*}
	Une suite $(x_n)$ de nombres réels dans $[0,1[$ est dite \textit{équirépartie modulo $1$} si pour tout $0\leqslant a<b\leqslant 1$, on a 
	$$\lim\limits_{N\to\infty}\frac{1}{N}\sharp \{1\leqslant n\leqslant N: x_n\in\mathopen[a,b\mathclose [ \}=b-a.$$
	En effet ceci revient à dire que $D_{(x_n)}(N)=o(N)$ pour $N\to\infty$ (cf. \cite[Corollary 1.1]{K-N}). De manière équivalente, pour toute fonction $\phi$ à variation bornée sur $\mathopen[0,1\mathclose]$, on a 
	$$\frac{1}{N}\sum_{n=1}^{N}\phi(x_n)\longrightarrow\int_{0}^{1}\phi(t)\operatorname{d}t,\quad N\to\infty.$$
	Pour tout nombre irrationnel $\theta$, la suite $(\{n\theta\})$ est équirépartie modulo $1$ (cf. \cite[Example 2.1]{K-N}).
\end{remark*}
 L'inégalité de Erd\H{o}s-Tur\'an donne un contrôle de la discrépance (cf.  \cite{K-N}  p. 122-123) pour les nombres irrationnels dont la mesure d'irrationalité est finie.
	\begin{theorem}[Erd\H{o}s-Tur\'an]\label{co:controlforthediscrepancy}
				Soit $\vartheta$ un nombre irrationnel ayant un ordre d'irrationalité $t>1$. Alors pour tout $\sigma>0$, on a $$D_{\vartheta}(N)=O_{\sigma,\vartheta,t}(N^{-\frac{1}{t-1}+\sigma}).$$
	\end{theorem}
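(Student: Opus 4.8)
The plan is to pass through the continued fraction expansion of $\theta$ and to use the classical estimate bounding the discrepancy of $(n\theta)$ by its partial quotients. Write $\theta=[a_0;a_1,a_2,\dots]$ with convergents $p_k/q_k$, so that $q_{k+1}=a_{k+1}q_k+q_{k-1}$ and $\tfrac{1}{2q_{k+1}}<\|q_k\theta\|<\tfrac{1}{q_{k+1}}$, where $\|\cdot\|$ denotes the distance to $\ZZ$. For a given $N$ let $k=k(N)$ be the index with $q_k\le N<q_{k+1}$. By the continued-fraction discrepancy estimate (see \cite{K-N}, Ch.~2, \S 3),
\[
N\,D_\theta(N)\ll 1+\sum_{i=1}^{k}a_i+\frac{N}{q_k},
\]
so everything reduces to bounding the right-hand side under the order-$t$ hypothesis. (The capped last term $N/q_k$ cannot be dropped: it accounts for how far $N$ reaches into the $(k{+}1)$-th block.)

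First I would turn \eqref{eq:Ct} into information on the continued fraction. Applying it to the convergent $p_k/q_k$ gives $\|q_k\theta\|\ge C(\theta,t)\,q_k^{-(t-1)}$, hence
\[
q_{k+1}<\frac{1}{\|q_k\theta\|}\le C(\theta,t)^{-1}\,q_k^{\,t-1}\qquad(k\ge 0).
\]
Two consequences. Since $N<q_{k+1}$, this forces $q_k\gg_{\theta,t}N^{1/(t-1)}$, so $N/q_k\ll_{\theta,t}N^{(t-2)/(t-1)}$. And for each $i\le k$ the partial quotient $a_i$ is squeezed between two bounds: on one hand $a_i\le q_i/q_{i-1}\le N/q_{i-1}$ because $q_i\le q_k\le N$; on the other hand, applying the displayed inequality at index $i-1$, $a_i\le q_i/q_{i-1}<C(\theta,t)^{-1}q_{i-1}^{\,t-2}$. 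Writing $q_{i-1}=N^{s}$ with $0\le s\le 1$, the first bound is $N^{1-s}$ and the second is $\ll_{\theta,t} N^{s(t-2)}$; the larger of the two is smallest exactly at the crossover $s=1/(t-1)$, which gives $a_i\ll_{\theta,t}N^{(t-2)/(t-1)}$ for every $i\le k$. Since the $q_i$ increase at least geometrically we have $k\ll\log N$, whence $\sum_{i=1}^{k}a_i\ll_{\theta,t}(\log N)\,N^{(t-2)/(t-1)}$.

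Putting the pieces together, $N\,D_\theta(N)\ll_{\theta,t}(\log N)\,N^{(t-2)/(t-1)}$, i.e.
\[
D_\theta(N)\ll_{\theta,t}(\log N)\,N^{-1/(t-1)}=O_{\sigma,\theta,t}\!\big(N^{-1/(t-1)+\sigma}\big)
\]
for every $\sigma>0$, which is the claim; the case $t=2$ recovers the familiar $D_\theta(N)\ll_\theta N^{-1}\log N$ for badly approximable numbers. An alternative route, closer to the name attached to the statement, starts from the Erd\H{o}s--Tur\'an inequality $D_\theta(N)\ll H^{-1}+N^{-1}\sum_{h=1}^{H}(h\|h\theta\|)^{-1}$: one first establishes $\sum_{h\le H}\|h\theta\|^{-1}\ll_{\theta}H^{t-1}\log H$ by grouping $h$ into consecutive blocks of length $q_j$ and again converting \eqref{eq:Ct} into control of the $q_j$, deduces $\sum_{h\le H}(h\|h\theta\|)^{-1}\ll_{\theta}H^{\,t-2+\sigma}$ by partial summation, and optimizes in $H$ (taking $H\asymp N^{1/(t-1)}$), reaching the same exponent. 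In either approach the real point — and the main obstacle — is the same: the order-$t$ hypothesis only provides a lower bound for $\|q\theta\|$, and one must convert it into a genuine upper bound of size $N^{(t-2)/(t-1)}$ for every partial quotient $a_i$ with $q_i\le N$, which is where the competing inequalities $a_i\le q_i/q_{i-1}$ and $a_i\ll_{\theta,t}q_{i-1}^{\,t-2}$ must be balanced against one another.
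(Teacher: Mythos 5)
Your proposal is correct, but it proves the statement by a different route than the one the paper relies on: the paper gives no proof at all and simply cites Kuipers--Niederreiter (pp.~122--123), where the argument runs through the Erd\H{o}s--Tur\'an inequality $D_\theta(N)\ll H^{-1}+N^{-1}\sum_{h\leqslant H}(h\|h\theta\|)^{-1}$, the estimate $\sum_{h\leqslant H}(h\|h\theta\|)^{-1}=O(H^{t-2+\sigma})$ for numbers of finite type, and the choice $H\asymp N^{1/(t-1)}$ --- exactly the ``alternative route'' you sketch in your last lines. Your main argument is instead elementary and goes through the continued fraction: the Ostrowski-type bound $ND_\theta(N)\ll 1+\sum_{i\leqslant k}a_i+N/q_k$ (with $q_k\leqslant N<q_{k+1}$), the conversion of \eqref{eq:Ct} into $q_{k+1}\ll_{\theta,t}q_k^{\,t-1}$, and the balancing of the two bounds $a_i\leqslant N/q_{i-1}$ and $a_i\ll q_{i-1}^{\,t-2}$, which is carried out correctly; the capped term $N/q_k$ is indeed indispensable, since $a_{k+1}$ alone is only $O(N^{t-2})$. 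What each approach buys: yours avoids exponential sums entirely and, because the paper's notion of ``ordre d'irrationalité'' is the effective inequality \eqref{eq:Ct} rather than the sup-type condition of Kuipers--Niederreiter, it even yields the sharper bound $D_\theta(N)\ll_{\theta,t}N^{-1/(t-1)}\log N$ (so the case $t=2$ recovers, within the same framework, the refined $\log$-estimate the paper obtains separately in Lemma \ref{le:boundforboundquotients}); the price is that you must invoke the classical discrepancy bound in its Ostrowski form with the capped last term, which you should either locate precisely in \cite{K-N} or derive in a line from the Ostrowski expansion of $N$, whereas the cited Erd\H{o}s--Tur\'an route is the one the theorem's name and the paper's reference actually point to.
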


\begin{definition}
	Pour un nombre réel $\vartheta$, écrivons $\vartheta=[a_0;a_1,\cdots],a_i\in\NN ,\forall i\geqslant 0$ son \emph{expansion en fraction continue} (cf. par exemple \cite[\S1.2]{Bugeaud}). Les nombres $a_i,i\geqslant 0$ sont appelés \emph{quotients partiels}. 
\end{definition}
Si les quotients partiels dans l'expansion en fraction continue d'un nombre irrationnel sont bornés, de manière équivalente, l'inégalité de Liouville \eqref{eq:LiouvillealphaXi} étant vérifiée, alors une meilleure majoration de la discrépance existe.
\begin{theorem}[\cite{K-N}, Theorem 3.4, p. 125]
	Soient $\vartheta=[a_0;a_1,\cdots]$ un nombre irrationnel. Supposons que les quotients partiels de $\vartheta$ sont bornés, i.e. il existe $M>0$ tel que $a_i\leqslant M,\forall i\geqslant 0$. Alors
	$$ND_\vartheta(N)\leqslant 3+\left(\frac{1}{\log\xi}+\frac{M}{\log(M+1)}\right) \log N,$$
	où
	$\xi=(1+\sqrt{5})/2$.
\end{theorem}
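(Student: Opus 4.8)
The plan is to combine the elementary theory of continued fractions with Steinhaus' three‑distance theorem. Replacing $\vartheta$ by its fractional part changes neither the hypothesis nor the sequence $(\{n\vartheta\})$, so one may assume $a_0=0$; write $p_k/q_k$ for the convergents, so that $q_{-1}=0$, $q_0=1$, $q_1=a_1$ and $q_{k+1}=a_{k+1}q_k+q_{k-1}$, and recall the standard bounds $\frac{1}{q_{k+1}+q_k}<\|q_k\vartheta\|<\frac{1}{q_{k+1}}$, where $\|\cdot\|$ denotes the distance to the nearest integer.

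The first step is to reduce $ND_\vartheta(N)$ to an Ostrowski digit sum. Given $N\geq 1$, let $m$ be defined by $q_{m-1}\leq N<q_m$ and write $N=\sum_{i=1}^m c_iq_{i-1}$ in Ostrowski form, i.e. $0\leq c_1\leq a_1-1$, $0\leq c_i\leq a_i$ for $i\geq 2$, with the carry rule $c_i=a_i\Rightarrow c_{i-1}=0$. The decisive observation is that the ``discrepancy functional'' $E_I(\lambda)=\#\{n\in I:\{n\vartheta\}\in[0,\lambda)\}-\lambda\,\#I$ is additive in the index set $I$, so that splitting $\{0,\dots,N-1\}$ into $c_i$ runs of $q_{i-1}$ consecutive integers at each level $i$ gives $\sup_\lambda|E_{\{0,\dots,N-1\}}(\lambda)|\leq\sum_i c_i\cdot\sup_\lambda|E_J(\lambda)|$ for a single run $J$ of length $q_{i-1}$. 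Since the discrepancy for intervals is rotation‑invariant, $\sup_\lambda|E_J(\lambda)|$ equals $q_{i-1}$ times the discrepancy of $\{0,\{\vartheta\},\dots,\{(q_{i-1}-1)\vartheta\}\}$, and here the three‑distance theorem applies: these $q_{i-1}$ points cut the circle into arcs of only two or three lengths, all comparable to $\|q_{i-2}\vartheta\|\asymp 1/q_{i-1}$, whence $q_{i-1}\cdot(\text{their discrepancy})\leq C$ for an absolute constant $C$. One concludes $ND_\vartheta(N)\leq C'+\sum_{i=1}^m c_i$; pinning $C'$ down to what is needed for the constant $3$ requires a tighter accounting that exploits the carry rule and the constraint $c_1\leq a_1-1$.

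The second step bounds the digit sum and the number of levels under $a_i\leq M$. Since every $a_i\geq 1$, the recursion forces $q_k\geq F_{k+1}\geq\xi^{k-1}$ (Fibonacci growth), so $q_{m-1}\leq N$ yields $m\leq\frac{\log N}{\log\xi}+O(1)$. Writing $\sum_{i=1}^m c_i\leq m+\sum_{i:\,c_i\geq 2}(c_i-1)\leq m+\sum_{i:\,a_i\geq 2}(a_i-1)$, one uses that for a level with $a_i\geq 2$ one has $q_i/q_{i-1}=a_i+q_{i-2}/q_{i-1}>a_i$, together with the monotonicity of $x\mapsto\frac{x-1}{\log x}$ on $[2,\infty)$ and the elementary inequality $(M-1)\log(M+1)\leq M\log M$, to obtain $a_i-1\leq\frac{M}{\log(M+1)}\log a_i\leq\frac{M}{\log(M+1)}\log(q_i/q_{i-1})$. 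Summing over the levels telescopes: $\sum_{i:a_i\geq 2}(a_i-1)\leq\frac{M}{\log(M+1)}\sum_{i=1}^m\log(q_i/q_{i-1})=\frac{M}{\log(M+1)}\log q_m$, and $q_m\leq(M+1)q_{m-1}\leq(M+1)N$ converts this into $\frac{M}{\log(M+1)}\log N+O(M)$. Adding the estimate for $m$ gives $\sum_i c_i\leq\bigl(\frac{1}{\log\xi}+\frac{M}{\log(M+1)}\bigr)\log N+O(1)$, and hence the claimed inequality after absorbing the additive constants into the $3$ via the bookkeeping alluded to above.

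I expect the genuine obstacle to be the first step — more precisely, turning the block estimate into the clean inequality carrying the constant $3$, which forces one to track the three‑distance combinatorics of $\{n\vartheta\}_{n<q_{i-1}}$ exactly (identifying the gap lengths as $\|q_{i-2}\vartheta\|$ and $\|q_{i-2}\vartheta\|\pm\|q_{i-1}\vartheta\|$ with their precise multiplicities) and counting how many of them land in an arbitrary initial interval $[0,\lambda)$, rather than settling for an $O(1)$. Once that is done, everything else — the Fibonacci lower bound on $q_{m-1}$, the convexity of $x\mapsto\frac{x-1}{\log x}$, and the telescoping of $\sum\log(q_i/q_{i-1})$ — is routine and elementary.
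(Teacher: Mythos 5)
The paper does not prove this statement: it is quoted verbatim from \cite[Theorem 3.4]{K-N} (p. 125) and used only as a black box to derive Lemme \ref{le:boundforboundquotients}. So there is no in-paper proof for you to match; what can be assessed is whether your blind proof is complete and correct.

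Your overall strategy — Ostrowski representation of $N$, additivity of the discrepancy functional over blocks, Fibonacci lower bound $q_{m-1}\geq F_m\geq\xi^{m-2}$, and the bound $\sum(a_i-1)\leq\frac{M}{\log(M+1)}\sum\log(q_i/q_{i-1})$ via monotonicity of $x\mapsto(x-1)/\log x$ together with $(M+1)^{M-1}\leq M^M$ — is exactly the standard route taken in Kuipers--Niederreiter, and the telescoping idea is the correct one. But as written the argument does not yield the stated inequality, for two reasons.

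First, your Step~1 is not carried out: you stop at ``$q_{i-1}$ times the discrepancy of a single block is $\leq C$ for an absolute constant'' and explicitly defer the determination of $C'$. To obtain the additive constant $3$ one needs the precise inequality $ND_\vartheta(N)\leq 1+\sum_{i=1}^m c_i$ (or a comparably sharp version), which requires actually running the three-distance combinatorics you describe and counting the number of gap points inside $[0,\lambda)$ exactly, not merely to within an unspecified $O(1)$ per block. You are honest that this is the obstacle, but it means the proof is not complete.

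Second, and this is a gap you did not flag, your Step~2 as written does \emph{not} give an additive $O(1)$: from $\sum_{i=1}^m(a_i-1)\leq\frac{M}{\log(M+1)}\log q_m$ and $q_m\leq(M+1)N$ you get $\frac{M}{\log(M+1)}\log N+M$, i.e.\ an additive error of size $O(M)$, which you then silently promote to $O(1)$. Since the theorem's constant $3$ is independent of $M$, this step would fail as stated. It can be repaired: telescope only up to $q_{m-1}$ (which satisfies $q_{m-1}\leq N$ with no lost factor), and at the top level use $c_m\leq N/q_{m-1}$ together with the same monotonicity of $(x-1)/\log x$ to get $(c_m-1)_+\leq\frac{M}{\log(M+1)}\log(N/q_{m-1})$, so that the sum telescopes cleanly to $\frac{M}{\log(M+1)}\log N$ with no residual $M$-term; combined with $m\leq 2+\frac{\log N}{\log\xi}$ this leaves exactly one unit of slack for the Step~1 constant. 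You need this sharper bookkeeping, not the one you wrote, to have any chance of landing on $3$.
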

\begin{lemma}\label{le:boundforboundquotients}
	Soit $\vartheta$ un nombre irrationnel. Supposons qu'il existe $C(\vartheta)\in\mathopen]0,1\mathclose[$ tel que pour tout nombre rationnel $\frac{p}{q}$, on ait
	\begin{equation}\label{eq:liouvillegeneral}
	\left|\vartheta-\frac{p}{q}\right|\geqslant \frac{C(\vartheta)}{q^2}.
	\end{equation} 
	Soit $\vartheta=[a_0;a_1,\cdots]$ son expansion en fraction continue. Alors pour tout $i\geqslant 1$, $$a_i\leqslant (C(\vartheta))^{-1}.$$
	Par conséquent,$$ND_\vartheta(N)=O\left(\frac{C(\vartheta)^{-1}}{\log(C(\vartheta)^{-1})}\log N\right).$$
\end{lemma}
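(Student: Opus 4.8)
The plan is to extract the bound on the partial quotients from the best-approximation property of the convergents, and then substitute the resulting uniform bound $M=C(\vartheta)^{-1}$ into the theorem on the discrepancy of $(n\vartheta)_n$ stated just before the lemma, checking the elementary inequalities needed to keep the implied constant absolute.

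First I would write $\vartheta=[a_0;a_1,\dots]$ and let $(p_n/q_n)_{n\geq 0}$ be its sequence of principal convergents, so that $q_{n+1}=a_{n+1}q_n+q_{n-1}$ with $q_{-1}=0$, $q_0=1$. Since $\vartheta$ is irrational, the classical inequalities give, for every $n\geq 0$,
\[
\left|\vartheta-\frac{p_n}{q_n}\right|<\frac{1}{q_nq_{n+1}}\leq\frac{1}{a_{n+1}q_n^2}.
\]
Applying the hypothesis \eqref{eq:liouvillegeneral} to the rational $p_n/q_n$ (whose denominator $q_n$ is a positive integer) yields
\[
\frac{C(\vartheta)}{q_n^2}\leq\left|\vartheta-\frac{p_n}{q_n}\right|<\frac{1}{a_{n+1}q_n^2},
\]
hence $a_{n+1}<C(\vartheta)^{-1}$. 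As $n\geq 0$ is arbitrary, this proves $a_i\leq C(\vartheta)^{-1}$ for all $i\geq 1$.

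For the discrepancy estimate, I would invoke the theorem preceding the lemma with $M=C(\vartheta)^{-1}$ (an admissible value, since $a_i\leq M$ for all $i\geq 1$ and $M>0$): this gives, with $\xi=\tfrac{1+\sqrt 5}{2}$,
\[
ND_\vartheta(N)\leq 3+\left(\frac{1}{\log\xi}+\frac{C(\vartheta)^{-1}}{\log\bigl(C(\vartheta)^{-1}+1\bigr)}\right)\log N .
\]
It then remains to absorb the right-hand side into $O\!\left(\frac{C(\vartheta)^{-1}}{\log(C(\vartheta)^{-1})}\log N\right)$ with an absolute constant. Writing $t=C(\vartheta)^{-1}$, we have $t>1$ because $C(\vartheta)<1$, hence $\log t>0$ and $\log(t+1)>\log t$, so $\frac{t}{\log(t+1)}<\frac{t}{\log t}$; moreover $t\mapsto t/\log t$ is bounded below by $e$ on $(1,\infty)$, and the function $t\log\xi-\log t$ is positive at $t=1$ and has positive minimum on $[1,\infty)$, so $\frac{1}{\log\xi}<\frac{t}{\log t}$. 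Combining these bounds with $\log N\geq\log 2$ for $N\geq 2$, the displayed right-hand side is at most a fixed multiple of $\frac{t}{\log t}\log N$, which is exactly the assertion.

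The argument is essentially routine; the only point requiring a little care is the bookkeeping in the last step, namely checking that the constant term $3$ and the summand $\frac{1}{\log\xi}$ are uniformly dominated by $\frac{C(\vartheta)^{-1}}{\log(C(\vartheta)^{-1})}\log N$, so that the implied constant really is independent of $\vartheta$. This matters precisely because $C(\vartheta)^{-1}$ may be arbitrarily close to $1$, a regime in which $\frac{C(\vartheta)^{-1}}{\log(C(\vartheta)^{-1})}$ is large while $\log(C(\vartheta)^{-1})$ is small, so the comparison cannot be done naively.
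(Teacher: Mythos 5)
Your proof is correct and follows essentially the same route as the paper: the bound $a_{i}\leqslant C(\vartheta)^{-1}$ is obtained exactly as in the paper from the recurrence $q_{k+1}=a_{k+1}q_k+q_{k-2}$ and the classical inequality $\left|\vartheta-\frac{p_k}{q_k}\right|<\frac{1}{q_kq_{k+1}}$ compared with \eqref{eq:liouvillegeneral}, and the discrepancy estimate is then read off from the preceding theorem with $M=C(\vartheta)^{-1}$. Your extra bookkeeping showing that the terms $3$ and $\frac{1}{\log\xi}$ are absorbed with an absolute constant is a welcome precision that the paper leaves implicit.
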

\begin{proof}
	Soient $(\frac{p_n}{q_n})_{n=-1}^\infty$ les convergents (principaux) de $\vartheta$ (cf. \cite[Definition 1.2]{Bugeaud}, \cite[p. 4]{Khinchin}). Alors ils vérifient, pour tout $k\geqslant 1$, la règle de récurrence (voir \cite[Theorem 1]{Khinchin})
	$$p_k=a_kp_{k-1}+p_{k-2};$$
	$$q_k=a_kq_{k-1}+q_{k-2},$$
	et (voir \cite[Theorem 9]{Khinchin}) pour tout $k\geqslant 0$,
	$$\left|\vartheta-\frac{p_k}{q_k}\right|<\frac{1}{q_kq_{k+1}}.$$
	Il en découle que
	$q_{k+1}>a_{k+1}q_k$ et donc
	$$\left|\vartheta-\frac{p_k}{q_k}\right|<\frac{1}{a_{k+1} q_k^2}.$$
	En reportant dans \eqref{eq:liouvillegeneral}, on obtient que pour $k\geqslant 0$,
\begin{equation*}
	a_{k+1}\leqslant (C(\vartheta))^{-1}.\qedhere
\end{equation*}
\end{proof}
		Avant de poursuivre le raisonnement pour le Théorème \ref{th:localdistributionofrealnumbers}, on établit d'abord les formules asymptotiques suivantes un peu plus générales faisant intervenir une constante fixée $K$, qui permet de l'appliquer à une hauteur équivalente à celle donnée. Pour une utilisation ultérieure, on précise la dépendance en $K$ du terme d'erreur.
\begin{proposition}\label{po:smallepsilon}
	Soit $\alpha\in\RR_{>1}$ tel que $m(\alpha)=2$. Soient $B\geqslant 1$ et $\frac{1}{2}< r\leqslant 1$. Fixons $d\in \NN^*,K>0$ et $\varepsilon>\eta\geqslant 0$. 
	Supposons que
	\begin{equation}\label{eq:epsiloncondition12}
	(\varepsilon-\eta)KB^{1-\frac{1}{r}}<\frac{\alpha}{4}.
	\end{equation}
	On définit l'ensemble $	T_K(\varepsilon,\eta,d,B)$ des $(u,v)\in\ZZ\times\NN^*$ satisfaisant à
	$v\leqslant \frac{K}{\alpha d}B $
	ainsi que
	\begin{equation}\label{eq:epsilonetapositive}
	\eta B^{-\frac{1}{r}}<\frac{u}{v}-\alpha\leqslant \varepsilon B^{-\frac{1}{r}}.
	\end{equation}
	Alors pour tout $\sigma>0$ et pour tout $N>\max(1,\frac{4\eta K}{\alpha d})$,
	\begin{equation}\label{eq:Tsmallepsilon}
	\sharp T_K(\varepsilon,\eta, d,B)=\frac{(\varepsilon-\eta)K^2}{2\alpha^2 d^2}B^{2-\frac{1}{r}}+O_{\varepsilon,\eta}\left(\frac{K^2B^{2-\frac{1}{r}}}{N d^2}\right)+O_{\sigma}\left(\frac{K^\sigma B^\sigma N}{d^\sigma}\right).
	\end{equation}
	Si de plus \eqref{eq:LiouvillealphaXi} est vérifiée, 
	on a (rappelons $\varDelta(\alpha)$ \eqref{eq:deltalpha})
	\begin{equation}\label{eq:Tstrongalpha}
	\sharp T_K(\varepsilon,\eta, d,B)=\frac{(\varepsilon-\eta)K^2}{2\alpha^2 d^2}B^{2-\frac{1}{r}}+O_{\varepsilon,\eta}\left(\frac{K^2 B^{2-\frac{1}{r}}}{N d^2}\right)+O\left(\varDelta(\alpha)N\log\left(KB\right)\right).
	\end{equation}
	Les formules ci-dessus restent valides si l'on remplace \eqref{eq:epsilonetapositive} par 
	\begin{equation}\label{eq:epsilonetanegative}
	-\varepsilon B^{-\frac{1}{r}}\leqslant \frac{u}{v}-\alpha< -\eta B^{-\frac{1}{r}}.
	\end{equation}
\end{proposition}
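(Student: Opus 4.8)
The plan is to reduce the count to a one-dimensional sawtooth sum governed by the discrepancy of $(\alpha v)_v$. Throughout $\alpha>1$, as at the start of the proof of Theorem~\ref{th:localdistributionofrealnumbers}, and we set $V=\lfloor\frac{K}{\alpha d}B\rfloor$. For fixed $v$ with $1\leqslant v\leqslant V$ the integers $u$ satisfying \eqref{eq:epsilonetapositive} are exactly those in the interval $\big(\alpha v+\eta vB^{-1/r},\,\alpha v+\varepsilon vB^{-1/r}\big]$, whose length $(\varepsilon-\eta)vB^{-1/r}\leqslant(\varepsilon-\eta)KB^{1-1/r}/(\alpha d)<1/(4d)\leqslant 1$ by \eqref{eq:epsiloncondition12}; hence that interval carries $\lfloor\alpha v+\varepsilon vB^{-1/r}\rfloor-\lfloor\alpha v+\eta vB^{-1/r}\rfloor\in\{0,1\}$ integers, so
$$\sharp T_K(\varepsilon,\eta,d,B)=\sum_{v=1}^{V}\Big(\big\lfloor(\alpha+\varepsilon B^{-1/r})v\big\rfloor-\big\lfloor(\alpha+\eta B^{-1/r})v\big\rfloor\Big).$$
Writing $\lfloor x\rfloor=x-\tfrac12-\psi(x)$ with $\psi(x)=\{x\}-\tfrac12$, the linear part contributes $(\varepsilon-\eta)B^{-1/r}\tfrac{V(V+1)}{2}=\tfrac{(\varepsilon-\eta)K^2}{2\alpha^2d^2}B^{2-1/r}+O_{\varepsilon,\eta}(\tfrac Kd B^{1-1/r})$; we may assume $N\leqslant V$ (otherwise the claimed error term already dominates the whole count), so $N\leqslant KB/d$ since $\alpha>1$, and this remainder is absorbed into the stated errors. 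It remains to bound $R:=\sum_{v=1}^{V}\big(\psi((\alpha+\eta B^{-1/r})v)-\psi((\alpha+\varepsilon B^{-1/r})v)\big)$.

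The delicate point is that $\psi((\alpha+cB^{-1/r})v)$ involves the slope $\alpha+cB^{-1/r}$, which depends on $B$ and whose continued fraction we cannot control; only the discrepancy $D_\alpha$ of $(\alpha v)_v$ is accessible, via Theorem~\ref{co:controlforthediscrepancy} (giving $D_\alpha(M)=O_\sigma(M^{-1+\sigma})$ under the irrationality order $2$ of $\alpha$, i.e. $m(\alpha)=2$, in force here) and, when \eqref{eq:LiouvillealphaXi} holds, via Lemma~\ref{le:boundforboundquotients} (giving $D_\alpha(M)=O(\varDelta(\alpha)M^{-1}\log M)$). To bring the slope back to $\alpha$ we split $[1,V]$ into $N':=\lceil V/M\rceil\asymp N$ consecutive blocks $I_j=(v_{j-1},v_j]$ of length $\leqslant M:=\lceil V/N\rceil\asymp V/N$; the hypothesis $N>\max(1,4\eta K/(\alpha d))$ together with \eqref{eq:epsiloncondition12} makes $\varepsilon MB^{-1/r}<1$ for $B$ large. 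On $I_j$, for $c\in\{\varepsilon,\eta\}$, write $cvB^{-1/r}=\tau_j^c+\rho_{c,v}$ with $\tau_j^c:=cv_{j-1}B^{-1/r}$ constant on the block and $0\leqslant\rho_{c,v}<cMB^{-1/r}<1$, so that $\psi((\alpha+cB^{-1/r})v)=\psi(\alpha v+\tau_j^c)+\rho_{c,v}-\mathbf 1[\{\alpha v+\tau_j^c\}\geqslant 1-\rho_{c,v}]$.

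A translation on $\RR/\ZZ$ changes the star discrepancy by at most a bounded factor, and a block sum is a difference of two prefix sums, so Koksma--Denjoy (Theorem~\ref{th:koksma-denjoy}) with $\phi(x)=x$ gives $\sum_{v\in I_j}\psi(\alpha v+\tau_j^c)=O(VD_\alpha(V))$ and, applied to indicator functions, $\#\{v\in I_j:\{\alpha v+\tau_j^c\}\in J\}=|I_j|\,|J|+O(VD_\alpha(V))$ for every interval $J$. Taking $J=[1-cMB^{-1/r},1)$ and using $\sum_{v\in I_j}\rho_{c,v}\leqslant|I_j|cMB^{-1/r}$, we obtain $\sum_{v\in I_j}\psi((\alpha+cB^{-1/r})v)=\sum_{v\in I_j}\psi(\alpha v+\tau_j^c)+O_{\varepsilon,\eta}(|I_j|MB^{-1/r})+O(VD_\alpha(V))$; subtracting the $c=\varepsilon$ identity from the $c=\eta$ one (each $\psi(\alpha v+\tau_j^c)$ sum being $O(VD_\alpha(V))$) and summing over the $N'$ blocks yields
$$R=O(N'VD_\alpha(V))+O_{\varepsilon,\eta}(N'M^2B^{-1/r}).$$
Since $N'M^2\asymp V^2/N=\tfrac{K^2}{\alpha^2d^2N}B^2$, the second term is $O_{\varepsilon,\eta}(\tfrac{K^2B^{2-1/r}}{d^2N})$; for the first, $D_\alpha(V)=O_\sigma(V^{-1+\sigma})$ gives $N'VD_\alpha(V)=O_\sigma(NV^\sigma)=O_\sigma(\tfrac{K^\sigma B^\sigma N}{d^\sigma})$, which is \eqref{eq:Tsmallepsilon}, while $D_\alpha(V)=O(\varDelta(\alpha)V^{-1}\log V)$ gives $N'VD_\alpha(V)=O(\varDelta(\alpha)N\log(KB))$, which is \eqref{eq:Tstrongalpha}. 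For the range \eqref{eq:epsilonetanegative}, the number of admissible $u$ equals $\lfloor(-\alpha+\varepsilon B^{-1/r})v\rfloor-\lfloor(-\alpha+\eta B^{-1/r})v\rfloor$, i.e. the previous sum with $\alpha$ replaced by $-\alpha$; as $D_{-\alpha}=D_\alpha$ and \eqref{eq:LiouvillealphaXi} is symmetric, the same argument applies verbatim.

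The main obstacle is precisely this $B$-dependence of the effective slope: a Diophantine lower bound for $\alpha+cB^{-1/r}$ is out of reach, and the block decomposition --- which trades the small $B$-dependent perturbation inside each block for a fixed circle rotation together with rare integer-crossing corrections --- is exactly what reduces the count to the controllable discrepancy $D_\alpha$. The free parameter $N$ is the number of blocks, and the two competing error terms are, respectively, the perturbation error accumulated over all blocks ($\propto 1/N$) and the discrepancy error ($\propto N$).
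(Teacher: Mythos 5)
Your proof is correct, and it rests on the same two pillars as the paper's (a decomposition of the range of $v$ into $N$ blocks, then Koksma--Denjoy \ref{th:koksma-denjoy} combined with Erd\H{o}s--Tur\'an \ref{co:controlforthediscrepancy} or Lemme \ref{le:boundforboundquotients}), but the reduction to a mod-$1$ statement is organised differently. The paper replaces $v$ by the block endpoints inside the inequality \eqref{eq:epsilonetapositive}, which turns the condition on $(u,v)$ into the membership of $\{\alpha v\}$ in a fixed interval $I_k$; this forces the delicate Case I/Case II discussion of whether the interval $J_k$ contains an integer (the troublesome situation at $r=1$), and the main term is then reassembled by summing the lengths $|I_k|$ over the blocks, with the swapping error $\operatorname{Er}$ controlled by a second application of equidistribution. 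You instead count, for each $v$, the admissible $u$ exactly as a difference of floors, so the main term $\tfrac{(\varepsilon-\eta)K^2}{2\alpha^2d^2}B^{2-\frac1r}$ drops out of the linear part with no case analysis, and all that remains is a difference of sawtooth sums with the $B$-dependent slopes $\alpha+cB^{-\frac1r}$; the blocks then serve only to freeze the small perturbation $cvB^{-\frac1r}$, the identity $\psi(x+\rho)=\psi(x)+\rho-\mathtt{1}[\{x\}\geqslant 1-\rho]$ splitting it into a quantity of size $O_{\varepsilon,\eta}(V^2B^{-1/r}/N)$ and rare crossing events handled by the discrepancy, exactly reproducing the two error terms of \eqref{eq:Tsmallepsilon} and \eqref{eq:Tstrongalpha}. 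This bookkeeping is arguably cleaner, and the hypotheses \eqref{eq:epsiloncondition12} and $N>\max(1,\tfrac{4\eta K}{\alpha d})$ are used for the same purpose as in the paper (keeping the perturbation below $1$, which, as in the paper, implicitly uses $r\leqslant 1$, the only case where the proposition is invoked). Two small points to tighten: your reduction ``we may assume $N\leqslant V$'' should note that when $N>V$ both the count and the main term are $O(V)$, hence dominated by the stated error terms, so the formula is trivially true there; and the per-block bound you quote as $O(VD_\alpha(V))$ is really obtained by applying the explicit discrepancy bounds uniformly to prefixes of length at most $V$, since $M'D_\alpha(M')$ is not literally monotone. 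Neither affects the validity of the argument.
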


\begin{proof}
	Soit $N>\max(1,\frac{4\eta K}{\alpha d})$ fixé dans la suite. On peut supposer que $N$ est un entier (quitte à rajouter des constantes absolues dans les termes d'erreur).
	On considère le découpage de l'intervalle $\mathopen]0,\frac{K}{\alpha d}B \mathclose ]$ en les intervalles
	\begin{equation}\label{eq:thedivisionofv}
	\left]\frac{(k-1)KB}{\alpha d N},\frac{kKB}{\alpha d N}\right],\quad (1\leqslant k\leqslant N).
	\end{equation}
	Soit $(u_0,v_0)\in \ZZ\times \NN^*$ tel que
	$$\frac{(k-1)KB}{\alpha d N}<v_0\leqslant\frac{kKB}{\alpha d N} \text{ et }\eta B^{-\frac{1}{r}}<\frac{u_0}{v_0}-\alpha\leqslant \varepsilon B^{-\frac{1}{r}},$$
	on a
	$$\eta v_0 B^{-\frac{1}{r}}>\frac{\eta (k-1)K}{\alpha d N} B^{1-\frac{1}{r}},\quad \varepsilon v_0 B^{-\frac{1}{r}}\leqslant \frac{\varepsilon k K}{\alpha d N}B^{1-\frac{1}{r}}.$$
	et donc 
	$$\frac{\eta K(k-1) }{\alpha d N}B^{1-\frac{1}{r}}< u_0-\alpha v_0 \leqslant \frac{\varepsilon kK}{\alpha d N}B^{1-\frac{1}{r}}.$$
	D'après la condition \eqref{eq:epsiloncondition12} et le choix de $N$,
	$$\frac{\varepsilon kK}{\alpha d N}B^{1-\frac{1}{r}}-\frac{\eta (k-1)K}{\alpha d N}B^{1-\frac{1}{r}}=\frac{(\varepsilon-\eta)kK}{\alpha  d N}B^{1-\frac{1}{r}}+\frac{\eta K B^{1-\frac{1}{r}}}{\alpha d N}<\frac{1}{2}.$$
	On en conclut que pour tout $k\leqslant N$ et pour tout entier positif $v$ fixé, il existe au plus un entier $u$ tel que
	$(u,v)\in T_K(\varepsilon,\eta,d,B)$. Fixons dans la suite $k$ et considérons l'intervalle
	$$J_k=\left[-\frac{\varepsilon kK}{\alpha d N}B^{1-\frac{1}{r}},-\frac{\eta (k-1)K}{\alpha d N}B^{1-\frac{1}{r}}\right[.$$
	On cherche maintenant une équivalence à l'hypothèse suivante:
	\begin{center}
		\textbf{(H)} Pour $v\in\NN$ fixé, il existe un (unique) $u\in \ZZ$ tel que $u-\alpha v\in\left]\frac{\eta (k-1)K}{\alpha d N}B^{1-\frac{1}{r}},\frac{\varepsilon k K}{\alpha d N}B^{1-\frac{1}{r}}\right[ $.
	\end{center}
	La difficulté de la démonstration qui suit vient du cas $r=1$, pour lequel l'intervalle $J_k$ peut contenir un entier, y compris pour des grandes valeurs de $B$.
	On a les deux possibilités suivantes: \\
	\textbf{Cas I}: l'intervalle $J_k$ contient un entier $u_k$. Alors on a
	$$J_k-(u_k-1)=\left[-\frac{\varepsilon k K}{\alpha d N}B^{1-\frac{1}{r}}-(u_k-1),1\right[ \bigcup \left[1,-\frac{\eta (k-1)K}{\alpha d N}B^{1-\frac{1}{r}}-(u_k-1)\right[.$$
	Alors \textbf{(H)} revient à dire que 
	$$ \{\alpha v\} =\alpha v -u-(u_k-1) \in I_{k,1}=\left[-\frac{\varepsilon k K}{\alpha d N}B^{1-\frac{1}{r}}-(u_k-1),1\right[ $$
	$$\text{ ou } \{\alpha v\}=\alpha v-u-u_k\in I_{k,2}=\left[0,-\frac{\eta (k-1)K}{\alpha d N}B^{1-\frac{1}{r}}-u_k\right[.$$
	Réciproquement, si $v$ vérifie l'une des conditions ci-dessus, l'entier $u$ tel que \textbf{(H)} soit vérifiée existe et vaut $\alpha v-\{\alpha v\}-u_k+1$ ou $\alpha v-\{\alpha v\}-u_k$ selon l'appartenance de $\{\alpha v\}$. Notons que $I_{k,1}\cap  I_{k,2}=\varnothing$. En désignant $I_k=I_{k,1}\cup I_{k,2}\subset\mathopen]0,1\mathclose[$, qui est soit un intervalle (si l'un des $I_{k,i}$ est vide), soit la réunion disjointe de deux intervalles, on conclut que la condition \textbf{(H)} est équivalente à $\{\alpha v\} \in I_k$.\\
	\textbf{Cas II}: l'intervalle $J_k$ ne contient aucun entier. Suppose que $J_k\subset \mathopen ]u_{k}-1,u_k\mathclose[$ où $u_k\in\ZZ$. Alors dans ce cas \textbf{(H)} est équivalente à 
	$$\{\alpha v\}=\alpha v-u-(u_k-1) \in  I_k=\left[-\frac{\varepsilon k K}{\alpha d N}B^{1-\frac{1}{r}}-(u_k-1),-\frac{\eta(k-1)K}{\alpha d N}B^{1-\frac{1}{r}}-(u_k-1)\right[\subset \mathopen ]0,1\mathclose[.$$
	On vérifie que l'on a une équivalence analogue si l'on suppose \eqref{eq:epsilonetanegative}.
	Ayant établi cette équivalence, on peut réduire le comptage en deux variables $(u,v)$ en une seule $v$.	Par abuse de notations, pour une propriété $\mathcal{P}(x)$, on note $\mathtt{1}_{\mathcal{P}(x)}=\mathtt{1}_{\mathcal{P}(x)}(x)$ la fonction qui vaut $1$ si $\mathcal{P}(x)$ est vérifiée et vaut $0$ sinon. En notant
	$$T_{\varepsilon,\eta,d,k,B}=\sum_{\frac{(k-1)KB}{\alpha d N}<v\leqslant\frac{kKB}{\alpha d N}}\mathtt{1}_{\{\alpha v\}\in I_k},$$
	nous avons la décomposition suivante
	\begin{equation}\label{eq:decomposionofTgeneral}
	\begin{split}
	\sharp T_K(\varepsilon,\eta,d,B)&=\sum_{1\leqslant k\leqslant N}\sum_{\frac{(k-1)KB}{\alpha d N}<v\leqslant\frac{kKB}{\alpha d N}}\sum_{u\in\ZZ}\mathtt{1}_{\eta v B^{-\frac{1}{r}}<u-\alpha v\leqslant \varepsilon v B^{-\frac{1}{r}}}\\
	&=\sum_{1\leqslant k\leqslant N}\left(\sum_{\frac{(k-1)KB}{\alpha d N}<v\leqslant\frac{kKB}{\alpha d N}}\mathtt{1}_{\{\alpha v\}\in I_k}\right)+\operatorname{Er},\\
	&=\left(\sum_{1\leqslant k\leqslant N} T_{\varepsilon,\eta,d,k,B}\right)+\operatorname{Er},
	\end{split}
	\end{equation}
	où le terme d'erreur $\operatorname{Er}$ admet la majoration suivante.
	\begin{equation}\label{eq:Erterm}
	\begin{split}
	|\operatorname{Er}|&\leqslant 2\sum_{1\leqslant k\leqslant N}\sum_{\frac{(k-1)KB}{\alpha d N}<v\leqslant\frac{kKB}{\alpha d N}} \mathtt{1}_{\substack{\varepsilon v B^{-\frac{1}{r}}< u-\alpha v<\frac{\varepsilon kK}{\alpha d N}B^{1-\frac{1}{r}}\\ \text{ ou } \frac{\eta (k-1)K}{\alpha d N}B^{1-\frac{1}{r}}\leqslant u-\alpha v\leqslant \eta vB^{-\frac{1}{r}}}}\\
	&\leqslant 2\sum_{1\leqslant k\leqslant N}\sum_{\frac{(k-1)KB}{\alpha d N}<v\leqslant\frac{kKB}{\alpha d N}}\left(\mathtt{1}_{\frac{\varepsilon (k-1)K}{\alpha d N}B^{1-\frac{1}{r}}<u-\alpha v<\frac{\varepsilon kK}{\alpha dN}B^{1-\frac{1}{r}}} +\mathtt{1}_{\frac{\eta (k-1)K}{\alpha dN}B^{1-\frac{1}{r}}\leqslant u-\alpha v\leqslant\frac{\eta kK}{\alpha dN}B^{1-\frac{1}{r}}}\right)\\
	&=2 \sum_{1\leqslant k\leqslant N} \left(T_{\varepsilon,\varepsilon,d,k,B}+ T_{\eta,\eta,d,k,B}\right).
	\end{split}
	\end{equation}
	On traite d'abord le terme principal en appliquant les Théorèmes \ref{co:controlforthediscrepancy} et \ref{th:koksma-denjoy}. On définit la fonction $\phi_k:\mathopen[0,1\mathclose]\to\mathbb{R}$ comme
	\begin{equation}\label{eq:choiceofphi}
	\phi_k(u)=\mathtt{1}_{u\in I_k}(u).
	\end{equation}
	On décompose $T_{\varepsilon,\eta,d,k,B}$ comme
	$$T_{\varepsilon,\eta,d,k,B}=\sum_{v\leqslant\frac{kKB}{\alpha d N}}\mathtt{1}_{\{\alpha v\}\in I_k}-\sum_{v\leqslant\frac{(k-1)KB}{\alpha d N}}\mathtt{1}_{\{\alpha v\}\in I_k}.$$
	Puisque $m(\alpha)=2$, pour tout $\sigma^\prime>0$, $2+\sigma^\prime$ est un ordre d'irrationalité de $\alpha$.
	D'après les Théorèmes \ref{th:koksma-denjoy} et \ref{co:controlforthediscrepancy} (avec $t=2+\sigma^\prime$), pour tout $\sigma>0$, en choisissant $\sigma^\prime>0$ tel que $\frac{\sigma^\prime}{1+\sigma^\prime}+\sigma^\prime\leqslant  \sigma$, on a
	\begin{align*}
	T_{1,k}&=\sum_{v\leqslant\frac{kKB}{\alpha d N}}\mathtt{1}_{\{\alpha v\}\in I_k}\\
	&=\frac{kK}{\alpha d}\frac{B}{N}\left(\int_{0}^{1}\phi_k(t)\operatorname{d}t+O\left(D_\alpha\left(\frac{kK}{\alpha d}\frac{B}{N}\right)\right)\right)\\
	&=\frac{kK}{\alpha d}\frac{B}{N}|I_k|+O\left(\frac{kK}{\alpha d}\frac{B}{N}D_\alpha \left(\frac{kK}{\alpha d}\frac{B}{N}\right)\right)\\
	&=\frac{(\varepsilon-\eta) k^2 K^2}{\alpha^2 d^2}\frac{B^{2-\frac{1}{r}}}{N^2}+\frac{\eta k K^2}{\alpha^2 d^2}\frac{B^{2-\frac{1}{r}}}{N^2}+O\left(\frac{k^\sigma K^\sigma B^\sigma}{d^\sigma N^\sigma}\right)
	\end{align*}
	où la constante implicite dépend de $\sigma$ et $\alpha$. 
	De façon similaire on obtient
	\begin{align*}
	T_{2,k}=\sum_{v\leqslant\frac{(k-1)K B}{\alpha d N}}\mathtt{1}_{\{\alpha v\}\in I_k}
	=\frac{(\varepsilon-\eta) k(k-1)K^2}{\alpha^2 d^2}\frac{B^{2-\frac{1}{r}}}{N^2}+\frac{\eta  (k-1)K^2}{\alpha^2 d^2}\frac{B^{2-\frac{1}{r}}}{N^2}+O\left(\frac{k^\sigma K^\sigma B^\sigma}{d^\sigma N^\sigma}\right).
	\end{align*}
	Donc
	\begin{equation}\label{eq:Tepsilonkb}
	T_{\varepsilon,\eta,d,k,B}=\frac{(\varepsilon-\eta) kK^2}{\alpha^2 d^2}\frac{B^{2-\frac{1}{r}}}{N^2}+\frac{\eta K^2 }{\alpha^2 d^2}\frac{B^{2-\frac{1}{r}}}{N^2}+O\left(\frac{k^\sigma K^\sigma B^\sigma}{d^\sigma N^\sigma}\right).
	\end{equation}
	On somme sur tous les $k$. Faisons-le d'abord pour le terme principal de $T_{\varepsilon,\eta,d,k,B}$ \eqref{eq:Tepsilonkb}:
	\begin{align*}
	\sum_{k=1}^{N}\left( \frac{(\varepsilon-\eta) kK^2}{\alpha^2 d^2}\frac{B^{2-\frac{1}{r}}}{N^2}+\frac{\eta K^2 }{\alpha^2 d^2}\frac{B^{2-\frac{1}{r}}}{N^2}\right)&=\frac{(\varepsilon-\eta)K^2}{\alpha^2 d^2}\frac{N(N+1)}{2N^2}B^{2-\frac{1}{r}}+\frac{\eta K^2}{\alpha^2 d^2}\frac{B^{2-\frac{1}{r}}}{N}\\
	&=\frac{(\varepsilon-\eta)K^2}{2\alpha^2 d^2}B^{2-\frac{1}{r}}+O_{\varepsilon,\eta}\left(\frac{K^2B^{2-\frac{1}{r}}}{N d^2}\right).
	\end{align*} 
	Ensuite pour le terme d'erreur de $T_{\varepsilon,d,k,B}$ \eqref{eq:Tepsilonkb}:
	\begin{align}
	\sum_{k=1}^{N}\frac{k^\sigma K^\sigma B^\sigma}{d^\sigma N^\sigma}= O\left(\frac{K^\sigma B^\sigma N}{d^\sigma}\right).
	\end{align}
	On obtient \begin{equation}\label{eq:sumofTersilonkb}
	\sum_{k=1}^{N}T_{\varepsilon,\eta,d,k,B}=\frac{(\varepsilon-\eta) K^2}{2\alpha^2 d^2}B^{2-\frac{1}{r}}+O_{\varepsilon,\eta}\left(\frac{K^2 B^{2-\frac{1}{r}}}{N d^2}\right)+O_{\sigma}\left(\frac{K^\sigma B^\sigma N}{d^\sigma}\right)
	\end{equation}
	où la constante implicite dépend de $\sigma$ et $\alpha$. 
	Nous obtenons aussi une majoration pour le terme $\operatorname{En}$ (prenant $\varepsilon=\eta$) en vertu de \eqref{eq:Erterm}:
	\begin{equation}\label{eq:Er}
	\begin{split}
	\operatorname{Er}
	=O_{\varepsilon,\eta}\left(\frac{K^2 B^{2-\frac{1}{r}}}{N d^2}\right)+O_{\sigma}\left(\frac{K^\sigma B^\sigma N}{d^\sigma}\right).
	\end{split}
	\end{equation}
	En résumé de \eqref{eq:sumofTersilonkb} et \eqref{eq:Er}, on a démontré que pour $\alpha$ ayant la mesure d'irrationalité $2$ (pas nécessairement quadratique)
	\begin{equation}\label{eq:Tepsilondb}
	\sharp T_K(\varepsilon,\eta, d,B)=\frac{(\varepsilon-\eta)K^2}{2\alpha^2 d^2}B^{2-\frac{1}{r}}+O_{\varepsilon,\eta}\left(\frac{K^2B^{2-\frac{1}{r}}}{N d^2}\right)+O_{\sigma}\left(\frac{K^\sigma B^\sigma N}{d^\sigma}\right).
	\end{equation}
	
	Maintenant, supposons la validité de l'inégalité \eqref{eq:LiouvillealphaXi}. 
	Il suffit de modifier tous les termes d'erreur concernant la discrépance, à savoir, les termes qui contiennent $\sigma$. Plus précisément, en utilisant le Lemme \ref{le:boundforboundquotients} et en rappelant $\varDelta(\alpha)$ \eqref{eq:deltalpha}, on a
	$$\sum_{1\leqslant k\leqslant N}  \frac{kK}{\alpha d}\frac{B}{N}D_\alpha \left(\frac{kK}{\alpha d}\frac{B}{N}\right)=\sum_{1\leqslant k\leqslant N} O\left(\varDelta(\alpha)\log \left(\frac{kK}{\alpha d}\frac{B}{N}\right)\right)=O\left(\varDelta(\alpha)N\log\left(\frac{KB}{\alpha d}\right)\right)$$
	et donc, 
	\begin{align*}
	\sharp T_K(\varepsilon,\eta, d,B)&=\frac{(\varepsilon-\eta)K^2}{2\alpha^2 d^2}B^{2-\frac{1}{r}}+O_{\varepsilon,\eta}\left(\frac{K^2 B^{2-\frac{1}{r}}}{N d^2}\right)+O\left(\varDelta(\alpha)N\log\left(\frac{KB}{\alpha d}\right)\right)\\
	&=\frac{(\varepsilon-\eta)K^2}{2\alpha^2 d^2}B^{2-\frac{1}{r}}+O_{\varepsilon,\eta}\left(\frac{K^2 B^{2-\frac{1}{r}}}{N d^2}\right)+O\left(\varDelta(\alpha)N\log\left(KB\right)\right).\qedhere
	\end{align*}
\end{proof}
	\begin{proof}[Démonstration du Théorème \ref{th:localdistributionofrealnumbers}] 
			On peut supposer que $\alpha>0$. On fixe $\varepsilon>0,B\geqslant 1$. Soit $\alpha^\prime=\max(\alpha,1)$. \\
			\textbf{Cas $\frac{1}{2}<r\leqslant 1$.}	
			La première étape est de comparer le cardinal de $S(\varepsilon_1,\varepsilon_2,B)$ \eqref{eq:thesetS} avec celui de $T(\varepsilon_1,\varepsilon_2,B)$ qui consiste en les $(u,v)\in\ZZ\times\NN^*$ de sorte que 
			\begin{equation}\label{eq:thesetT}
			\pgcd(u,v)=1,\quad\varepsilon_2 B^{-\frac{1}{r}}<\left|\frac{u}{v}-\alpha\right|\leqslant \varepsilon_1 B^{-\frac{1}{r}},\quad
			v\leqslant B/\alpha^\prime
			\end{equation}
			 On prend $(u,v)\in S(\varepsilon_1,\varepsilon_2,B)$ (\eqref{eq:thesetS}). Alors \eqref{eq:thesetS} implique que
			 $$|u-\alpha v|\leqslant \varepsilon_1 B^{-\frac{1}{r}} v\leqslant \varepsilon_1.$$
			 Donc pour un tel $v$ choisi, il n'y a qu'un nombre fini de choix pour $u$.
			 De plus on a $$v\leqslant \alpha^{-1}(\varepsilon_1+|u|)\leqslant B/\alpha +\varepsilon_1 /\alpha.$$ 
			 On en conclut que $$\sharp (S(\varepsilon_1,\varepsilon_2,B)\setminus T(\varepsilon_1,\varepsilon_2,B))=O_{\varepsilon_i}(1).$$
			 Réciproquement, si l'on prend $(u,v)\in T(\varepsilon_1,\varepsilon_2,B)$, \eqref{eq:thesetT} implique que
			 $$|u|\leqslant v(\alpha+\varepsilon_1 B^{-\frac{1}{r}})\leqslant B+\varepsilon_1/\alpha.$$
			 On en conclut de façon analogue que 
			 $$\sharp (T(\varepsilon_1,\varepsilon_2,B)\setminus S(\varepsilon_1,\varepsilon_2,B))=O_{\varepsilon_i}(1),$$
			 et ainsi que
			 \begin{equation}\label{eq:thesetSd}
			 \sharp S(\varepsilon_1,\varepsilon_2,B)=\sharp T(\varepsilon_1,\varepsilon_2,B)+O_{\varepsilon_i}(1).
			 \end{equation}
		On est ramené à calculer le cardinal de l'ensemble $T(\varepsilon_1,\varepsilon_2,B)$. 
		À l'aide de l'inversion de Möbius, on définit pour $d\in\NN^*$,
\begin{equation}\label{eq:Mobius1}
		\sharp T(\varepsilon_1,\varepsilon_2,B)=\sum_{d\in\NN^*} \mu(d) \left(\sharp T_1(\varepsilon_1,\varepsilon_2,d,B)+\sharp T_2(\varepsilon_1,\varepsilon_2,d,B)\right),
\end{equation}
		où $T_1(\varepsilon_1,\varepsilon_2,d,B)$ consiste en les $(u,v)\in\ZZ\times\NN^*$ tels que 
		\begin{equation}\label{eq:thesetT1}
		\varepsilon_2 B^{-\frac{1}{r}}<\frac{u}{v}-\alpha\leqslant \varepsilon_1 B^{-\frac{1}{r}},\quad
		v\leqslant \frac{B}{\alpha^\prime d}
		\end{equation}
		et $T_2(\varepsilon_1,\varepsilon_2,d,B)$ consiste en les $(u,v)\in\ZZ\times\NN^*$ tels que 
		\begin{equation}\label{eq:thesetT2}
		-\varepsilon_1 B^{-\frac{1}{r}}\leqslant\frac{u}{v}-\alpha<-\varepsilon_2 B^{-\frac{1}{r}},\quad
		v\leqslant \frac{B}{\alpha^\prime d}
		\end{equation}
On obtient une borne sur $d$ plus fine que celle naïve $d\leqslant \frac{B}{\alpha^\prime}$ de la façon suivante.
D'une part,
\begin{equation}\label{eq:upperboundfordistz}
\dist(\alpha v,\ZZ)\leqslant |u-\alpha v|\leqslant \varepsilon_1 v B^{-\frac{1}{r}}\leqslant \varepsilon_1 \frac{B^{1-\frac{1}{r}}}{ d}.
\end{equation}
D'après l'hypothèse sur $\alpha$, pour tout $\lambda>0$, il existe une constante $C(\alpha,\lambda)>0$ telle que pour tout $n\in\ZZ^*$,
$$\dist(\alpha n,\ZZ)\geqslant \frac{C(\alpha,\lambda)}{n^{1+\lambda}}.$$
Donc
$$\dist(\alpha v,\ZZ)\geqslant \frac{C(\alpha,\lambda)}{v^{1+\lambda
	}}\geqslant\frac{C(\alpha,\lambda)d^{1+\lambda}}{B^{1+\lambda}}.$$
	En combinant les deux inégalités on obtient
	$$\varepsilon_1 \frac{B^{1-\frac{1}{r}}}{ d}\geqslant C(\alpha,\lambda)\frac{d^{1+\lambda}}{B^{1+\lambda}}.$$
	Donc on peut restreindre la somme en $d$ à ceux vérifiant
	\begin{equation}\label{eq:boundford}
	d\leqslant\left(\frac{\varepsilon_1}{C(\alpha,\lambda)}\right)^{\frac{1}{2+\lambda}}B^{1-\frac{1}{(2+\lambda)r}}.
	\end{equation}
	Si de plus \eqref{eq:LiouvillealphaXi} est valide, alors 
	$$\dist(\alpha v,\ZZ)\geqslant \frac{\Xi(\alpha)}{v}\geqslant \frac{\Xi(\alpha)d}{B}.$$
	Dans ce cas en combinant \eqref{eq:upperboundfordistz} on obtient
	\begin{equation}\label{eq:boundfordforgoodalpha}
	d\leqslant \left(\frac{\varepsilon_1}{\Xi(\alpha)}\right)^\frac{1}{2} B^{1-\frac{1}{2r}}.
	\end{equation}
	Nous allons nous concentrer sur le dénombrement de l'ensemble $T_1(\varepsilon_1,\varepsilon_2,d,B)$. Celui de $T_2(\varepsilon_1,\varepsilon_2,d,B)$ suit en imitant les lignes de la preuve (en changeant $\alpha$ par $-\alpha$, et $u$ par $-u$, notons que $\alpha$ et $-\alpha$ ont la même mesure d'irrationalité). On décompose $T_1(\varepsilon_1,\varepsilon_2,d,B)$ \eqref{eq:Mobius1} en des parties $T(\eta_{i-1},\eta_{i},d,B)$ où
\begin{equation}\label{eq:thesetS1}
T(\eta_{i-1},\eta_{i},d,B)=\left\{
\begin{aligned}
(u,v)\in\ZZ\times \NN^*
\end{aligned}
\left|
\begin{aligned}
&\eta_{i-1}B^{-\frac{1}{r}}<\left|\frac{u}{v}-\alpha\right|\leqslant \eta_i B^{-\frac{1}{r}}\\
&v\leqslant \frac{B}{\alpha^\prime d}
\end{aligned}
\right\}\right. 
\end{equation}	
avec $\varepsilon_2=\eta_0<\cdots<\eta_l=\varepsilon_1$ choisies de sorte que 
$$\eta_i-\eta_{i-1}<\alpha/4.$$
En appliquant la Proposition \ref{po:smallepsilon} à ces ensembles (avec $K=1,\varepsilon=\eta_i,\eta=\eta_{i-1}$ et $\alpha=\alpha^\prime$) on déduit que pour tout $\sigma>0$ et $N>\max(4\varepsilon_{1},1)$,
\begin{equation}\label{eq:T1var1var2}
\sharp T_1(\varepsilon_1,\varepsilon_2,d,B)=
\frac{\varepsilon_1-\varepsilon_2}{2(\alpha^\prime)^2 d^2}B^{2-\frac{1}{r}}+O_{\varepsilon_i}\left(\frac{B^{2-\frac{1}{r}}}{N d^2}\right)+O_{\sigma,\varepsilon_i}\left(\frac{B^\sigma N}{d^\sigma}\right),
\end{equation}
ce qui est valide aussi pour $T_2(\varepsilon_1,\varepsilon_2,d,B)$ (cf. la discussion au-dessus de \eqref{eq:thesetS1}). 
	Il reste à sommer sur tous les $d$. Rappelons d'après \eqref{eq:boundford} que l'on a pour tout $\lambda>0$,
	$$d= O_{\lambda,\alpha,\varepsilon_i}(B^{1-\frac{1}{(2+\lambda)r}}).$$
	On choisit $$\lambda=\frac{\sigma}{2}\left(1-\frac{1}{2r}\right)\text{ et } N=\frac{B^{(1-\frac{\sigma}{4})(1-\frac{1}{2r})}}{d^{1-\frac{\sigma}{2}}}.$$
	On vérifie que avec ce choix
	$$\left(1-\frac{1}{(2+\lambda)r}\right)\left(1-\frac{\sigma}{2}\right)<\left(1-\frac{\sigma}{4}\right)\left(1-\frac{1}{2r}\right)$$
	et donc $$N\gg B^{\left(1-\frac{\sigma}{4}\right)\left(1-\frac{1}{2r}\right)-\left(1-\frac{1}{(2+\lambda)r}\right)(1-\frac{\sigma}{2})}\gg \varepsilon_1$$ pour $B \gg_{\sigma,\varepsilon} 1$. On réécrit \eqref{eq:T1var1var2} comme
	$$\sharp T_1(\varepsilon_1,\varepsilon_2,d,B)=
	\frac{\varepsilon_1-\varepsilon_2}{2(\alpha^\prime)^2 d^2}B^{2-\frac{1}{r}}+O_{\varepsilon_i,\sigma}\left(\frac{B^{1-\frac{1}{2r}+\sigma(\frac{3}{4}+\frac{1}{8r})}}{d^{1+\frac{\sigma}{2}}}\right).$$
	En utilisant l'estimation élémentaire
	$$\sum_{n=1}^{M}\frac{\mu(n)}{n^2}=\frac{6}{\pi^2} +O\left(\frac{1}{M}\right),$$ 
	la somme du terme principal de \eqref{eq:T1var1var2} est égale à 
	\begin{align*}
		\sum_{d= O(B^{1-\frac{1}{(2+\lambda)r}})}\mu(d)\frac{\varepsilon_1-\varepsilon_2}{2(\alpha^\prime)^2 d^2}B^{2-\frac{1}{r}} &=\frac{3(\varepsilon_1-\varepsilon_2)}{\pi^2 (\alpha^\prime)^2}B^{2-\frac{1}{r}}+O_{\varepsilon_i,\lambda}(B^{1-\frac{1+\lambda}{(2+\lambda)r}})\\
		&=\frac{3(\varepsilon_1-\varepsilon_2)}{\pi^2 (\alpha^\prime)^2}B^{2-\frac{1}{r}}+O_{\varepsilon_i,\lambda}(B^{1-\frac{1}{2r}})
	\end{align*}
	Et celle du terme d'erreur est d'ordre de grandeur égal à
	\begin{align*}
&\sum_{d= O(B^{1-\frac{1}{(2+\lambda)r}})} O_{\varepsilon_i,\sigma}\left(\frac{B^{1-\frac{1}{2r}+\sigma(\frac{3}{4}+\frac{1}{8r})}}{d^{1+\frac{\sigma}{2}}}\right)=O_{\varepsilon_i,\sigma}(B^{1-\frac{1}{2r}+\sigma(\frac{3}{4}+\frac{1}{8r})}).
	\end{align*}
	Finalement en reportant dans \eqref{eq:Mobius1}, en rajoutant aussi la contribution de $T_2(\varepsilon_1,\varepsilon_2,d,B)$, on en conclut que
	\begin{equation}\label{eq:resultforsepsilonb}
	\sharp T(\varepsilon_1,\varepsilon_2,B)=\frac{6(\varepsilon_1-\varepsilon_2)}{\pi^2 (\alpha^\prime)^2}B^{2-\frac{1}{r}}+O_{\varepsilon_i,\sigma}(B^{1-\frac{1}{2r}+\sigma(\frac{3}{4}+\frac{1}{8r})}).
	\end{equation}
	Il ne reste qu'à prendre, pour $\tau>0$ donné, $\sigma>0$ tel que $\sigma(\frac{3}{4}+\frac{1}{8r})\leqslant \tau$.
	En reportant dans \eqref{eq:thesetSd}, cela démontre finalement que 
	\begin{equation}\label{eq:conclusion1}
	\begin{split}
	\delta_{\PP^1\alpha,B,r}(\chi(\varepsilon_1,\varepsilon_2))&=\sharp S(\varepsilon_1,\varepsilon_2,B)\\
	&=B^{2-\frac{1}{r}}\frac{3}{\pi^2 (\alpha^\prime)^2}\left(\int_{\varepsilon_2}^{\varepsilon_1}\operatorname{d}x+\int_{-\varepsilon_1}^{-\varepsilon_2}\operatorname{d}x\right)+O_{\varepsilon_i,\tau}(B^{1-\frac{1}{2r}+\tau})\\
	&=B^{2-\frac{1}{r}}\frac{3}{\pi^2 (\alpha^\prime)^2}\int \chi(\varepsilon_1,\varepsilon_2)\operatorname{d}x + O_{\varepsilon_i,\tau}(B^{1-\frac{1}{2r}+\tau}).
	\end{split}
	\end{equation}
	
 	Maintenant, supposons la validité de \eqref{eq:LiouvillealphaXi}. Rappelons la borne pour $d$ \eqref{eq:boundfordforgoodalpha}. Dans ce cas on prend
 	$$N=\frac{(16\varepsilon_1^\frac{3}{2}+2\varepsilon_1^\frac{1}{2})\Xi(\alpha)^{-\frac{1}{2}}B^{1-\frac{1}{2r}}}{d}>\max(4\varepsilon_1,1).$$
 	 Comme l'on suppose que $0<\Xi(\alpha)<1$, on déduit de \eqref{eq:Tstrongalpha} en utilisant \eqref{eq:T1var1var2},
 	 $$	\sharp T_1(\varepsilon_1,\varepsilon_2,d,B)=\frac{\varepsilon_1-\varepsilon_2}{ 2(\alpha^\prime)^2d^2}B^{2-\frac{1}{r}}+O_{\varepsilon_i}\left(\frac{\varDelta(\alpha)\Xi(\alpha)^{-\frac{1}{2}}}{d}B^{1-\frac{1}{2r}}\log (B)\right).$$
 	 De même pour $T_2(\varepsilon_1,\varepsilon_2,d,B)$.
 	 On somme sur tous les $d$ dans le terme principal et le terme d'erreur respectivement.
 	\begin{align*}
 	\sum_{d\leqslant \varepsilon_{1}^\frac{1}{2} \Xi(\alpha )^{-\frac{1}{2}}B^{1-\frac{1}{2r}}}\mu(d)\frac{\varepsilon_1-\varepsilon_2}{2(\alpha^\prime)^2 d^2}B^{2-\frac{1}{r}} &=\frac{6(\varepsilon_1-\varepsilon_2)}{\pi^2 (\alpha^\prime)^2}B^{2-\frac{1}{r}}+O_{\varepsilon_i}(\Xi(\alpha)^\frac{1}{2}B^{1-\frac{1}{2r}}),
 	\end{align*}
 	\begin{align*}
 		\sum_{d\leqslant \varepsilon_{1}^\frac{1}{2} \Xi(\alpha )^{-\frac{1}{2}}B^{1-\frac{1}{2r}}}O_{\varepsilon_i}\left(\frac{\varDelta(\alpha)\Xi(\alpha)^{-\frac{1}{2}}}{d}B^{1-\frac{1}{2r}}\log (B)\right)
 		=O_{\varepsilon_i}\left(\varDelta(\alpha)\Xi(\alpha)^{-\frac{1}{2}}B^{1-\frac{1}{2r}}\log (B)\log (\Xi(\alpha)^{-\frac{1}{2}}B^{1-\frac{1}{2r}})\right).
 	\end{align*}
 	On en conclut que
 	\begin{align*}
 		\sharp T(\varepsilon_1,\varepsilon_2,B)&=\frac{6(\varepsilon_1-\varepsilon_2)}{\pi^2 (\alpha^\prime)^2}B^{2-\frac{1}{r}}+O_{\varepsilon_i}(\Xi(\alpha)^\frac{1}{2}B^{1-\frac{1}{2r}})+O_{\varepsilon_i}\left(\varDelta(\alpha)\Xi(\alpha)^{-\frac{1}{2}}B^{1-\frac{1}{2r}}\log (B)\log (\Xi(\alpha)^{-\frac{1}{2}}B^{1-\frac{1}{2r}})\right)\\
 		&=\frac{3}{\pi^2 (\alpha^\prime)^2}B^{2-\frac{1}{r}}\int\chi(\varepsilon_1,\varepsilon_2)\operatorname{d}x+O_{\varepsilon_i}\left(\varDelta(\alpha)\Xi(\alpha)^{-\frac{1}{2}}B^{1-\frac{1}{2r}}\log (B)\log (\Xi(\alpha)^{-\frac{1}{2}}B^{1-\frac{1}{2r}})\right).
 	\end{align*}
 	Cela démontre la formule \eqref{eq:strongalpha}.\\
 	\textbf{Cas $r>1$.} 
 		Une inversion de Möbius nous fournit que 
 	$$\sharp S(\varepsilon_1,\varepsilon_2,B)=\sum_{d\leqslant B}\mu(d)(\sharp S(d,\varepsilon_1,B)-\sharp S(d,\varepsilon_2,B))$$
 	où 
 	\begin{equation*}
 	S(d,\varepsilon,B)=\left\{
 	(u,v)\in\ZZ\times \NN^*
 	\left|
 	\begin{aligned}
 	&\left|\frac{u}{v}-\alpha\right|\leqslant \varepsilon B^{-\frac{1}{r}}\\
 	&\sup(|u|,v)\leqslant B/\alpha^\prime d
 	\end{aligned}
 	\right\}\right. . 
 	\end{equation*}	
    On a, par la méthode de comparaison classique avec l'aire du domaine réel (du triangle dans la figure \ref{fig:area}, notons en effet qu'on a deux telles régions compte tenu du signe de $u$),
    $$\sharp S(d,\varepsilon,B)= B^{2-\frac{1}{r}}\frac{\varepsilon}{(\alpha^\prime)^2 d^2}+O_{|\alpha|}\left(\frac{B}{ d}+\frac{\varepsilon B^{1-\frac{1}{r}}}{d}\right) =B^{2-\frac{1}{r}}\frac{2\varepsilon}{(\alpha^\prime)^2 d^2}+O_{|\alpha|,\varepsilon}\left(\frac{B}{ d}\right).$$
    Puis on somme sur tous les $d$ possible \eqref{eq:boundford} et l'on obtient
    \begin{align*}
        \sharp S(\varepsilon_1,\varepsilon_2,B)&=\frac{6(\varepsilon_1-\varepsilon_2)}{\pi^2(\alpha^\prime)^2 }B^{2-\frac{1}{r}}+O_{|\alpha|,\varepsilon_i}(B^{1-\frac{1}{r}})+O_{|\alpha|,\varepsilon_i}(B\log B)\\
        &=\frac{3}{\pi^2(\alpha^\prime)^2 }B^{2-\frac{1}{r}}\int \chi(\varepsilon_1,\varepsilon_2)\operatorname{d}x+O_{|\alpha|,\varepsilon_i}(B\log B).\qedhere
    \end{align*}
 	 	\end{proof}
 	\subsection{Généralisation à un réseau}\label{se:gerelattice}
 	Les résultats sur l'approximation d'un nombre réel qu'on a démontrés précédemment peuvent être vu comme le fait d'approcher une droite par des points primitifs de pente donnée. Pour une utilisation ultérieure, nous allons présenter une version plus générale du Théorème \ref{th:localdistributionofrealnumbers}.
 	C'est-à-dire, on compte des points primitifs sur un réseau proche d'une droite dont la pente est un nombre quadratique donné. Le but est d'essayer de préciser la dépendance des constantes implicites, qui fait intervenir le déterminant du réseau. L'idée est qu'à l'aide d'une transformation linéaire liée au réseau, l'approximation sur le réseau de la droite équivaut l'approximation sur $\ZZ^2$ d'une autre droite de pente égale à un autre nombre quadratique. Comme l'on a vu, l'un des inconvénients de la démonstration ci-dessus est que la constante $C(\lambda,\alpha)$ qui apparaît dans \eqref{eq:boundford} est en général difficile à préciser, à cause de l'absence d'effectivité du théorème de Roth (ou bien de l'inégalité de Liouville). Cela mène aussi à une ineffectivité de la discrépance, car dans la démonstration du Corollaire \ref{co:controlforthediscrepancy}, on a utilisé le théorème de Roth (voir \cite{K-N}, p 123). Pour les nombres quadratiques, cette difficulté disparaît parce que l'on peut préciser facilement les constantes dans l'inégalité de Liouville \eqref{eq:liouvillelemma} et dans la majoration de la discrépance. On remarque que la technique ci-dessous fonctionne aussi pour les réels dont les quotients partiels dans l'expansion en fraction continue sont bornés (ceux qui vérifient l'inégalité de Liouville à puissance $2$). On ne rentrera pas dans les détails.
 	
 	\textbf{Notations et Conventions}: On fixe un réseau $\Lambda\subset\ZZ^2$ et $\varepsilon,K>0$. On suppose que $\Lambda$ contient un point primitif, donc $\Lambda\not\subset (d\ZZ)^2$ pour tout $d\in\NN^*$. Les nombres quadratiques auxquels on s'intéresse sont de la forme $\sqrt{\frac{b}{a}}\not\in\QQ$ avec $a<b$, que l'on notera toujours comme $\alpha$. $[m,n]$ désigne le plus petit multiple commun de deux entiers $m,n$. En pratique, les constantes $\varepsilon,\alpha$ sont bornées et l'on n'a pas besoin d'indiquer leur rôles dans les constante implicites. 
 		On note $\Lambda_d=\Lambda\cap d\ZZ^2$. et l'on définit
 		\begin{equation}\label{eq:GammaLambda}
 		\Theta(\Lambda)=\sum_{d\in\NN}\frac{\mu(d)}{\det(\Lambda_d)}.
 		\end{equation}
 		Il existe $\mathbf{f}_1,\mathbf{f}_2$ une base de $\ZZ^2$ telle que $\Lambda=\ZZ\mathbf{f}_1\oplus\ZZ\det(\Lambda)\mathbf{f}_2$ puisque $\Lambda$ contient un point primitif. On rappelle \eqref{eq:arithmeticfunction 1} et on a 
 		\begin{equation}\label{eq:Gammadet}
 		\begin{split}
 		\Theta(\Lambda)&=\sum_{d\in\NN}\frac{\mu(d)}{d[d,\det(\Lambda)]}\\
 		&=\prod_{p\nmid \det(\Lambda)}\left(1-\frac{1}{p^2}\right)\prod_{p|\det(\Lambda)}\left(1-\frac{1}{p}\right)\frac{1}{p^{v_p(\det(\Lambda))}}\\
 		&=\frac{1}{\det(\Lambda)}\frac{6}{\pi^2}\prod_{p|\det(\Lambda)}\frac{1-p^{-1}}{1-p^{-2}}=\frac{6}{\pi^2}\frac{\Psi_1(\det(\Lambda))}{\det(\Lambda)}.
 		\end{split}
 		\end{equation}
 		
 			On voudrait estimer le cardinal de l'ensemble suivant
 		\begin{equation}\label{SK}
 		S(\varepsilon,K,\Lambda,B)=\left\{
 		\begin{aligned}
 		&(u,v)\in\Lambda\cap\NN^2_{\operatorname{prem}}\\
 		\end{aligned}
 		\left|
 		\begin{aligned}
 		&0<\frac{u}{v}-\alpha\leqslant \varepsilon B^{-\frac{1}{r}}\\
 		&v\leqslant KB 
 		\end{aligned}
 		\right\}\right. .
 		\end{equation}
 		Pour réduire la difficulté technique et pour obtenir une formule asymptotique, nous allons imposer plusieurs conditions techniques. On ne prétend pas que les coefficients dans \eqref{eq:cond0} et \eqref{eq:cond4} soient optimaux mais ils suffisent pour l'utilisation ultérieure.
 		\begin{equation}\label{eq:cond0}
 		\frac{1}{2}<r<\frac{7}{10},
 		\end{equation}
 		\begin{equation}\label{eq:cond4}
 		K^{2} b\leqslant U(\alpha,\varepsilon)B^{\frac{4}{5}(\frac{1}{r}-1)-\frac{3}{5}(2-\frac{1}{r})}, \quad U(\alpha,\varepsilon)= (2^{21}\times 162 \alpha^2\varepsilon^2)^{-\frac{2}{5}}
 		\end{equation}
 		\begin{equation}\label{eq:cond5}
 		b\det(\Lambda)^2\leqslant K^2 B^{2-\frac{1}{r}}.
 		\end{equation}
 		En gros, le but d'imposer la condition \eqref{eq:cond4} est de réduire le problème de dénombrement à l'équidistribution modulo $1$, et celui de la condition \eqref{eq:cond5} est d'obtenir un meilleur terme d'erreur.
 	 \begin{theorem}\label{th:zoomingonlattices}
 		Avec les hypothèses ci-dessus, on a
  	 \begin{equation}\label{eq:estimatefors}
 \sharp S(\varepsilon,K,\Lambda,B)=\frac{\Theta(\Lambda)\varepsilon K^2}{2} B^{2-\frac{1}{r}}+O(K^\frac{3}{2}b^{\frac{1}{4}}\det(\Lambda)^\frac{1}{2} B^{\frac{3}{4}(2-\frac{1}{r})}\log B+Kb^{\frac{3}{2}}B^{1-\frac{1}{2r}}\log B).
 \end{equation}
 	\end{theorem}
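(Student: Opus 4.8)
The plan is to peel off the coprimality condition by Möbius inversion at the level of $\Lambda$, and then to \emph{linearise} each of the resulting sublattices so as to reduce the count to the equidistribution–modulo–$1$ estimate already established for $\PP^1$, namely Proposition \ref{po:smallepsilon}, in the effective form valid for quadratic irrationals. Concretely, write
\[
\sharp S(\varepsilon,K,\Lambda,B)=\sum_{d\in\NN}\mu(d)\,\sharp S_d,\qquad
S_d=\Bigl\{(u,v)\in\Lambda_d\cap\NN^2:\ 0<\tfrac{u}{v}-\alpha\le\varepsilon B^{-\frac{1}{r}},\ v\le KB\Bigr\},
\]
which is legitimate since $\pgcd(u,v)=1$ is detected by $\sum_{d\mid\pgcd(u,v)}\mu(d)$ and $\{(u,v)\in\Lambda:\ d\mid u,\ d\mid v\}=\Lambda_d$; no rescaling is needed because the conditions cutting out $S_d$ bear only on $u/v$ and on $v$. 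The key feature of quadratic $\alpha=\sqrt{b/a}$ that will be exploited throughout (and which is what forces hypothesis \eqref{eq:cond0}) is effectivity: the Liouville inequality \eqref{eq:liouvillelemma} holds with the explicit constant $\Xi(\alpha)\gg(ab)^{-1/2}$ coming from $|au^2-bv^2|\ge 1$, and the partial quotients of $\alpha$ are bounded, so that the discrepancy of $(n\alpha)$ is $O(\log N)$ via Lemma \ref{le:boundforboundquotients}. This bypasses the ineffective Roth/Erd\H{o}s--Tur\'an input used for general algebraic numbers in the proof of Theorem \ref{th:localdistributionofrealnumbers}.

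\noindent\textbf{Linearisation of $S_d$.} For each admissible $d$, choose a basis $(\mathbf g_1,\mathbf g_2)$ of $\Lambda_d$ with entries $O(\det(\Lambda_d))$ and set $(u,v)=s\mathbf g_1+t\mathbf g_2$. The line $u=\alpha v$ becomes $s=\alpha'_d\,t$, where $\alpha'_d\in\QQ(\sqrt{b/a})$ is the image of $\alpha$ under an invertible fractional linear transformation with integral coefficients of size $O(\det(\Lambda_d))$; being the image of a quadratic irrational under an invertible $\QQ$-map, $\alpha'_d$ is again a quadratic irrational (never rational), so Lemma \ref{le:boundforboundquotients} applies to it. Clearing denominators, $\alpha'_d$ is a root of a primitive integral binary quadratic form with coefficients $O\!\bigl(b\det(\Lambda_d)^2\bigr)$, whence the explicit bounds $\Xi(\alpha'_d)\gg\bigl(b^{1/2}\det(\Lambda_d)\bigr)^{-1}$ and, by \eqref{eq:deltalpha}, $\varDelta(\alpha'_d)\ll b^{1/2}\det(\Lambda_d)$ up to a logarithm, with absolute implied constants. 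The height constraint $v\le KB$ transforms into one of the shape $t\le K_d B$ with $K_d\asymp K/\det(\Lambda_d)^{1/2}$, up to a boundary layer of width $O(1)$ treated exactly as the $S$-versus-$T$ comparison in the proof of Theorem \ref{th:localdistributionofrealnumbers}; if $|\alpha'_d|$ is large one exchanges the roles of $s$ and $t$. Thus $\sharp S_d$ is, up to $O(1)$, a quantity of the type $\sharp T_{K_d}(\varepsilon',0,1,B)$ treated in Proposition \ref{po:smallepsilon} for the \emph{quadratic} number $\alpha'_d$, and inequality \eqref{eq:LiouvillealphaXi} holds for $\alpha'_d$ with the explicit $\Xi(\alpha'_d)$ above.

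\noindent\textbf{Counting and summation over $d$.} Applying formula \eqref{eq:Tstrongalpha} with the free parameter $N=N_d$ chosen as in the proof of Theorem \ref{th:localdistributionofrealnumbers} — balanced against the discrepancy term $\varDelta(\alpha'_d)N_d\log(K_dB)$ and truncated so as not to exceed the available range of $t$ — gives $\sharp S_d=\dfrac{\varepsilon K^2}{2\det(\Lambda_d)}\,B^{2-\frac1r}+(\text{error})$. The admissible values of $d$ are bounded by feeding the Liouville inequality \eqref{eq:liouvillelemma} for $\alpha$ into the defining constraints of $S_d$, which yields $d\ll K b^{1/2}B^{1-\frac{1}{2r}}$; hypotheses \eqref{eq:cond4} and \eqref{eq:cond5} are precisely what guarantee that condition \eqref{eq:epsiloncondition12} of Proposition \ref{po:smallepsilon} is met for every such $d$ and that the error terms remain subordinate to the main term. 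Summing, $\sum_{d}\mu(d)/\det(\Lambda_d)=\Theta(\Lambda)$ by \eqref{eq:GammaLambda}, producing the main term $\tfrac12\Theta(\Lambda)\varepsilon K^2B^{2-\frac1r}$; the error terms split into an equidistribution-modulo-$1$ contribution (which carries the $\log B$ and, after summation over $d$, scales like $B^{\frac34(2-\frac1r)}$ with weight $K^{3/2}b^{1/4}\det(\Lambda)^{1/2}$) and a truncation-and-boundary contribution of size $K b^{3/2}B^{1-\frac1{2r}}\log B$, giving \eqref{eq:estimatefors}.

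\noindent\textbf{Main obstacle.} The delicate part is not the structure of the argument but the bookkeeping of constants through the linear change of variables: one must control how the explicit Liouville constant degrades for the transformed quadratic, $\Xi(\alpha'_d)\gg(b^{1/2}\det(\Lambda_d))^{-1}$, since this quantity governs simultaneously the admissible range of $d$, the bound on the partial quotients of $\alpha'_d$ (hence the discrepancy error through $\varDelta(\alpha'_d)$), and the validity of the hypotheses of Proposition \ref{po:smallepsilon}; and then one must choose $N_d$ so that, after summation over $d$, the two error terms land below the main term — this is exactly where the numeric constraints \eqref{eq:cond0}, \eqref{eq:cond4}, \eqref{eq:cond5} are consumed. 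A secondary, lower-order point is the treatment of bases for which the transformed line is close to vertical, handled by swapping coordinates and by the same boundary-comparison as in Theorem \ref{th:localdistributionofrealnumbers}.
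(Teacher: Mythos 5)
Your overall strategy is the same as the paper's: Möbius inversion over $\Lambda_d=\Lambda\cap d\ZZ^2$, a reduced basis of $\Lambda_d$ turning the line of slope $\alpha$ into a new quadratic irrational (the paper's $\theta(\alpha)$ of \eqref{eq:thetaalpha}), effective Liouville constants and bounded partial quotients for that number (Proposition \ref{po:effectiveliouville}, Lemma \ref{le:boundforboundquotients}, Corollaire \ref{co:discrepancyfortheta}), a Koksma--Denjoy count, and the summation $\sum_d\mu(d)/\det(\Lambda_d)=\Theta(\Lambda)$. However, there is a genuine gap in your claim that conditions \eqref{eq:cond4}--\eqref{eq:cond5} let you run the linearised estimate ``for every such $d$'' up to the full range $d\ll Kb^{1/2}B^{1-\frac{1}{2r}}$. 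The constants you yourself compute degrade with the sublattice: $\Xi(\alpha'_d)^{-1}$, hence $\varDelta(\alpha'_d)$, grows like $b\det(\Lambda_d)$, and $\det(\Lambda_d)\geqslant d^2$. So even with your $N_d$ truncated to $1$, the discrepancy error in \eqref{eq:Tstrongalpha} is at least of order $b\,d^2\log B$ per $d$; summed over $d$ up to $Kb^{1/2}B^{1-\frac{1}{2r}}$ this is of order $K^3b^{5/2}B^{3-\frac{3}{2r}}\log B$, which swamps both the main term and the error terms asserted in \eqref{eq:estimatefors}. This is exactly why the paper splits the range of $d$: the linearised (lattice) argument is only used for $d\leqslant K^{1/2}b^{-1/4}\det(\Lambda)^{-1/2}B^{\frac14(2-\frac1r)}$ (condition \eqref{eq:condd}, whose existence is what \eqref{eq:cond5} guarantees), while for larger $d$ it \emph{abandons} the transformed quadratic altogether, bounds $\sharp S(\varepsilon,K,\Lambda_d,B)$ by the count for $\alpha$ itself with $v\leqslant KB/d$ (constants independent of $d$), and this is where the second error term $Kb^{3/2}B^{1-\frac{1}{2r}}\log B$ comes from. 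Without this case split your error bookkeeping cannot close.

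A secondary, repairable imprecision: after linearisation the zoom condition reads $0<n_2-n_1\theta\leqslant\frac{n_1\mu_1+n_2\mu_2}{\lambda_2-\alpha\mu_2}\varepsilon B^{-\frac1r}$, whose width involves $v=n_1\mu_1+n_2\mu_2$, a form in \emph{both} new variables, not the single variable $n_1$ required by Proposition \ref{po:smallepsilon}. Replacing it by $\frac{\det(\Lambda_d)n_1}{(\lambda_2-\alpha\mu_2)^2}\varepsilon B^{-\frac1r}$ is not a boundary effect of size $O(1)$: one must show the cross term is at most half the main width (this uses the Liouville lower bound on $n_1$, i.e. \eqref{eq:minorationsurn1}, together with \eqref{eq:cond4} and \eqref{eq:condd}), and then bound the symmetric difference by a separate equidistribution estimate, giving $O(b\det(\Lambda_d)\log B)$ per $d$ — absorbable in the stated error for small $d$, but only after this extra argument, which your ``up to $O(1)$'' reduction skips.
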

 	Rappelons \eqref{eq:GammaLambda} que l'on peut aussi interpréter $\Theta(\Lambda)$, dans l'esprit de l'équidistribution, comme
 $$\Theta(\Lambda)=\frac{1}{\sharp \PP^1(\ZZ/\det(\Lambda)\ZZ)}\prod_{p}\left(1-\frac{1}{p^2}\right),$$
 qui correspond à la probabilité pour un point de $\PP^1(\QQ)$ de provenir d'un élément de $\Lambda$ primitif dans $\ZZ^2$.
 \begin{remark*}
 	Le deuxième terme d'erreur peut facilement dépasser le terme principal quand $\det(\Lambda)$ est trop petit et $b$ est trop grand, ce qui perd l'intérêt de la formule. Toutefois pour notre utilisation ultérieure il y aura des relations étroites entre $b,K,\det(\Lambda)$. Il s'avère qu'en fait ce sera le premier terme d'erreur qui contribuera plus que le deuxième.
 \end{remark*}
L'idée de la démonstration est que les points que l'on veut dénombrer sont dans un triangle. Toute transformation définie par une matrice l'envoie sur un autre triangle dont l'aire est celle du triangle initial divisé par le déterminant et la pente de l'une des bords correspond à un nouveau nombre quadratique à approcher, que l'on notera $\theta(\alpha)$, ce qui nous permet de se ramener au cas traité précédemment et d'appliquer la même technique.
 
Commençons par quelques préparations. 
 On prend un réseau $\Gamma\subset \ZZ^2$.
 	On choisit une base \begin{equation}\label{baseforgamma}
 	\be_1=(\lambda_1,\mu_1),\quad\be_2=(\lambda_2,\mu_2)
 	\end{equation} engendrant $\Gamma$ telle que (cf. \cite{Cassels} p. 135)
 	\begin{equation}\label{eq:choiceofbasis}
 	\|\be_1\|\leqslant 2 \nu_2,\quad \|\be_2\|\leqslant 2\nu_1,
 	\end{equation}
 	où $\nu_i$ désigne le $i$-ème minima successif de $\Gamma$ par rapport à la norme $\|\cdot\|$:
 	$$\|(x,y)\|=\max(|x|,|y|).$$
 	Le théorème de Minkowski (cf. \cite[Theorem V, VIII 4.3]{Cassels}) dit que
 	\begin{equation}\label{eq:Minkowski}
 	\nu_1^2\leqslant \nu_1 \nu_2\leqslant 4\det(\Gamma)=4|\lambda_2\mu_1-\lambda_1\mu_2|.
 	\end{equation}
 	On définit
 	\begin{equation}\label{eq:thetaalpha}
 	\theta=\theta(\alpha)=-\frac{\lambda_1-\alpha\mu_1}{\lambda_2-\alpha\mu_2}.
 	\end{equation}
 	Quitte à remplacer $\mathbf{e}_i$ par son opposé, on peut supposer que 
 	\begin{equation}\label{eq:supposebig0}
 	\lambda_2-\alpha\mu_2>0\quad \text{et}\quad\theta(\alpha)>0.
 	\end{equation}

 	On établit un théorème de Liouville \og effectif\fg, à savoir, avec des constantes explicites.
 	\begin{proposition}\label{po:effectiveliouville}
 		Pour tout $(u,v)\in(\NN\setminus \{0\})^2$, on a
 		\begin{equation}\label{eq:liouvilleeffective}
 		\left|\frac{u}{v}-\alpha\right|\geqslant\frac{\Xi(\alpha)}{v^2}\quad\text{et}\quad\left|\frac{u}{v}-\theta\right|\geqslant \frac{\xi(\theta)}{v^2},
 		\end{equation}
 		où on peut prendre 
 		\begin{equation}\label{eq:Xixi}
 		\Xi(\alpha)=(4\sqrt{ab})^{-1},\quad \xi(\theta)= (162b \det(\Gamma))^{-1}.
 		\end{equation}
 	\end{proposition}
 	\begin{proof}
 		On va reprendre la démonstration de l'inégalité de Liouville respectivement pour $\alpha$ et $\theta$.  
 		On note $f(x)=ax^2-b$ le polynôme minimal sur $\ZZ$ du nombre quadratique $\alpha=\sqrt{\frac{b}{a}}$.
 		Alors 
 		$$g(x)=a(\lambda_1+x\lambda_2)^2-b(\mu_1+x\mu_2)^2$$ est un polynôme entier qui annule $\theta$. D'une part comme $\alpha\not\in\QQ$, on a \begin{equation}\label{eq:ll1}
 		\left|f\left(\frac{u}{v}\right)\right|\geqslant \frac{1}{v^2},\quad\left|g\left(\frac{u}{v}\right)\right|\geqslant \frac{1}{v^2}.
 		\end{equation}
 		D'autre part, d'après le théorème de la valeur moyenne,  pour $z\in\mathopen]\alpha-1,\alpha+1\mathclose[$ et $y\in\mathopen]\theta-1,\theta+1\mathclose[$, 
 		\begin{equation}\label{eq:ll2}
 		\left|f\left(z\right)\right|\leqslant \sup_{|x-\alpha|<1}|f^\prime(x)|\left|z-\alpha\right|,\quad\left|g\left(y\right)\right|\leqslant \sup_{|x-\theta|<1}|g^\prime(x)|\left|y-\theta\right|.
 		\end{equation}
 		Or, on a les majorations
 		\begin{equation}\label{eq:ll3}
 		\sup_{|x-\alpha|<1}|f^\prime(x)|=\sup_{|x-\alpha|<1}2|ax|\leqslant 4\sqrt{ab},
 		\end{equation}
 		\begin{align*}
 \sup_{|x-\theta|<1}|g^\prime(x)|&= \sup_{|x-\theta|<1}|2|(a\lambda_2^2-b\mu_2^2)x+(a\lambda_1\lambda_2-b\mu_1\mu_2)|\\
 &\leqslant 2\sqrt{ab}|\lambda_2\mu_1-\lambda_1\mu_2|+2|a\lambda_2^2-b\mu_2^2|+4|a\lambda_1\lambda_2-b\mu_1\mu_2|.
 		\end{align*}
 	D'après les théorèmes de Minkowski \eqref{eq:Minkowski},
    en rappelant le choix de $\be_1,\be_2$ \eqref{eq:choiceofbasis} et $b>a$, il en découle que
    $$|a\lambda_2^2-b\mu_2^2|\leqslant b\|\be_2\|^2\leqslant 16b\det(\Gamma),$$
    \begin{equation}\label{eq:applicationofMinkowski1}
    |a\lambda_1\lambda_2-b\mu_1\mu_2|\leqslant 2b\|\be_1\|\|\be_2\|\leqslant 32 b\det(\Gamma),
    \end{equation}
    d'où 
    \begin{equation}\label{eq:ll4}
     \sup_{|x-\theta|<1}|g^\prime(x)|\leqslant 162 b\det(\Gamma).
    \end{equation}
    Les inégalités \eqref{eq:liouvilleeffective} sont triviales si 
    $$\left|\frac{u}{v}-\alpha\right|\geqslant 1, \quad \text{ ou }\quad \left|\frac{u}{v}-\theta\right|\geqslant 1.$$
   Dans le cas contraire les inégalités \eqref{eq:liouvilleeffective} résultent de \eqref{eq:ll1},\eqref{eq:ll2},\eqref{eq:ll3},\eqref{eq:ll4}.
 	\end{proof}
 On en déduit l'encadrement suivant qui sera utilisé fréquemment.
 \begin{corollary}\label{co:rangeoflambdamu2}
 	On a
 	$$(16b\sqrt{\det(\Gamma)})^{-1}<\lambda_2-\alpha\mu_2<8\alpha \sqrt{\det(\Gamma)}.$$
 \end{corollary}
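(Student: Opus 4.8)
The plan is to bound $\lambda_2-\alpha\mu_2$ from both sides using only the triangle inequality, the size estimates on the chosen basis coming from Minkowski's theorem, and the effective Liouville inequality of Proposition~\ref{po:effectiveliouville}. Throughout one uses that $\alpha=\sqrt{b/a}>1$ since $a<b$, and that $\sqrt{ab}<b$ for the same reason; these are precisely what turn the non‑strict estimates below into the strict inequalities asserted.

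For the upper bound I would invoke \eqref{eq:choiceofbasis} and \eqref{eq:Minkowski}: since $\|\be_2\|\leqslant 2\nu_1$ and $\nu_1^2\leqslant 4\det(\Gamma)$, we get $|\lambda_2|\leqslant\|\be_2\|\leqslant 4\sqrt{\det(\Gamma)}$ and likewise $|\mu_2|\leqslant 4\sqrt{\det(\Gamma)}$. By the normalisation \eqref{eq:supposebig0} the quantity $\lambda_2-\alpha\mu_2$ is positive, hence
$$\lambda_2-\alpha\mu_2=|\lambda_2-\alpha\mu_2|\leqslant|\lambda_2|+\alpha|\mu_2|\leqslant(1+\alpha)\cdot 4\sqrt{\det(\Gamma)}<8\alpha\sqrt{\det(\Gamma)},$$
the last step because $1+\alpha<2\alpha$.

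For the lower bound I would first dispose of the degenerate cases. If $\mu_2=0$ then $\be_2=(\lambda_2,0)$ with $\lambda_2\in\ZZ\setminus\{0\}$ ($\be_2$ being a nonzero vector of a basis of the rank‑two lattice $\Gamma$), so $\lambda_2-\alpha\mu_2=|\lambda_2|\geqslant 1$. If $\mu_2\neq 0$ but $\lambda_2\mu_2\leqslant 0$, then positivity of $\lambda_2-\alpha\mu_2$ forces $\mu_2<0$ and $\lambda_2\geqslant 0$, whence $\lambda_2-\alpha\mu_2\geqslant\alpha|\mu_2|\geqslant\alpha>1$. In both cases $\lambda_2-\alpha\mu_2\geqslant 1>(16b\sqrt{\det(\Gamma)})^{-1}$, as $b,\det(\Gamma)\geqslant 1$. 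In the remaining case $\lambda_2,\mu_2$ are nonzero of the same sign, so $\lambda_2/\mu_2=|\lambda_2|/|\mu_2|$ is a positive rational, and Proposition~\ref{po:effectiveliouville} applied with $v=|\mu_2|$ (and $\Xi(\alpha)=(4\sqrt{ab})^{-1}$) gives $|\lambda_2/\mu_2-\alpha|\geqslant\Xi(\alpha)/\mu_2^2$. Multiplying by $|\mu_2|$ and using $|\mu_2|\leqslant 4\sqrt{\det(\Gamma)}$,
$$\lambda_2-\alpha\mu_2=|\mu_2|\left|\frac{\lambda_2}{\mu_2}-\alpha\right|\geqslant\frac{\Xi(\alpha)}{|\mu_2|}=\frac{1}{4\sqrt{ab}\,|\mu_2|}\geqslant\frac{1}{16\sqrt{ab}\,\sqrt{\det(\Gamma)}}>\frac{1}{16b\sqrt{\det(\Gamma)}},$$
using $\sqrt{ab}<b$ in the final step. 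Together with the upper bound this gives the corollary.

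I do not expect any genuine obstacle: the only non‑elementary ingredients are the size control \eqref{eq:choiceofbasis}--\eqref{eq:Minkowski} on the reduced basis $(\be_1,\be_2)$ and the explicit Liouville inequality of Proposition~\ref{po:effectiveliouville}, both already available; the remaining work is the triangle inequality together with the bookkeeping of which of $\lambda_2,\mu_2$ may vanish.
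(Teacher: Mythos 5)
Your proof is correct and follows essentially the same route as the paper: the upper bound from $\|\be_2\|\leqslant 2\nu_1\leqslant 4\sqrt{\det(\Gamma)}$ together with $1+\alpha<2\alpha$, and the lower bound by writing $\lambda_2-\alpha\mu_2=|\mu_2|\,|\lambda_2/\mu_2-\alpha|$ and applying the effective Liouville inequality of Proposition \ref{po:effectiveliouville} with $\Xi(\alpha)=(4\sqrt{ab})^{-1}$ and $\sqrt{ab}<b$. Your explicit treatment of the degenerate cases $\mu_2=0$ and $\lambda_2\mu_2\leqslant 0$ is a minor completeness point that the paper passes over silently.
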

\begin{proof}
	Premièrement,
	$$\lambda_2-\alpha\mu_2<2\alpha \|e_2\|\leqslant 8\alpha\sqrt{\det(\Gamma)}.$$
	Pour la minoration on utilise les inégalités \eqref{eq:liouvilleeffective}:
	$$\lambda_2-\alpha\mu_2>\frac{\Xi(\alpha)}{|\mu_2|}\geqslant \frac{\Xi(\alpha)}{\|\mathbf{e}_2\|}>\frac{1}{16b\sqrt{\det(\Gamma)}}.$$
\end{proof}
 	\begin{corollary}\label{co:discrepancyfortheta}
 		Pour $N> 1$, on a
 		$$N D_{\alpha}(N)=O\left(\frac{b}{\log b}\log N\right)=O(b \log N),$$
 		$$ND_\theta(N)=O\left(\frac{b\det(\Gamma)}{\log(b\det(\Gamma))}\log N\right)=O(b\det(\Gamma)\log N).$$
 	\end{corollary}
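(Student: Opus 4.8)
The plan is to feed the effective Liouville inequalities of Proposition~\ref{po:effectiveliouville} into Lemma~\ref{le:boundforboundquotients}. Recall that Lemma~\ref{le:boundforboundquotients} asserts that if an irrational $\vartheta$ satisfies $|\vartheta-p/q|\geqslant C(\vartheta)/q^2$ for all rationals $p/q$ with some $C(\vartheta)\in\mathopen]0,1\mathclose[$, then $ND_\vartheta(N)=O\left(\frac{C(\vartheta)^{-1}}{\log(C(\vartheta)^{-1})}\log N\right)$. Hence it suffices to check the hypothesis with an explicit admissible constant in each of the two cases, and then to simplify the resulting bound by elementary estimates.

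First take $\vartheta=\alpha=\sqrt{b/a}$. By Proposition~\ref{po:effectiveliouville} we may take $C(\alpha)=\Xi(\alpha)=(4\sqrt{ab})^{-1}$. Since $a,b\in\NN$, $a<b$ and $b/a$ is not a square, we have $a\geqslant 1$ and $b\geqslant 2$, hence $\sqrt{ab}\geqslant\sqrt{2}>1$ and $\Xi(\alpha)<\tfrac14<1$; the hypothesis of Lemma~\ref{le:boundforboundquotients} is thus satisfied. Applying the lemma gives $ND_\alpha(N)=O\left(\frac{4\sqrt{ab}}{\log(4\sqrt{ab})}\log N\right)$. Because $a\leqslant b$ we have $4\sqrt{ab}\leqslant 4b$, and since $x\mapsto x/\log x$ is increasing on $\mathopen]e,\infty\mathclose[$ (and $4b>e$), it follows that $\frac{4\sqrt{ab}}{\log(4\sqrt{ab})}\leqslant\frac{4b}{\log(4b)}=O\!\left(\frac{b}{\log b}\right)$. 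This yields $ND_\alpha(N)=O\!\left(\frac{b}{\log b}\log N\right)$, and the coarser bound $O(b\log N)$ follows at once since $\log b\geqslant\log 2$.

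For $\vartheta=\theta=\theta(\alpha)$, Proposition~\ref{po:effectiveliouville} allows $C(\theta)=\xi(\theta)=(162\,b\det(\Gamma))^{-1}$, which is $<1$ because $b\geqslant 2$ and $\det(\Gamma)\geqslant 1$. Lemma~\ref{le:boundforboundquotients} then gives $ND_\theta(N)=O\left(\frac{162\,b\det(\Gamma)}{\log(162\,b\det(\Gamma))}\log N\right)$; using once more the monotonicity of $x/\log x$ (here $162\,b\det(\Gamma)\geqslant 324>e$) to absorb the numerical constant $162$ into an $O(1)$ factor, we obtain $ND_\theta(N)=O\!\left(\frac{b\det(\Gamma)}{\log(b\det(\Gamma))}\log N\right)$, and the final bound $O(b\det(\Gamma)\log N)$ follows since $\log(b\det(\Gamma))\geqslant\log 2$. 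There is essentially no obstacle here: the only points requiring care are checking that the Liouville constants furnished by Proposition~\ref{po:effectiveliouville} are genuinely $<1$ so that Lemma~\ref{le:boundforboundquotients} applies verbatim, and the elementary monotonicity argument used to replace $\sqrt{ab}$ by $b$ and to discard the numerical constants. (For $N$ in a bounded range the stated bounds are understood up to the harmless additive constant coming from the Erdős–Turán/Koksma estimate underlying Lemma~\ref{le:boundforboundquotients}, which is irrelevant in the applications, where $N\to\infty$.)
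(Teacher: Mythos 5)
Your proof is correct and follows exactly the paper's route: the paper also deduces the corollary by feeding the explicit Liouville constants $\Xi(\alpha)=(4\sqrt{ab})^{-1}$ and $\xi(\theta)=(162\,b\det(\Gamma))^{-1}$ of Proposition~\ref{po:effectiveliouville} into Lemma~\ref{le:boundforboundquotients}. The only difference is that you spell out the harmless verifications (the constants are in $\mathopen]0,1\mathclose[$, and the monotonicity of $x/\log x$ used to replace $4\sqrt{ab}$ by $b$ and absorb the numerical factors), which the paper leaves implicit.
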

 \begin{proof}
 	Ceci résulte du Lemme \ref{le:boundforboundquotients} et de la Proposition \ref{po:effectiveliouville}.
 \end{proof}
 	\begin{proof}[Démonstration du Théorème \ref{th:zoomingonlattices}]
Avant de commencer de dénombrer l'ensemble \eqref{SK}, on présente deux conditions supplémentaires qui sont des conséquences des conditions \eqref{eq:cond0} et \eqref{eq:cond4} pour $B\gg_{\alpha,\varepsilon} 1$,
 	\begin{equation}\label{eq:cond2}
 	K^6b^3\leqslant (2^{10}\varepsilon)^{-4}B^{4(\frac{1}{r}-1)-(2-\frac{1}{r})},
 	\end{equation}
 	\begin{equation}\label{eq:cond3}
 	K^2b\leqslant(2^6 \alpha)^{-4}B^{4-3(2-\frac{1}{r})},
 	\end{equation}
 	puisque
 	$$0<\frac{4}{5}\left(\frac{1}{r}-1\right)-\frac{3}{5}\left(2-\frac{1}{r}\right)<\min\left(\frac{4}{3}\left(\frac{1}{r}-1\right)-\frac{1}{3}\left(2-\frac{1}{r}\right),4-3\left(2-\frac{1}{r}\right)\right).$$
 	Premièrement, comme toujours, on utilise l'inversion de Möbius pour éliminer la coprimalité:
 	$$\sharp S(\varepsilon,K,\Lambda,B)=\sum_{d\in\NN^*}\mu(d) \sharp S(\varepsilon,K,\Lambda_d,B),$$
 	où 
 	 \begin{equation}\label{eq:SKprime}
 	S(\varepsilon,K,\Lambda_d,B)=\left\{
 	\begin{aligned}
 	(u,v)\in\Lambda_d\cap\NN^2
 	\end{aligned}
 	\left|
 	\begin{aligned}
 	&0<u-v\alpha\leqslant \varepsilon v B^{-\frac{1}{r}}\\
 	&v\leqslant KB
 	\end{aligned}
 	\right\}\right. .
 	\end{equation} 
   	Pour déduire une borne précise pour $d$ nous suivons \eqref{eq:boundford}. Pour tout $(u,v)\in S(\varepsilon,K,\Lambda_d,B)$, on note $u^\prime=u/d,v^\prime=v/d$.
   	En utilisant la Proposition \ref{po:effectiveliouville}, on a
   	$$\frac{d\Xi(\alpha)}{KB}\leqslant\frac{\Xi(\alpha)}{v^\prime}\leqslant u^\prime-v^\prime\alpha\leqslant\varepsilon v^\prime B^{-\frac{1}{r}}\leqslant \frac{\varepsilon K }{d}B^{1-\frac{1}{r}},$$
   d'où
   	\begin{equation}\label{eq:boundford22}
   	d\leqslant \left(\frac{\varepsilon K^2}{\Xi(\alpha)}\right)^\frac{1}{2} B^{1-\frac{1}{2r}}.
   	\end{equation}
   	Notons que 
   \begin{equation}\label{eq:Dalpha}
   	D(\alpha,K)=\left(\frac{\varepsilon K^2}{\Xi(\alpha)}\right)^\frac{1}{2}\leqslant 2\sqrt{\varepsilon}K b^\frac{1}{2} \ll Kb^{\frac{1}{2}}.
   \end{equation}

   	\textbf{Cas I. $d$ est petit.} 
 	L'étape suivante consiste à éliminer la condition de réseau sur $u,v$ pour les $d$ petits. Pour contrôler la contribution des $d$ grands on peut utiliser la majoration donnée par \eqref{eq:Tstrongalpha}. On utilisera les notations \eqref{baseforgamma},\eqref{eq:choiceofbasis},\eqref{eq:thetaalpha} pour le réseau $\Gamma=\Lambda_d$, c'est-à-dire
 	$\Lambda_d=\ZZ \be_1\oplus\ZZ\be_2$. On écrit
 	$$u=n_1\lambda_1+n_2\lambda_2,\quad v=n_1\mu_1+n_2\mu_2,$$ avec $n_1,n_2\in\ZZ$.
 	On voudrait se ramener au dénombrement sur $(n_1,n_2)$ puisqu'il n'y a plus de contraintes sur la divisibilité de $n_1,n_2$. Avec les notations ci-dessus l'ensemble \eqref{eq:SKprime} s'écrit
 	 	\begin{equation*}
 	 	S(\varepsilon,K,\Lambda_d,B)=\left\{
 	 	\begin{aligned}
 	 	(n_1,n_2)\in\ZZ^2
 	 	\end{aligned}
 	 	\left|
 	 	\begin{aligned}
 	 	&0<n_2-n_1\theta\leqslant\frac{n_1\mu_1+n_2\mu_2}{\lambda_2-\alpha\mu_2}\varepsilon  B^{-\frac{1}{r}}\\
 	 	&0<n_1\mu_1+n_2\mu_2\leqslant KB
 	 	\end{aligned}
 	 	\right\}\right. .
 	 	\end{equation*}
 	On va estimer le cardinal de $S(\varepsilon,K,\Lambda_d,B)$ quand $d$ vérifie la condition
 	 \begin{equation}\label{eq:condd}
 	 d\leqslant \frac{K^\frac{1}{2}}{b^\frac{1}{4}\det(\Lambda)^\frac{1}{2}}B^{\frac{1}{4}(2-\frac{1}{r})}.
 	 \end{equation}
 	 La condition \eqref{eq:cond5} assure que tels $d$ existent. 
 	 D'abord on détermine les signes de $n_1,n_2$. En utilisant le Corollaire \ref{co:rangeoflambdamu2} et en rappelant la condition \eqref{eq:cond2}, on a
 	 \begin{equation}\label{eq:estimaten2n11}
 	 \begin{split}
 	 0<n_2-n_1\theta&\leqslant\frac{n_1\mu_1+n_2\mu_2}{\lambda_2-\alpha\mu_2}\varepsilon  B^{-\frac{1}{r}}\leqslant \frac{K\varepsilon}{\lambda_2-\alpha\mu_2}B^{1-\frac{1}{r}}\leqslant 16K\varepsilon b \sqrt{\det(\Lambda_d)}B^{1-\frac{1}{r}}\\
 	 &\leqslant 16K\varepsilon b d\sqrt{\det(\Lambda)}B^{1-\frac{1}{r}} 
 	 \leqslant 2^4 \varepsilon K^{\frac{3}{2}} b^{\frac{3}{4}}  B^{1-\frac{1}{r}+\frac{1}{4}(2-\frac{1}{r})}\leqslant \frac{1}{64}.
 	  	 \end{split}
 	 \end{equation}
 	 Cela implique non seulement que pour $n_1$ fixé, il y a au plus un $n_2$ tel que $(n_1,n_2)\in S(\varepsilon,K,\Lambda_d,B)$, mais aussi que, pour un réseau $\Lambda_d$ fixé, on a $n_1,n_2\geqslant 0 \text{ ou } n_1,n_2\leqslant 0$.
 	 On ne peut avoir qu'une seule possibilité parce que le signe de $n_1,n_2$ détermine aussi le signe de $v$, que l'on a supposé être positif. Donc dans la suite, sans perte de généralité on suppose que $n_1,n_2$ sont positifs. En fait on a dans ce cas $n_1,n_2>0$ (sinon on aurait $n_1=n_2=0$ et avec $v=0$). 
 	  On définit les ensembles $R_1(\varepsilon,K,\Lambda_d,B),R_2(\varepsilon,K,\Lambda_d,B)$ des couples $(n_1,n_2)\in\NN^*\times\NN^*$ vérifiant respectivement
 	 \begin{equation}\label{eq:RepsilonLambdad}
 	 \begin{split}
 	 \frac{\det(\Lambda_d)n_1}{(\lambda_2-\alpha\mu_2)^2}\varepsilon  B^{-\frac{1}{r}}-2^{10} K\varepsilon^2 b^2\det(\Lambda_d)^{\frac{3}{2}}B^{1-\frac{2}{r}}&<n_2-n_1\theta\leqslant\frac{\det(\Lambda_d)n_1}{(\lambda_2-\alpha\mu_2)^2}\varepsilon  B^{-\frac{1}{r}}+2^{10} K\varepsilon^2 b^2\det(\Lambda_d)^{\frac{3}{2}}B^{1-\frac{2}{r}}\\
 	 n_1&\leqslant 2\frac{\lambda_2-\alpha\mu_2}{\det(\Lambda_d)} KB
 	 \end{split}
 	 \end{equation}
 	 et
 	 \begin{equation}\label{eq:RepsilonLambdad2}
 	 \begin{split}
 	 &0<n_2-n_1\theta\leqslant\frac{3}{2}\frac{\det(\Lambda_d)n_1}{(\lambda_2-\alpha\mu_2)^2}\varepsilon  B^{-\frac{1}{r}}\\
 	 &\frac{\lambda_2-\alpha\mu_2}{\det(\Lambda_d)} KB-\frac{\alpha}{2}\leqslant n_1\leqslant \frac{\lambda_2-\alpha\mu_2}{\det(\Lambda_d)} KB+\frac{\alpha}{2}
 	 \end{split}
 	 \end{equation}
 	 et l'ensemble	\begin{equation}\label{eq:TepsilonLambdad}
 	 T(\varepsilon,K,\Lambda_d,B)=\left\{
 	 \begin{aligned}
 	 (n_1,n_2)\in\NN^*\times\NN^*
 	 \end{aligned}
 	 \left|
 	 \begin{aligned}
 	 &0<n_2-n_1\theta\leqslant\frac{\det(\Lambda_d)n_1}{(\lambda_2-\alpha\mu_2)^2}\varepsilon  B^{-\frac{1}{r}}\\
 	 &n_1\leqslant\frac{\lambda_2-\alpha\mu_2}{\det(\Lambda_d)} KB
 	 \end{aligned}
 	 \right\}\right. .
 	 \end{equation}
 	 Comme dans \eqref{eq:estimaten2n11}, on a que pour $(n_1,n_2)\in T(\varepsilon,K,\Lambda_d,B)$, 
 	 \begin{equation}\label{eq:estimaten2n12}
 	 0<n_2-n_1\theta\leqslant \frac{\det(\Lambda_d)\varepsilon B^{-\frac{1}{r}}}{(\lambda_2-\alpha\mu_2)^2}\times \frac{(\lambda_2-\alpha\mu_2)KB}{\det(\Lambda_d)}=\frac{K\varepsilon}{\lambda_2-\alpha\mu_2}B^{1-\frac{1}{r}}\leqslant \frac{1}{64}.
 	 \end{equation}
 	 
 	 L'étape suivante est de comparer les ensembles $S(\varepsilon,K,\Lambda_d,B)$ et $T(\varepsilon,K,\Lambda_d,B)$, ainsi qu'à établir (rappelons la notation $\Delta$ \eqref{eq:delta})
\begin{equation}\label{eq:STdiff}
 	 	S(\varepsilon,K,\Lambda_d,B)\Delta  T(\varepsilon,K,\Lambda_d,B)\subseteq R_1(\varepsilon,K,\Lambda_d,B)\cup R_2(\varepsilon,K,\Lambda_d,B).
\end{equation}

 	  		Tout d'abord, rappelons l'égalité
 	  		$$|\mu_1+\theta \mu_2|=\frac{\det(\Lambda_d)}{\lambda_2-\alpha\mu_2}.$$
 Soit $(n_1,n_2)\in S(\varepsilon,K,\Lambda_d,B)\cup T(\varepsilon,K,\Lambda_d,B)$. On a
\begin{equation}\label{eq:S1}
 	 n_1|\mu_1+\theta \mu_2|=|n_1\mu_1+n_2\mu_2-(n_2-n_1\theta)\mu_2|\leqslant KB+|\mu_2|(n_2-n_1\theta),
\end{equation}
D'après \eqref{eq:estimaten2n11} et le Corollaire \ref{co:rangeoflambdamu2}, on a, puisque 
$$|\mu_2|\leqslant\|e_2\|\leqslant 4\sqrt{\det(\Lambda_d)},$$
\begin{equation}\label{eq:S2}
	 n_2-n_1\theta\leqslant \frac{K \varepsilon }{\lambda_2-\alpha \mu_2} B^{1-\frac{1}{r}}\leqslant 16K\varepsilon b\sqrt{\det(\Lambda_d)}B^{1-\frac{1}{r}}.
\end{equation}
	 Donc d'après \eqref{eq:estimaten2n11} et \eqref{eq:estimaten2n12},
 	 $$\frac{|\mu_2|(n_2-n_1\theta)}{|\mu_1+\theta\mu_2|}=\frac{|\mu_2|(n_2-n_1\theta)(\lambda_2-\alpha\mu_2)}{\det(\Lambda_d)}\leqslant  32 \alpha(n_2-n_1\theta)\leqslant \frac{\alpha}{2}.$$
 	Notons aussi que, d'après le Corollaire \ref{co:rangeoflambdamu2}, \eqref{eq:cond3}, et \eqref{eq:condd}, 
 	$$\frac{\lambda_2-\alpha\mu_2}{\det(\Lambda_d)} KB > \frac{KB}{16b\det(\Lambda_d)^\frac{3}{2}} \geqslant\frac{KB}{16b\det(\Lambda)^\frac{3}{2}d^3}\geqslant \frac{B^{1-\frac{3}{4}(2-\frac{1}{r})}}{2^7 K^\frac{1}{2} b^\frac{1}{4}}\geqslant \frac{\alpha}{2}.$$
Donc 
\begin{equation}\label{eq:n1upper}
n_1\leqslant \frac{KB+|\mu_2|(n_2-n_1\theta)}{|\mu_1+\theta \mu_2|}\leqslant \frac{\lambda_2-\alpha\mu_2}{\det(\Lambda_d)} KB+\frac{\alpha}{2}\leqslant 2\frac{\lambda_2-\alpha\mu_2}{\det(\Lambda_d)} KB.
\end{equation}
Si $(n_1,n_2)\in T(\varepsilon,K,\Lambda_d,B)$ est tel que $n_1\mu_1+n_2\mu_2\geqslant KB$, on a comme dans \eqref{eq:S1},
$$|n_1\mu_1+n_2\mu_2|\leqslant n_1|\mu_1+\theta\mu_2|+|\mu_2|(n_2-n_1\theta),$$
et donc
\begin{equation}\label{eq:n1lower}
n_1\geqslant \frac{|n_1\mu_1+n_2\mu_2|}{|\mu_1+\theta\mu_2|}-\frac{|\mu_2|(n_2-n_1\theta)}{|\mu_1+\theta\mu_2|}\geqslant  \frac{\lambda_2-\alpha\mu_2}{\det(\Lambda_d)} KB-\frac{\alpha}{2}.
\end{equation}
Deuxièmement, comme d'après la définition de $\theta$ \eqref{eq:thetaalpha},
 	 $$\frac{n_1\mu_1+n_2\mu_2}{\lambda_2-\alpha\mu_2}=\frac{n_1(\lambda_2\mu_1-\lambda_1\mu_2)}{(\lambda_2-\alpha\mu_2)^2}+\frac{\mu_2(n_2-\theta n_1)}{\lambda_2-\alpha\mu_2},$$
 	on va montrer que pour tout $(n_1,n_2)\in S(\varepsilon,K,\Lambda_d,B)\cup  T(\varepsilon,K,\Lambda_d,B)$,
 	 \begin{equation}\label{eq:halfseparate}
 	\left|\frac{(\lambda_2\mu_1-\lambda_1\mu_2)n_1}{(\lambda_2-\alpha\mu_2)^2}\right|\varepsilon B^{-\frac{1}{r}}\geqslant 2^{11} K\varepsilon^2 b^2\det(\Lambda_d)^{\frac{3}{2}}B^{1-\frac{2}{r}}\geqslant 2\left|\frac{\mu_2(n_2-n_1 \theta)}{\lambda_2-\alpha\mu_2}\right|\varepsilon B^{-\frac{1}{r}}. 
 	\end{equation}
 	 	En particulier, comme on a $\det(\Lambda_d)=\lambda_2\mu_1-\lambda_1\mu_2$, cela implique que
 	\begin{align}\label{eq:n1n2diff}
 		\left|\frac{\det(\Lambda_d)n_1}{(\lambda_2-\alpha\mu_2)^2}\varepsilon  B^{-\frac{1}{r}}- \frac{n_1\mu_1+n_2\mu_2}{\lambda_2-\alpha\mu_2}\varepsilon B^{-\frac{1}{r}} \right|=\left|\frac{\mu_2(n_2-\theta n_1)}{\lambda_2-\alpha\mu_2} \varepsilon B^{-\frac{1}{r}}\right|
 		\leqslant 2^{10} K\varepsilon^2 b^2\det(\Lambda_d)^{\frac{3}{2}}B^{1-\frac{2}{r}},
 	\end{align}
 	et donc, en rappelant la définition des ensembles $R_1(\varepsilon,K,\Lambda_d,B)$ et $R_2(\varepsilon,K,\Lambda_d,B)$ \eqref{eq:RepsilonLambdad}, \eqref{eq:RepsilonLambdad2}, l'inclusion \eqref{eq:STdiff} découle des estimations \eqref{eq:n1lower}, \eqref{eq:n1upper} et \eqref{eq:n1n2diff}.
 	
 	Pour tout $(n_1,n_2)\in S(\varepsilon,K,\Lambda_d,B)\cup T(\varepsilon,K,\Lambda_d,B)$, d'après la Proposition \ref{po:effectiveliouville},
 	 $$\lambda_2-\alpha \mu_2\geqslant \frac{\Xi(\alpha)}{|\mu_2|}, \quad n_2-n_1\theta \geqslant \frac{\xi(\theta)}{n_1},$$
 	 on déduit de \eqref{eq:S2} que
 	 \begin{equation}\label{eq:minorationsurn1}
 	 n_1\geqslant\frac{B^{\frac{1}{r}-1}\xi(\theta)}{16K\varepsilon b\sqrt{\det(\Lambda_d)}}=\frac{B^{\frac{1}{r}-1}}{16\times 162 K\varepsilon b^2 \det(\Lambda_d)^{\frac{3}{2}}},
 	 \end{equation}
 	 et
 	 \begin{align*}
 	 	\left|\frac{\mu_2(n_2-n_1 \theta)}{\lambda_2-\alpha\mu_2}\right|\varepsilon B^{-\frac{1}{r}}&\leqslant (\Xi(\alpha))^{-1}\mu_2^2(n_2-n_1\theta)\varepsilon B^{-\frac{1}{r}}\\
 	 	&\leqslant 2^{10} \varepsilon^2 K b^2 \det(\Lambda_d)^\frac{3}{2}B^{1-\frac{2}{r}}.
 	 \end{align*}
 	 	 	 Pour que \eqref{eq:halfseparate} soit vraie, il reste à démontrer
 	 	 	 $$\frac{\det(\Lambda_d)n_1}{(\lambda_2-\alpha \mu_2)^2}\varepsilon B^{-\frac{1}{r}}\geqslant 2^{11} K\varepsilon^2b^2\det(\Lambda_d)^{\frac{3}{2}}B^{1-\frac{2}{r}}.$$ 
 	 	 	 Pour cela il suffit de démontrer 
 	 	 	 \begin{equation}\label{eq:minorationsurn12}
 	 	 	 n_1\geqslant 2^{17} \alpha^2 \varepsilon Kb^2\det(\Lambda_d)^\frac{3}{2}B^{1-\frac{1}{r}},
 	 	 	 \end{equation}
 	 	 	 car d'après la deuxième inégalité du Corollaire \ref{co:rangeoflambdamu2},
 	 	 	 $$2^{17} \alpha^2 \varepsilon Kb^2\det(\Lambda_d)^\frac{3}{2}B^{1-\frac{1}{r}}\geqslant 2^{11} \varepsilon(\lambda_2-\mu_2\alpha)^2 K b^2\det(\Lambda_d)^\frac{1}{2}B^{1-\frac{1}{r}}.$$
 	 	 	 D'après la condition \eqref{eq:cond4} et la condition sur $d$ \eqref{eq:condd}, on a
 	 	 	 \begin{align*}
 	 	 	 	2^{21}\times 162 \alpha^2 \varepsilon^2 K^2 b^4 \det(\Lambda_d)^3 &\leqslant 	
 	 	 	 	2^{21}\times 162 \alpha^2 \varepsilon^2 K^2 b^4 \det(\Lambda)^3 d^6\\
 	 	 	 	&\leqslant 2^{21}\times 162 \alpha^2\varepsilon^2 K^5 b^\frac{5}{2} B^{\frac{3}{2}(2-\frac{1}{r})}\\
 	 	 	 		&\leqslant B^{2(\frac{1}{r}-1)},
 	 	 	 \end{align*}
 	 	 	 ce qui implique que
 	 	 	 $$\frac{B^{\frac{1}{r}-1}}{16\times 162 K\varepsilon b^2 \det(\Lambda_d)^{\frac{3}{2}}}\geqslant 2^{17} \alpha^2 \varepsilon K b^2\det(\Lambda_d)^{\frac{3}{2}}B^{1-\frac{1}{r}}.$$
 	         Donc l'inégalité dans \eqref{eq:minorationsurn12} qui fallait démontrer découle de \eqref{eq:minorationsurn1}. La formule \eqref{eq:STdiff} est finalement achevée.

 	 Maintenant on découpe l'intervalle $$\left[0,\frac{(\lambda_2-\alpha\mu_2)K}{\det(\Lambda_d)} B \right]$$ en $N$ pièces (avec $N$ à déterminer) 
 	 \begin{equation}\label{eq:intervalsN}
 	 \left[\frac{(\lambda_2-\alpha\mu_2)(k-1)K}{N\det(\Lambda_d)} B,\frac{(\lambda_2-\alpha\mu_2)kK}{N\det(\Lambda_d)} B\right],\quad 1\leqslant k\leqslant N.
 	 \end{equation}
 	 Avant de dénombrer l'ensemble $T(\varepsilon,K,\Lambda_d,B)$, observons que pour tout $(n_1,n_2)\in T(\varepsilon,K,\Lambda_d,B)$, nous avons $n_2-n_1\theta=1-\{n_1\theta\}$ grâce à \eqref{eq:estimaten2n12}.
 	 Cela nous permet d'écrire
 	 \begin{align*}
 	 	\sharp T(\varepsilon,K,\Lambda_d,B)
 	 	&=\sum_{1\leqslant k\leqslant N}\sum_{\frac{(\lambda_2-\alpha\mu_2)(k-1)K}{N\det(\Lambda_d)}B<n\leqslant\frac{(\lambda_2-\alpha\mu_2)kK}{N\det(\Lambda_d)}B} \mathtt{1}_{1-\{n\theta\}\leqslant\frac{\det(\Lambda_d)n}{(\lambda_2-\alpha\mu_2)^2}\varepsilon  B^{-\frac{1}{r}}}\\
 	 	&=\sum_{1\leqslant k\leqslant N}\sum_{\frac{(\lambda_2-\alpha\mu_2)(k-1)K}{N\det(\Lambda_d)}B<n\leqslant\frac{(\lambda_2-\alpha\mu_2)kK}{N\det(\Lambda_d)}B}\left(\mathtt{1}_{1-\{n\theta\}\leqslant\frac{k\varepsilon K}{N(\lambda_2-\mu_2\alpha)}B^{1-\frac{1}{r}}}-\mathtt{1}_{\frac{\det(\Lambda_d)n}{(\lambda_2-\alpha\mu_2)^2}\varepsilon  B^{-\frac{1}{r}}<1-\{n\theta\}\leqslant \frac{k\varepsilon K}{N(\lambda_2-\mu_2\alpha)}B^{1-\frac{1}{r}}}\right)\\
 	 	&=\sum_{1\leqslant k\leqslant N}\left(\sum_{n\leqslant\frac{(\lambda_2-\alpha\mu_2)kK}{N\det(\Lambda_d)}B}-\sum_{n\leqslant\frac{(\lambda_2-\alpha\mu_2)(k-1)K}{N\det(\Lambda_d)}B}\right)\mathtt{1}_{1-\{n\theta\}\leqslant\frac{k\varepsilon K}{N(\lambda_2-\mu_2\alpha)}B^{1-\frac{1}{r}}}-\sum_{1\leqslant k\leqslant N}\operatorname{Er}_k,
 	 \end{align*}
 	 où pour tout $1\leqslant k\leqslant N$,
 	 $$\operatorname{Er}_k=\sum_{\frac{(\lambda_2-\alpha\mu_2)(k-1)K}{N\det(\Lambda_d)}B<n\leqslant\frac{(\lambda_2-\alpha\mu_2)kK}{N\det(\Lambda_d)}B}\mathtt{1}_{\frac{\det(\Lambda_d)n}{(\lambda_2-\alpha\mu_2)^2}\varepsilon  B^{-\frac{1}{r}}<1-\{n\theta\}\leqslant \frac{k\varepsilon K}{N(\lambda_2-\mu_2\alpha)}B^{1-\frac{1}{r}}}.$$
 	 Pour $1\leqslant k\leqslant N$, on définit $$\phi_k(u)=\mathtt{1}_{1-\frac{k\varepsilon K}{N(\lambda_2-\mu_2\alpha)}B^{1-\frac{1}{r}}\leqslant u< 1}(u).$$ 
 	 On rappelle que d'après \eqref{eq:estimaten2n11}, $$\frac{K\varepsilon}{\lambda_2-\alpha\mu_2}B^{1-\frac{1}{r}}\leqslant \frac{1}{64},$$
 	 et donc la fonction $\phi_k$ est à support dans $[0,1[$ et on a
 	 	$$\int_{0}^{1}\phi_k(u)\operatorname{d}u=\frac{k\varepsilon K}{N(\lambda_2-\mu_2\alpha)}B^{1-\frac{1}{r}}.$$
 	 Donc d'après le Théorème \ref{th:koksma-denjoy} avec le terme d'erreur précis (Corollaire \ref{co:discrepancyfortheta}),
 	 \begin{equation}
 	 \begin{split}
 	 &\sum_{n\leqslant\frac{(\lambda_2-\alpha\mu_2)kK}{N\det(\Lambda_d)}B}\mathtt{1}_{1-\{n\theta\}\leqslant\frac{k\varepsilon K}{N(\lambda_2-\mu_2\alpha)}B^{1-\frac{1}{r}}}\\
 	 &=\frac{k^2\varepsilon K^2}{N^2\det(\Lambda_d)}B^{2-\frac{1}{r}}+O\left(V(\phi)b\det(\Lambda_d)\log\left(\frac{kK(\lambda_2-\alpha\mu_2)}{N\det(\Lambda_d)}B\right)\right)\\
 	 &=\frac{k^2\varepsilon K^2}{N^2\det(\Lambda_d)}B^{2-\frac{1}{r}}+O\left(b\det(\Lambda_d) \log\left(\frac{kK(\lambda_2-\alpha\mu_2)}{N\det(\Lambda_d)}B\right)\right),
 	 \end{split}
 	 \end{equation}
 	 \begin{align*}
 	  	 &\sum_{n\leqslant\frac{(\lambda_2-\alpha\mu_2)(k-1)K}{N\det(\Lambda_d)}B}\mathtt{1}_{1-\{n\theta\}\leqslant\frac{k\varepsilon K}{N(\lambda_2-\mu_2\alpha)}B^{1-\frac{1}{r}}}\\
 	  	 &=\frac{k(k-1)\varepsilon K^2}{N^2\det(\Lambda_d)}B^{2-\frac{1}{r}}+O\left(b\det(\Lambda_d) \log\left(\frac{(k-1)K(\lambda_2-\alpha\mu_2)}{N\det(\Lambda_d)}B\right)\right).
 	 \end{align*}
Donc 
\begin{align*}
	&\sum_{\frac{(\lambda_2-\alpha\mu_2)(k-1)K}{N\det(\Lambda_d)}B<n\leqslant\frac{(\lambda_2-\alpha\mu_2)kK}{N\det(\Lambda_d)}B}\mathtt{1}_{1-\{n\theta\}\leqslant\frac{k\varepsilon K}{N(\lambda_2-\mu_2\alpha)}B^{1-\frac{1}{r}}}\\
	&=\frac{k\varepsilon K^2}{N^2\det(\Lambda_d)}B^{2-\frac{1}{r}}+O\left(b\det(\Lambda_d) \log\left(\frac{kK(\lambda_2-\alpha\mu_2)}{N\det(\Lambda_d)}B\right)\right).
\end{align*}
On calcule maintenant le terme d'erreur $\operatorname{Er}_k$ provenant du changement de $n_1$ en les valeurs du bord des intervalles \eqref{eq:intervalsN}.  En appliquant le Théorème \ref{th:koksma-denjoy} à la fonction auxiliaire
$$\mathtt{1}_{1-\frac{k\varepsilon K}{N(\lambda_2-\mu_2\alpha)}B^{1-\frac{1}{r}}\leqslant u\leqslant1-\frac{(k-1)\varepsilon K}{N(\lambda_2-\mu_2\alpha)}B^{1-\frac{1}{r}}},$$
on obtient
\begin{align*}
\operatorname{Er}_k
&\leqslant\sum_{\frac{(\lambda_2-\alpha\mu_2)(k-1)K}{N\det(\Lambda_d)}B<n\leqslant\frac{(\lambda_2-\alpha\mu_2)kK}{N\det(\Lambda_d)}B} \mathtt{1}_{\frac{(k-1)\varepsilon K}{N(\lambda_2-\mu_2\alpha)}B^{1-\frac{1}{r}}<1-\{n\theta\}\leqslant \frac{k\varepsilon K}{N(\lambda_2-\mu_2\alpha)}B^{1-\frac{1}{r}}}\\
&=\left(\sum_{n\leqslant\frac{(\lambda_2-\alpha\mu_2)kK}{N\det(\Lambda_d)}B}-\sum_{n\leqslant\frac{(\lambda_2-\alpha\mu_2)(k-1)K}{N\det(\Lambda_d)}B}  \right)  \mathtt{1}_{\frac{(k-1)\varepsilon K}{N(\lambda_2-\mu_2\alpha)}B^{1-\frac{1}{r}}<1-\{n\theta\}\leqslant \frac{k\varepsilon K}{N(\lambda_2-\mu_2\alpha)}B^{1-\frac{1}{r}}}\\
&=\frac{\varepsilon K^2}{N^2 \det(\Lambda_d)}B^{2-\frac{1}{r}}+O\left(b\det(\Lambda_d) \log\left(\frac{kK(\lambda_2-\alpha\mu_2)}{N\det(\Lambda_d)}B\right)\right).
\end{align*}
On en conclut que 
\begin{align*}
&\sharp T(\varepsilon,K,\Lambda_d,b)\\
&=\sum_{1\leqslant k\leqslant N}\left(\frac{k\varepsilon K^2}{N^2\det(\Lambda_d)}B^{2-\frac{1}{r}}+O\left(\frac{ K^2}{N^2 \det(\Lambda_d)}B^{2-\frac{1}{r}}\right)+O\left(b\det(\Lambda_d) \log\left(\frac{kK(\lambda_2-\alpha\mu_2)}{N\det(\Lambda_d)}B\right)\right)\right).
\end{align*}
Grâce à la condition \eqref{eq:cond0} et \eqref{eq:cond4},
$$\log (KB)\leqslant \log (Kb^\frac{1}{2}B)\ll \log B.$$
 	 En sommant sur tous les $k$ pour le premier terme (le terme principal) 
 	 $$\sum_{k=1}^{N}\frac{k\varepsilon K^2}{N^2\det(\Lambda_d)}B^{2-\frac{1}{r}}=\frac{\varepsilon K^2}{2\det(\Lambda_d)}B^{2-\frac{1}{r}}+O\left(\frac{K^2}{N\det(\Lambda_d)}B^{2-\frac{1}{r}}\right),$$
 	 puis sur les termes d'erreur
 	 \begin{align*}
 	 	&\sum_{k=1}^N\left(\frac{ K^2}{N^2 \det(\Lambda_d)}B^{2-\frac{1}{r}}+ b\det(\Lambda_d)\log\left(\frac{kK(\lambda_2-\alpha\mu_2)}{N\det(\Lambda_d)}B\right)\right)\\
 	 	&\ll \frac{ K^2}{N \det(\Lambda_d)}B^{2-\frac{1}{r}}+b \det(\Lambda_d)\sum_{1\leqslant k\leqslant N}\log\left(\frac{kKB}{N}\right)\\
 	 	&\ll \frac{ K^2}{N \det(\Lambda_d)}B^{2-\frac{1}{r}}+b\det(\Lambda_d)N \log(KB)\\
 	 	&\ll \frac{ K^2}{N \det(\Lambda_d)}B^{2-\frac{1}{r}}+b\det(\Lambda_d)N \log(B),
 	 \end{align*}
 	 on obtient que
 	 $$\sharp T(\varepsilon,K,\Lambda_d,b)=\frac{\varepsilon K^2}{2\det(\Lambda_d)}B^{2-\frac{1}{r}}+O\left(\frac{ K^2}{N \det(\Lambda_d)}B^{2-\frac{1}{r}}+b\det(\Lambda_d)N \log(B)\right).$$
 	 Il nous reste à majorer le terme d'erreur venant du cardinal de $R_1(\varepsilon,K,\Lambda_d,B)$ \eqref{eq:RepsilonLambdad} et $R_2(\varepsilon,K,\Lambda_d,B)$ \eqref{eq:RepsilonLambdad2}. Pour cela on utilise encore une fois le Théorème \ref{th:koksma-denjoy}.
 	 \begin{align*}
 	 	&\sharp R_1(\varepsilon,K,\Lambda_d,B)\\
 	 	&=\sum_{n\leqslant 2\frac{\lambda_2-\mu_2\alpha}{\det(\Lambda_d)}KB} \mathtt{1}_{\frac{\det(\Lambda_d)n}{(\lambda_2-\alpha\mu_2)^2}\varepsilon  B^{-\frac{1}{r}}-2^{10}K\varepsilon^2 b^2\det(\Lambda_d)^{\frac{3}{2}}B^{1-\frac{2}{r}}<1-\{n\theta\}\leqslant\frac{\det(\Lambda_d)n}{(\lambda_2-\alpha\mu_2)^2}\varepsilon  B^{-\frac{1}{r}}+2^{10}K\varepsilon^2 b^2\det(\Lambda_d)^{\frac{3}{2}}B^{1-\frac{2}{r}}}\\
 	 	&=O\left(K\varepsilon^2b^2\det(\Lambda_d)^{\frac{3}{2}}B^{1-\frac{2}{r}}\times\frac{\lambda_2-\mu_2\alpha}{\det(\Lambda_d)}KB+b \det(\Lambda_d)\log\left(\frac{\lambda_2-\mu_2\alpha}{\det(\Lambda_d)}KB\right)\right)\\
 	 	&=O(K^2 b^2\det(\Lambda_d)B^{2-\frac{2}{r}}+b\det(\Lambda_d)\log (KB))\\
 	 	&=O(b\det(\Lambda_d)\log (B)).
 	 \end{align*}
 	  	grâce à la condition \eqref{eq:cond4}.
 	 Quant à $R_2(\varepsilon,K,\Lambda_d,B)$, le même raisonnement que \eqref{eq:estimaten2n11} donne que 
 	 $$\sharp R_2(\varepsilon,K,\Lambda_d,B)=O(1).$$
 	 On en conclut que
 	 $$S(\varepsilon,K,\Lambda_d,B)=\frac{\varepsilon K^2}{2\det(\Lambda_d)}B^{2-\frac{1}{r}}+O\left(\frac{ K^2}{N \det(\Lambda_d)}B^{2-\frac{1}{r}}+b\det(\Lambda_d)N \log(B)\right)$$
 	  On choisit 
 	 $$N=\frac{2KB^{1-\frac{1}{2r}}}{b^\frac{1}{2}\det(\Lambda_d)}.$$
 	 Comme $\det(\Lambda_d)\leqslant d^2\det(\Lambda)$, la condition \eqref{eq:condd} assure que $N>1$. On conclut que
 	 $$S(\varepsilon,K,\Lambda_d,B)=\frac{\varepsilon K^2}{2\det(\Lambda_d)}B^{2-\frac{1}{r}}+O(Kb^\frac{1}{2}B^{1-\frac{1}{2r}}\log(B)).$$
 	 Maintenant on somme sur tous les $d$ petits \eqref{eq:condd}. D'abord faisons-le pour le terme principal. En rappelant la constante $\Theta(\Lambda)$ \eqref{eq:GammaLambda} et en remarquant que $\det(\Lambda_d)\geqslant d^2$,
 	 \begin{equation}\label{eq:error1}
 	 \begin{split}
 	 	&\sum_{1\leqslant d\leqslant K^\frac{1}{2}b^{-\frac{1}{4}}\det(\Lambda)^{-\frac{1}{2}}B^{\frac{1}{4}(2-\frac{1}{r})}} \mu(d)\frac{\varepsilon K^2}{2\det(\Lambda_d)}B^{2-\frac{1}{r}}\\
 	 	&=\frac{\Theta(\Lambda)\varepsilon K^2}{2} B^{2-\frac{1}{r}} +\sum_{d> K^\frac{1}{2}b^{-\frac{1}{4}}\det(\Lambda)^{-\frac{1}{2}}B^{\frac{1}{4}(2-\frac{1}{r})}} O\left(\frac{K^2}{d^2}B^{2-\frac{1}{r}}\right)\\
 	 	&=\frac{\Theta(\Lambda)\varepsilon K^2}{2} B^{2-\frac{1}{r}}+O\left(K^\frac{3}{2}b^\frac{1}{4}\det(\Lambda)^\frac{1}{2}B^{\frac{3}{4}(2-\frac{1}{r})}\right).
 	 	 	 \end{split}
 	 \end{equation}
 	 Ensuite pour le terme d'erreur
\begin{align}\label{eq:error2}
	 	 \sum_{1\leqslant d\leqslant K^\frac{1}{2}b^{-\frac{1}{4}}\det(\Lambda)^{-\frac{1}{2}}B^{\frac{1}{4}(2-\frac{1}{r})}}Kb^\frac{1}{2}B^{1-\frac{1}{2r}}\log(B)=O\left(\frac{K^\frac{3}{2}b^\frac{1}{4}}{\det(\Lambda)^\frac{1}{2}}B^{\frac{3}{4}(2-\frac{1}{r})}\log B\right).
\end{align}

 	 	 \textbf{Cas II. $d$ est grand.} Rappelons \eqref{eq:boundford22} et \eqref{eq:Dalpha}. Pour les $d$ vérifiant
 	 	 $$\frac{K^\frac{1}{2}}{b^\frac{1}{4}\det(\Lambda)^\frac{1}{2}}B^{\frac{1}{4}(2-\frac{1}{r})}<d\leqslant D(\alpha,K)B^{1-\frac{1}{2r}},$$ 
 	 	  on utilise la formule \eqref{eq:Tstrongalpha} de la Proposition \ref{po:smallepsilon} avec $N=1+\frac{4\varepsilon K}{\alpha d}$ puisque $\alpha=\sqrt{\frac{b}{a}}$ est quadratique, notant que les conditions \eqref{eq:cond4} \eqref{eq:cond5} impliquent
 	 	  $$(2^{21}\times 162)^2 K^4b^8\det(\Lambda)^6\alpha^4\varepsilon^4 B^{\frac{3}{r}-6}\leqslant(2^{21}\times 162)^2 K^{10} b^5\alpha^4\varepsilon^4\leqslant B^{\frac{7}{r}-10},$$
 	 	  en particulier
 	 	  $$(2^{21}\times 162)^2 K^4\varepsilon^4 \leqslant B^{\frac{4}{r}-4},$$
 	 	  et l'hypothèse \eqref{eq:epsiloncondition12} de la proposition est donc vérifiée. Rappelons aussi $\varDelta(\alpha)$ \eqref{eq:deltalpha} et $\Xi(\alpha)$ \eqref{eq:Xixi}. On obtient (quitte à remplacer $B$ par $\frac{B}{\alpha}$ et $\varepsilon$ par $\varepsilon \alpha^{\frac{1}{r}}$, on rappelle que le terme d'erreur peut dépendre de $\alpha$ et $\varepsilon$)
 	 	 \begin{align*}
 	 	 	\sharp S(\varepsilon,K,\Lambda_d,B)&
 	 	 	\leqslant \sharp \left\{
 	 	 	\begin{aligned}
 	 	 	(u^\prime,v^\prime)\in\NN^2
 	 	 	\end{aligned}
 	 	 	\left|
 	 	 	\begin{aligned}
 	 	 	&0<\frac{u^\prime}{v^\prime}-\alpha\leqslant \varepsilon B^{-\frac{1}{r}}\\
 	 	 	&v^\prime\leqslant \frac{KB }{ d}
 	 	 	\end{aligned}
 	 	 	\right\}\right. \\
 	 	 	&=O\left(\frac{K^2}{d^2}B^{2-\frac{1}{r}}+\varDelta(\alpha)\left(1+\frac{K}{d}\right) \log(KB))\right)\\
 	 	 	&=O\left(\frac{K^2}{d^2}B^{2-\frac{1}{r}}+b\left(1+\frac{K}{d}\right)\log B\right).
 	 	 \end{align*}
 	 	 Donc la contribution totale provenant de ces $d$ est d'ordre de grandeur, compte tenu de \eqref{eq:Dalpha},
 	 	 \begin{equation}\label{eq:error3}
 	 	 \begin{split}
 	 	 	 	 	 &\sum_{K^\frac{1}{2}b^{-\frac{1}{4}}\det(\Lambda)^{-\frac{1}{2}}B^{\frac{1}{4}(2-\frac{1}{r})}<d\leqslant D(\alpha,K)B^{1-\frac{1}{2r}}} \left(\frac{K^2}{d^2}B^{2-\frac{1}{r}}+b\left(1+\frac{K}{d}\right)\log B\right)\\
 	 	 	 	 	 &=O(K^\frac{3}{2}b^{\frac{1}{4}}\det(\Lambda)^\frac{1}{2} B^{\frac{3}{4}(2-\frac{1}{r})}+bD(\alpha,K)B^{1-\frac{1}{2r}}\log B+Kb\log (D(\alpha,K)B^{1-\frac{1}{2r}})\log B)\\
 	 	 	 	 	 &=O(K^\frac{3}{2}b^{\frac{1}{4}}\det(\Lambda)^\frac{1}{2} B^{\frac{3}{4}(2-\frac{1}{r})}+Kb^{\frac{3}{2}}B^{1-\frac{1}{2r}}\log B+Kb(\log B)^2)\\
 	 	 	 	 	 &=O(K^\frac{3}{2}b^{\frac{1}{4}}\det(\Lambda)^\frac{1}{2} B^{\frac{3}{4}(2-\frac{1}{r})}+Kb^{\frac{3}{2}}B^{1-\frac{1}{2r}}\log B)
 	  	 	 \end{split}
 	 	 \end{equation}
 	 	 La formule \eqref{eq:estimatefors} est déduite de \eqref{eq:error1}, \eqref{eq:error2} et \eqref{eq:error3}.		
  	 \end{proof}
	\subsection{Perspective}
	Le Théorème \ref{th:secondtheorem} ne couvre pas le zoom critique des nombres algébriques de degré plus grand que $2$. Les experts semblent penser que l'expansion en fraction continue d'un nombre algébrique $\alpha$ de degré $\geqslant 3$ n'a pas de quotients partiels bornés. C'est-à-dire que pour tout réel $C>0$,  il existe toujours des nombres rationnels $\frac{p}{q}$ vérifiant (cf. par exemple \cite{Chu})
	$$\left|\alpha -\frac{p}{q}\right|<\frac{C}{q^2}.$$ 
	En terme de la distribution locale (critique), 
	on pourrait interpréter cela comme: les nombres rationnels proches de $\alpha$ seraient \og beaucoup plus nombreux \fg\ que dans le cas quadratique et on n'aurait plus de phénomène de \og trou\fg\ .
	  
\section{Distribution locale sur la surface torique $Y_4$}\label{se:toricsurface}

On considère la surface torique $Y_4$ définie sur $\QQ$ obtenue en éclatant $\mathbb{P}^1\times\mathbb{P}^1$ en $4$ points invariants par l'action du tore:
$$P_1=[1:0]\times [1:0],\quad P_2=[0:1]\times[1:0],$$ 
$$P_3=[1:0]\times[0:1],\quad P_4=[0:1]\times[0:1].$$ 
On désigne par $\pi$ le morphisme d'éclatement. L'éventail de $Y_4$ est dans la Figure \ref{fig:y4}. 

\subsection{Géométrie de $Y_4$ et courbes rationnelles sur $Y_4$}

 On note $\mathcal{O}(1,0),\mathcal{O}(0,1)$ les tirés en arrière de sections hyperplans dans chaque $\mathbb{P}^1$, et $E_i~(1\leqslant i\leqslant 4)$ les diviseurs exceptionnels tels que $E_i=\pi^{-1}(P_i)$.
 En dehors de $\cup_{i=1}^4 E_i$ on utilise encore les coordonnées $[x:y]\times[s:t]$ de $\mathbb{P}^1\times\mathbb{P}^1$. 

Parmi les courbes qui rencontrent l'orbite ouverte, il y en a $4$ familles de degré anticanonique $2$ dont les classes sont celles de
$$\mathcal{O}(1,0),\quad \mathcal{O}(0,1),\quad \mathcal{O}(1,1)-E_2-E_3,\quad \mathcal{O}(1,1)-E_1-E_4$$
dans le groupe de Picard. Appartenant à chacun de ces fibrés il existe une unique courbe irréductible passant par $Q=[1:1]\times[1:1]$ d'équations respectives
\begin{equation}\label{fourspeciallines}
x=y,\quad s=t,\quad xs=yt,\quad \text{et}\quad xt=ys.
\end{equation}
On les appellera courbes spéciales et l'on les notera $Z_i(1\leqslant i\leqslant 4)$ 
Il y a $4$ familles de courbes rationnelles de degré $3$ relativement au fibré anticanonique passant par $Q$ qui sont lisses, dont les classes sont
\begin{equation}\label{eq:generallines}
\mathcal{O}(1,1)-E_i,\quad (1\leqslant i\leqslant 4).
\end{equation}
 Le diviseur anticanonique $$\omega_{Y_4}^{-1}=\mathcal{O}(2,2)-E_1-E_2-E_3-E_4,$$ dont les sections globales sont des combinaisons linéaires des monômes
$$x^2 st,\quad y^2 st,\quad t^2 xy,\quad s^2 xy,\quad xyst,$$ est gros et engendré par ses sections globales mais il n'est pas ample (car, les fibrés $L_1=\mathcal{O}(1,0)-E_3-E_4,L_2=\mathcal{O}(0,1)-E_1-E_2$ représentent des courbes effectives et $\langle\omega_{Y_4}^{-1},L_1\rangle=\langle\omega_{Y_4}^{-1},L_2\rangle=0$). Il vérifie donc la propriété de Northcott dans l'orbite ouverte (cf. Définition \ref{def:Northcott}) car il définit un morphisme $Y_4\to \PP^4$ qui est un isomorphisme autour de $Q$ (cf. Section \ref{se:thinset} \emph{infra}).
Sur l'ouvert
$(s\neq 0)\cap( x\neq 0)$, en utilisant les coordonnées $(w,z)=(\frac{y}{x},\frac{t}{s})$, on identifie localement l'espace tangent en $Q$ à un voisinage de $(0,0)\in\RR^2$ par le difféomorphisme 
\begin{equation}\label{eq:diffeomorphism}
\rho:[x:y]\times [s:t]\longmapsto \left(\frac{y}{x}-1,\frac{t}{s}-1\right)=(w-1,z-1).
\end{equation}
Avec cette identification, les $4$ courbes spéciales s'écrivent comme $3$ droites et une hyperbole:
\begin{equation}\label{eq:fourlines}
z=1,\quad w=1,\quad z=w,\quad zw=1.
\end{equation}
Maintenant on associe une hauteur de Weil à $\omega_{Y_4}^{-1}$. Tout d'abord pour un point 
$P=[x:y]\times[s:t]\in (Y_4\setminus (\cup_{i=1}^4 E_i))(\QQ)$ avec $x,y,s,t\in\ZZ,\pgcd(x,y)=\pgcd(s,t)=1$ n'appartenant pas aux diviseurs exceptionnels,
\begin{align*}
&\pgcd(x^2 st,y^2 st,t^2 xy,s^2 xy,xyst)\\
=&\pgcd(st\pgcd(x^2,y^2),xy\pgcd(t^2,s^2),xyst)\\
=&\pgcd(st,xy)\\
=&\pgcd(x,s)\pgcd(x,t)\pgcd(y,s)\pgcd(y,t).
\end{align*}  
Donc on peut prendre comme hauteur de Weil
$$\h_{\omega_{Y_4}^{-1}}(P)=\frac{\max(|x^2 st|,| y^2 st|,| t^2 xy|,|s^2 xy|,|xyst|)}{\pgcd(x^2 st,y^2 st,t^2 xy,s^2 xy,xyst)}
=\frac{\max(|x^2 st|,| y^2 st|,| t^2 xy|,|s^2 xy|)}{\pgcd(x,s)\pgcd(x,t)\pgcd(y,s)\pgcd(y,t)}.$$
Les courbes \eqref{eq:fourlines} divisent $\RR^2$ en $8$ régions. Puisque la surface $Y_4$ possède un \og gros\fg{}\ groupe d'automorphisme, dans lequel les permutations de coordonnées échangent les points dans ces $8$ régions tout en préservant la hauteur $\h_{\omega_{Y_4}^{-1}}$. On peut donc se ramener à l'une des ces régions
\begin{equation}\label{eq:theregionR}
R=\{(w,z)\in\RR^2:z>w>1\}.
\end{equation}
On note
\begin{equation}\label{eq:theregionV}
V=\rho^{-1}(R)\subset U=Y_4\setminus\cup_{i=1}^4 Z_i.
\end{equation}
Pour $P=[x:y]\times[s:t]\in V$, la hauteur se calcule comme
$$\h_{\omega_{Y_4}^{-1}}(P)=\frac{t^2 xy}{\pgcd(x,s)\pgcd(x,t)\pgcd(y,s)\pgcd(y,t)}.$$
La distance que l'on va utiliser est 
\begin{equation}\label{eq:dist}
\dist(P)=\dist(\rho(P),\rho(Q))=\max(|z-1|,|w-1|)=z-1.
\end{equation}
\subsection{Détermination des constantes d'approximation}\label{se:cab}
\subsubsection{Borne inférieure uniforme}
Nous allons montrer que la meilleure constante d'approximation est $2$ par une estimation directe.
\begin{proposition}\label{po:lowerbound1}
	$$\alpha(Q,Y_4)=2.$$
\end{proposition}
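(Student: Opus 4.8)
The plan is to use the general procedure outlined after Conjecture \ref{conj:mckinnon}: exhibit a uniform lower bound $d(P)^2 H(P)\gg 1$ valid on a Zariski-dense open subset around $Q$ (this gives $\alpha(Q,Y_4)\geqslant 2$), and then exhibit at least one rational curve through $Q$ on which the constant $2$ is attained (this gives $\alpha(Q,Y_4)\leqslant 2$). For the upper bound the natural candidates are the four special curves $Z_i$ of anticanonical degree $2$ defined in \eqref{fourspeciallines}; by Remark \ref{rmk:thmofmckinnon}, a smooth rational curve $C$ through $Q$ with $\deg_C(\omega_{Y_4}^{-1})=2$ and $k(Q)=k$ has $\alpha(Q,C)=\frac{d}{m_Q r_Q}=\frac{2}{1\cdot 1}=2$, and Theorem \ref{th:approximationtheorem1} applied to $Z_i\simeq\PP^1$ confirms $\alpha(Q,Z_i)=2$ directly. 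So the real content is the uniform lower bound.

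For the lower bound, I would work in the region $V=\varrho^{-1}(R)$ with $R=\{(w,z):z>w>1\}$ as in \eqref{eq:theregionR}--\eqref{eq:theregionV}; by the torus symmetry (permuting the factors and the two rulings) this suffices. For a point $P=[x:y]\times[s:t]\in V$ with $\pgcd(x,y)=\pgcd(s,t)=1$ one has $d(P)=z-1=\frac{t-s}{s}$ and, as computed just before this proposition,
$$H(P)=\h_{\omega_{Y_4}^{-1}}(P)=\frac{t^2 xy}{\pgcd(x,s)\pgcd(x,t)\pgcd(y,s)\pgcd(y,t)}.$$
I would first dispose of the gcd denominator: since $\pgcd(x,y)=1$ we have $\pgcd(x,s)\pgcd(y,s)\mid s$ and $\pgcd(x,t)\pgcd(y,t)\mid t$, so the product of the four gcd's divides $st$, whence $H(P)\geqslant \frac{t^2xy}{st}=\frac{txy}{s}$. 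Then
$$d(P)^2 H(P)\;\geqslant\;\frac{(t-s)^2}{s^2}\cdot\frac{txy}{s}\;=\;\frac{(t-s)^2 t\,xy}{s^3}.$$
In the region $V$ we have $t>s\geqslant 1$, so $(t-s)^2\geqslant 1$ and $t>s$ gives $\frac{(t-s)^2 t}{s^3}\geqslant \frac{t}{s^3}\geqslant \frac{1}{s^2}$; combined with $x>0$, $y>x>0$ (from $w>1$) so $xy\geqslant 2$... here one must be slightly careful, because $s$ can be large while $x,y$ are small. The point is that on $V$, $w=\frac{y}{x}>1$ and $z=\frac{t}{s}>w$ force $y>x\geqslant 1$ and $t>s\geqslant 1$, but there is no \emph{a priori} bound tying $s$ to $x,y$.

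Hence the main obstacle is exactly this: showing the quantity does not degenerate when $s\to\infty$ with $x,y$ bounded. I expect the resolution to be that this cannot happen inside a fixed neighborhood of $Q$: the zoom/approximation only concerns points with $d(P)\to 0$, i.e. $\frac{t-s}{s}\to 0$, equivalently $z\to 1$; and the constraint $z>w=\frac{y}{x}>1$ then forces $\frac{y}{x}\to 1$ as well, so $y/x$ is bounded, but $x$ itself (hence $y$) is still unconstrained and $xy$ can be large — which only \emph{helps} the lower bound. So the genuinely dangerous direction is $xy$ small (say $x=y$... but that is excluded, $x=y$ is $Z_1$) with $s$ large. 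I would handle this by noting $t-s\geqslant 1$ is the crude bound but the sharper fact is: if $d(P)=\frac{t-s}{s}$ is small then $t-s$ is small relative to $s$, yet $t-s$ is a positive integer so $t-s\geqslant 1$ and therefore $s\leqslant \frac{t-s}{d(P)}$... this is circular. The clean way: write $H(P)\geqslant \frac{txy}{s}\geqslant \frac{xy\cdot s}{s}=xy$ using $t>s$, which is too weak; instead keep $H(P)\geqslant\frac{t^2xy}{st}$ and also use the symmetric estimate with the roles of the variables exchanged to get a second inequality, then multiply or take the max. Concretely, by the full symmetry of $Y_4$ one also has estimates involving $\pgcd$'s that let one write $H(P)\geqslant c\cdot\frac{\max(t,s)^2\max(x,y)\min(x,y)}{\ldots}$; I would set up the bound so that $d(P)^2H(P)\geqslant \frac{(t-s)^2}{s^2}\cdot\frac{t^2xy}{st}=\frac{(t-s)^2 t\,xy}{s^3}$ and then observe $t = s + (t-s)$ with $t-s\geqslant 1$, $t\leqslant 2s$ once $d(P)<1$, so $\frac{(t-s)^2 t}{s^3}\geqslant \frac{(t-s)^2}{s^2}\cdot\frac{t}{s}\geqslant \frac{(t-s)^2}{s^2}$; this still needs $(t-s)^2/s^2\gg 1$ which is false. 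The correct fix, which I would carry out in detail, is to pair $d(P)$ not with $t^2xy/st$ but to exploit that $x,s$ also satisfy $\pgcd(x,s)\leqslant \min(x,s)$ type bounds together with the constraint that $[x:y]$ and $[s:t]$ are \emph{independent} primitive vectors; running the product over the four gcd's more carefully (e.g. $\pgcd(x,s)\pgcd(x,t)\leqslant x$ and $\pgcd(y,s)\pgcd(y,t)\leqslant y$) gives $H(P)\geqslant \frac{t^2xy}{xy}=t^2$, so
$$d(P)^2H(P)\geqslant \frac{(t-s)^2}{s^2}t^2\geqslant (t-s)^2\geqslant 1$$
since $t>s$ gives $t^2>s^2$ — wait, $\frac{(t-s)^2 t^2}{s^2}\geqslant (t-s)^2\geqslant 1$. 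That is the clean bound. So I would prove $\pgcd(x,s)\pgcd(x,t)\mid x$ and $\pgcd(y,s)\pgcd(y,t)\mid y$ (which holds because $\pgcd(s,t)=1$ implies $\pgcd(\pgcd(x,s),\pgcd(x,t))=1$, and both divide $x$), conclude $H(P)\geqslant t^2$, and finish with $d(P)^2H(P)=\frac{(t-s)^2t^2}{s^2}\geqslant (t-s)^2\geqslant 1$, giving $\alpha(Q,Y_4)\geqslant 2$; together with $\alpha(Q,Z_1)=2$ from Theorem \ref{th:approximationtheorem1} and $\alpha(Q,Y_4)\leqslant\alpha(Q,Z_1)$, this yields equality. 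The bookkeeping on the gcd divisibilities and the reduction to the region $V$ by symmetry is the part requiring care; everything else is immediate.
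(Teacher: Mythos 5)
Your final argument is correct and is essentially the paper's own proof: the lower bound comes from the same Liouville-type estimate, using that $\pgcd(x,s)\pgcd(x,t)\mid x$ and $\pgcd(y,s)\pgcd(y,t)\mid y$ (since $\pgcd(s,t)=1$) so that $\h_{\omega_{Y_4}^{-1}}(P)d(P)^2\geqslant \frac{xy}{\pgcd(x,s)\pgcd(x,t)\pgcd(y,s)\pgcd(y,t)}\frac{t^2}{s^2}(t-s)^2\geqslant 1$, and the upper bound from $\alpha(Q,Z_i)=2$ via Remark \ref{rmk:thmofmckinnon}. The intermediate false starts are discarded correctly, so despite the detours the proposal matches the paper's route.
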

\begin{proof}
	 Pour $P=[x:y]\times[s:t]\neq Q$ satisfaisant à $t\neq s$ (les cas où $x\neq y$ se démontre de façon analogue), on a
	\begin{equation}\label{eq:htdistlowery4}
	\begin{split}
	 \h_{\omega_{Y_4}^{-1}}(P) d(P)^2&\geqslant\frac{t^2 xy}{\pgcd(x,s)\pgcd(x,t)\pgcd(y,s)\pgcd(y,t)} \left(\frac{t}{s}-1\right)^2\\
	&=\frac{ xy}{\pgcd(x,s)\pgcd(x,t)\pgcd(y,s)\pgcd(y,t)}\frac{t^2}{s^2}(t-s)^2\\
	&\geqslant 1.
	\end{split}
	\end{equation}
	Cela montre que $\alpha(Q,Y_4)\geqslant 2$ (cf. Proposition-Définition \ref{pd:appconstant}), en rappelant que $\omega_{Y_4}^{-1}$ vérifie la propriété de Northcott. Mais les courbes spéciales $Z_i$ \eqref{fourspeciallines} donnent des sous-variétés avec $\alpha(Q,Z_i)=2$ (remarque \ref{rmk:thmofmckinnon}).
	Ceci clôt la démonstration.
\end{proof}
\begin{remark*}
	Cette majoration uniforme indique un phénomène de \og trou\fg\ en dimension $2$ dans le zoom critique. C'est-à-dire, il existe $\varepsilon_0>0$ tel que pour tout $\varepsilon<\varepsilon_0$, et pour tout $B$, on ait $\delta_{U,Q,B,\frac{1}{2}}(\chi(\varepsilon))=0$. Ceci peut aussi s'interpréter comme une inégalité du type Liouville: $$d(P)\geqslant \frac{1}{ \h_{\omega_{Y_4}^{-1}}(P) ^\frac{1}{2}}, \quad \forall P\in Y_4(\QQ)\setminus \{Q\}.$$
\end{remark*}
\subsubsection{Constante d'approximation essentielle}
Considérons les relevés stricts des courbes $C_{a,b}$ dans $Y_4$ définie par
$$C_{a,b}:axy(t-s)^2=bst(y-x)^2 \quad (a,b)\in \NN^{*2},\pgcd(a,b)=1.$$
Avec les coordonnées $(w,z)=\left(\frac{y}{x},\frac{t}{s}\right)$, cette équation s'écrit
$$aw(z-1)^2=bz(w-1)^2.$$
Ces équations définissent donc une famille de courbes cubiques dans $\PP^1\times \PP^1$ passant par les $4$ points invariants $P_i,1\leqslant i\leqslant 4$ avec un point singulier en $Q$. Donc la classe de $C_{a,b}$ dans le groupe de Picard est la même que celle de $\omega_{Y_4}^{-1}$ et on a
$$\deg_{\omega_{Y_4}^{-1}}C_{a,b}=4.$$ Si $a=b$, la courbe
$$xy(t-s)^2=st(y-x)^2$$
a en fait deux composantes irréductibles
$$yt=xs\quad \text{et}\quad xt=ys.$$
\begin{lemma}
Lorsque $a\neq b$, la courbe $\cab$ est une courbe cubique géométriquement intègre et nodale en $Q$. 
\end{lemma}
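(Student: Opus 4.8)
Throughout I work in the affine chart with coordinates $(w,z)=(y/x,t/s)$, in which $\cab$ is the zero locus of
\begin{equation*}
f(w,z)=aw(z-1)^2-bz(w-1)^2=(aw)\,z^2-\bigl(bw^2-2(b-a)w+b\bigr)z+aw .
\end{equation*}
The plan is first to establish geometric integrality by presenting $\cab$ as a double cover of $\PP^1$. Projection to the first coordinate exhibits $f$ as a quadratic in $z$ over $\overline{\QQ}(w)$ with leading coefficient $aw$, and its discriminant computes to
\begin{equation*}
\Delta(w)=\bigl(bw^2-2(b-a)w+b\bigr)^2-4a^2w^2=b\,(w-1)^2\bigl(bw^2+(4a-2b)w+b\bigr).
\end{equation*}
The quadratic factor $g(w)=bw^2+(4a-2b)w+b$ has discriminant $16a(a-b)$, which is nonzero since $a\geqslant 1$ and $a\neq b$; hence $g$ has two distinct roots, both different from $1$ because $g(1)=4a\neq 0$. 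Thus $\Delta(w)$ is a nonzero constant times $(w-1)^2$ times the squarefree quadratic $g$, so it is not a square in $\overline{\QQ}(w)^\times$. Consequently $\overline{\QQ}(w)[z]/(f)$ is a field; as $f$ is moreover primitive over $\overline{\QQ}[w]$ (its coefficients $aw$, $-(bw^2-2(b-a)w+b)$, $aw$ have no common factor in $\overline{\QQ}[w]$, since $w\nmid bw^2-2(b-a)w+b$), $f$ is irreducible over $\overline{\QQ}$ and $\cab$ is geometrically integral; in particular it has no multiple or linear component.

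Next I examine $Q=[1:1]\times[1:1]$, i.e.\ $(w,z)=(1,1)$. Setting $p=w-1$, $q=z-1$ yields $f=(aq^2-bp^2)+pq(aq-bp)$, so $\cab$ has multiplicity exactly $2$ at $Q$, with tangent cone $aq^2-bp^2=(\sqrt a\,q-\sqrt b\,p)(\sqrt a\,q+\sqrt b\,p)$. Since $a,b\geqslant 1$ these are two distinct lines over $\overline{\QQ}$, so $Q$ is an ordinary double point, that is a node; the two analytic branches at $Q$ have tangent slopes $\mathrm{d}z/\mathrm{d}w=\pm\sqrt{b/a}$, which is precisely the mechanism producing the irrational tangent directions discussed in the introduction.

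It remains to exclude any further singular point, which I do by an arithmetic genus count. By hypothesis the class of $\cab$ in $\operatorname{Pic}(Y_4)$ is $\omega_{Y_4}^{-1}$, so adjunction on the smooth surface $Y_4$ gives arithmetic genus $p_a(\cab)=1+\tfrac12(\cab^2+\cab\cdot K_{Y_4})=1$ (consistently, $\cab$ is a curve of bidegree $(2,2)$ on $\PP^1\times\PP^1$ passing simply through the four blown-up points). Since $\cab$ is geometrically integral, $0\leqslant p_g(\cab)=p_a(\cab)-\sum_P\delta_P=1-\sum_P\delta_P$, and the node at $Q$ already contributes $\delta_Q=1$; hence $\delta_P=0$ for every $P\neq Q$, so $Q$ is the unique singularity of $\cab$ and it is a node. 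Therefore $\cab$ is a geometrically integral nodal cubic, and being of geometric genus $0$ it is rational, parametrised by the pencil of lines through $Q$ via $\tau\mapsto\bigl(1+\tfrac{b-a\tau^2}{\tau(a\tau-b)},\,1+\tfrac{b-a\tau^2}{a\tau-b}\bigr)$. I expect the first step to be the main obstacle: although the explicit factorisation of $\Delta(w)$ renders irreducibility painless here, it depends essentially on the standing hypotheses $a\geqslant 1$, $a\neq b$, $\pgcd(a,b)=1$, which are exactly what forces $16a(a-b)\neq 0$ and keeps the branch locus squarefree and away from $w=1$; without $a\neq b$ the curve degenerates into the pair of lines $yt=xs$ and $xt=ys$, as noted above.
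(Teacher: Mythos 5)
Your proof is correct, and it shares the paper's overall skeleton (geometric integrality of $\cab$, then identification of the singularity at $Q$ as a node via the two tangent directions $\pm\sqrt{b/a}$), but the two key points are justified differently. The paper merely asserts irreducibility of $aw(z-1)^2-bz(w-1)^2$ over $\CC[w,z]$ for $a\neq b$, then invokes the classical fact that an irreducible singular plane cubic has exactly one singular point, which is a node or a cusp, and finally distinguishes node from cusp by computing the tangent slopes as a limit. You instead prove irreducibility honestly, by viewing $f$ as a quadratic in $z$ over $\overline{\QQ}(w)$ and checking that its discriminant $b(w-1)^2\bigl(bw^2+(4a-2b)w+b\bigr)$ is not a square (the hypotheses $a\geqslant 1$, $a\neq b$ give $16a(a-b)\neq 0$, and Gauss's lemma applies since $w\nmid b$); you read the node off directly from the tangent cone $aq^2-bp^2$ at $Q$; and you exclude further singular points not via the cubic classification but via adjunction, using $[\cab]=\omega_{Y_4}^{-1}$ (stated in the paper just before the lemma) to get $p_a(\cab)=1$ and then $0\leqslant p_g=1-\sum_P\delta_P$ with $\delta_Q=1$. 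Your route supplies the detail the paper omits and is more robust (the genus argument would work for any class, not just plane cubics), at the cost of being longer; the paper's argument is shorter but leans on the unproved irreducibility claim and on the plane-cubic classification. One small remark: the check that the roots of $bw^2+(4a-2b)w+b$ differ from $1$ is not actually needed for the non-square conclusion (a squarefree quadratic factor already suffices), though it does no harm; and your concluding rational parametrisation agrees with the one the paper derives later in \eqref{eq:theparametrization}.
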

\begin{proof}
	Si $a\neq b$, le polynôme $aw(z-1)^2=bz(w-1)^2$ est irréductible sur $\CC[z,w]$.
	Une courbe cubique intègre singulière ne peut pas avoir qu'un point singulier qui est nodal ou cuspidal. 
	Les tangentes au point $Q$ sont de pente
	$$\lim_{(w,z)\to(1,1)}\frac{z-1}{w-1}=\pm \sqrt{\frac{b}{a}}\lim_{(w,z)\to(1,1)}\sqrt{\left|\frac{z}{w}\right|}=\pm \sqrt{\frac{b}{a}}.$$
	Donc $Q$ est un point nodal de $\cab$.
\end{proof}
\begin{lemma}\label{le:curvescoverallpoints}
	Pour tout point rationnel $P=(w_0,z_0)\in R$ où $z_0,w_0\in\QQ$, il existe une unique courbe $\cab$ passant par $\rho^{-1}(P)$.
\end{lemma}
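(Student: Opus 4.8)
The plan is to translate ``$\cab$ passes through $P$'' into a single homogeneous linear condition on the pair $(a,b)$ and to read off its unique normalized solution. Writing $\cab$ in the affine coordinates $(w,z)=(y/x,t/s)$ as $aw(z-1)^2=bz(w-1)^2$, the point $P=(z_0,w_0)$ lies on $\cab$ if and only if $a\,w_0(z_0-1)^2=b\,z_0(w_0-1)^2$. The two coefficients $w_0(z_0-1)^2$ and $z_0(w_0-1)^2$ vanish simultaneously only when $(z_0,w_0)=(1,1)$, i.e.\ only at $P=Q$; since $Q$ lies on all four curves $Z_i$ \eqref{fourspeciallines} it is excluded from the region $R$ \eqref{eq:theregionR} on which the lemma is used, so I may assume they are not both zero. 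Then the condition cuts out exactly one point $[a:b]$ of $\PP^1$, which already yields the uniqueness; it remains to check that this point has homogeneous coordinates lying in $\NN^2$ with $\pgcd(a,b)=1$, i.e.\ in the indexing set of the family.

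First I would use $z_0,w_0>0$: this forces $w_0(z_0-1)^2\geqslant 0$ and $z_0(w_0-1)^2\geqslant 0$, so $[a:b]$ lies in $\{[a:b]:a,b\geqslant 0\}$. If $P$ avoids the special curves $Z_i$ --- in particular whenever $P\in R$ --- one has $z_0\neq 1$ and $w_0\neq 1$, hence both coefficients are strictly positive and the condition becomes $\frac{a}{b}=\frac{z_0(w_0-1)^2}{w_0(z_0-1)^2}\in\QQ_{>0}$. A positive rational has a unique expression as a quotient of two coprime positive integers, which produces the desired $(a,b)\in\NN^2$ with $\pgcd(a,b)=1$; conversely this $(a,b)$ satisfies the displayed equation, so $\cab$ passes through $P$. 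This gives both existence and uniqueness.

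For completeness I would record the behaviour on the degenerate loci: if $z_0=1$ (resp.\ $w_0=1$) with $P\neq Q$ the ratio above is $\infty$ (resp.\ $0$), corresponding to $(a,b)=(1,0)$ (resp.\ $(0,1)$), that is to $t=s$ (resp.\ $y=x$); if $z_0=w_0$ or $z_0 w_0=1$ the ratio equals $1$, giving $(a,b)=(1,1)$ and the reducible member. On a genuine point of $R$ one has $z_0>w_0>1$, so $a\neq b$ and $\cab$ is the geometrically integral nodal cubic of the preceding lemma. There is no real obstacle here; the only point requiring care is the vanishing of $z_0-1$ or $w_0-1$, which is exactly what pins the degenerate cases to the curves $Z_i$.
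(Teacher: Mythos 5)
Your argument is correct and coincides in substance with the paper's: membership of $P$ in $\cab$ is a single linear condition on $(a,b)$, so the unique reduced positive fraction equal to $a/b$ pins down $\cab$. The only difference is a coordinate convention — the paper writes the incidence condition in the shifted chart $(w',z')=(w-1,z-1)$, where the hypothesis $z_0,w_0>0$ already forces $w,z>1$ and hence both sides of the linear relation are nonzero, whereas you work in the unshifted $(w,z)$ chart and therefore need the (correct and clarifying) discussion of the loci $w_0=1$, $z_0=1$, $z_0=w_0$, $z_0w_0=1$.
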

\begin{proof}
	On écrit (de façon unique) le quotient en une fraction positive réduite:
\begin{equation}\label{eq:cabparametrisation}
	\frac{z_0(w_0-1)^2}{w_0(z_0-1)^2}=\frac{a_0}{b_0}.
\end{equation}
	Alors $\rho^{-1}(P)\in C_{a_0,b_0}(\QQ)$.
\end{proof}

Géométriquement, on considère $\widetilde{\pi}:\widetilde{Y_4}\to Y_4$ l'éclatement de $Y_4$ en $Q$, et on note $\widetilde{C_{a,b}}$ la transformation stricte de $\cab$. Alors $\widetilde{C_{a,b}}$ est une normalisation de $\cab$ et $\widetilde{\pi}^{-1}(Q)=\{Q_1,Q_2\}$. 
On peut voir $\pm\sqrt {\frac{b}{a}}$ comme l'une des coordonnées de $Q_i$.
Approcher $Q$ sur la courbe $C_{a,b}$ revient à approcher l'un des $Q_1,Q_2$ sur $\widetilde{\cab}$, ceci étant équivalent à approcher $\sqrt{\frac{b}{a}}$ ou $-\sqrt{\frac{b}{a}}$ par des points rationnels sur $\PP^1_\QQ$.
\begin{center}
	\textbf{Cas I.} $a,b$ sont deux carrés de nombres entiers.\end{center}
Alors $\sqrt{\frac{b}{a}}\in\QQ$ et $Q_1,Q_2\in\widetilde{C_{a,b}}(\QQ)$. Il s'agit d'approximer un nombre rationnel par des nombres rationnels. La remarque \ref{rmk:thmofmckinnon} dit que
\begin{equation}\label{eq:absquare}
\alpha(Q,C_{a,b})=\alpha(Q_i,\widetilde{\cab})=\deg_{\omega_{Y_4}^{-1}}C_{a,b}=4.
\end{equation}
\begin{center}\textbf{Cas II.}
	l'un des $a,b$ n'est pas le carré d'un entier, (noté $(a,b)\not\in\square^2$)
\end{center}
Comme $a$ et $b$ sont premiers entre eux, $\sqrt{\frac{b}{a}}\not\in \QQ$.
Aucun des points $Q_1,Q_2$ n'est défini sur $\QQ$. 
L'approximation du point $Q$ le long une branche de $\cab$ est équivalente à l'approximation du point \textit{irrationnel quadratique réel} $Q_i$ (dépendant de la branche choisie) le long $\widetilde{C_{a,b}}$. Donc dans ce cas d'après la remarque \ref{rmk:thmofmckinnon},
\begin{equation}\label{eq:abnotsquare}
\alpha(Q,C_{a,b})=\alpha(Q_i,\widetilde{\cab})=\frac{\deg_{\omega_{Y_4}^{-1}}C_{a,b}}{2}=2.
\end{equation}
Cependant, pour les $4$ courbes spéciales $Z_i$, l'approximation au point $Q$ équivaut à une approximation d'un point rationnel défini sur $\QQ$ dans $\PP^1$, à savoir $\alpha(Q,Z_i) =2$. Bien que les valeur de leurs constante d'approximations soient la même, le nombre des points rationnels que l'on trouve dans l'opération de zoom sont de grandeur très différente.
C'est cette différence qui explique les phénomènes radicalement différents entre les deux types de courbes ci-dessus.
On en conclut 
\begin{theorem}\label{th:approximationessential}
	On a
	$$\aess(Q)=\alpha(Q,Y_4)=2.$$
	Par conséquent, il n'y a pas de sous-variétés localement accumulatrices (Définition \ref{df:essential}).
\end{theorem}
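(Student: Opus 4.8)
The plan is to play the uniform lower bound of Proposition \ref{po:lowerbound1} against the cubics $C_{a,b}$ analysed above. I will repeatedly use the elementary monotonicity of the approximation constant: if $V'\subseteq V$ are constructible subsets of $Y_4$, then any sequence witnessing an exponent $\gamma\in A(Q,V')$ witnesses it also in $V$, whence $A(Q,V')\subseteq A(Q,V)$ and therefore $\alpha(Q,V')\geqslant\alpha(Q,V)$. In particular $\alpha(Q,V)\geqslant\alpha(Q,Y_4)=2$ for every constructible $V\subseteq Y_4$; taking the supremum over dense opens already gives $\aess(Q)\geqslant 2$.

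For the opposite inequality, fix a Zariski dense open $V\subseteq Y_4$ for which $\alpha(Q,V)$ is defined. Its complement $Y_4\setminus V$ is a proper Zariski closed subset, hence a finite union of irreducible curves and points, so it can contain only finitely many of the pairwise distinct irreducible cubics $C_{a,b}$ (with $a\neq b$). Since there are infinitely many coprime pairs $(a,b)$ with $(a,b)\notin\square^2$ — for instance $(1,b)$ with $b>1$ squarefree — I may pick such a pair with $C_{a,b}\not\subseteq Y_4\setminus V$. Then $C_{a,b}\cap V$ is $C_{a,b}$ minus finitely many points. Because any sequence of rational points converging to $Q$ avoids, past finitely many terms, every fixed point distinct from $Q$, the sets $A(Q,C_{a,b})$ and $A(Q,C_{a,b}\cap V)$ coincide, so $\alpha(Q,C_{a,b}\cap V)=\alpha(Q,C_{a,b})=2$ by \eqref{eq:abnotsquare} (the point $Q$ being nodal on $C_{a,b}$ with quadratic branches, of anticanonical degree $4$). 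Applying monotonicity to $C_{a,b}\cap V\subseteq V$ yields $\alpha(Q,V)\leqslant\alpha(Q,C_{a,b}\cap V)=2$. As $V$ was arbitrary, $\aess(Q)=\sup_V\alpha(Q,V)\leqslant 2$, and combined with the first paragraph $\aess(Q)=\alpha(Q,Y_4)=2$.

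The absence of locally accumulating subvarieties is then immediate from Definition \ref{df:essential}: if $Z\subsetneq Y_4$ is a closed subvariety and $W\subseteq Z$ a dense open, monotonicity gives $\alpha(Q,W)\geqslant\alpha(Q,Y_4)=2=\aess(Q)$, so the defining inequality $\alpha(Q,W)<\aess(Q)$ cannot hold, and $Z$ is not locally accumulating.

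The only delicate points are geometric rather than analytic: that a proper Zariski closed subset of the surface meets all but finitely many of the curves $C_{a,b}$ in a proper (hence finite) subset, so that infinitely many of those with $(a,b)\notin\square^2$ survive with a cofinite trace in $V$, and that passing to a cofinite open of such a curve leaves its approximation constant at $Q$ unchanged. Everything else is bookkeeping with the monotonicity of $\alpha(Q,\cdot)$.
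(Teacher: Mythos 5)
Your proposal is correct and follows the same underlying strategy as the paper (uniform lower bound from Proposition \ref{po:lowerbound1}, plus the family of nodal cubics $C_{a,b}$ with $(a,b)\notin\square^2$ achieving $\alpha(Q,C_{a,b})=2$); you merely spell out the routine monotonicity bookkeeping that the paper leaves implicit. One small observation: the paper invokes analytic density of the $C_{a,b}$, but as your argument shows, this is overkill — it suffices that the family is infinite and pairwise distinct, so that a proper Zariski-closed complement can contain only finitely many of them and some $C_{a,b}$ with $(a,b)\notin\square^2$ meets $V$ in a cofinite subset. Your proof is thus marginally more economical while reaching the same conclusion.
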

\begin{proof}
	Cela résulte du fait que l'ensemble des points rationnels dans $\cab$ dont le paramètre $a,b$ vérifient la condition dans le cas II est dense (même pour la topologie analytique)
	et de la borne inférieure que l'on a établie précédemment (Proposition \ref{po:lowerbound1})
\end{proof}
Donc la famille $(\cab)_{(a,b)\not\in\square^2}$ donnent une autre façon de paramétrer les points rationnels bien adaptée à notre problème car la constante d'approximation sur celles correspondant au cas II est $2$, plus petite que celle donnée par les droites générales, i.e. les sections irréductibles de \eqref{eq:generallines}, valant $3$.
\subsection{Zoom sur la surface $Y_4$}
Nous utiliserons les courbes nodales $\cab$ pour paramétrer les points rationnels autour de $Q$. Nous avons vu qu'elles n'ont pas la même constante d'approximation et nous avons constaté que l'approximation essentielle est prise sur celles vérifiant la condition $(a,b)\not\in\square^2$. Regardons d'abord ce que la prédiction naïve nous donne (cf. \eqref{eq:naivepredictionofpowers} dans \S\ref{se:zoomoperation}). On note $r$ le facteur de zoom et $B$ la borne de la hauteur. Alors dans le zoom de facteur $r$ le nombre de points rationnels dans le voisinage de diamètre $\varepsilon$ devrait être 
\begin{equation}\label{eq:prediction}
c_{Y_4} B(\log B)^{\operatorname{rg}\operatorname{Pic}(Y_4)-1} \times O(B^{-\frac{\dim Y_4}{r}})=O(B^{1-\frac{2}{r}}(\log B)^5).
\end{equation}
Si $r>2=\aess(Q,Y_4)$, on devrait pouvoir trouver \og beaucoup\fg{}\ de points, c'est-à-dire de cardinal au moins une puissance de $B$. Si $r=2$, \textit{a priori} cette heuristique prédit qu'il existe \og très peu\fg{}\ de points dans ce voisinage (le nombre étant d'ordre de grandeur $\log $). Nous allons montrer qu'en fait il y a au plus un nombre fini de points en dehors des courbes spéciales dans ce dernier cas.
\subsubsection{Paramétrage par des courbes nodales}\label{se:parametrisation}
Nous nous bornons à la région $R\subset \RR^2$ \eqref{eq:theregionR}. Puisque les droites $w=z, w=1$
forment le bord de $R$, il reste à trouver la distribution dans l'intérieur de $R$. Grâce au Lemme \ref{le:curvescoverallpoints}, les courbes nodales donnent un paramétrage local des points rationnels. Donc le dénombrement des points rationnels dans l'intérieur de $R$ peut se décomposer en des sommes des points sur chaque $\cab$. On va calculer la formule de la hauteur restreinte à $\cab$ en choisissant un paramétrage rationnel par les coordonnées de $\PP^1$.
 
Par la formule \eqref{eq:cabparametrisation}, seulement les courbes $\cab$ avec $a<b$ intersectent $R$ puisque $z_0>w_0>1$ implique 
$$z_0(w_0-1)^2<w_0(z_0-1)^2.$$
On fixe un couple $(a,b)\in \NN_{\operatorname{prem}}^{*2}$ vérifiant la condition $a<b$.
D'abord on change les coordonnées $(w,z)$ en $(w^\prime,z^\prime)=(w-1,z-1)$.
Alors l'équation de $\cab$ devient 
\begin{equation}\label{eq:cabR}
\cab: a z^ {\prime 2}(w^\prime+1)=bw^{\prime 2}(z^\prime+1).
\end{equation}
D'après le principe de Bézout, une droite générale intersecte $\cab$ en $3$ points (comptant la multiplicité). 
La droite $D_\lambda:z^\prime=\lambda w^\prime$ passe par $(0,0)$ 
La multiplicité d'intersection $\cab\cap D_\lambda $ en $(0,0)$ est $2$. Donc cette droite intersecte $\cab$ en un autre point rationnel $(w^\prime_\lambda,z^\prime_\lambda)$ différent de $(0,0)$ avec $z^\prime_\lambda>w^\prime_\lambda>0$. Un calcul nous donne
\begin{equation}\label{eq:theparametrization}
w^\prime_\lambda=\frac{a\lambda^2-b}{\lambda(b-\lambda a)},\quad z^\prime_\lambda=\frac{a\lambda^2-b}{b-\lambda a},\quad \left(\sqrt{\frac{b}{a}}<\lambda<\frac{b}{a}\right).
\end{equation}
Ceci nous permet de retrouver le paramétrage sous les coordonnées $(w,z)$:
\begin{equation*}
w_\lambda=\frac{b(\lambda-1)}{\lambda(b-\lambda a)},\quad z_\lambda=\frac{\lambda a(\lambda-1)}{b-\lambda a},\quad\left(\sqrt{\frac{b}{a}}<\lambda<\frac{b}{a}\right).
\end{equation*}
Donc on obtient un morphisme de paramétrage
$\psi_{a,b}:\PP^1\longrightarrow \cab$ défini pour $(u,v)\in\NN^{*2}_{\operatorname{prem}}$,
\begin{equation}\label{eq:parapsiab}
(u,v)\longmapsto \left(\frac{b(\frac{u}{v}-1)}{\frac{u}{v}(b-\frac{u}{v} a)},\frac{\frac{u}{v} a(\frac{u}{v}-1)}{b-\frac{u}{v} a}\right)=\left(\frac{bv(u-v)}{u(bv-ua)},\frac{ua(u-v)}{v(bv-ua)}\right).
\end{equation}
On introduit les notations
\begin{align}
&d_1=\pgcd(u,b),\quad d_2=\pgcd(v,a),\quad d_3=\pgcd(u-v,b-a);\nonumber\\
& D_1=\pgcd(u^2,b),\quad D_2=\pgcd(v^2,a).\label{eq:D1D2}
\end{align}
Alors comme l'on a supposé que $\pgcd(u,v)=\pgcd(a,b)=1$,
\begin{align}
&\pgcd(bv(u-v),u(bv-ua))\nonumber\\
=&\pgcd(b,u(bv-ua))\pgcd(v,bv-ua)\pgcd(u-v,bv-ua)\label{eq:pgcd1}\\
=&\pgcd(u^2,b)\pgcd(v,a)\pgcd(u-v,b-a)\nonumber\\
=&D_1d_2d_3\nonumber;
\end{align}
\begin{align}
&\pgcd(ua(u-v),v(bv-ua))\nonumber\\
=&\pgcd(a,v(bv-ua))\pgcd(u,bv-ua)\pgcd(u-v,bv-ua)\label{eq:pgcd2}\\
=&\pgcd(v^2,a)\pgcd(u,b)\pgcd(u-v,b-a)\nonumber\\
=&d_1D_2d_3\nonumber.
\end{align}
L'égalité \eqref{eq:pgcd1} se découle car
$$\pgcd(b,u(bv-ua))=\pgcd(b,u^2)\mid u^2,\quad \pgcd(v,u(bv-ua))\mid v,\quad \pgcd(u-v,u(bv-ua))\mid u-v$$
sont premiers deux à deux. Le même raisonnement s'applique à \eqref{eq:pgcd2}.
On trouve le paramétrage suivant pour les coordonnées primitives dans $\PP^1\times\PP^1$.
\begin{equation}
x=\frac{u(bv-ua)}{D_1d_2d_3},\quad y=\frac{bv(u-v)}{D_1d_2d_3},\quad s=\frac{v(bv-ua)}{d_1D_2d_3},\quad t=\frac{ua(u-v)}{d_1D_2d_3}.
\end{equation}
On a alors, puisque $D_1\mid d_1^2,D_2\mid d_2^2$,
\begin{align*}
	\pgcd(y,t)=\frac{u-v}{d_3}\pgcd\left(\frac{bv}{D_1d_2},\frac{ua}{d_1D_2}\right)=\frac{u-v}{d_3};
\end{align*}
$$\pgcd(y,s)=\frac{v}{d_2}\pgcd\left(\frac{b(u-v)}{D_1d_3},\frac{(bv-ua)d_2}{d_1D_2d_3}\right)=\frac{v}{d_2};$$
$$\pgcd(x,t)=\frac{u}{d_1}\pgcd\left(\frac{(bv-ua)d_1}{D_1d_2d_3},\frac{a(u-v)}{D_2d_3}\right)=\frac{u}{d_1};$$
$$\pgcd(x,s)=\frac{bv-ua}{d_1d_2d_3}\pgcd\left(\frac{ud_1}{D_1},\frac{vd_2}{D_2}\right)=\frac{bv-ua}{d_1d_2d_3}.$$
Enfin on obtient la formule de la hauteur restreinte à $\cab(\QQ)$ induite par l'image inverse sur $\PP^1$ dans la région $R$: pour $[u:v]\in \PP^1(\ZZ)$ satisfaisant à $\sqrt{\frac{b}{a}}<\frac{u}{v}<\frac{b}{a}$, 
\begin{align*}
\h_{\omega_{Y_4}^{-1}}(\psi_{a,b}([u:v]))&=\frac{t^2 xy}{\pgcd(x,s)\pgcd(x,t)\pgcd(y,s)\pgcd(y,t)}\\
&=\frac{b(ua(u-v))^2}{(D_1D_2d_3)^2}.
\end{align*}

Pour $\varepsilon>0,B>0$ fixés, on prend la fonction de test 
$$\chi(\varepsilon)=\mathtt{1}(\{(w^\prime,z^\prime)\in\RR^2:\max(|w^\prime|,|z^\prime|)\leqslant \varepsilon\})$$ 
et l'on regarde la distribution locale dans le voisinage de diamètre $\varepsilon$ intersectant la région $R$, c'est-à-dire la quantité
\begin{equation}\label{eq:thesetEepsilonB}
\delta_{V,Q,B,r}(\chi(\varepsilon))=\sharp \left\{
\begin{aligned}
&P=[x:y]\times[s:t]\\
&x,y,s,t\in\NN^*;\frac{t}{s}>\frac{x}{y}>1\\
&\pgcd(x,y)=\pgcd(s,t)=1
\end{aligned}
\left|
\begin{aligned}
&B^{\frac{1}{r}}\dist(P)=B^{\frac{1}{r}}\left(\frac{t}{s}-1\right)\leqslant \varepsilon \\
&\h_{\omega_{Y_4}^{-1}}(P)=\frac{t^2 xy}{\pgcd(x,s)\pgcd(x,t)\pgcd(y,s)\pgcd(y,t)}\leqslant B
\end{aligned}
\right\}\right..
\end{equation}
On le décrit comme un problème de dénombrement. 
Pour un couple $(a,b)\in \NN_{\operatorname{prem}}^{*2}$ satisfaisant à $a<b$, considérons l'ensemble $E(a,b,\varepsilon,B,r)$ des $(u,v)\in\NN_{\operatorname{prem}}^{*2}$ vérifiant (cf. \eqref{eq:theparametrization})
\begin{equation}\label{eq:zoomoncabdistance}
\sqrt{\frac{b}{a}}<\frac{u}{v}<\frac{b}{a},\quad B^{\frac{1}{r}}\dist(P)=B^{\frac{1}{r}}\frac{\frac{u^2}{v^2} -\frac{b}{a}}{\frac{b}{a}-\frac{u}{v} }\leqslant \varepsilon ,
\end{equation}
\begin{equation}\label{eq:heightoncab}
\h_{\omega_{Y_4}^{-1}}(\psi_{a,b}[u:v])=\frac{b(ua(u-v))^2}{(D_1D_2d_3)^2}\leqslant B .
\end{equation}
Alors
\begin{equation}\label{eq:decompositiononcab}
\delta_{V,Q,B,r}(\chi(\varepsilon))=\sum_{a,b\in\NN_{\operatorname{prem}}^{*2},a<b}\sharp E(a,b,\varepsilon,B,r).
\end{equation}

Avant tout on poursuit quelques manipulations simples pour obtenir des encadrements des paramètres.
En factorisant $v$, l'inégalité \eqref{eq:heightoncab} s'écrit 
\begin{equation*}
v^4ba^2\left(\left(\frac{u}{v}\right)^2\left(\frac{u}{v}-1\right)^2\right)\leqslant BD_1^2D_2^2d_3^2.
\end{equation*}
Comme on a supposé que $u\neq v$, on a donc
\begin{align}\label{eq:heightboundforv}
v^4&\leqslant\frac{ BD_1^2D_2^2d_3^2}{ba^2\left(\left(\frac{u}{v}\right)^2\left(\frac{u}{v}-1\right)^2\right)}\leqslant \frac{ BD_1^2D_2^2d_3^2}{b^2a\left(\sqrt{\frac{b}{a}}-1\right)^2}.
\end{align}
Donc en combinant avec \eqref{eq:zoomoncabdistance}, on en déduit
\begin{align*}
|au^2-bv^2|\leqslant \varepsilon av^2\left(\frac{b}{a}-\frac{u}{v}\right) B^{-\frac{1}{r}}
\leqslant \varepsilon a\left(\frac{b}{a}-\sqrt{\frac{b}{a}}\right)\frac{D_1D_2d_3}{b\sqrt{a}\left(\sqrt{\frac{b}{a}}-1\right)}B^{\frac{1}{2}-\frac{1}{r}}
=\frac{\varepsilon D_1D_2d_3}{\sqrt{b}}B^{\frac{1}{2}-\frac{1}{r}},
\end{align*}
d'où
\begin{equation}\label{eq:thenumberofnodalcurves}
b\leqslant \varepsilon^2 \left(\frac{|au^2-bv^2|}{D_1D_2d_3}\right)^{-2}B^{1-\frac{2}{r}}.
\end{equation}

La condition \eqref{eq:zoomoncabdistance} implique aussi
\begin{equation}\label{eq:distinduced}
\frac{u}{v}-\sqrt{\frac{b}{a}}\leqslant \frac{ba^{-1}-uv^{-1}}{uv^{-1}+\sqrt{ba^{-1}}}\varepsilon B^{-\frac{1}{r}}\leqslant \frac{ba^{-1}-\sqrt{ba^{-1}}}{2\sqrt{ba^{-1}}}\varepsilon B^{-\frac{1}{r}}=\frac{1}{2}\left(\sqrt{\frac{b}{a}}-1\right)\varepsilon B^{-\frac{1}{r}}.
\end{equation}

\subsubsection{Zoom critique: la finitude}
\begin{theorem}\label{th:finitenessofcriticzoom}
	Pour tout $\varepsilon>0$, on a que, pour tout $B\gg_{\varepsilon}1$,
	$$\delta_{V,Q,B,2}(\chi(\varepsilon))=\sum_{\substack{(a,b)\not\in\square^2\\
			a<b\leqslant \varepsilon^2}}\sharp E(a,b,\varepsilon,B,r)=O_\varepsilon(1).$$
Par conséquent, la sous-variété $\cup_{i=1}^4 Z_i$ est celle localement faiblement accumulatrice (définition \ref{df:weaklocal}). Il n'existe pas de mesure limite pour le zoom critique. 
\end{theorem}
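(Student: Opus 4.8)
The plan is to parametrize the rational points of $V=\varrho^{-1}(R)$ \eqref{eq:theregionV} lying near $Q$ by the nodal curves $\cab$ of \S\ref{se:parametrisation} (writing $\h=\h_{\omega_{Y_4}^{-1}}$), and to exploit the fact that on $\cab$, in the region $R$, the restricted height and the distance take the very simple form
$$\h(\psi_{a,b}([u:v]))=b\,t^{2},\qquad \dist(\psi_{a,b}([u:v]))=\frac{t-s}{s},$$
where $[s:t]$ is the second $\PP^1$-factor of $\psi_{a,b}([u:v])$ in primitive coordinates, with $t>s\geqslant 1$ (both integers, since in $R$ one has $z>w>1$). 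Both identities are read off the formulas of \S\ref{se:parametrisation}: one has $t-s=\tfrac{au^{2}-bv^{2}}{d_{1}d_{2}d_{3}}$ and a direct computation gives $\tfrac{xy}{\pgcd(x,s)\pgcd(x,t)\pgcd(y,s)\pgcd(y,t)}=b$.

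First I would prove the range restriction: $E(a,b,\varepsilon,B,2)=\varnothing$ whenever $b>\varepsilon^{2}$, for \emph{every} $B$. Indeed $\h(P)=bt^{2}\leqslant B$ gives $t\leqslant (B/b)^{1/2}$, while $\dist(P)=\tfrac{t-s}{s}\leqslant\varepsilon B^{-1/2}$ together with $s<t$ gives $t-s\leqslant\varepsilon sB^{-1/2}<\varepsilon tB^{-1/2}\leqslant\varepsilon b^{-1/2}$; as $t-s$ is a positive integer this forces $b\leqslant\varepsilon^{2}$. In view of \eqref{eq:decompositiononcab} this already collapses $\delta_{V,Q,B,2}(\chi(\varepsilon))$ to a \emph{finite} sum over coprime pairs $a<b\leqslant\varepsilon^{2}$. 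For the finitely many such pairs with $(a,b)\in\square^{2}$ one invokes $\alpha(Q,\cab)=4$ (cf. \eqref{eq:absquare}): then $\dist(P)^{4}\h(P)$ is bounded below by a positive constant on $\cab(\QQ)\setminus\{Q\}$, so $\dist(P)\leqslant\varepsilon B^{-1/2}$ and $\h(P)\leqslant B$ are incompatible once $B$ exceeds a bound depending only on $a,b,\varepsilon$; taking the maximum over the finite list shows these terms vanish for $B\gg_{\varepsilon}1$. This yields the displayed identity.

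Next I would bound each remaining term $\sharp E(a,b,\varepsilon,B,2)$ by a constant, uniformly in $B$, by reducing to the critical zoom of a quadratic number on $\PP^{1}$. Since $\psi_{a,b}$ is a birational parametrization, $\sharp E(a,b,\varepsilon,B,2)$ counts the $[u:v]\in\PP^{1}(\QQ)$ with $\sqrt{b/a}<u/v<b/a$ satisfying \eqref{eq:zoomoncabdistance}--\eqref{eq:heightoncab}. On that range one has the one-sided comparisons $\bigl|\tfrac{u}{v}-\theta\bigr|\leqslant c_{1}(a,b)\,\dist(\psi_{a,b}([u:v]))$ and $t\geqslant c_{2}(a,b)\max(|u|,|v|)^{2}$ with explicit positive constants $c_{i}(a,b)$, where $\theta=\sqrt{b/a}$; hence $E(a,b,\varepsilon,B,2)$ injects into a set of the shape $\{[u:v]\colon |u/v-\theta|\leqslant \varepsilon'(B')^{-2},\ \max(|u|,|v|)\leqslant B'\}$ with $B'=c_{3}(a,b)B^{1/4}\to\infty$ and $\varepsilon'=\varepsilon'(a,b,\varepsilon)$, that is, exactly the set counted by $\delta_{\PP^{1},\theta,B',\frac{1}{2}}(\chi(\varepsilon'))$. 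By Proposition \ref{po:controlofepsilon} (using its explicit bound, together with \eqref{eq:lowerboundforepsilon} when $\varepsilon'\leqslant\Xi(\theta)$) this is $O_{a,b,\varepsilon}(1)$ for $B\gg 1$ --- the Pell--Fermat structure of \S\ref{se:theoryalgebraic} being exactly what makes such a critical count bounded. Summing over the finitely many admissible $(a,b)$ gives $\delta_{V,Q,B,2}(\chi(\varepsilon))=O_{\varepsilon}(1)$, and by the dihedral symmetry permuting the eight sectors cut out near $Q$ by the four special curves, $\delta_{U,Q,B,2}(\chi(\varepsilon))=O_{\varepsilon}(1)$, hence $\delta_{U,Q,B,2}(f)=O_{\|f\|_{\infty}}(1)$ for all $f\in\CBXQ$.

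Finally, the two consequences. Comparing with Pagelot's theorem for the $Z_i$ (Théorème \ref{th:pagelot}), along which the critical zoom count is unbounded, the bound just obtained gives $\delta_{U\setminus W,Q,B,2}(\chi(\varepsilon))=o\bigl(\delta_{U\cap W,Q,B,2}(\chi(\varepsilon))\bigr)$ for $W=\cup_{i=1}^{4}Z_{i}$, which is Définition \ref{df:weaklocal}. And the non-existence of a limit measure is inherited from Théorème \ref{th:criticzoomforquadratics}: since only finitely many nodal curves contribute, one adapts Propositions \ref{po:avoidance} and \ref{po:trackpoints} to this finite family --- choosing an annular test function $\chi(\varepsilon,\eta)$ adapted to one contributing curve and a common sequence of heights avoiding the (finitely many) lattices of Pell--Fermat solutions --- so as to get $\liminf_{B}\delta_{U,Q,B,2}(\chi(\varepsilon,\eta))=0<\limsup_{B}\delta_{U,Q,B,2}(\chi(\varepsilon,\eta))$; since the count is $O_\varepsilon(1)$, no normalization of $\delta_{U,Q,B,2}$ can then converge to a nonzero measure. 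The one genuinely technical point is the reduction in the third paragraph: one must keep careful track of the constants $c_{i}(a,b)$ relating $\dist$ on $\cab$ to $|u/v-\theta|$ and $t$ to $\max(|u|,|v|)$, so that Proposition \ref{po:controlofepsilon} applies cleanly; everything else is elementary.
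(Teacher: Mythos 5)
Your proposal is correct and follows essentially the same route as the paper: restrict to the sector $R$, use the nodal-curve parametrization to show (via $t-s=\frac{au^2-bv^2}{d_1d_2d_3}\geqslant 1$, which is the paper's divisibility observation $d_1d_2d_3\mid au^2-bv^2$ in disguise) that only the finitely many $\cab$ with $b\leqslant\varepsilon^2$ contribute at $r=2$, eliminate the $(a,b)\in\square^2$ cases by $\alpha(Q,\cab)=4$ (Proposition \ref{po:weakzoom}), bound each remaining curve by the quadratic critical-zoom estimate of Proposition \ref{po:controlofepsilon} after comparing the induced height with $\h_{\mathcal{O}_{\PP^1}(4)}$, and deduce the weak accumulation from Théorème \ref{th:pagelot} and the non-existence of a limit measure from the avoidance/track-point constructions of Théorème \ref{th:criticzoomforquadratics}. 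This matches the paper's proof step for step, with only cosmetic differences in how the bound $b\leqslant\varepsilon^2$ is phrased.
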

\begin{proof}
On va démontrer que, pour $\varepsilon>0$ fixé, en utilisant le paramétrage ci-dessus, dans le cas critique ($r=2$), il n'y a qu'un nombre fini de courbes $\cab$ qui interviennent. 

Notons que $D_1,D_2,d_3$ sont premiers deux à deux. Une observation importante est que
$$D_1D_2 | au^2-bv^2,\quad d_3|a(u^2-v^2)-(b-a)v^2=au^2-bv^2,$$
et donc $$D_1D_2d_3|au^2-bv^2.$$ 
Compte tenu de \eqref{eq:thenumberofnodalcurves}, ceci nous donne la majoration du paramètre $b$ pour $r=2$: 
$$b\leqslant \varepsilon^2 \left(\frac{|au^2-bv^2|}{D_1D_2d_3}\right)^{-2}\leqslant \varepsilon^2,$$
ce qui signifie que dans un voisinage fixé après zoom, le nombre de courbes nodales est uniformément majoré, à savoir
$$\delta_{V,Q,B,2}(\chi(\varepsilon))=\sum_{a<b\leqslant \varepsilon^2}\sharp E(a,b,\varepsilon,B,r).$$
Parmi ces courbes $\cab$ qui interviennent, si $(a,b)\in\square^2$, on a $\alpha(Q,\cab)=4$ \eqref{eq:absquare}. Alors la Proposition \ref{po:weakzoom} nous donne que
$$\delta_{\cab\cap V,Q,B,2}(\chi(\varepsilon))=\sharp E(a,b,\varepsilon,B,2)=0$$ pour tout $B\gg_{\varepsilon}1$.
Pour tous les $(a,b)\not\in\square^2$, sur $\PP^1$ (l'image inverse de $\psi_{a,b}$) on a une distance et une hauteur induites de celles sur $Y_4$, la hauteur étant équivalente à la hauteur de Weil canonique $\h_{\mathcal{O}_{\PP^1}(4)}$ (cf. \eqref{eq:zoomoncabdistance}, \eqref{eq:heightoncab}). Comme \eqref{eq:abnotsquare}, le zoom induit sur $\cab$ avec le facteur $r=2$ est critique sur $\cab$.
En prenant $r=2$ dans \eqref{eq:distinduced}, la majoration découle donc de la Proposition \ref{po:controlofepsilon}.
D'après le Théorème \ref{th:criticzoomforquadratics}, pour tout $\varepsilon>\eta>0$ suffisamment proches, on a
$$\liminf_{B} \delta_{V,Q,B,2}(\chi(\varepsilon,\eta))=\liminf_{B} \delta_{\bigcup_{\substack{(a,b)\not\in\square^2\\b\leqslant \varepsilon^2}}\cab\cap V,Q,B,2}(\chi(\varepsilon,\eta))=0,$$
alors que pour tout $(a,b)\not\in\square^2$, il existe certains couples $(\varepsilon,\eta)$ tels que
$$ \limsup_{B}\delta_{V,Q,B,2}(\chi(\varepsilon,\eta))\geqslant\limsup_{B}\delta_{\cab\cap V,Q,B,2}(\chi(\varepsilon,\eta))>0.$$
Cela démontre la non-existence de mesure limite.
Le fait que $Z_i$ est localement faiblement accumulatrice découle du Théorème \ref{th:pagelot} puisque $\h_{\omega_{Y_4}^{-1}}|_{Z_i}\sim \h_{\mathcal{O}_{\PP^1}(2)}$ sur $Z_i$ et donc
\begin{equation*}
\delta_{Z_i,Q,B,2}(\chi(\varepsilon))\gg\ll_\varepsilon B^{\frac{1}{2}}.\qedhere
\end{equation*}
\end{proof}

\subsubsection{Zoom sous-critique: une borne inférieure}
 On va démontrer une borne inférieure du nombre de points rationnels dans le zoom sous-critique ($r>2$) basé sur le fait que sur certaines courbes nodales on trouve \og beaucoup\fg\ de points avec la hauteur et la distance induites (Théorème \ref{th:zoomingonlattices}). 
 
 Fixons $\varepsilon_1>\varepsilon_2>0,\tau_1>\tau_2>1$. Nous nous bornons à la région
\begin{equation}\label{eq:regionW}
W=W(\varepsilon_1,\varepsilon_2,\tau_1,\tau_2)=\left\{(w^\prime,z^\prime)\in\RR^2:\varepsilon_2<z^\prime\leqslant\varepsilon_1,\tau_2\leqslant\frac{z^\prime}{w^\prime}\leqslant\tau_1\right\}\subset R.
\end{equation}
L'énoncé précis est le suivant. Rappelons que $V=\rho^{-1}(R)\subset Y_4\setminus \cup_{i=1}^4 Z_i$.
\begin{theorem}\label{th:lowerbound}
	Pour \begin{equation}\label{eq:conditionforeta}
	2<r<\frac{144}{55},\quad 0<\eta< \frac{1}{35},
	\end{equation}on a
	\begin{align*}
		&\delta_{V,Q,B,r}(\chi(W(\varepsilon_1,\varepsilon_2,\tau_1,\tau_2)))\\
		\geqslant &B^{(1+\eta)\left(\frac{1}{2}-\frac{1}{r}\right)}(\log B)^3 \left( C_2 \int\chi(W(\varepsilon_1,\varepsilon_2,\tau_1,\tau_2))\frac{\operatorname{d}w^\prime\operatorname{d}z^\prime}{z^\prime}+O_{\tau_i,\varepsilon_i}\left(\frac{1}{\log B}\right)\right),
	\end{align*}
	où 
\begin{equation}\label{eq:C2}
	C_2=C_2(r)=\frac{6}{\pi^2}\left(\eta\left(1-\frac{2}{r}\right)\right)^3\prod_{p}\left(1-\frac{1}{p}\right)^3\left(1+\frac{3}{p}-\frac{1}{p^2}-\frac{18}{p(p+2)}\right).
\end{equation}
\end{theorem}
\subsubsubsection{Comptage sur les $\cab$}
Au vu de \eqref{eq:decompositiononcab}, on va appliquer le Théorème \ref{th:zoomingonlattices} pour compter le nombre de points dans le zoom de facteur $r$ induit sur chaque $\cab$, 
avec un terme d'erreur précis. 

\begin{proposition}\label{po:zoompointsoncab}
	Supposons que 
	\begin{equation}\label{eq:conditionforr}
		2<r<\frac{144}{55}.
	\end{equation}
	Alors pour tout $\varepsilon>0$, pour tout couple $(a,b)\in\NN^{*2}$ satisfaisant à 
	\begin{equation}\label{eq:conditiononab}
	\pgcd(a,b)=1,\quad a<b,\quad (a,b)\not\in\square^2,
	\end{equation} 
	et
	\begin{equation}\label{eq:conditionforbandr1}
\tau_2^2<ba^{-1}<\tau_1^2\quad\text{et}\quad b ^{\frac{17}{2}}\leqslant \frac{16 \tau_2}{\tau_1-1}B^{\frac{1}{2}-\frac{1}{r}}.
	\end{equation} 
	on a (rappelons l'ensemble $E(a,b,\varepsilon,B,r)$ défini par \eqref{eq:zoomoncabdistance} et \eqref{eq:heightoncab} et les fonctions $\Psi$ \eqref{eq:arithmeticfunctionkey} et $\Phi$ \eqref{eq:arithmeticfunctionPhi})
	\begin{equation}\label{eq:asymptoticoncab}
	\begin{split}
	\sharp E(a,b,\varepsilon,B,r)
	=\frac{3}{2\pi^2}\frac{\Phi(b)\Phi(a)\Psi(b-a)}{ba^{\frac{1}{2}}}\varepsilon B^{\frac{1}{2}-\frac{1}{r}}+O_{\tau_i,\varepsilon,\delta}\left(b^{\frac{23}{8}+\delta}B^{\frac{3}{4}(\frac{1}{2}-\frac{1}{r})}\log B \right)
	\end{split}
		\end{equation}
		pour tout $\delta>0$.
\end{proposition}
\begin{remark*}
	Si l'on identifie localement une branche de $\cab$ avec l'espace tangent de $\PP^1$ en le point $\sqrt{\frac{b}{a}}$, la Proposition \ref{po:zoompointsoncab} indique qu'il existe une équidistribution locale des points rationnels sur chaque $\cab$ (avec $(a,b)\not\in\square^2$) avec la hauteur et la distance induites (cf. \eqref{eq:zoomoncabdistance}, \eqref{eq:heightoncab}).
\end{remark*}
On utilisera le lemme suivant dû à Heath-Brown \cite[Lemma 2]{H} nous permet de contrôler le nombre des points entiers primitifs dans une région planaire autour de l'origine. Voir aussi \cite[Lemma 1]{B-B2}.
\begin{lemma}\label{le:countinglatticepointsinsmalldomains}
	Soit $\Lambda\subset \ZZ^2$ un réseau de rang $2$. Soit $E\subset\RR^2$ une partie convexe dont le bord est lisse par morceaux avec $(0,0)\in \overline{E}$. Alors
	$$\sharp \{(x_1,x_2)\in\Lambda\cap E:\pgcd(x_1,x_2)=1\}\ll 1+\frac{\operatorname{vol}(E)}{\det(\Lambda)}.$$
\end{lemma}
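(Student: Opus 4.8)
Le plan est de ramener le comptage des points primitifs de $\Lambda$ dans $E$ à un argument angulaire élémentaire, en exploitant de façon cruciale que $(0,0)\in\overline{E}$ et que $E$ est convexe. On noterait $\mathbf{v}_1,\dots,\mathbf{v}_N$ les points de $\Lambda\cap E$ qui sont primitifs dans $\ZZ^2$ (et donc \emph{a fortiori} primitifs dans $\Lambda$, puisque $\Lambda\subseteq\ZZ^2$). Comme chaque $\mathbf{v}_i$ est primitif, deux d'entre eux distincts ne peuvent être sur une même demi-droite issue de l'origine (l'un serait un multiple propre de l'autre); on peut donc les ordonner par argument. On distinguerait alors deux cas selon que les $\mathbf{v}_i$ sont tous contenus dans un demi-plan fermé passant par $0$ — c'est automatiquement le cas lorsque $0\in\partial E$, par l'existence d'un hyperplan d'appui, ou lorsqu'un des trous angulaires cycliques est $\geqslant\pi$ — ou non.

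Dans le premier cas, après avoir ordonné $\mathbf{v}_1,\dots,\mathbf{v}_N$ par argument croissant dans un arc de longueur $\leqslant\pi$, on considérerait pour $1\leqslant i\leqslant N-1$ le triangle $T_i=\mathrm{conv}(0,\mathbf{v}_i,\mathbf{v}_{i+1})$. Par convexité de $\overline{E}$ et puisque $0,\mathbf{v}_i,\mathbf{v}_{i+1}\in\overline{E}$, on a $T_i\subseteq\overline{E}$; les $T_i$ ont des intérieurs deux à deux disjoints (secteurs angulaires disjoints, chacun d'ouverture $<\pi$ dès que $N\geqslant 3$); et lorsque $\mathbf{v}_i,\mathbf{v}_{i+1}$ sont linéairement indépendants, $\operatorname{vol}(T_i)=\tfrac12|\det(\mathbf{v}_i,\mathbf{v}_{i+1})|\geqslant\tfrac12\det(\Lambda)$, car $\ZZ\mathbf{v}_i\oplus\ZZ\mathbf{v}_{i+1}$ est un sous-réseau de $\Lambda$, d'indice entier $\geqslant 1$. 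En sommant, $\tfrac12(N-1)\det(\Lambda)\leqslant\operatorname{vol}(\overline{E})=\operatorname{vol}(E)$, d'où $N\leqslant 1+2\operatorname{vol}(E)/\det(\Lambda)$. Dans le second cas, tous les trous angulaires cycliques sont $<\pi$, les $N$ triangles $T_i=\mathrm{conv}(0,\mathbf{v}_i,\mathbf{v}_{i+1})$ (indices modulo $N$) sont d'intérieurs disjoints et contenus dans $\overline{E}$, et le même calcul donne $N\leqslant 2\operatorname{vol}(E)/\det(\Lambda)$. Dans tous les cas on obtient $N\leqslant 2+2\operatorname{vol}(E)/\det(\Lambda)\leqslant 4\bigl(1+\operatorname{vol}(E)/\det(\Lambda)\bigr)$, ce qui est la majoration cherchée (en fait légèrement plus forte).

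L'obstacle n'est pas profond mais réside dans le traitement soigneux des configurations dégénérées: lorsque $N\leqslant 2$, ou lorsque deux $\mathbf{v}_i$ consécutifs sont antipodaux (ce qui ne peut arriver que si $N=2$ et l'arc est exactement de longueur $\pi$), le triangle correspondant est d'aire nulle et l'argument de volume ne fournit rien; on se contenterait alors de la majoration triviale $N\leqslant 2\leqslant 4(1+\operatorname{vol}(E)/\det(\Lambda))$, qui clôt ces cas. Il faut aussi observer que l'hypothèse de régularité par morceaux du bord ne sert qu'à garantir $\operatorname{vol}(\partial E)=0$, donc $\operatorname{vol}(\overline{E})=\operatorname{vol}(E)$ (le cas $\operatorname{vol}(E)=+\infty$ ou $E$ d'intérieur vide étant immédiat), et que le point-clé est la position de $(0,0)$ dans $\overline{E}$, qui force tous les triangles issus de l'origine à rester dans $\overline{E}$. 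On peut également se reporter à \cite[Lemma 2]{H} pour une démonstration alternative de cette majoration.
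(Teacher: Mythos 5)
Votre démonstration est correcte. Le papier ne donne pas lui-même de preuve de ce lemme — il renvoie simplement à \cite[Lemma 2]{H} — si bien qu'il n'y a pas d'argument interne au texte auquel comparer; votre raisonnement par triangulation depuis l'origine est exactement celui d'Heath-Brown, avec les observations clés correctement en place: un point $\ZZ^2$-primitif de $\Lambda$ est automatiquement $\Lambda$-primitif, le triangle fermé $\mathrm{conv}(0,\mathbf{v}_i,\mathbf{v}_{i+1})$ est inclus dans $\overline{E}$ par convexité et, lorsqu'il est non dégénéré, son aire vaut $\tfrac12\lvert\det(\mathbf{v}_i,\mathbf{v}_{i+1})\rvert\geqslant\tfrac12\det(\Lambda)$, les configurations dégénérées (antipodalité, $N\leqslant 2$) étant écartées trivialement. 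Vous obtenez même la constante plus fine $2+2\operatorname{vol}(E)/\det(\Lambda)$, qui implique la majoration $4\bigl(1+\operatorname{vol}(E)/\det(\Lambda)\bigr)$ énoncée.
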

\begin{proof}[Démonstration de la Proposition \ref{po:zoompointsoncab}]
La condition \eqref{eq:heightoncab} correspond à la restriction de la hauteur $H_{\omega_{Y_4}^{-1}} $ sur $\cab$, et une inversion de Möbius conduit à une condition de réseau. 
Dans un premier temps on voudrait réduire la condition \eqref{eq:zoomoncabdistance} en un zoom avec une distance induite sur l'image inverse de $\PP^1\to\cab$ pour que l'on puisse appliquer le Théorème \ref{th:zoomingonlattices}. On va approcher le cardinal de l'ensemble $E(a,b,\varepsilon,B,r)$ par celui de l'ensemble $F(a,b,\varepsilon,B,r)$ des $(u,v)\in\NN^{*2}_{\operatorname{prem}}$ satisfaisant à (rappelons les notations \eqref{eq:D1D2})
\begin{equation}\label{eq:zoomdistanceF}
\sqrt{\frac{b}{a}}<\frac{u}{v}<\frac{b}{a},\quad \frac{u}{v}-\sqrt{\frac{b}{a}} \leqslant \frac{\varepsilon}{2}\left(\sqrt{\frac{b}{a}}-1\right) B^{-\frac{1}{r}},
\end{equation}
\begin{equation}\label{eq:heightF}
v^4\leqslant \frac{B D_1^2D_2^2d_3^2}{b^2a\left(\sqrt{ba^{-1}}-1\right)^2}.
\end{equation}
On compare maintenant ces deux ensembles.
En combinant \eqref{eq:distinduced} avec la majoration de $v$ \eqref{eq:heightboundforv} obtenue à partir de la condition \eqref{eq:heightoncab}, on conclut que
$$E(a,b,\varepsilon,B,r)\subset F(a,b,\varepsilon,B,r).$$
De plus en rappelant l'hypothèse \eqref{eq:conditionforbandr1} sur $(a,b)$, pour $(u,v)\in F(a,b,\varepsilon,B,r)$, on a d'après \eqref{eq:zoomdistanceF}, 
$$\frac{u}{v}-\sqrt{\frac{b}{a}}\leqslant \frac{\varepsilon(\tau_1-1)}{2}B^{-\frac{1}{r}}.$$
Donc on a
\begin{align*}
	\frac{ba^{-1}-uv^{-1}}{uv^{-1}+\sqrt{ba^{-1}}}\varepsilon B^{-\frac{1}{r}}&=\frac{\varepsilon}{2}\left(\sqrt{\frac{b}{a}}-1\right)\varepsilon B^{-\frac{1}{r}}-\left(\frac{uv^{-1}-\sqrt{ba^{-1}}}{uv^{-1}+\sqrt{ba^{-1}}}+\frac{ba^{-1}-\sqrt{ba^{-1}}}{2\sqrt{ba^{-1}}}-\frac{ba^{-1}-\sqrt{ba^{-1}}}{uv^{-1}+\sqrt{ba^{-1}}}\right)\varepsilon B^{-\frac{1}{r}}\\
	&>\frac{\varepsilon}{2}\left(\sqrt{\frac{b}{a}}-1\right)\varepsilon B^{-\frac{1}{r}}-\left(\frac{uv^{-1}-\sqrt{ba^{-1}}}{2\tau_2}+\frac{(uv^{-1}-\sqrt{ba^{-1}})(\tau_1^2-\tau_1)}{4\tau_2}\right)\varepsilon B^{-\frac{1}{r}}\\
	&>\frac{\varepsilon}{2}\left(\sqrt{\frac{b}{a}}-1\right)\varepsilon B^{-\frac{1}{r}}-\frac{\varepsilon^2(\tau_1^2-\tau_1+2)(\tau_1-1)}{8\tau_2}B^{-\frac{2}{r}}.
\end{align*}
Et aussi
\begin{align*}
	\frac{B D_1^2D_2^2d_3^2}{b^2a\left(\sqrt{ba^{-1}}-1\right)^2} -\frac{ BD_1^2D_2^2d_3^2}{ba^2\left(\left(\frac{u}{v}\right)^2\left(\frac{u}{v}-1\right)^2\right)}
	=&\frac{B D_1^2D_2^2d_3^2}{ba^2}\left(\frac{1}{\frac{b}{a}\left(\sqrt{\frac{b}{a}}-1\right)^2}-\frac{1}{\left(\frac{u}{v}\right)^2\left(\frac{u}{v}-1\right)^2}\right)\\
	\leqslant & \frac{B D_1^2D_2^2d_3^2}{b^2a}\times \frac{(\tau_1^2 -1)^3 \tau_1+\tau_1^2(\tau_1-1)(\tau_1^2+\tau_1-2)}{2\tau_2^2(\tau_2-1)^4}\varepsilon B^{-\frac{1}{r}}.
\end{align*}
Cela implique que $$\quad F(a,b,\varepsilon,B,r)\setminus E(a,b,\varepsilon,B,r)\subset G(a,b,\varepsilon,B,r),$$ 
où $G(a,b,\varepsilon,B,r)=G_1(a,b,\varepsilon,B,r)\cup G_2(a,b,\varepsilon,B,r)$, avec 
\begin{equation*}
G_1(a,b,\varepsilon,B,r)=\left\{
\begin{aligned}
&(u,v)\in \NN^{*2}_{\text{prem}}\\
&\sqrt{\frac{b}{a}}<\frac{u}{v}<\frac{b}{a}
\end{aligned}
\left|
\begin{aligned}
&\frac{\varepsilon}{2}\left(\sqrt{\frac{b}{a}}-1\right) B^{-\frac{1}{r}}-A(\varepsilon,\tau_1,\tau_2)B^{-\frac{2}{r}}\leqslant\frac{u}{v}-\sqrt{\frac{b}{a}} \leqslant \frac{\varepsilon}{2}\left(\sqrt{\frac{b}{a}}-1\right) B^{-\frac{1}{r}} \\
&v^4\leqslant \frac{B D_1^2D_2^2d_3^2}{b^2a\left(\sqrt{ba^{-1}}-1\right)^2} 
\end{aligned}
\right\}\right. ,
\end{equation*}
\begin{equation*}
G_2(a,b,\varepsilon,B,r)=\left\{
\begin{aligned}
&(u,v)\in \NN^{*2}_{\text{prem}}\\
&\sqrt{\frac{b}{a}}<\frac{u}{v}<\frac{b}{a}
\end{aligned}
\left|
\begin{aligned}
&\frac{u}{v}-\sqrt{\frac{b}{a}} \leqslant \frac{\varepsilon}{2}\left(\sqrt{\frac{b}{a}}-1\right) B^{-\frac{1}{r}} \\
&\frac{B D_1^2D_2^2d_3^2}{b^2a\left(\sqrt{ba^{-1}}-1\right)^2}\left(1-A_2(\varepsilon,\tau_1,\tau_2)B^{-\frac{1}{r}}\right)\leqslant v^4\leqslant \frac{B D_1^2D_2^2d_3^2}{b^2a\left(\sqrt{ba^{-1}}-1\right)^2} 
\end{aligned}
\right\}\right. ,
\end{equation*}
où $$A_1(\varepsilon,\tau_1,\tau_2)=\frac{\varepsilon^2(\tau_1^2-\tau_1+2)(\tau_1-1)}{8\tau_2},$$
$$A_2(\varepsilon,\tau_1,\tau_2)=\frac{\varepsilon(\tau_1^2 -1)^3 \tau_1+\tau_1^2(\tau_1-1)(\tau_1^2+\tau_1-2)}{2\tau_2^2(\tau_2-1)^4}.$$
Ensuite on va borner le cardinal de $G(a,b,\varepsilon,B,r)$. 
Par une inversion de Möbius, on a pour $l=1,2$,
$$\sharp G_l(a,b,\varepsilon,B,r)=\sum_{\substack{e_1,e_2,e_3,f_1,f_2,f_3\in\NN^*\\e_1f_1|b,e_2f_2|a,e_3f_3|b-a}}\left(\prod_{i=1}^{3}\mu(e_i)\right)\sharp G_l(e_1,e_2,e_3,f_1,f_2,f_3,a,b,\varepsilon,B,r)$$
où $G_1(e_1,e_2,e_3,f_1,f_2,f_3,a,b,\varepsilon,B,r),G_2(e_1,e_2,e_3,f_1,f_2,f_3,a,b,\varepsilon,B,r)$ sont respectivement les ensembles
\begin{align*}
\left\{
\begin{aligned}
&(u,v)\in \NN^{*2}_{\text{prem}}\\
&\sqrt{ba^{-1}}<uv^{-1}<ba^{-1}\\
&e_1f_1|u^2,e_2f_2|v^2,e_3f_3|u-v
\end{aligned}
\left|
\begin{aligned}
&\frac{\varepsilon}{2}\left(\sqrt{\frac{b}{a}}-1\right) B^{-\frac{1}{r}}-A_1(\varepsilon,\tau_1,\tau_2)B^{-\frac{2}{r}}\leqslant\frac{u}{v}-\sqrt{\frac{b}{a}} \leqslant \frac{\varepsilon}{2}\left(\sqrt{\frac{b}{a}}-1\right) B^{-\frac{1}{r}} \\
&v^4\leqslant \frac{B f_1^2f_2^2f_3^2}{b^2a\left(\sqrt{ba^{-1}}-1\right)^2} 
\end{aligned}
\right\}\right. ,
\end{align*}
\begin{align*}
\left\{
\begin{aligned}
&(u,v)\in \NN^{*2}_{\text{prem}}\\
&\sqrt{ba^{-1}}<uv^{-1}<ba^{-1}\\
&e_1f_1|u^2,e_2f_2|v^2,e_3f_3|u-v
\end{aligned}
\left|
\begin{aligned}
&\frac{u}{v}-\sqrt{\frac{b}{a}} \leqslant \frac{\varepsilon}{2}\left(\sqrt{\frac{b}{a}}-1\right) B^{-\frac{1}{r}} \\
&\frac{B f_1^2f_2^2f_3^2}{b^2a\left(\sqrt{ba^{-1}}-1\right)^2}\left(1-A_2(\varepsilon,\tau_1,\tau_2)B^{-\frac{1}{r}}\right) \leqslant v^4\leqslant \frac{B f_1^2f_2^2f_3^2}{b^2a\left(\sqrt{ba^{-1}}-1\right)^2} 
\end{aligned}
\right\}\right. .
\end{align*}
Le points dans le premier ensemble se trouvent dans l'intersection d'un triangle, dont l'origine est l'un de ses sommets et l'aire est de grandeur $O_{\tau_i,\varepsilon}\left(B^{\frac{1}{4}}(f_1f_2f_3)^\frac{1}{2}b^{-\frac{3}{4}}\times B^{\frac{1}{4}-\frac{2}{r}}(f_1f_2f_3)^\frac{1}{2}b^{-\frac{3}{4}} \right)=O_{\tau_i,\varepsilon}\left(B^{\frac{1}{2}-\frac{2}{r}}f_1f_2f_3b^{-\frac{3}{2}}\right)$, avec le réseau
\begin{equation}\label{eq:thelattice}
\begin{split}
\Lambda_{\substack{e_1,e_2,e_3\\f_1,f_2,f_3}}&=\{(x,y)\in\ZZ^2:e_1f_1|x^2,e_2f_2|y^2,e_3f_3|y-x\}\\
&=\{(x,y)\in\ZZ^2:g(e_1f_1)|x,g(e_2f_2)|y,e_3f_3|y-x\}.
\end{split}
\end{equation}
Puisque $\pgcd(e_if_i,e_jf_j)=1$ pour $i\neq j$, on vérifie que (cf. par exemple \cite[Proposition 4.3]{Huang1})
$$\det\left(\Lambda_{\substack{e_1,e_2,e_3\\f_1,f_2,f_3}}\right)=g(e_1f_1)g(e_2f_2)e_3f_3\leqslant e_1e_2e_3f_1f_2f_3.$$
Quant à l'ensemble $G_2(e_1,e_2,e_3,f_1,f_2,f_3,a,b,\varepsilon,B,r)$, ses points sont contenu dans un trapèze dont la longueur du bord est $O_{\tau_i,\varepsilon}\left(B^{\frac{1}{4}-\frac{1}{r}}(f_1f_2f_3)^\frac{1}{2}b^{-\frac{3}{4}}\right)$ et l'aire est 
$O_{\tau_i,\varepsilon}(B^{\frac{1}{2}-\frac{1}{r}}f_1f_2f_3b^{-\frac{3}{2}})$.
En utilisant le Lemme \ref{le:countinglatticepointsinsmalldomains}, on obtient que pour tout $0<\delta_1<1$, comme $\tau(n)\ll  _{\delta_1} n^{\delta_1}$ (cf. \cite[\S I.5.2]{tenenbaum}), $\sigma_\kappa(n)=\sum_{d\mid n}d^\kappa \ll_{\delta_1} n^{\kappa+\delta_1}$ (cf. \cite[\S I.5.5]{tenenbaum}),
\begin{align*}
	 \sharp G(a,b,\varepsilon,B,r)&\ll_{\tau_i,\varepsilon}\sum_{e_1f_1|b,e_2f_2|a,e_3f_3|b-a}\left(1+\frac{B^{\frac{1}{2}-\frac{2}{r}}}{b^{\frac{3}{2}}e_1e_2e_3}+\frac{B^{\frac{1}{4}-\frac{1}{r}}(f_1f_2f_3)^\frac{1}{2}}{b^{\frac{3}{4}}}+\frac{B^{\frac{1}{2}-\frac{1}{r}}f_1f_2f_3}{b^{\frac{3}{2}}}\right)\\
	 &\ll_{\tau_i} \sum_{n_1|b,n_2|a,n_3|b-a} \tau(n_1)\tau(n_2)\tau(n_3)\\
	 &\ll  _{\delta_1,\tau_i}\sum_{n_1|b,n_2|a,n_3|b-a} (n_1n_2n_3)^{\delta_1}\\
	 &\ll_{\tau_i,\delta_1} \sigma_{\delta_1}(ba(b-a))\ll _{\tau_i,\delta_i}b^{\delta_2},
 \end{align*}
 où $\delta_1,\delta_2$ peuvent être arbitrairement petits. On en conclut que
 $$\sharp  E(a,b,\varepsilon,B,r)=\sharp F(a,b,\varepsilon,B,r)+O_{\tau_i,\delta}(b^\delta).$$
Maintenant on compte le cardinal de $F(a,b,\varepsilon,B,r)$. Comme on a fait pour $G(a,b,\varepsilon,B,r)$, et par une inversion de Möbius on obtient
\begin{equation}\label{eq:Fepsilonabdecomposition}
	\begin{split}
	\sharp F(a,b,\varepsilon,B,r)&=\sum_{e_1f_1|b,e_2f_2|a,e_3f_3|b-a}\left(\prod_{i=1}^3\mu(e_i)\right)\sharp F(e_1,e_2,e_3,f_1,f_2,f_3,a,b,\varepsilon,B,r),
	\end{split}
\end{equation} où
\begin{equation}\label{eq:thesetF}
 F(e_1,e_2,e_3,f_1,f_2,f_3,a,b,\varepsilon,B,r)=\left\{
 \begin{aligned}
&(u,v)\in \NN^{*2}_{\text{prem}}\\
&e_1f_1|u^2,e_2f_2|v^2,e_3f_3|u-v
 \end{aligned}
\left|
\begin{aligned}
&0<\frac{u}{v}-\sqrt{\frac{b}{a}} \leqslant \frac{\varepsilon}{2}\left(\sqrt{\frac{b}{a}}-1\right) B^{-\frac{1}{r}}\\
&v^4\leqslant \frac{B f_1^2f_2^2f_3^2}{b^2a\left(\sqrt{ba^{-1}}-1\right)^2}
\end{aligned}
\right\}\right. .
\end{equation}

Nous allons appliquer le Théorème \ref{th:zoomingonlattices}à \eqref{eq:thesetF}. À cette fin on prend dans \eqref{eq:thesetF} (rappelons le réseau \eqref{eq:thelattice})
$$\Lambda^\prime=\Lambda_{\substack{e_1,e_2,e_3\\f_1,f_2,f_3}},\quad K^\prime=\frac{(f_1f_2f_3)^{\frac{1}{2}}}{b^\frac{1}{2}a^{\frac{1}{4}}\left(\sqrt{ba^{-1}}-1\right)^{\frac{1}{2}}},\quad \varepsilon^\prime=\frac{\varepsilon}{2}\left(\sqrt{\frac{b}{a}}-1\right).$$
Notons qu'ici la signification de $B$ et $r$ sont différentes. On prend
$$B^\prime=B^{\frac{1}{4}},\quad r^\prime=\frac{r}{4}.$$
Enfin d'appliquer le Théorème \ref{th:zoomingonlattices}, nous vérifions que les conditions \eqref{eq:conditionforeta} et \eqref{eq:conditionforbandr1} sur $r$ et $b$ impliquent \eqref{eq:cond0} - \eqref{eq:cond5}.
En effet, la condition \eqref{eq:cond5} se traduit en
	$$bg(e_1f_1)^2 g(e_2f_2)^2 e_3^2f_3^2\leqslant \frac{\prod_{i=1}^3 f_i}{b a^\frac{1}{2}(\sqrt{ba^{-1}}-1)}B^{\frac{1}{2}-\frac{1}{r}},$$
	qui est elle-même impliquée par les conditions équivalentes
	$$b\prod_{i=1}^{3}e_i^2f_i^2\leqslant \frac{\prod_{i=1}^3 f_i}{b a^\frac{1}{2}(\sqrt{ba^{-1}}-1)}B^{\frac{1}{2}-\frac{1}{r}}\Leftrightarrow b^{\frac{5}{2}}\sqrt{ab^{-1}}(\sqrt{ba^{-1}}-1) \prod_{i=1}^{3}e_i^2 f_i\leqslant B^{\frac{1}{2}-\frac{1}{r}}.$$
	En utilisant la majoration $$\prod_{i=1}^{3} e_i\leqslant \prod_{i=1}^3 e_i f_i\leqslant ba(b-a),$$
	la condition \eqref{eq:conditionforbandr1} sur $b$ entraîne que (on rappelle que $0<a<b$ et donc $\frac{a}{b}\left(1-\frac{a}{b}\right)\leqslant \left(\frac{1}{2}\left(\frac{a}{b}+1-\frac{a}{b}\right)\right)^2=\frac{1}{4}$)
	\begin{align*}
		b^{\frac{5}{2}}\sqrt{ab^{-1}}(\sqrt{ba^{-1}}-1) \prod_{i=1}^{3}e_i^2 f_i&\leqslant b^{\frac{5}{2}}\frac{\tau_1-1}{\tau_2}(ba(b-a))^2\\
		&=b^{\frac{17}{2}}\frac{\tau_1-1}{\tau_2}\left(\frac{a}{b}\left(1-\frac{a}{b}\right)\right)^2\\
		&\leqslant b^{\frac{17}{2}}\frac{\tau_1-1}{16\tau_2}\leqslant B^{\frac{1}{2}-\frac{1}{r}},
	\end{align*}
	 d'où la condition \eqref{eq:cond5}.
	De plus, \eqref{eq:conditionforeta} sur $r$ entraîne que
	$\frac{1}{2}<r^\prime<\frac{36}{55}<\frac{7}{10}$ et pour un tel $r$ on a
$$\frac{2}{17}\left(\frac{1}{2}-\frac{1}{r}\right)<\frac{2}{5}\left(\frac{4}{5}(\frac{1}{r}-\frac{1}{4})-\frac{3}{5}(\frac{1}{2}-\frac{1}{r})\right).$$
Donc \eqref{eq:conditionforbandr1} implique la condition suivante vérifiées de la même façon, qui elle-même implique \eqref{eq:cond4} pour tout $B\gg_{\tau_i,\varepsilon} 1$,
$$ b^\frac{5}{2}\leqslant \tau_1^2U(\tau_2,\varepsilon(\tau_2-1)/2) B^{\frac{4}{5}(\frac{1}{r}-\frac{1}{4})-\frac{3}{5}(\frac{1}{2}-\frac{1}{r})}.$$

Les conditions \eqref{eq:cond0} - \eqref{eq:cond5} étant vérifiées, on peut appliquer le Théorème \ref{th:zoomingonlattices} avec les quantités $\Lambda_{\substack{e_1,e_2,e_3\\f_1,f_2,f_3}},K^\prime,\varepsilon^\prime,B^\prime,r^\prime$ ci-dessus, et on obtient, compte-tenu du calcul \eqref{eq:Gammadet},
	\begin{equation}
		\begin{split}
		&\sharp F(e_i,f_i,a,b,\varepsilon,B,r)\\
	&=\frac{\Theta\left(\Lambda_{\substack{e_1,e_2,e_3\\f_1,f_2,f_3}}\right)f_1f_2f_3}{4ba^{\frac{1}{2}}}\varepsilon B^{\frac{1}{2}-\frac{1}{r}}+O_{\tau_i,\varepsilon}\left((K^\prime)^\frac{3}{2}b^\frac{1}{4} \det\left(\Lambda_{\substack{e_1,e_2,e_3\\f_1,f_2,f_3}}\right)^{\frac{1}{2}}B^{\frac{3}{4}(\frac{1}{2}-\frac{1}{r})}\log B +K^\prime b^\frac{3}{2}B^{\frac{1}{4}-\frac{1}{2r}}\log B\right)\\
	&=\frac{3}{2\pi^2}\frac{\Psi_1\left(\det\left(\Lambda_{\substack{e_1,e_2,e_3\\f_1,f_2,f_3}}\right)\right)f_1f_2f_3}{ba^{\frac{1}{2}}\det\left(\Lambda_{\substack{e_1,e_2,e_3\\f_1,f_2,f_3}}\right)}\varepsilon B^{\frac{1}{2}-\frac{1}{r}}+O_{\tau_i,\varepsilon}\left( \frac{(e_1e_2e_3)^{\frac{1}{2}}(f_1f_2f_3)^\frac{5}{4}}{b^{\frac{7}{8}}} B^{\frac{3}{4}(\frac{1}{2}-\frac{1}{r})}\log B+ b^\frac{3}{4}(f_1f_2f_3)^\frac{1}{2}B^{\frac{1}{4}-\frac{1}{2r}}\log B\right).
		\end{split}
	\end{equation}
	En reportant dans \eqref{eq:Fepsilonabdecomposition}, on calcule la somme du coefficient du terme principal
	\begin{align*}
	&\sum_{e_1f_1|b,e_2f_2|a,e_3f_3|b-a}(\prod_{i=1}^3\mu(e_i))\frac{\Psi_1\left(\det\left(\Lambda_{\substack{e_1,e_2,e_3\\f_1,f_2,f_3}}\right)\right)f_1f_2f_3}{\det\left(\Lambda_{\substack{e_1,e_2,e_3\\f_1,f_2,f_3}}\right)}\\
	&=\left(\sum_{e_1f_1|b}\frac{\Psi_1(g(e_1f_1))e_1f_1}{g(e_1f_1)}\frac{\mu(e_1)}{e_1}\right)\left(\sum_{e_2f_2|a}\frac{\Psi_1(g(e_2f_2))e_2f_2}{g(e_2f_2)}\frac{\mu(e_2)}{e_2}\right)\left(\sum_{e_3f_3|b-a}\Psi_1(e_3f_3)\frac{\mu(e_3)}{e_3}\right)\\
	&=\Phi(b)\Phi(a)\Psi(b-a).
	\end{align*}
	Rappelons la majoration pour la fonction $\sigma_\kappa$ \cite[Theorem 5.7]{tenenbaum},
	$$\sigma_\kappa(n)\ll n^\kappa,(\kappa>1),\quad \sigma_{\kappa}(n)\ll_\delta n^{\kappa+\delta},(0<\kappa<1,\forall\delta>0).$$ 
	On peut majorer le terme d'erreur de la façon suivante. Pour tout $\delta>0$, 
	\begin{align*}
		\sum_{e_1f_1|b,e_2f_2|a,e_3f_3|b-a}\frac{(e_1e_2e_3)^{\frac{1}{2}}(f_1f_2f_3)^\frac{5}{4}}{b^{\frac{7}{8}}} &=b^{-\frac{7}{8}}\sum_{n_1|b,n_2|a,n_3|b-a}(n_1n_2n_3)^{\frac{1}{2}}\sigma_{\frac{3}{4}}(n_1)\sigma_{\frac{3}{4}}(n_2)\sigma_{\frac{3}{4}}(n_3)\\
		&\ll_\delta b^{-\frac{7}{8}}\sigma_{\frac{5}{4}+\delta}(b)\sigma_{\frac{5}{4}+\delta}(a)\sigma_{\frac{5}{4}+\delta}(b-a)\\
	    &\ll_\delta b^{\frac{15}{4}-\frac{7}{8}+\delta}=b^{\frac{23}{8}+\delta}.
	\end{align*}
	Et pour tout $\frac{5}{8}\geqslant\delta_2>3\delta_1>0$, 
	\begin{align*}
			\sum_{e_1f_1|b,e_2f_2|a,e_3f_3|b-a} b^\frac{3}{4}(f_1f_2f_3)^\frac{1}{2}
			&=b^\frac{3}{4} \sum_{n_1|b,n_2|a,n_3|b-a} \sigma_{\frac{1}{2}}(n_1)\sigma_{\frac{1}{2}}(n_2)\sigma_{\frac{1}{2}}(n_3)\\
			&\ll_{\delta_1} b^\frac{3}{4} \sum_{n_1|b,n_2|a,n_3|b-a}  (n_1n_2n_3)^{\frac{1}{2}+\delta_1}\\
			&\ll_{\delta_1} b^\frac{3}{4}\sigma_{\frac{1}{2}+\delta_1}(ba(b-a))\\
			&\ll_{\delta_2} b^{\frac{9}{4}+\delta_2}.
	\end{align*}
Compte tenu le terme d'erreur provenant du cardinal de $G(a,b,\varepsilon,B,r)$, on en conclut la formule asymptotique \eqref{eq:asymptoticoncab} sur chaque courbe $\cab$.
\end{proof}
	\subsubsubsection{Obtention de la borne inférieure}
	Comme étant une étape de clé, avant tout on a besoin du lemme suivant, qui nous conduira au problème de diviseurs des formes en deux variables.
	\begin{lemma}\label{le:upperandlowerboundforphi}
		Rappelons les fonctions arithmétiques $\tau$ \eqref{eq:thefunctiontau}, $\Psi$ \eqref{eq:arithmeticfunctionkey} et $\Phi$ \eqref{eq:arithmeticfunctionPhi}. On a pour tout $n\in\NN_{\geqslant 1}$,
		\begin{equation*}
		\Psi(n)\leqslant \tau(n) \quad \text{et}\quad\Psi(n)\leqslant \Phi(n)\leqslant \sqrt{n}\Psi(n).
		\end{equation*}
	\end{lemma}
	\begin{proof}
		Puisque $\tau,\Psi,\Phi$ sont positives et multiplicatives, il suffit de comparer leur valeurs en les puissances des nombre premiers. Fixons un nombre premier $p$ et $k\in\NN$, on a
		$$\Psi(p^k)=1+k\frac{1-p^{-1}}{1+p^{-1}}\leqslant 1+k=\tau(p^k).$$
		Quant à $\Phi$, on a
		\begin{equation*}
		\begin{split}
		\Phi(p^k)=\sum_{l=0}^{k}\Psi_1(p^{\lceil \frac{l}{2}\rceil})p^{l-\lceil \frac{l}{2}\rceil}\phi(p^l)
		=1+\frac{1-p^{-1}}{1+p^{-1}}\sum_{l=1}^k p^{l-\lceil \frac{l}{2}\rceil}
		\geqslant 1+k\frac{1-p^{-1}}{1+p^{-1}}=\Psi(p^k),
		\end{split}
		\end{equation*}
		alors que
		\begin{equation*}
		\Phi(p^k)=1+\frac{1-p^{-1}}{1+p^{-1}}\sum_{l=1}^k p^{l-\lceil \frac{l}{2}\rceil}
		\leqslant 1+\frac{1-p^{-1}}{1+p^{-1}}\sum_{l=1}^k p^{\frac{l}{2}}
		\leqslant p^{\frac{k}{2}}\left(1+k\frac{1-p^{-1}}{1+p^{-1}}\right)
		=p^{\frac{k}{2}}\Psi(p^k).\qedhere
		\end{equation*}
	\end{proof}
	On voit ailleurs que la différence entre $\Psi$ et $\tau$ est \og petite\fg.
	Le terme principal \eqref{eq:asymptoticoncab} sur chaque courbe nodale admet donc une minoration de la forme
	\begin{equation}\label{eq:keylowerbound}
	\frac{\Phi(b)\Phi(a)\Psi(b-a)}{ba^{\frac{1}{2}}}\geqslant \frac{\Psi(b)\Psi(a)\Psi(b-a)}{ba^{\frac{1}{2}}},
	\end{equation}
	qui fait disparaître la fonction $\Phi$ et laisser la fonction $\Psi$ qui ressemble à la fonction $\tau$ au sens de la convolution, dont l'ordre moyen de ce type est connu grâce à une série de travaux de R. de la Bretèche et T. D. Browning (\cite{B-B2}, \cite{B-B1}, \cite{Browning}). On donnera les détails dans les appendices, où nous décrivons ces résultats. 
	
	\begin{proof}[Démonstration du Théorème \ref{th:lowerbound}]
	Tout d'abord rappelons le difféomorphisme local $\rho$ \eqref{eq:diffeomorphism} et les notations de coordonnées de l'espace tangent
	$$(w^\prime,z^\prime)=(w-1,z-1).$$
	On déduit un encadrement des $(a,b)$ tels que la courbe $\cab$  intervienne dans le dénombrement \eqref{eq:thesetEepsilonB} quand on prend la fonction caractéristique $\chi(W(\varepsilon_1,\varepsilon_2,\tau_1,\tau_2))$ de la région $W=W(\varepsilon_1,\varepsilon_2,\tau_1,\tau_2)$ \eqref{eq:regionW} dans $R$ \eqref{eq:theregionR}. 
	La condition de zoom dit que
\begin{equation}\label{eq:zoomconditiononzw}
	\max(w^\prime,z^\prime)=z^\prime=z-1\leqslant \varepsilon_1 B^{-\frac{1}{r}}.
\end{equation}
	Prenons un couple $(a,b)$ vérifiant la condition \eqref{eq:conditiononab}, 
	d'après l'équation \eqref{eq:cabR} définissant l'image de la courbe $\cab$, on a 
	$$\frac{b}{a}=\frac{(z^\prime)^2(w^\prime+1)}{(w^\prime)^2(z^\prime+1)}.$$
	On note
	$$\delta(\varepsilon,B)=1+\varepsilon B^{-\frac{1}{r}}.$$
	S'il existe $(w^\prime,z^\prime)\in B^{-\frac{1}{r}}W\cap \rho(\cab)$, alors d'après \eqref{eq:conditionforbandr1} on a
	$$\frac{\tau_2^2}{\delta(\varepsilon_1,B)}=\frac{\tau_2^2}{1+\varepsilon_1 B^{-\frac{1}{r}}}<\frac{b}{a}< \tau_1^2(1+\varepsilon_1 B^{-\frac{1}{r}})=\tau_1^2\delta(\varepsilon_1,B),$$
	Maintenant prenons un couple $(a,b)$ vérifiant \eqref{eq:conditiononab} et 
	$$\tau_2^2\delta(\varepsilon_1,B)<\frac{b}{a}<\frac{\tau_1^2}{\delta(\varepsilon_1,B)}.$$
	Puisqu'un point $(w^\prime,z^\prime)\in\rho( \cab)$ vérifie 
	$$\frac{(z^\prime)^2}{(w^\prime)^2}=\frac{b(z^\prime+1)}{a(w^\prime+1)},$$
	la condition de zoom implique que sur la courbe $\cab$, si $\max(w^\prime,z^\prime)\leqslant \varepsilon_1 B^{-\frac{1}{r}}$, on a
	$$\tau_2^2=\frac{\tau_2^2\delta(\varepsilon_1,B)}{1+\varepsilon_1 B^{-\frac{1}{r}}}<\frac{(z^\prime)^2}{(w^\prime)^2}<\frac{\tau_1^2(1+\varepsilon_1 B^{-\frac{1}{r}})}{\delta(\varepsilon_1,B)}=\tau_1^2.$$
Donc $(w^\prime,z^\prime)$ donne un point de $B^{-\frac{1}{r}}W\cap\rho(\cab)$.

On restreint le dénombrement sur les $(a,b)$ tels que $(a,b)\not\in\square^2$ car c'est sur de telles $\cab$ qu'on trouve une équidistribution de points pour le zoom sous-critique (Proposition \ref{po:zoompointsoncab}).
On en conclut que 
\begin{equation}\label{eq:deuxparts}
	\begin{split}
	\delta_{U,Q,B,r}(\chi(W\varepsilon_1,\varepsilon_2,\tau_1,\tau_2))
	\geqslant \sum^*_{\substack{ \tau_2^2\leqslant\frac{b}{a}\leqslant\tau_1^2}}\sharp E(\varepsilon_1,\varepsilon_2,a,b,B,r)+O\left(\sum^*_{\substack{ \frac{\tau_2^2}{\delta(\varepsilon_1,B)}<\frac{b}{a}<\tau_2^2\delta(\varepsilon_1,B) \text{ ou }\\ \frac{\tau_1^2}{\delta(\varepsilon_1,B)}<\frac{b}{a}<\tau_1^2\delta(\varepsilon_1,B) }}\sharp E(\varepsilon_1,\varepsilon_2,a,b,B,r)\right),
	\end{split}
\end{equation}
où le symbole $*$ signifie que la somme est prise sur les $(a,b)\in\NN_{\geqslant 1}^2$ satisfaisant à	\eqref{eq:conditiononab}, et les ensembles
$E(a,b,\varepsilon_1,\varepsilon_2,B,r)$ sont définis de manière analogue à $E(a,b,\varepsilon,B,r)$ en remplaçant \eqref{eq:zoomoncabdistance} par $\varepsilon_2<B^{\frac{1}{r}}d(P)\leqslant \varepsilon_1$.

		Pour obtenir un terme principal satisfaisant, on restreint la somme sur les courbes $\cab$ des paramètres $(a,b)$ vérifiant $$b\leqslant B^{\eta\left(1-\frac{2}{r}\right)}=o(B^{\frac{2}{17}(\frac{1}{2}-\frac{1}{r})}),$$
		qui correspond à \eqref{eq:conditionforbandr1}. Tout d'abord d'après la Proposition \ref{po:zoompointsoncab}, on obtient, pour chaque tel $(a,b)$ fixé, la minoration
		\begin{equation}\label{eq:lowerboundforstar}
		\begin{split}
			\sharp E(a,b,\varepsilon_1,\varepsilon_2,B,r)&=\frac{3}{2\pi^2}\frac{\Phi(b)\Phi(a)\Psi(b-a)}{ba^{\frac{1}{2}}}(\varepsilon_1-\varepsilon_2)B^{\frac{1}{2}-\frac{1}{r}}+O_{\tau_i,\varepsilon_i,\delta}\left(b^{\frac{23}{8}+\delta}B^{\frac{3}{4}(\frac{1}{2}-\frac{1}{r})}\log B \right)\\
			&\geqslant \frac{3}{2\pi^2}\frac{\Psi(b)\Psi(a)\Psi(b-a)}{ba^{\frac{1}{2}}}(\varepsilon_1-\varepsilon_2)B^{\frac{1}{2}-\frac{1}{r}}+O_{\tau_i,\varepsilon_i,\delta}\left(b^{\frac{23}{8}+\delta}B^{\frac{3}{4}(\frac{1}{2}-\frac{1}{r})}\log B \right),
			\end{split}
		\end{equation}
		en utilisant le Lemme \ref{le:upperandlowerboundforphi}.
		 Afin d'appliquer la Proposition \ref{po:asymptoticforPsiquotient} sur l'ordre moyen des diviseurs de formes binaires, on étend la somme \eqref{eq:deuxparts} sur les $(a,b)$ vérifiant la condition $\eqref{eq:conditiononab}$ en rajoutant les $(a,b)$ vérifiant la condition $(a,b)\in\square^2$. Le terme d'erreur correspondant est de grandeur
		\begin{align*}
		\sum_{\substack{1\leqslant\max(a,b)\leqslant B^{\eta\left(1-\frac{2}{r}\right)}\\ a<b,(a,b)\in\square^2}}\frac{\Psi(b)\Psi(a)\Psi(b-a)}{ba^{\frac{1}{2}}}B^{\frac{1}{2}-\frac{1}{r}}&\ll \sum_{\substack{1\leqslant\max(c,d)\leqslant B^{\eta\left(\frac{1}{2}-\frac{1}{r}\right)}\\c>d}}\frac{\Psi(c^2)\Psi(d^2)\Psi(c^2-d^2)}{c^2d}B^{\frac{1}{2}-\frac{1}{r}}\\
		&\ll \sum_{\substack{1\leqslant\max(c,d)\leqslant B^{\eta\left(\frac{1}{2}-\frac{1}{r}\right)}\\c>d}}\frac{\tau(c^2)\tau(d^2)\tau(c^2-d^2)}{c^2d}B^{\frac{1}{2}-\frac{1}{r}}\\ &\ll _{\delta_1}  \sum_{\substack{1\leqslant\max(c,d)\leqslant B^{\eta\left(\frac{1}{2}-\frac{1}{r}\right)} }}\frac{1}{c^{2-\delta_1}d}B^{\frac{1}{2}-\frac{1}{r}}\\
		&\ll_{\delta_1} B^{\frac{1}{2}-\frac{1}{r}}\log B,
		\end{align*}
		pour tout $\delta_1\in\mathopen]0,1\mathclose[$ puisque $\tau(n)\ll n^\delta,\forall \delta>0$ (cf. \cite[\S I.5.2]{tenenbaum}).
	D'après la formule \ref{po:asymptoticforPsiquotient} de la Proposition \ref{po:problemofdivisor}, la somme des termes principaux dans \eqref{eq:lowerboundforstar} est
	 	\begin{align*}
	 &\sum_{\substack{a<b\leqslant B^{\eta(1-\frac{2}{r})}\\\tau_2^2\leqslant\frac{b}{a}\leqslant\tau_1^2\\ \pgcd(a,b)=1}}\frac{3}{2\pi^2}\frac{\Psi(b)\Psi(a)\Psi(b-a)}{ba^{\frac{1}{2}}}(\varepsilon_1-\varepsilon_2)B^{\frac{1}{2}-\frac{1}{r}}\\
	 &=\frac{6}{\pi^2}C_1 \left(\eta(1-\frac{2}{r})\right)^3 \left(\frac{1}{\tau_2}-\frac{1}{\tau_1}\right)(\varepsilon_1-\varepsilon_2) B^{(1+\eta)\left(\frac{1}{2}-\frac{1}{r}\right)}(\log B)^3 +O( B^{(1+\eta)\left(\frac{1}{2}-\frac{1}{r}\right)}(\log B)^2 )\\
	 &=\left(C_2 \int_{\substack{z^\prime\in\mathopen]\varepsilon_2,\varepsilon_1\mathclose]\\\theta\in\mathopen]\tau_2,\tau_1\mathclose[}}\frac{1}{\theta^{2}} \operatorname{d}\theta\operatorname{d}z^\prime\right)B^{(1+\eta)\left(\frac{1}{2}-\frac{1}{r}\right)}(\log B)^3 +O( B^{(1+\eta)\left(\frac{1}{2}-\frac{1}{r}\right)}(\log B)^2 )\\
	 &=\left(C_2\int \chi(W(\varepsilon_1,\varepsilon_2,\tau_1,\tau_2))\frac{\operatorname{d }w^\prime\operatorname{d}z^\prime}{z^\prime}\right)B^{(1+\eta)\left(\frac{1}{2}-\frac{1}{r}\right)}(\log B)^3+O( B^{(1+\eta)\left(\frac{1}{2}-\frac{1}{r}\right)}(\log B)^2 ),
	 \end{align*}
	 où $C_1$ est définie par \eqref{eq:C1} et $C_2$ est \eqref{eq:C2}.
	 La contribution de la somme des termes d'erreur dans \eqref{eq:lowerboundforstar} est majorée de la façon suivante.
	 \begin{align*}
	 	\sum_{a<b\leqslant B^{\eta(1-\frac{2}{r})}} b^{\frac{23}{8}+\delta}B^{\frac{3}{4}(\frac{1}{2}-\frac{1}{r})}\log B& 
	 	\ll \sum_{b\leqslant B^{\eta(1-\frac{2}{r})}} b^{\frac{23}{8}+1+\delta}B^{\frac{3}{4}(\frac{1}{2}-\frac{1}{r})}\log B\\
	 	&\ll B^{(\frac{39}{4}\eta+\frac{3}{4}+2\eta\delta)(\frac{1}{2}-\frac{1}{r})}\log B= O(B^{(1+\eta)(\frac{1}{2}-\frac{1}{r})}\log B),
	 \end{align*}
	 grâce à la condition \eqref{eq:conditionforeta} sur $r$ et pour $$0<8\delta\leqslant \frac{1}{\eta}-35.$$
Pour contrôler la somme du terme d'erreur dans \eqref{eq:deuxparts}, on utilise la majoration pour la fonction $\Phi$ donné par le Lemme \ref{le:upperandlowerboundforphi}. Le terme principal \eqref{eq:asymptoticoncab} admet donc la majoration de la forme
$$	\frac{\Phi(b)\Phi(a)\Psi(b-a)}{ba^{\frac{1}{2}}}\leqslant \frac{\Psi(b)\Psi(a)\Psi(b-a)}{b^{\frac{1}{2}}}\leqslant \tau(b)\tau(a)\tau(b-a).$$
Les $(a,b)\in\NN_{\geqslant 1}$ vérifiant la condition
\begin{equation}\label{eq:errortermconditionab}
\frac{\tau_2^2}{\delta(\varepsilon_1,B)}<\frac{b}{a}<\tau_2^2\delta(\varepsilon_1,B) \quad \text{ou}\quad \frac{\tau_1^2}{\delta(\varepsilon_1,B)}<\frac{b}{a}<\tau_1^2\delta(\varepsilon_1,B),  \quad \text{et} \quad a<b\leqslant B^{\eta(1-\frac{2}{r})}
\end{equation}
se trouvent dans deux triangles de l'aire 
$$B^{2\eta(1-\frac{2}{r})}O_{\tau_i}\left(\delta(\varepsilon_{1},B)-\frac{1}{\delta(\varepsilon_{1},B)}\right)=O_{\tau_i}(B^{4\eta(\frac{1}{2}-\frac{1}{r})-\frac{1}{r}}).$$
Rappelons le Lemme \ref{le:countinglatticepointsinsmalldomains} et la majoration pour la fonction $\tau$ \cite[\S I.5.2]{tenenbaum}, on obtient que, pour tout $\delta>0$,
\begin{align*}
	&\sum_{\substack{(a,b)\text{ vérifie} \eqref{eq:errortermconditionab},\\\pgcd(a,b)=1}}\sharp E(\varepsilon_1,\varepsilon_2,a,b,B)\\
		&\ll_{\varepsilon_i,\tau_i}  \sum_{\substack{(a,b)\text{ vérifie} \eqref{eq:errortermconditionab},\\\pgcd(a,b)=1}}\tau(b)\tau(a)\tau(b-a)B^{\frac{1}{2}-\frac{1}{r}}+ 	\sum_{a<b\leqslant B^{\eta(1-\frac{2}{r})}} b^{\frac{23}{8}+\delta}B^{\frac{3}{4}(\frac{1}{2}-\frac{1}{r})}\log B\\		
		&\ll_{\varepsilon_i,\tau_i,\delta} B^{\frac{1}{2}-\frac{1}{r}+\delta}(B^{4\eta(\frac{1}{2}-\frac{1}{r})-\frac{1}{r}}+1)+B^{(1+\eta)(\frac{1}{2}-\frac{1}{r})}\log B\\
		&\ll_{\varepsilon_i,\tau_i,\delta} B^{\frac{1}{2}-\frac{1}{r}+\delta}+ B^{(4\eta+1)(\frac{1}{2}-\frac{1}{r})+\delta-\frac{1}{r}}+B^{(1+\eta)(\frac{1}{2}-\frac{1}{r})}\log B.
\end{align*}
Pour que cette majoration soit satisfaisante, il faut que
$$ (4\eta+1)\left(\frac{1}{2}-\frac{1}{r}\right)-\frac{1}{r}<(1+\eta)\left(\frac{1}{2}-\frac{1}{r}\right)\Leftrightarrow 3\eta<\frac{2}{r-2}.$$
Cela est valide à cause de la condition \eqref{eq:conditionforeta}.
La preuve du Théorème \ref{th:lowerbound} est achevée.
	\end{proof}
\subsubsubsection{Remarque}
Nous expliquons une raison pour laquelle nous n'avons pas réussi à établir une formule asymptotique pour le zoom sous-critique. Comme l'on a vu, la prédiction naïve affirme que la puissance de $B$ devrait être $1-\frac{2}{r}$. En fait on a
$$b=O_\varepsilon(B^{1-\frac{2}{r}})$$
d'après \eqref{eq:thenumberofnodalcurves}. Le terme principal de la borne inférieure (Théorème \ref{th:lowerbound}) deviendrait $B^{1-\frac{2}{r}}(\log B)^3$ si l'on pourrait prendre la constante $\eta=1$. 
Une difficulté se cache sur l'exactitude de la formule \eqref{eq:asymptoticoncab} pour les paramètres $(a,b)$ grands (c'est-à-dire $a,b>B^\lambda$ pour certain $0<\lambda<1-\frac{2}{r}$). Dans ce cas le zoom sur la courbe nodale $\cab$ compte au plus un point et la technique utilisée dans ce texte ne permet pas de déterminer s'il est non-nul ou pas. Dans \cite{Huang1}, nous avons surmonté un obstacle similaire à l'aide de la transformation de Cremona. Mais cette astuce n'est plus applicable ici car la transformation de Cremona préserve les courbes $\cab$.

\section{Interprétation en terme d'ensemble mince}\label{se:thinset}
Le but de cette section est de signaler le fait qu'il existe une $2:1$-application qui envoie les $\QQ$-points de $Y_4$ sur un ensemble mince de $\PP^1\times\PP^1$.
Notons que, outre le morphisme d'implosion utilisé jusqu'à maintenant, la surface $Y_4$ possède un autre morphisme vers $\PP^1\times\PP^1$ comme suit.

Le diviseur $\omega_{Y_4}^{-1}$ définit un morphisme birationnel $$f:Y_4\to \mathcal{V}\subset \PP^4$$
dont l'image $\mathcal{V}$ est une surface torique de del Pezzo de degré $4$ singulière de type $4\mathbf{A}_1$ définie comme l'intersection de deux quadriques dans $\PP^4$:
\begin{equation}\label{eq:intersectionoftwoquadrics}
x_0x_1=x_2x_3=x_4^2,
\end{equation}
et l'éventail est un \og croix\fg\ qui ressemble à celui de $\PP^1\times\PP^1$. Nous invitons le lecteur à consulter \cite{Derenthal} pour plus de détails sur le sujet des surfaces de del Pezzo singulières.
\begin{figure}[h]
	\centering
	\includegraphics[scale=0.8]{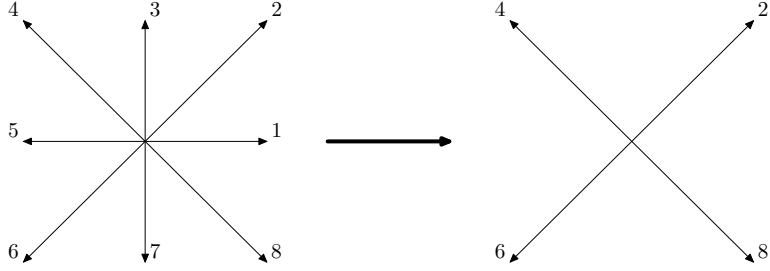}
	\caption{La désingularisation $Y_4\to \mathcal{V}$}
\end{figure}

Le morphisme $f$ est en fait la désingularisation minimale torique de $\mathcal{V}$ puisque les $4$ diviseurs au bord $(1\leqslant i\leqslant 4)$ ont le nombre d'auto-intersection $-2$. La première équation \eqref{eq:intersectionoftwoquadrics}
implique (comme pour toute surface de del Pezzo de degré  $4$ fibrée en conique) que $Y_4$ possède deux projections vers la droite projective, et donc un morphisme vers $\PP^1\times\PP^1$ qui est fini de degré générique $2$ et défini de la façon suivante. D'abord la projection $\wp:\PP^4\dashrightarrow\PP^3$ depuis le point $[0:0:0:0:1]$ est bien définie sur $\mathcal{V}$, dont l'image est définie par $x_0x_1=x_2x_3$, si l'on utilise les coordonnées $[x_0:x_1:x_2:x_3]$ venant de $\PP^4$. C'est une surface quadrique isomorphisme à la variété de produit $\PP^1\times\PP^1$ puisqu'elle s'injecte sur $\PP^3$ par
$$[u:v]\times[s:t]\longmapsto [us:vt:ut:vs].$$  
Donc $\wp$ induit un morphisme $g:\mathcal{V}\to\PP^1\times\PP^1$ de degré générique $2$. L'image de $\mathcal{V}(\QQ)$ est un ensemble \textit{mince} (cf. \cite[\S 9.1]{Serre}) de $(\PP^1\times\PP^1)(\QQ)$:
\begin{equation}\label{eq:thinset}
\{[u:v]\times [s:t]\in(\PP^1\times\PP^1)(\QQ):u,v,s,t\in\ZZ,uvst\in\square\}.
\end{equation} En les composant, on voit que le morphisme $h=g\circ f:Y_4\to\PP^1\times \PP^1$ est un revêtement lisse de degré générique $2$. De plus, on a
$$\omega_{Y_4}^{-1}=f^*(\omega_\mathcal{V}^{-1})=h^*(\mathcal{O}_{\PP^1\times\PP^1}(1,1)).$$
Le lieu de ramification étant sur les diviseurs au bord, autour du point $[1:1]\times[1:1]$, $f$ est un difféomorphisme. Par la fonctorialité de la hauteur on peut se ramener au cas de $\PP^1\times\PP^1$ restreinte à l'ensemble mince \eqref{eq:thinset}. Ceci pourrait réexpliquer le fait que l'approximation rationnelle des nombres rationnels est équivalente à l'approximation rationnelle des nombres quadratiques. 
\section{Autres variétés toriques}\label{se:othervarieties}
Dans cette section, on produira plus de variétés toriques de dimension supérieure sur lesquelles en dehors d'un fermé de Zariski les meilleurs approximants se trouvent dans une famille de courbes nodales, comme pour $Y_4$. Elles sont construites en fait comme produits de $Y_4$ avec les espaces projectifs.

Pour tout $n\in\NN^*$, considérons la variété $V_n=Y_4\times\PP^n_{\QQ}$. On note $\pi_1,\pi_2$ les morphismes de projection vers $Y_4$ et $\PP^n$. \begin{proposition}
	Pour $Q\in V_n(\QQ)$ dans l'orbite ouverte, on a
	$$\aess_{\omega_{V_n}^{-1}}(Q)=\aess_{\omega_{Y_4}^{-1}}(\pi_1(Q))+\aess_{\omega_{\PP^n}^{-1}}(\pi_2(Q))=n+3,$$
	qui peut être atteinte sur une famille de courbes nodales couvrant un ouvert dense de $V_n$.
\end{proposition}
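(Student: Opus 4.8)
L'idée directrice est de montrer que la constante d'approximation essentielle est \og additive\fg\ pour un produit de variétés, puis d'exploiter la description de $\aess$ pour $Y_4$ (Théorème \ref{th:approximationessential}) et pour $\PP^n$ (Théorème \ref{th:approximationtheorem1}, qui donne $\aess(Q,\PP^1)=\alpha(Q,\PP^1)=1$ pour un point rationnel, et donc $\aess(Q,\PP^n)=n$ par la même estimation, les droites rationnelles réalisant la borne). Précisément, la première étape consiste à fixer une hauteur de Weil associée à $\omega_{V_n}^{-1}=\pi_1^*\omega_{Y_4}^{-1}\otimes\pi_2^*\omega_{\PP^n}^{-1}$ de la forme produit $\h_{\omega_{V_n}^{-1}}(P)=\h_{\omega_{Y_4}^{-1}}(\pi_1(P))\cdot\h_{\omega_{\PP^n}^{-1}}(\pi_2(P))$, et une distance produit $\dist(P,Q)=\max(\dist(\pi_1(P),\pi_1(Q)),\dist(\pi_2(P),\pi_2(Q)))$, ce qui est licite puisque $\aess$ ne dépend ni de la hauteur choisie dans sa classe, ni de la distance projective (remarque suivant la Définition \ref{df:essential}).

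La deuxième étape est l'inégalité $\aess_{\omega_{V_n}^{-1}}(Q)\geqslant \aess_{\omega_{Y_4}^{-1}}(\pi_1(Q))+\aess_{\omega_{\PP^n}^{-1}}(\pi_2(Q))$. Pour cela je prendrais un ouvert dense $U_1\subset Y_4$ (resp. $U_2\subset \PP^n$) avec $\alpha(\pi_1(Q),U_1)$ (resp. $\alpha(\pi_2(Q),U_2)$) arbitrairement proche de $\aess_{\omega_{Y_4}^{-1}}(\pi_1(Q))$ (resp. de $\aess_{\omega_{\PP^n}^{-1}}(\pi_2(Q))$), fourni par la procédure décrite après la Proposition \ref{po:weakzoom} à l'aide de familles de courbes rationnelles denses; concrètement on utilise les courbes nodales $\cab$ (avec $(a,b)\not\in\square^2$) pour le facteur $Y_4$ et les droites rationnelles pour le facteur $\PP^n$. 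On considère alors l'ouvert dense $U=U_1\times U_2$ et, étant données des suites $(y_i)\subset U_1(\QQ)$, $(z_j)\subset U_2(\QQ)$ réalisant les exposants d'approximation voulus, la suite \og diagonale\fg\ $(y_i,z_j)$ convenablement indexée: si $\dist(\pi_1(Q),y_i)^{\gamma_1}\h(y_i)\ll 1$ et $\dist(\pi_2(Q),z_j)^{\gamma_2}\h(z_j)\ll 1$, on choisit pour chaque $i$ l'indice $j=j(i)$ tel que $\dist(\pi_2(Q),z_j)\asymp \dist(\pi_1(Q),y_i)$ (possible quitte à extraire, car les distances décroissent continûment vers $0$ le long des courbes rationnelles paramétrées), et alors $\dist(Q,(y_i,z_{j(i)}))=\dist(\pi_1(Q),y_i)$ vérifie $\dist(Q,(y_i,z_{j(i)}))^{\gamma_1+\gamma_2}\h(y_i,z_{j(i)})\ll 1$. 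En faisant tendre $\gamma_1\to\aess_{\omega_{Y_4}^{-1}}(\pi_1(Q))$ et $\gamma_2\to\aess_{\omega_{\PP^n}^{-1}}(\pi_2(Q))$, et en notant que les produits $\cab\times\ell$ (avec $\ell$ droite rationnelle) forment une famille de courbes nodales en $Q$ couvrant un ouvert dense de $V_n$, on obtient la minoration ainsi que l'assertion sur le lieu où $\aess$ est atteinte.

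La troisième étape, qui est l'obstacle principal, est la majoration $\aess_{\omega_{V_n}^{-1}}(Q)\leqslant n+3$, c'est-à-dire: pour tout ouvert dense $U\subset V_n$, $\alpha(Q,U)\leqslant n+3$. Ici on ne peut pas se contenter de projeter sur les facteurs, car un ouvert dense de $V_n$ ne contient pas nécessairement un produit d'ouverts denses. L'argument que j'emploierais repose sur la borne inférieure uniforme de type Liouville \textit{combinée}: d'une part $\dist(\pi_1(P),\pi_1(Q))\geqslant \h_{\omega_{Y_4}^{-1}}(\pi_1(P))^{-1/2}$ pour $P\notin\cup Z_i$ (remarque suivant la Proposition \ref{po:lowerbound1}), et d'autre part, pour une suite de points échappant à tout fermé strict, on doit \og payer\fg\ un exposant $n$ sur le facteur $\PP^n$ via le théorème de Roth appliqué fibre à fibre. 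Le point délicat est de traiter les suites de points $(P_i)$ dont la projection $\pi_1(P_i)$ reste dans un des $Z_i$ (ou $\pi_2(P_i)$ dans un sous-espace linéaire): comme $Z_i\times\PP^n$ est un fermé strict rencontrant $U$ en un ouvert dense de lui-même, on est ramené à calculer $\alpha(Q,Z_i\times\PP^n)$, où $Z_i\simeq\PP^1$ contribue l'exposant $\deg_{\omega_{Y_4}^{-1}}Z_i=2$ pour un point rationnel, soit $2+n<n+3$; donc ces suites ne peuvent pas battre l'exposant $n+3$. On conclut que le supremum sur les ouverts denses est bien $n+3$, réalisé par les courbes nodales produites à l'étape précédente, ce qui achève la preuve.
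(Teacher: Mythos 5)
Votre plan comporte deux défauts qui l'empêchent d'aboutir. D'abord une erreur numérique : avec la hauteur anticanonique sur $\PP^n$ (associée à $\mathcal{O}(n+1)$), un point rationnel vérifie $d_2(\pi_2(Q),\cdot)^{n+1}H_{\omega_{\PP^n}^{-1}}\geqslant 1$ et les droites rationnelles donnent exactement l'exposant $n+1$, donc $\aess_{\omega_{\PP^n}^{-1}}(\pi_2(Q))=n+1$ et non $n$ comme vous l'écrivez deux fois ; votre somme $2+n$ est incompatible avec l'égalité $n+3$ à démontrer, et votre traitement du cas $Z_i\times\PP^n$ (l'inégalité $2+n<n+3$) repose sur cette même valeur erronée. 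Ensuite, et c'est le point structurel, vous avez interverti le rôle des deux inégalités : construire des suites d'approximants avec $d^{\gamma}H\ll 1$ démontre $\alpha\leqslant\gamma$, donc sert à la majoration $\aess\leqslant n+3$ (qui exige de telles suites dans \emph{tout} ouvert dense), tandis qu'une inégalité uniforme de type Liouville démontre $\alpha\geqslant\gamma$, donc sert à la minoration. Votre étape 2 (suites diagonales) ne prouve donc pas $\aess\geqslant n+3$, et votre étape 3, fondée sur des bornes inférieures (Liouville et Roth, ce dernier d'ailleurs sans objet puisque $\pi_2(Q)$ est rationnel), ne prouve pas la majoration annoncée. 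La minoration correcte est immédiate et c'est celle du texte : sur l'ouvert dense où les deux distances projetées sont non nulles, $d(Q,P)^{n+3}H_{\omega_{V_n}^{-1}}(P)\geqslant \bigl(d_1^2 H_{\omega_{Y_4}^{-1}}\bigr)\bigl(d_2^{n+1}H_{\omega_{\PP^n}^{-1}}\bigr)\geqslant 1$ par la Proposition \ref{po:lowerbound1} et la définition de la hauteur sur $\PP^n$, sans qu'il soit besoin d'écarter les $Z_i$ ni d'invoquer Roth.

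Pour la majoration et pour l'assertion d'atteinte, il manque l'idée clé de la démonstration : exhiber dans $V_n$ une famille dense de courbes rationnelles \emph{nodales} en $Q$ de constante $n+3$. Vos produits $\cab\times\ell$ sont des surfaces et non des courbes nodales, et les suites diagonales qu'ils portent ne donnent $\alpha\leqslant n+3$ que pour des ouverts produits $U_1\times U_2$, alors que $\aess$ est un supremum sur tous les ouverts denses — obstacle que vous signalez vous-même sans le lever. Le texte construit les courbes $D_{a,b,l}$ paramétrées par $[u:v]\mapsto\bigl(\psi_{a,b}([u:v]),\psi_l([au^2-bv^2:v^2])\bigr)$ : la composante dans $\PP^n$ est une droite parcourue deux fois, synchronisée avec $\psi_{a,b}$ de sorte que les points au-dessus de $Q$ soient les points quadratiques $[\pm\sqrt{b/a}:1]$ ; le degré anticanonique vaut $4+2(n+1)=2n+6$ et la Remarque \ref{rmk:thmofmckinnon} donne $\alpha(Q,D_{a,b,l})=n+3$. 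Comme la réunion de ces courbes est dense, tout ouvert dense en rencontre une selon un ouvert cofini, d'où $\alpha(Q,W)\leqslant n+3$ pour tout ouvert dense $W$, ce qui fournit à la fois la majoration et l'atteinte sur une famille de courbes nodales, c'est-à-dire précisément la partie de l'énoncé que votre construction ne permet pas d'obtenir.
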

\begin{proof}
On peut supposer que $\pi_1(Q)$ a pour coordonnées $[1:1]\times [1:1]$ et $\pi_2(Q)=[1:\cdots:1]$. Tout d'abord, comme $\omega_{V_n}^{-1}=\pi_1^*(\omega_{Y_4}^{-1})\otimes \pi_2^*(\omega_{\PP^n}^{-1})$, on choisit la hauteur de Weil associée à $\omega_{V_n}^{-1}$ définie pour $x\in Y_4(\QQ),[y_0:\cdots:y_n]\in\PP^n(\QQ)$ tel que $y_0,\cdots,y_n\in\ZZ,\pgcd(y_0,\cdots,y_n)=1$, $$H_{\omega_{V_n}^{-1}}(x,[y_0:\cdots:y_n])=H_{\omega_{Y_4}^{-1}}(x)H_{\omega_{\PP^n}^{-1}}([y_0:\cdots:y_n]), \quad H_{\omega_{\PP^n}^{-1}}([y_0:\cdots:y_n])=\max_{0\leqslant i\leqslant n}(|y_i|^{n+1}),$$ et la distance $d$ comme le maximum de celle définie par \eqref{eq:dist}, notée ici $d_1$ sur $Y_4$ et $d_2$ sur $\PP^n$ définie par la formule (pour $y_0\neq 0$), 
$$d_2(\pi_2(Q),[y_0:\cdots:y_n])=\max_{1\leqslant i\leqslant n}\left|\frac{y_i}{y_0}-1\right|.$$
Par définition, si $[y_0:\cdots:y_n]\neq \pi_2(Q)$, 
$$d_2(\pi_2(Q),[y_0:\cdots:y_n])^{n+1}H_{\omega_{\PP^n}^{-1}}([y_0:\cdots:y_n])\geqslant 1.$$
Donc pour un point général $P\in V_n(\QQ)$ tel que $d_1(\pi_1(Q),\pi_1(P)),d_2(\pi_2(Q),\pi_2(P)))\neq 0$, on a, d'après le calcul \eqref{eq:htdistlowery4} dans la Proposition \ref{po:lowerbound1},
$$d(Q,P)^{n+3} H_{\omega_{V_n}^{-1}}(P)\geqslant d_1(\pi_1(Q),\pi_1(P)))^2 H_{\omega_{Y_4}^{-1}}(\pi_1(P)) \times d_2(\pi_2(Q),\pi_2(P)))^{n+1} H_{H_{\omega_{\PP^n}^{-1}}}(\pi_2(P))\geqslant 1.$$
Cela démontre que $\aess_{\omega_{V_n}^{-1}}(Q)\geqslant n+3$.

Ensuite on construit des courbes nodales dans $V_n$ à partir de celles dans $Y_4$.
Avec les notations dans la Section \ref{se:cab}, pour $(a,b)\in\NN^{*2}_{\text{prem}},a<b$, on a construit une courbe nodale $\cab$ dans $Y_4$ avec le morphisme de paramétrage $\psi_{a,b}$ \eqref{eq:parapsiab}. Maintenant on prend une droite $l$ dans $\PP^n$ avec un paramétrage $\psi_l:\PP^1\to l$ qui envoie $[0:1]$ sur $\pi_2(Q)$. On définit un morphisme de paramétrage comme suit
$$\varPsi_{a,b,l}:\PP^1\to V_n,$$
$$\varPsi_{a,b,l} ([u:v])=(\psi_{a,b}([u:v]),\psi_l\circ \theta_{a,b}([u:v])), $$
où $\theta_{a,b}:\PP^1\to\PP^1$ est défini comme le revêtement double
$$\theta_{a,b}([u:v])=[au^2-bv^2:v^2].$$
On voit que l'image $D_{a,b,l}$ de $\varPsi_{a,b,l}$ est une courbe rationnelle nodale en $Q=\varPsi_{a,b,l}([\pm \sqrt{\frac{b}{a}}:1])$ avec
$$\deg_{\omega_{V_n}^{-1}}(D_{a,b,l})=\deg_{\omega_{Y_4}^{-1}}(\pi_1(D_{a,b,l}))+2\deg_{\omega_{\PP^n}^{-1}}(\pi_2(D_{a,b,l}))=4+2(n+1)=2n+6,$$
puisque $\theta_{a,b}^*(\mathcal{O}_{\PP^1}(\deg_{\omega_{\PP^n}^{-1}}(\pi_2(D_{a,b,l}))))=\theta_{a,b}^*(\mathcal{O}_{\PP^1}(n+1))=\mathcal{O}_{\PP^1}(2n+2)$.
En vertu de la remarque \ref{rmk:thmofmckinnon}, nous obtenons
$$\alpha_{\omega_{V_n}^{-1}}(Q,D_{a,b,l})=\frac{\deg_{\omega_{V_n}^{-1}}(D_{a,b,l})}{2}=n+3.$$
Comme la réunion la famille $(D_{a,b,l})$ avec $(a,b)\in\NN^{*2}_{\operatorname{prem}},a<b,(a,b)\not\in \square^2$ et $l$ variant est dense pour la topologie de Zariski dans $V_n$, ceci nous fournit la borne inférieure $\aess_{\omega_{V_n}^{-1}}(Q)\leqslant n+3$. 
\end{proof}
\newpage
\begin{center}
	\LARGE Appendices
\end{center}
\appendix
\section{Distribution locale d'un point rationnel sur la droite projective}\label{se:app1}
	Dans un souci de complétude, nous redémontrons le résultat de S. Pagelot concernant la distribution locale d'un $\QQ$-point $Q$ sur la droite projective $\PP_\QQ^1$. Pour simplicité on suppose que $Q=[0:1]$; le résultat pour un point général diffère par une constante. 
	\subsection{Énoncé du théorème}
	\begin{theorem}[Pagelot \cite{Pagelot}]\label{th:pagelot}
On a $\alpha(Q,\PP^1)=\aess(Q)=1$.
	 On fixe la hauteur de Weil absolue associée au fibré $\mathcal{O}(1)$ définie par
	 $$H([u:v])=\max(|u|,|v|),\quad (u,v)\in\ZZ,\quad\pgcd(u,v)=1,$$
	 et l'on note $r\geqslant 1$ le facteur de zoom.
	 Soit $f$ une fonction intégrable à support compact sur $T_Q \PP^1$. Alors
	\begin{itemize}
		\item si $r=1$ alors
		$$\delta_{\PP^1,Q,B,1}(f)=B\int f(x)\frac{\sigma(x)\operatorname{d}x}{x^2}+O_{f}(1),$$
		où
		$$\sigma(x)=\sum_{n\leqslant |x|}\varphi(n),$$ 
		\item si $r>1$, alors 
		$$\delta_{\PP^1,Q,B,r}(f)=B^{2-\frac{1}{r}}\frac{3}{\pi^2}\int f(x)\operatorname{d}x+O_{f}(B(\log B)^\frac{2}{3}(\log\log B)^\frac{4}{3}).$$
	\end{itemize}
\end{theorem}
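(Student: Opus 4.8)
The plan is to reduce the weighted count $\delta_{\PP^1,Q,B,r}(f)$ to a lattice-point count and then treat the two regimes $r=1$ and $r>1$ separately, following the same two-step strategy used throughout the paper: an elimination of the coprimality condition by Möbius inversion, then an estimation of the resulting counts by comparison with a continuous domain (for $r>1$) or by summing an arithmetic main term explicitly (for $r=1$). First I would note that $\delta_{\PP^1,Q,B,r}(f)=\sum_{\pgcd(u,v)=1,\ \max(|u|,|v|)\leqslant B}f(B^{1/r}u/v)$, where we identify $[u:v]\mapsto u/v$ near $Q=[0:1]$. Since $f$ has compact support, only points with $|u/v|\leqslant C_f B^{-1/r}$ contribute; in particular $|u|$ is much smaller than $|v|$, so the constraint $\max(|u|,|v|)\leqslant B$ becomes simply $|v|\leqslant B$ up to $O_{\|f\|_\infty}(1)$ boundary terms, exactly as in the reduction \eqref{eq:thesetSd}. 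Writing $\mathtt{1}_{\pgcd(u,v)=1}=\sum_{d\mid\pgcd(u,v)}\mu(d)$ and substituting $u=du',v=dv'$ turns the sum into $\sum_{d\leqslant B}\mu(d)\sum_{|v'|\leqslant B/d}\sum_{u'\in\ZZ}f(B^{1/r}du'/v')$.

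For the case $r>1$, the inner double sum over $(u',v')$ counts integer points in a region whose area is of order $B^{2-1/r}/d^2$ and whose boundary has length $O(B/d)$; since $2-1/r>1$ when $r>1$, the classical lattice-point comparison gives $\sum_{|v'|\leqslant B/d}\sum_{u'}f(\cdots)=\frac{B^{2-1/r}}{d^2}\int f(x)\,\operatorname{d}x+O_{\|f\|_\infty}(B/d)$. Summing against $\mu(d)$ and using $\sum_{d\leqslant B}\mu(d)/d^2=6/\pi^2+O(1/B)$ yields the main term $\frac{3}{\pi^2}B^{2-1/r}\int f$; the error $\sum_{d\leqslant B}O(B/d)=O(B\log B)$ would already suffice, but to reach the sharper $O_{\|f\|_\infty}(B(\log B)^{2/3}(\log\log B)^{4/3})$ one restricts the $d$-sum to $d\leqslant B^{1-1/r}$ (as in \eqref{eq:boundford}, using that $u/v$ cannot be too small relative to $1/v$ purely by $u/v\neq 0$ and $|u|\geqslant 1$), and then the surviving $d$-range contributes an error controlled by the Dirichlet-divisor–type estimate for $\sum_{d}\tau$-weights — precisely the input that produces the $(\log B)^{2/3}(\log\log B)^{4/3}$ factor.

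The genuinely delicate case is $r=1$, where the region has area $\asymp B/d^2$ comparable to its boundary length $\asymp B/d$, so the naive area comparison fails and the main term is no longer the volume. Here, for fixed $v'$, the number of integers $u'$ with $B u'/v'$ in the support of $f$ is essentially determined by how $f$ varies, and one must sum $f(Bu'/v')$ with $u'$ ranging over a fixed-shape set; the key is that for $|v|\leqslant B$ the admissible $u$ with $\pgcd(u,v)=1$ and $u/v\in$ (support scaled by $B^{-1}$) are $\varphi$-many per "height class," which is why the answer involves $\sigma(x)=\sum_{n\leqslant|x|}\varphi(n)$. Concretely I would group the points $[u:v]$ by the value $n=|v/\gcd|$ — no, rather by writing each rational $u/v=p/q$ in lowest terms with $|p|$ small: for a fixed denominator $q\leqslant B$ the fractions $p/q\in[0, C_f/B]$ with $\pgcd(p,q)=1$ number about $(C_f q/B)\cdot(\varphi(q)/q)$, and summing $f(Bp/q)$ over such $p$ and then over $q\leqslant B$ produces $B\sum_{q\leqslant B}\frac{\varphi(q)}{q^2}\big(\text{local average of }f\big)$, which after Abel summation against $\sigma$ gives $B\int f(x)\sigma(x)x^{-2}\,\operatorname{d}x$. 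The main obstacle will be making this last step rigorous with the stated error $O_{\|f\|_\infty}(1)$: one must show the discretisation error in replacing the sum over $p$ by an integral, summed over all $q\leqslant B$, telescopes to a bounded quantity rather than growing like $\log B$ — this uses that the "jumps" of $\sigma$ align with the jumps of the counting function and a careful pairing of consecutive $q$, exactly the kind of exact cancellation Pagelot's original argument exploits. The values $\alpha(Q,\PP^1)=\aess(Q)=1$ are immediate from Theorem \ref{th:approximationtheorem1} (with $d=1$, $x\in\PP^1(\QQ)$) together with the fact that in dimension $1$ there are no locally accumulating subvarieties.
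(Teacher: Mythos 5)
Your plan has a genuine gap in the critical case $r=1$, which is the only delicate part of the theorem. You fix the denominator $q\leqslant B$ and approximate the number of numerators $p$ coprime to $q$ with $p/q$ in the (rescaled) support of $f$ by the density $\varphi(q)/q$ times the length of an interval of size $\leqslant C_f q/B\leqslant C_f$. That interval has bounded length, so the error in this approximation is, for each $q$, of the same order as the sub-unit main term, and summed over $q\leqslant B$ it is $O(B)$ — as large as the main term itself. Worse, the density heuristic would produce $\frac{3}{\pi^2}B\int f$, whereas the correct answer is $B\int f(x)\sigma(x)x^{-2}\operatorname{d}x$; these genuinely differ whenever the support of $f$ meets $\{|x|<1\}$, where $\sigma\equiv 0$ (the gap phenomenon), so no bookkeeping of boundary terms can convert one into the other, and the "exact cancellation / telescoping" you invoke is precisely the argument that is missing. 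The paper's proof avoids the issue by swapping the roles of the two variables: for $r=1$ the constraints force the numerator to satisfy $u\leqslant \varepsilon v/B\leqslant\varepsilon$, a finite set independent of $B$; fixing $u$ and counting denominators $v$ coprime to $u$ in an interval of length $\asymp B$ gives $\phi(u)\left(1-\frac{u}{\varepsilon}\right)B+O(\tau(u))$, and summing over the finitely many admissible $u$ yields exactly $B\int\chi(\eta,\varepsilon)(x)\sigma(x)x^{-2}\operatorname{d}x+O_\varepsilon(1)$. That single reorganisation (sum over the bounded variable outside, the long variable inside) is what your proposal lacks.

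For $r>1$ your main term and the constant $\frac{3}{\pi^2}$ come out correctly, but not the stated error. The Möbius-over-$\pgcd$ decomposition with a per-$d$ boundary error $O(B/d)$ gives $\sum_d O(B/d)=O(B\log B)$; restricting to $d\leqslant \varepsilon B^{1-1/r}$ (legitimate, since $d\mid u$ and $0<|u|\leqslant\varepsilon B^{1-1/r}$) still leaves a harmonic sum of size $\log B$, and no divisor-sum estimate can produce the factor $(\log B)^{2/3}(\log\log B)^{4/3}$: that exponent is Walfisz's error term in $\sum_{n\leqslant x}\varphi(n)=\frac{3}{\pi^2}x^2+O\bigl(x(\log x)^{2/3}(\log\log x)^{4/3}\bigr)$ and in the analogous estimate for $\sum_{n\leqslant x}\phi(n)$, which the paper applies after grouping by numerators $u\leqslant\varepsilon B^{1-1/r}$ and writing the count as $\sum_u\bigl(\phi(u)(B-uB^{1/r}/\varepsilon)+O(\tau(u))\bigr)$. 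So your attribution of the error term to a "Dirichlet-divisor-type estimate" is incorrect, and as written your argument proves the asymptotic only with the weaker error $O(B\log B)$. The identification $\alpha(Q,\PP^1)=\aess(Q)=1$ via Théorème \ref{th:approximationtheorem1} is fine.
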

On voit que das les deux cas la mesure asymptotique existe. Pour le zoom critique, si $\operatorname{Supp}(f)\subset \mathopen]-1,1\mathclose[$, alors $\delta_{\PP^1,Q,B,1}(f)=0$. On observe donc qu'il y a un \og trou\fg{}\ autour du point $Q$. Pour un zoom sous-critique (c'est-à-dire $r>1$), la distribution est uniforme.
\subsection{Démonstration du Théorème \ref{th:pagelot}}
Rien dans cette section n'est nouveau. Le but est de rappeler  comment traiter ce type de problème dans le cas le plus simple. 
Une observation basique, qui est aussi celle que l'on va suivre dans la suite, est qu'au lieu de considérer des fonctions intégrables générales, il suffit de regarder les fonctions \og simple\fg, c'est-à-dire les fonctions caractéristiques d'intervalles, car ces fonctions suffisent pour caractériser la convergence faible de mesures de probabilité.

On note les coordonnées de $\PP^1_\RR$ par $[u:v]$. Sans perte de généralité on peut supposer que $u>0$. En supposant que le point à approcher est $Q=[0:1]$, on utilise le difféomorphisme local $\rho$ défini par
$$[u:v]\longmapsto \frac{u}{v}\in\RR.$$
Alors $Q$ est envoyé sur l'origine $O$.
Sur $T_Q \PP^1$ on utilise la distance canonique $$d([u:v],Q)=\left|\frac{u}{v}\right|.$$
Par symétrie, il suffit d'examiner les fonctions caractéristique des intervalles $\mathopen]\eta,\varepsilon\mathclose]\subset \mathopen]0,\infty[$. On désigne par $\chi(\eta,\varepsilon)$ une telle fonction 
Nous remarquons ici que l'usage de cette notation a un sens légèrement différent dans les parties précédentes du texte. L'étude de la convergence de la suite $(\delta_{\PP^1,Q,B,r}(\chi(\eta,\varepsilon)))_B$ revient à l'estimation du cardinal de l'ensemble 
\begin{equation}\label{eq:countingrationp1}
F(\varepsilon,B,r)=\left\{
\begin{aligned}
&P=[u:v]\in\PP^1(\QQ),
P\neq[0:1]\\
&(u,v)\in\NN_{\operatorname{prem}}^{*2}
\end{aligned}
\left|
\begin{aligned}
&0<B^{\frac{1}{r}}d(P,Q)=B^{\frac{1}{r}}\frac{u}{v}\leqslant \varepsilon\\
&H(P)=\sup(u,v)\leqslant B
\end{aligned}
\right\}\right. ,
\end{equation}
puisque $\delta_{\PP^1,Q,B,r}(\chi(\eta,\varepsilon))=\sharp F(\varepsilon,B,r)-\sharp F(\eta,B,r)$.
\subsubsection{Cas critique: $r=1$}
Les conditions \eqref{eq:countingrationp1} sur $(u,v)$ implique que 
$$u\leqslant \frac{\varepsilon v}{B}\leqslant \varepsilon.$$
Cela implique qu'il n'y a qu'un nombre fini de $u$ dans un voisinage fixé et nous permet de fixer $u$ tout en comptant $v$. On peut écrire
$$ F(\varepsilon,B,1)=\bigsqcup_{u\leqslant\varepsilon} F_u(\varepsilon,B),$$
où
\begin{equation*}
F_u(\varepsilon,B)=\{v\in\NN:\pgcd(u,v)=1, u\varepsilon^{-1}B\leqslant v\leqslant B\}.
\end{equation*}
On rappelle la définition de la fonction $\phi$ \eqref{eq:thefunctionphi} et l'on en déduit, en utilisant \cite[Exercise 5.2]{Browningbook},
\begin{equation*}
\sharp F_u(\varepsilon,B)=\phi(u)\left(1-\frac{u}{\varepsilon}\right)B+O(\tau(u)).
\end{equation*}
Alors en appliquant \cite[\S I.5.2]{tenenbaum} \begin{align*}
\delta_{\PP^1,Q,B,1}(\chi(\eta,\varepsilon))&=\sum_{u\leqslant \varepsilon}\sharp F_u(\varepsilon,B)-\sum_{u\leqslant \eta}\sharp F_u(\eta,B)\\
&=\sum_{u\leqslant \eta} u\phi
(u)\left(\frac{1}{\eta}-\frac{1}{\varepsilon}\right)B+\sum_{\eta<u\leqslant\varepsilon}\phi(u)\left(1-\frac{u}{\varepsilon}\right)B+O{\varepsilon}(1)\\
&=B \int \chi(\eta,\varepsilon)(x)\frac{\sigma(x)\operatorname{d}x}{x^2}+O{\varepsilon}(1).
\end{align*} 
Cela clôt la démonstration du cas critique.
\subsubsection{Cas $r>1$}
Comme précédemment on a la décomposition $$F(\varepsilon,B,r)=\bigsqcup_{u\leqslant \varepsilon} F_u(\varepsilon,B,r),$$
où
$$F_u(\varepsilon,B,r)=\{v\in\NN^*:u\varepsilon^{-1}B^\frac{1}{r}\leqslant v\leqslant B,\pgcd(u,v)=1\}.$$
Fixons $u$ dans cette réunion. 
Pour que $F_u(\varepsilon,B,r)$ soit non-vide, il faut que $$u\varepsilon^{-1}B^\frac{1}{r}\leqslant B \Leftrightarrow u\leqslant\varepsilon B^{1-\frac{1}{r}}.$$
Avant de calculer le cardinal, citons d'abord quelques formules asymptotique bien connues (voir \cite{tenenbaum} \S3.2, \S3.4):
\begin{equation}\label{fo:eulerfunction}
\sum_{n\leqslant x} \varphi(n)=\frac{3}{\pi^2}x^2+O(x(\log x)^\frac{2}{3}(\log\log x)^\frac{4}{3}),
\end{equation}
\begin{equation}\label{fo:phifunction}
\sum_{n\leqslant x} \phi(n)=\frac{6}{\pi^2}x+O((\log x)^\frac{2}{3}(\log\log x)^\frac{4}{3}),
\end{equation}
\begin{equation}\label{fo:divisorfunction}
\sum_{n\leqslant x}\tau(n)=O(x\log x).
\end{equation}
En appliquant ces formules, on obtient, encore d'après \cite[Exercise 5.2]{Browningbook}
\begin{align*}
\sharp F(\varepsilon,B,r)
&=\sum_{u\leqslant \varepsilon B^{1-\frac{1}{r}}} \left(\phi(u)\left(B-\frac{uB^{\frac{1}{r}}}{\varepsilon}\right)+O(\tau(u))\right)\\
&=\left(\frac{6}{\pi^2} \varepsilon B^{2-\frac{1}{r}}-\frac{3}{\pi^2}\varepsilon B^{2-\frac{1}{r}}\right)+O_\varepsilon( B(\log B)^{\frac{2}{3}}(\log\log B)^{\frac{4}{3}})+O_\varepsilon(B^{1-\frac{1}{r}}\log B)\\
&=\frac{3}{\pi^2}\varepsilon B^{2-\frac{1}{r}}+O_\varepsilon(B(\log B)^\frac{2}{3}(\log\log B)^\frac{4}{3}),
\end{align*}
d'où
$$\delta_{\PP^1,Q,B,r}(\chi(\eta,\varepsilon))=B^{2-\frac{1}{r}}\left(\frac{3}{\pi^2}\int \chi(\eta,\varepsilon)(x)\operatorname{d}x\right)+O_\varepsilon(B(\log B)^\frac{2}{3}(\log\log B)^\frac{4}{3}).$$
L'expression finale est achevée.
\begin{remark*}
	Quand $r>1$, les points à dénombrer se trouvent dans un triangle dont la longueur du bord est négligeable par rapport à l'aire. On pourrait adapter la méthode utilisée dans la démonstration du même problème pour les nombres irrationnels (cf. Théorème \ref{th:localdistributionofrealnumbers}). Le raisonnement décrit ici donne un meilleur terme d'erreur au niveau de la puissance de $\log B$.
\end{remark*}
\subsection{À propos des mesures limites}
L'ordre de grandeur des nombres de points dans les zooms pour $\PP^1$ étant en accord avec celui prévu, nous discutons maintenant à quoi correspondent les mesures limites.

Prenons comme d'habitude la fonction caractéristique $\chi(\varepsilon,\eta)$. 
Si l'on identifie $\PP^1(\QQ)$ avec l'ensemble des points primitifs dans $\ZZ^2$ le comptage de points de hauteur bornée 
$N(B)=\{P\in\PP^1(\QQ):H(P)\leqslant B\} $ équivaut au comptage des points primitifs dans le rectangle $R(B)=\{(x,y)\in\RR^2:\max(|x|,|y|)\leqslant B\}$. L'opération de zoom revient à prendre des points primitifs dans l'intérieur d'un triangle (cf. Figure \ref{fig:area}) noté $\triangle(\eta,\varepsilon,B,r)$ dont la taille dépend du facteur de zoom $r$ et de la borne $B$.

Pour les zooms sous-critiques $r>1$, l'ordre de grandeur de l'aire domine celui du bord, donc elle donne le terme principal. On a que 
$$\frac{\delta_{\PP^1,Q,B,r}(\chi(\eta,\varepsilon))}{\operatorname{Vol}(\triangle(\eta,\varepsilon,B,r))}=\frac{\sharp N(B) }{\operatorname{Vol}(R(B))}.$$
On voit que dans ce cas là les points primitifs sont équidistribués et l'on obtient une mesure proportionnelle à la mesure de Lebesgue. Cela coïncide avec celui du zoom sous-critique pour tous les nombres algébriques.

Pour le zoom critique $r=1$, l'aire de $\triangle(\eta,\varepsilon,B,1)$ et son bord ont le même ordre de grandeur $B$. Comme l'on a vu dans la démonstration, les points se trouvent en fait dans les droites horizontales dans l'intérieur du triangle dont la longueur est $u\left(\frac{1}{\eta}-\frac{1}{\varepsilon}\right)B$ (cf Figure \ref{fig:ratr1}).
\begin{figure}[h]
	\centering
	\includegraphics[scale=0.8]{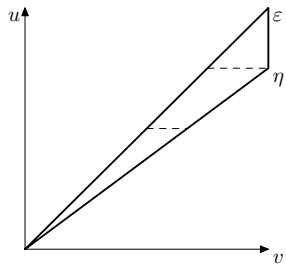}
	\caption{Le triangle $\triangle(\eta,\varepsilon,B,1)$}
	\label{fig:ratr1}
\end{figure}
Puisque $$\frac{\delta_{\PP^1,Q,B,1}(\chi(\eta,\varepsilon))}{B}=\sum_u \sharp L(u),$$ où 
$$L(u)=\left\{v\in \NN^*:\frac{u}{v} \text{ est une fraction réduite }, v\in \mathopen]\frac{u}{\varepsilon},\frac{u}{\eta}\mathclose]\right\}.$$
On en conclut que sur chaque droite on compte des nombres rationnels avec le numérateur fixé. 
Cela explique d'où viennent la fonction densité $\frac{1}{x^2}$ et la fonction arithmétique $\sigma$. On remarque que le phénomène pour des nombres algébriques est radicalement différent (pour eux dans le zoom de facteur $1$ on trouve aussi un autre type d'équidistribution, voir la Section \ref{se:weakzoomofalgebraicnumbers}).
\section{Le problème des diviseurs pour des formes binaires cubiques déployées}\label{se:app2}
\subsection{Énoncé du résultat}
Dans \cite{Browning}, T. D. Browning a étudié l'ordre moyen du nombre de diviseurs pour des formes binaires cubiques déployées, dont la technique remonte à \cite{B-B1}, où les auteurs ont déduit une formule asymptotique de l'ordre moyen primitif de fonctions arithmétiques \og ressemblant\fg{}\ à la fonction donnant le nombres de diviseurs pour certaines formes binaires quartique. Ils l'utilisent pour démontrer que le nombre de points de hauteur bornée sur une surface de del Pezzo de degré $4$ s'accorde avec la prédiction de Batyrev-Manin-Peyre. En les imitant, nous allons déduire une telle formule pour une forme binaire cubique particulière (la même démonstration marche pour toutes les formes binaires cubiques déployées, ce dont nous n'aurons pas besoin ici).
On considère les fonctions arithmétiques
	\begin{equation}\label{eq:arithmeticfun}
\Psi_1(n)=\prod_{p|n}\left(1+\frac{1}{p}\right)^{-1},\quad \Psi(n)=\sum_{d|n}\Psi_1(d)\sum_{e|d}\frac{\mu(e)}{e}=\sum_{d|n}\Psi_1(d)\phi(d),
\end{equation}
et la constante $C_1$:
\begin{equation}\label{eq:C1}
C_1=\prod_{p}\left(1-\frac{1}{p}\right)^3\left(1+\frac{3}{p}-\frac{1}{p^2}-\frac{18}{p(p+2)}\right).
\end{equation}
	\begin{proposition}\label{po:problemofdivisor}
	Soient $\tau_1>\tau_2>1,X\gg 1$, on a
	\begin{equation}\label{po:asymptoticforPsi}
	\sum_{\substack{\max(x_1,x_2)\leqslant X\\ 1<\tau_2\leqslant x_2x_1^{-1}\leqslant \tau_1\\ \pgcd(x_1,x_2)=1}} \Psi(x_1)\Psi(x_2)\Psi(x_2-x_1)=\frac{C_1}{2}\left(\frac{1}{\tau_2}-\frac{1}{\tau_1}\right) X^2(\log X)^3+O(X^2 (\log X)^2),
	\end{equation}
 \begin{equation}\label{po:asymptoticforPsiquotient}
	\sum_{\substack{\max(x_1,x_2)\leqslant X\\ 1<\tau_2\leqslant x_2x_1^{-1}\leqslant \tau_1\\ \pgcd(x_1,x_2)=1}} \frac{\Psi(x_1)\Psi(x_2)\Psi(x_2-x_1)}{x_2x_1^\frac{1}{2}}=4C_1\left(\frac{1}{\sqrt{\tau_2}}-\frac{1}{\sqrt{\tau_1}}\right) X^{\frac{1}{2}}(\log X)^3+O(X^{\frac{1}{2}}(\log X)^2).
 \end{equation}
\end{proposition}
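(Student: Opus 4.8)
The plan is to reduce the statement to the known asymptotics for the average order of divisor-type functions on values of products of linear forms, as established by de la Bretèche--Browning in \cite{B-B1} and by Browning in \cite{Browning} for split binary cubic forms, together with a convolution identity that brings $\Psi$ into the scope of that machinery. First I would observe, by summing the local generating series issued from \eqref{eq:valueofPsi}, that
\[
\sum_{n\geqslant 1}\frac{\Psi(n)}{n^{s}}=\zeta(s)^{2}\prod_{p}\Bigl(1-\frac{2p^{-s}}{p+1}\Bigr),
\]
the Euler product converging absolutely for $\Re s>0$; equivalently $\Psi=\tau*h$ with $h$ multiplicative, supported on squarefree integers, $h(p)=-2/(p+1)$ and $|h(d)|\leqslant 2^{\omega(d)}/d$. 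Thus $\Psi$ is a non-negative multiplicative function with $\Psi(p)=2-2/(p+1)$ and controlled growth at prime powers, hence of precisely the type to which the main theorem of \cite{B-B1} applies, with $f=\Psi$ and with $L_{1}=x_{1}$, $L_{2}=x_{2}$, $L_{3}=x_{2}-x_{1}$, the three factors of the split cubic form $x_{1}x_{2}(x_{2}-x_{1})$.

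Concretely, for the sector $\mathcal{R}(X)=\{(x_{1},x_{2})\in[0,X]^{2}:\tau_{2}\leqslant x_{2}/x_{1}\leqslant\tau_{1}\}$, the theorem of \cite{B-B1} -- or its specialisation to split binary cubic forms in \cite{Browning}, applied to $\Psi$ rather than to $\tau$, which the method permits -- would give
\[
\sum_{\substack{\mathbf{x}\in\mathcal{R}(X)\\ \pgcd(x_{1},x_{2})=1}}\Psi(x_{1})\Psi(x_{2})\Psi(x_{2}-x_{1})=c\cdot\operatorname{vol}\bigl(\mathcal{R}(1)\bigr)\,X^{2}(\log X)^{3}+O\!\bigl(X^{2}(\log X)^{2}\bigr),
\]
with $c$ a product of local densities; a routine $p$-by-$p$ computation -- the local factor assembled from $\Psi$ being exactly the one displayed in \eqref{eq:C1} -- identifies $c$ with $C_{1}$, while $\operatorname{vol}(\mathcal{R}(1))=\tfrac12(\tau_{2}^{-1}-\tau_{1}^{-1})$, which yields \eqref{po:asymptoticforPsi}. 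Should one wish to invoke only the classical divisor function, the alternative is to write $\Psi=\tau*h$, expand $\Psi(x_{i})=\sum_{d_{i}\mid x_{i}}h(d_{i})\tau(x_{i}/d_{i})$, interchange summations, note that the auxiliary divisors $d_{1}\mid x_{2}$, $d_{2}\mid x_{1}$, $d_{3}\mid x_{2}-x_{1}$ are pairwise coprime because $\pgcd(x_{1},x_{2})=1$, recognise the inner sum as a count of $\tau(L_{1})\tau(L_{2})\tau(L_{3})$ over the lattice $\Lambda_{d_{1},d_{2},d_{3}}$ of \eqref{eq:thelattice} (of determinant $d_{1}d_{2}d_{3}$), apply the lattice-uniform form of Browning's theorem, and resum over $(d_{1},d_{2},d_{3})$ -- the divisibility conditions forcing $d_{i}\leqslant X$ and the smallness of $h$ keeping both the main terms (an absolutely convergent triple series summing to the Euler product $C_{1}$) and the errors under control.

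For \eqref{po:asymptoticforPsiquotient} I would deduce the weighted sum from \eqref{po:asymptoticforPsi} by partial summation. On $\mathcal{R}(X)$ the weight $1/(ba^{1/2})$ is homogeneous of degree $-3/2$ and, away from a neighbourhood of the origin whose contribution is $O(X^{\delta})$, bounded (because $b/a$ is confined to $[\tau_{2},\tau_{1}]$); hence a two-parameter Abel summation against the counting function of \eqref{po:asymptoticforPsi} -- or, equivalently, the weighted form of the theorem of \cite{B-B1} with test weight $\widetilde{w}(\mathbf{x}/X)=(x_{2}/X)^{-1}(x_{1}/X)^{-1/2}$ -- reduces the main term to $C_{1}X^{1/2}(\log X)^{3}$ times the elementary integral $\int_{\mathcal{R}(1)}\widetilde{w}$, and evaluating that integral produces the stated constant together with the error $O(X^{1/2}(\log X)^{2})$.

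The main obstacle, and the reason the argument must lean on \cite{B-B1} and \cite{Browning} rather than on elementary sieve estimates, is the need for the asymptotic formula for $\sum\tau(L_{1})\tau(L_{2})\tau(L_{3})$ (equivalently $\sum\Psi(L_{1})\Psi(L_{2})\Psi(L_{3})$) to hold \emph{uniformly over the auxiliary lattices} $\Lambda_{d_{1},d_{2},d_{3}}$, with quantitative control of the error in terms of $\det\Lambda$; without this one cannot legitimately interchange the $(d_{1},d_{2},d_{3})$-summation with the asymptotic analysis. Securing that uniformity -- exactly the delicate point handled in \emph{loc. cit.} for split binary cubic forms -- is the heart of the matter; once it is available, the remaining work (the convolution bookkeeping, the identification of the constant with the Euler product of \eqref{eq:C1}, and the partial summation producing the weighted variant) is routine.
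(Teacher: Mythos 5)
Your proposal follows essentially the same route as the paper: write $\Psi=h*\tau$ with $h=\Psi*\mu*\mu$ multiplicative, squarefree-supported, and small; expand both formulas by this convolution to reduce to sums of $\tau(L_1/d_1)\tau(L_2/d_2)\tau(L_3/d_3)$ over the lattices $\Lambda(\mathbf{d})$; invoke Browning's lattice-uniform Theorem 3; resum over $\mathbf{d}$; and obtain the weighted variant by partial summation on the factors $x_2^{-3/2}$ and $(x_2/x_1)^{1/2}$. You correctly single out the need for error control uniform in $\det\Lambda(\mathbf{d})$ as the crux. Two small points where the paper is more careful than your sketch: first, the result of \cite{Browning} is stated for $\tau$ only and does not apply ``directly with $f=\Psi$''; the convolution route you offer as an alternative is in fact the actual proof. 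Second, Browning's theorem has no coprimality constraint on $(x_1,x_2)$, so one must perform an additional Möbius inversion over $e=\pgcd(x_1,x_2)$, split into $e$ small and $e$ large, before the resummation over $\mathbf{d}$; the paper isolates this as a lemma, and it is needed to keep the error at $O(X^2(\log X)^2)$.
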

\subsection{Préliminaires et résultats connus}
On fixe trois formes linéaires à coefficients entiers primitives $L_1,L_2,L_3$ en deux variables deux à deux linéairement indépendantes et $\mathcal{R}$ un sous-ensemble convexe fermé borné de $\RR^2$ dont les coordonnées sont notées $\mathbf{x}=(x_1,x_2)$. 
Pour $X>0$, on note
$$X\mathcal{R}=\{X\mathbf{x}:\mathbf{x}\in\mathcal{R}\}.$$ On suppose que $L_i(\mathbf{x})>0$ pour $\mathbf{x}\in \mathcal{R}$ et $i\in\{1,2,3\}$.
On note $$L_\infty=L_\infty(L_1,L_2,L_3)=\max\{\|L_1\|,\|L_2\|,\|L_3\|\},$$
où $\|L_i\|$ désigne le maximum des valeurs absolues des coefficients de $L_i$. 
On note aussi
$$r_\infty=r_\infty(\mathcal{R}) =\sup_{\mathbf{x}\in\mathcal{R}}\max(|x_1|,|x_2|),$$
$$r^\prime=r^\prime(L_1,L_2,L_3,\mathcal{R})=\max_{1\leqslant i\leqslant 3}(\sup_{\mathbf{x}\in\mathcal{R}} L_i(\mathbf{x})).$$
Pour $\mathbf{D}=(D_1,D_2,D_3)\in\NN_{\geqslant 1}^3$, on note $D=D_1D_2D_3$ et
$$\Lambda(\mathbf{D})=\{\mathbf{x}\in\ZZ^2:D_i|L_i(\mathbf{x}),i\in\{1,2,3\}\},\quad \varrho(\mathbf{D})=\sharp(\Lambda(\mathbf{D};L_1,L_2,L_3)\cap \mathopen[0,D\mathclose[ ^2).$$
La fonction $\varrho$ se calcule comme
$$\varrho(\mathbf{D};L_1,L_2,L_3)=\frac{D^2}{\det(\Lambda(\mathbf{D}))},$$ 
puisque $\Lambda(\mathbf{D})$ est un sous-réseau de $D\ZZ^2$.
Elle est donc multiplicative en dimension $3$:
$$\varrho(g_1h_1,g_2h_2,g_3h_3)=\varrho(g_1,g_2,g_3)\varrho(h_1,h_2,h_3),$$
pourvu que $\pgcd(g_1g_2g_3,h_1h_2h_3)=1$.
Pour $p$ un nombre premier, on définit \begin{equation}\label{eq:sigmap}
\sigma_p(L_1,L_2,L_3)=\left(1-\frac{1}{p}\right)^3 \sum_{\mathbf{\nu}\in\ZZ^3_{\geqslant 0}}\frac{\varrho(p^{\nu_1},p^{\nu_2},p^{\nu_3};L_1,L_2,L_3)}{p^{2(\nu_1+\nu_2+\nu_3)}}.
\end{equation}
On note  $\delta(\mathbf{D})$ le plus grand entier $\delta$ tel que $\Lambda(\mathbf{D})\subset \delta\ZZ^2$.
Pour $\mathbf{d},\mathbf{D}\in\NN_{\geqslant 1}^3$ tels que $d_i|D_i,\forall i\in\{1,2,3\}$, on a besoin de l'ordre moyen de la quantité suivante
\begin{equation}\label{eq:sumofdivisorsinitial}
S(X,\mathbf{d},\mathbf{D};L_1,L_2,L_3)=\sum_{\mathbf{x}\in\Lambda(\mathbf{D})\cap X\mathcal{R}}\tau\left(\frac{L_1(\mathbf{x})}{d_1}\right)\tau\left(\frac{L_2(\mathbf{x})}{d_2}\right)\tau\left(\frac{L_3(\mathbf{x})}{d_3}\right).
\end{equation}
\begin{theorem}[\cite{Browning}, Theorem 3]\label{th:browning}
	Soient $\varepsilon>0$, $\frac{1}{4}<\theta <1$. Supposons que $r^\prime X^{1-\theta}\geqslant 1$. Alors il existe un polynôme $P\in\RR[T]$ de degré $3$ tel que
	$$S(X,\mathbf{d},\mathbf{D};L_1,L_2,L_3)=\operatorname{vol}(\mathcal{R})X^2P(\log X)+O_\varepsilon\left(\frac{D^\varepsilon L_\infty ^{2+\varepsilon} r_\infty^\varepsilon}{\delta(\mathbf{D})}(r_\infty r^{\prime\frac{3}{4}}+r_\infty^2)X^{\frac{7}{4}+\varepsilon}\right),$$
	où $$\|P\|=O_\varepsilon(D^\varepsilon L_\infty^\varepsilon r_\infty^\varepsilon(1+r^{\prime -1})^\varepsilon (\operatorname{det}\Lambda(\mathbf{D}))^{-1}),$$
	et le coefficient du terme principal de $P$ est  $C(\mathbf{d},\mathbf{D})=\prod_{p}\sigma_p(\mathbf{d},\mathbf{D};L_1,L_2,L_3)$ avec
\begin{equation}\label{eq:sigmape}
	\sigma_p(\mathbf{d},\mathbf{D};L_1,L_2,L_3)=\left(1-\frac{1}{p}\right)^3\sum_{\mathbf{\nu}\in\ZZ^3_{\geqslant 0}}\frac{\varrho(p^{N_1},p^{N_2},p^{N_3},L_i)}{p^{2(N_1+N_2+N_3)}},
\end{equation}
	et pour $\mathbf{\nu}=(\nu_1,\nu_2,\nu_3)\in\ZZ^3_{\geqslant 0}$ et $i\in\{1,2,3\}$,
	\begin{equation}\label{eq:N_i}
	N_i=\max(v_p(D_i),\nu_i+v_p(d_i)).
	\end{equation}
\end{theorem}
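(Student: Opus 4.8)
The plan is to prove Theorem \ref{th:browning} by the Dirichlet hyperbola method coupled with a careful planar lattice-point count, tracking throughout the dependence of every error on $L_\infty$, $r_\infty$, $r'$, $D$ and $\delta(\mathbf D)$; the hypothesis $r'X^{1-\theta}\ge 1$ with $\tfrac14<\theta<1$ is used only to keep the values $L_i(\mathbf x)$ genuinely of order a power of $X$ on $X\mathcal R$.

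First I would open the three divisor functions. Writing $\tau(m)=2\sum_{u\mid m,\,u\le\sqrt m}1-\mathbf 1_{m=\square}$ for each factor $\tau(L_i(\mathbf x)/d_i)$ and expanding the product, $S(X,\mathbf d,\mathbf D)$ decomposes into a principal piece, in which all three divisor variables $u_1,u_2,u_3$ are truncated below the geometric mean of the corresponding $L_i(\mathbf x)/d_i$, plus a bounded number of pieces in which at least one $L_i(\mathbf x)/d_i$ is forced to be a square. In each of the latter, $L_i(\mathbf x)=d_ik^2$ confines $\mathbf x$ to $\ll\sqrt{r'X}$ lines; counting $\mathbf x\in\Lambda(\mathbf D)\cap X\mathcal R$ on each such line by geometry of numbers and bounding the remaining two divisor functions trivially by $O_\varepsilon((\cdot)^\varepsilon)$ shows that these pieces are absorbed into the claimed error term.

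For the principal piece, fix $\mathbf u=(u_1,u_2,u_3)$. The condition $u_i\mid L_i(\mathbf x)/d_i$ together with $D_i\mid L_i(\mathbf x)$ means $\mathbf x\in\Lambda(\mathbf E)$ with $E_i=\ppcm(D_i,d_iu_i)$, and since $L_i(\mathbf x)\asymp X$ on $X\mathcal R$ the hyperbola constraints become $u_i\ll\sqrt{X/d_i}$ up to a thin boundary layer. I would then invoke the planar count: for a convex region of perimeter $\ll Xr_\infty$,
$$\#\bigl(\Lambda(\mathbf E)\cap X\mathcal R\bigr)=\frac{\operatorname{vol}(X\mathcal R)}{\det\Lambda(\mathbf E)}+O\!\left(1+\frac{Xr_\infty L_\infty}{\lambda_2(\Lambda(\mathbf E))}\right),$$
where $\lambda_2$ is the second successive minimum, so that the skew lattices (where $\lambda_1$ is tiny) remain under control because $\lambda_2\gg\sqrt{\det\Lambda(\mathbf E)}=E/\sqrt{\varrho(\mathbf E)}$, the short-vector case being reduced to a one-dimensional divisor sum along the short direction. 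Summing the main term $\operatorname{vol}(X\mathcal R)\varrho(\mathbf E)/E^2$ over admissible $\mathbf u$: by multiplicativity of $\varrho$ the relevant $\mathbf u$-Dirichlet series factors over primes, and a three-variable Mellin (or Perron) analysis with a triple pole produces $\operatorname{vol}(\mathcal R)X^2$ times a polynomial $P$ of degree $3$ in $\log X$. Its leading coefficient is obtained by pushing the three $\mathbf u$-sums to infinity and reading off the residue: this is an absolutely convergent Euler product whose $p$-factor is precisely $\sigma_p(\mathbf d,\mathbf D,L_i)$ of \eqref{eq:sigmape}, with the $\ppcm$-structure of the $E_i$ collapsing into the exponents $N_i=\max(v_p(D_i),\nu_i+v_p(d_i))$; the bound on $\|P\|$ comes from the same estimates with one fewer logarithmic saving. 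The error terms — the $O(1)$ per $\mathbf u$, summing to $\ll(r'X)^{3/2}$; the $Xr_\infty L_\infty/\lambda_2$ contributions summed against $\sum_{\mathbf u}\sqrt{\varrho(\mathbf E)}/E$; and the boundary-layer error from the hyperbola constraints being near equality — each balance, after optimising the truncation height and using $\delta(\mathbf D)\mid\delta(\mathbf E)\le\lambda_1(\Lambda(\mathbf E))$, to the stated bound $O_\varepsilon\!\bigl(D^\varepsilon L_\infty^{2+\varepsilon}r_\infty^\varepsilon\delta(\mathbf D)^{-1}(r_\infty r^{\prime 3/4}+r_\infty^2)X^{7/4+\varepsilon}\bigr)$.

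I expect the main obstacle to be the uniformity. Obtaining a genuine power saving $X^{7/4+\varepsilon}$ with the precise dependence on $L_\infty$, $r_\infty$, $r'$, $D$ and $\delta(\mathbf D)$ forces one to split the $\mathbf u$-sum according to the ratio $\lambda_2/\lambda_1$ of $\Lambda(\mathbf E)$ and, on the thin lattices, to replace the crude lattice count by a uniform estimate for $\tau$ (and for the $\varrho$-weighted counts) along a linear form in one variable; keeping the boundary-layer error from dominating uses the same input. A secondary difficulty is the bookkeeping that turns the three-variable local sum over $\varrho(p^{N_1},p^{N_2},p^{N_3})$ into the clean closed form \eqref{eq:sigmape}, and verifying that the resulting Euler product converges and matches the expected density $\prod_p\sigma_p(L_i)$ of \eqref{eq:sigmap} in the case $\mathbf d=\mathbf D=(1,1,1)$.
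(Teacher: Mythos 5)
This statement is quoted from Browning's paper (\cite{Browning}, Theorem 3); the paper you are reading does \emph{not} reprove it but only imports it as a black box, so there is no internal proof against which to calibrate your attempt. That said, your sketch does capture the overall architecture of the actual argument in \cite{Browning} (itself refining \cite{B-B1}): open each $\tau$ by the square-root truncation $\tau(m)=2\sum_{u\mid m,\,u\le\sqrt m}1-\mathbf 1_{m=\square}$, fold the resulting divisor conditions and the congruence conditions into a single lattice $\Lambda(\mathbf E)$ with $E_i=\ppcm(D_i,d_iu_i)$, count lattice points in $X\mathcal R$, sum over $\mathbf u$ via multiplicativity of $\varrho$, and read off the triple pole. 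The identification $v_p(E_i)=\max(v_p(D_i),\nu_i+v_p(d_i))=N_i$ and hence the local factor \eqref{eq:sigmape} is indeed exactly how the leading constant arises.

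Where you are too optimistic is in the error analysis, which in \cite{Browning} is the entire content of the theorem: the specific exponent $X^{7/4+\varepsilon}$ and the precise dependence $D^\varepsilon L_\infty^{2+\varepsilon}r_\infty^\varepsilon\delta(\mathbf D)^{-1}(r_\infty r^{\prime 3/4}+r_\infty^2)$ are not ``balances after optimising the truncation height'' but the output of a delicate split of the $\mathbf u$-sum according to the geometry of $\Lambda(\mathbf E)$, a uniform treatment of the lattices with a very short vector (one-dimensional divisor sums along a linear form, where $\delta(\mathbf D)\mid\delta(\mathbf E)$ is used to control the short direction), and a separate accounting of the boundary layer coming from replacing $u_i\le\sqrt{L_i(\mathbf x)/d_i}$ by $u_i\le\sqrt{cX/d_i}$. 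Asserting ``each term balances to the stated bound'' is precisely the step that cannot be skipped: without the explicit split and the optimisation of the cut-off parameter the exponent $7/4$ is not forced, and the factor $\delta(\mathbf D)^{-1}$ (crucial for the application in the paper, where it is summed over square-free moduli) would not emerge. Likewise the bound $\|P\|=O_\varepsilon(\cdots\det\Lambda(\mathbf D)^{-1})$ on the lower-order coefficients requires Lemma~\ref{le:convolutionrepresentation} (or its analogue in \cite{Browning}) rather than a vague ``one fewer logarithmic saving.'' In short: the skeleton is right, but the theorem you are restating \emph{is} the quantitative error term, and your sketch treats it as a footnote.
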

Pour une utilisation ultérieure, nous voudrions en savoir plus sur la majoration de la constante $\sigma_p(\mathbf{d},\mathbf{D};L_1,L_2,L_3)$ ainsi que sur la constante $C(\mathbf{d},\mathbf{D})$. 
\begin{lemma}\label{le:browning}
	Pour tout $\varepsilon>0$, on a
	$$C(\mathbf{d},\mathbf{D})\ll_\varepsilon \frac{D^\varepsilon L_\infty^\varepsilon}{\det \Lambda(\mathbf{D})}.$$
\end{lemma}
\begin{proof}
	Nous esquissons des arguments se trouvant dans \cite[\S 2-\S 3]{Browning}.
	En introduisant les formes $M_i,1\leqslant i\leqslant 3$ \cite[p. 590 ligne -4]{Browning}, on a d'après \cite[p. 591 ligne 10]{Browning},
	$$L_\infty(M_1,M_2,M_3)=\max_{1\leqslant i\leqslant 3}(\|M_i\|)\leqslant D_1D_2D_3 L_\infty=DL_\infty.$$
	Grâce à \cite[p.592 lignes 6 et 12]{Browning},
	$$C(\mathbf{d},\mathbf{D})=\frac{\prod_p\sigma_p(M_1,M_2,M_3)}{\det(\Lambda(\mathbf{D}))}$$
	Il résulte du \cite[Lemma 2.4]{Browning} avec $\delta_1=\delta_2=\delta_3=0$ sous les notations dans \cite[p. 590 ligne 17]{Browning} que
	$$\prod_p\sigma_p(M_1,M_2,M_3)\ll_{\varepsilon} L_\infty(M_1,M_2,M_3)^\varepsilon.$$
	En rassemblant tous, on obtient
	$$C(\mathbf{d},\mathbf{D})\ll_{\varepsilon} \frac{L_\infty(M_1,M_2,M_3)^\varepsilon}{\det(\Lambda(\mathbf{D}))}\leqslant \frac{D^\varepsilon L_\infty^\varepsilon}{\det \Lambda(\mathbf{D})}.$$
	D'où la majoration souhaitée.
\end{proof}
\subsection{Démonstration des formules asymptotiques}
On définit une fonction arithmétique multiplicative $h=\Psi*\mu*\mu$. 
Rappelons la formule \eqref{eq:arithmeticfun}. Un calcul donne
\begin{equation*}
	(\Psi*\mu)(p^k)=\begin{cases}
	1-\frac{2}{p+1} \text{ si } k\geqslant 1;\\
	1 \text{ si } k=0.
	\end{cases}
\end{equation*}
Donc
\begin{equation*}
	h(p^k)=\begin{cases}
	0 \text{ si } k\geqslant 2;\\
	-\frac{2}{p+1} \text{ si } k=1;\\
	1 \text{ si } k=0.
	\end{cases}
\end{equation*}
On obtient que 
\begin{equation*}
h(n)=\begin{cases}
1 \text{ si } n=1;\\
\prod_{p|n}-\frac{2}{p+1} \text{ si } n\neq 1 \text{ et } \mu^2(n)=1;\\
0 \text{ si } \mu^2(n)=0.
\end{cases}
\end{equation*}
En particulier
\begin{equation}\label{eq:hsmall}
|h(n)|\leqslant\frac{2}{n}, \quad \forall n\in\NN_{\geqslant 1}.
\end{equation}
Donc $h$ est \textit{petite} dans le sens de (2.19) dans \cite{B-B1}: il existe $\delta_0$ petit tel que la série
$$\sum_{d\in\NN_{\geqslant 1}}\frac{|h(d)|}{d^{\frac{1}{2}-\delta_0}}$$
converge. Cette propriété jouera un rôle important  dans le traitement des termes d'erreur.
Dans la suite on fixe nos formes $L_i$ comme
\begin{equation}\label{eq:threeforms}
L_1=x_1,\quad L_2=x_2,\quad L_3=x_2-x_1,
\end{equation}
et la région 
\begin{equation}\label{eq:regionR}
\mathcal{R}=\mathcal{R}(\tau_1,\tau_2)=\{\mathbf{x}\in\RR^2:0<x_1,x_2\leqslant 1,\tau_2\leqslant x_2x_1^{-1}\leqslant \tau_1\}.
\end{equation}
Le but est d'obtenir une formule asymptotique en sommant sur les points dans $\mathcal{R}$ dont les coordonnées $x_1,x_2$ sont entières et premières entre elles. Cela consiste en une application directe du Théorème \ref{th:browning} avec une inversion de Möbius comme fait dans \cite[Corollaire 1]{B-B1} pour certaines formes binaires de degré $4$. Malheureusement cela n'est pas fait dans \cite{Browning}. Nous suivons les techniques venant de \cite{B-B1} pour démontrer les formules asymptotiques \eqref{po:asymptoticforPsiquotient}.

On définit pour $\mathbf{d}=(d_1,d_2,d_3)\in\NN_{\geqslant 1}^3$,
$$\varrho^*(\mathbf{d})=\varrho^*(\mathbf{d};L_1,L_2,L_3)=\sharp\{\mathbf{x}\in\Lambda(\mathbf{d})\cap\mathopen[0,d_1d_2d_3\mathclose[^2 :\pgcd(x_1,x_2,d_1d_2d_3)=1\}.$$ 
Soient $\mathbf{D}=(D_1,D_2,D_3),\mathbf{d}=(d_1,d_2,d_3)\in\NN_{\geqslant 1}^3$ comme précédemment avec $\pgcd(d_i,d_j)=\pgcd(D_i,D_j)=1,\forall i,j\in\{1,2,3\},i\neq j$. La sommation dont la formule asymptotique cherchée est 
\begin{equation}\label{eq:sumofdivisorsprimitive}
S^*(X,\mathbf{d},\mathbf{D})=\sum_{\substack{\mathbf{x}\in X\mathcal{R}\cap \Lambda(\mathbf{D})\\ \pgcd(x_1,x_2)=1}} \tau\left(\frac{L_1(\mathbf{x})}{d_1}\right)\tau\left(\frac{L_2(\mathbf{x})}{d_2}\right)\tau\left(\frac{L_3(\mathbf{x})}{d_3}\right).
\end{equation}
\begin{lemma}
	Pour tout $\varepsilon>0$, on a
	\begin{equation}\label{eq:Sstar}
	\begin{split}
	S^*(X,\mathbf{d},\mathbf{D})
	=C^*(\mathbf{d},\mathbf{D})\operatorname{vol}(\mathcal{R})X^2(\log X)^3 +O_{\varepsilon}\left(\left(\frac{D^\varepsilon}{\delta(\mathbf{D})}+1\right)X^{\frac{23}{12}+\varepsilon}+\frac{D^\varepsilon }{\operatorname{det}(\Lambda(\mathbf{D}))}X^2(\log X)^2\right),
	\end{split}
	\end{equation}
	où $C^*(\mathbf{d},\mathbf{D})=\prod_{p}\sigma^*(\mathbf{d},\mathbf{D})$ avec (rappelons les notations $N_i$ \eqref{eq:N_i})
	\begin{equation}\label{eq:sigmapstar}
	\sigma_p^*(\mathbf{d},\mathbf{D})=\left\{
	\begin{aligned}
	&\left(1-\frac{1}{p}\right)^3\sum_{\mathbf{\nu}\in\ZZ^3_{\geqslant 0}}\frac{\varrho^*(p^{N_1},p^{N_2},p^{N_3})}{p^{2(N_1+N_1+N_3)}}   \text{ si } v_p(D)\geqslant 1\\
	&\left(1-\frac{1}{p}\right)^3\left(1-\frac{1}{p^2}+\sum_{\substack{\mathbf{\nu}\in\ZZ^3_{\geqslant 0}\\\nu_1+\nu_2+\nu_3\geqslant 1}}\frac{\varrho^*(p^{\nu_1},p^{\nu_2},p^{\nu_3})}{p^{2(\nu_1+\nu_1+\nu_3)}} \right)  \text{ si } v_p(D)=0
	\end{aligned}\right.
	\end{equation}
\end{lemma}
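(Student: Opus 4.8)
The statement to prove is the asymptotic formula \eqref{eq:Sstar} for the primitive divisor sum $S^*(X,\mathbf{d},\mathbf{D})$, and the approach is exactly the Möbius-inversion reduction of \cite[Corollaire 1]{B-B1} carried over to the cubic situation, using Theorem \ref{th:browning} as the black box for the non-primitive count $S(X,\mathbf{d},\mathbf{D})$. First I would remove the coprimality condition $\pgcd(x_1,x_2)=1$ by writing $\mathtt{1}_{\pgcd(x_1,x_2)=1}=\sum_{e\mid \pgcd(x_1,x_2)}\mu(e)$, so that
\begin{equation*}
S^*(X,\mathbf{d},\mathbf{D})=\sum_{e\geqslant 1}\mu(e)\sum_{\substack{\mathbf{x}\in X\mathcal{R}\cap\Lambda(\mathbf{D})\\ e\mid x_1,\ e\mid x_2}}\tau\!\left(\frac{L_1(\mathbf{x})}{d_1}\right)\tau\!\left(\frac{L_2(\mathbf{x})}{d_2}\right)\tau\!\left(\frac{L_3(\mathbf{x})}{d_3}\right).
\end{equation*}
Since $L_1=x_1$, $L_2=x_2$, $L_3=x_2-x_1$, the condition $e\mid x_1$ and $e\mid x_2$ forces $e\mid L_i(\mathbf{x})$ for all $i$, and writing $\mathbf{x}=e\mathbf{y}$ the inner sum becomes a sum over $\mathbf{y}\in (X/e)\mathcal{R}\cap\Lambda(\mathbf{D}')$ (with $\mathbf{D}'$ obtained from $\mathbf{D}$ by dividing out the part of $D_i$ coprime-to-$e$ issues — more precisely one splits $D_i$ into its $e$-divisible and $e$-free parts), of $\prod_i\tau(e L_i(\mathbf{y})/d_i)$. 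Using the standard identity $\tau(em)=\sum_{f\mid e}\mathtt{1}_{f\mid m}\,\tau_{?}$ — in fact the cleaner route, following \cite{B-B1}, is to expand $\tau(eL_i(\mathbf{y})/d_i)$ via $\tau(ab)=\sum_{u\mid a,v\mid b,\,\pgcd(u,v)=1}\tau(a/u)\tau(b/v)$ or simply bound-and-extract the main term, reducing everything to finitely many shifted copies of $S(X/e,\mathbf{d}',\mathbf{D}')$ governed by Theorem \ref{th:browning}.

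The second step is to insert the asymptotic from Theorem \ref{th:browning} into each inner term. Each $S(X/e,\cdot,\cdot)$ contributes a main term $\operatorname{vol}(\mathcal{R})(X/e)^2 P_e(\log(X/e))$ with $P_e$ of degree $3$, plus the error term there. Summing $\mu(e)(X/e)^2 P_e(\log(X/e))$ over $e$: the polynomials $P_e$ have leading coefficient $C(\mathbf{d}',\mathbf{D}')=\det(\Lambda(\mathbf{D}'))^{-1}\prod_p\sigma_p(M_i)$, which is multiplicative-ish in $e$ and, crucially, decays like a negative power of $e$ (Lemma \ref{le:convolutionrepresentation} gives $\prod_p\sigma_p(M_i)\ll_\varepsilon L_\infty(M_i)^\varepsilon$ while $\det(\Lambda(\mathbf{D}'))$ grows), so the series $\sum_e \mu(e) e^{-2}(\cdots)$ converges absolutely. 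Collecting the coefficient of the top power $(\log X)^3$ produces an Euler product; one then checks that this Euler product equals $\prod_p \sigma_p^*(\mathbf{d},\mathbf{D})=C^*(\mathbf{d},\mathbf{D})$ with the local factors \eqref{eq:sigmapstar} — this is the local computation relating $\varrho$ to the primitive count $\varrho^*$ via $\varrho^*(p^{\nu_1},p^{\nu_2},p^{\nu_3})=\varrho(p^{\nu_1},p^{\nu_2},p^{\nu_3})-p^2\varrho(p^{\nu_1-1},p^{\nu_2-1},p^{\nu_3-1})$ (with the convention that the subtracted term vanishes when some $\nu_i=0$), which is precisely the inclusion–exclusion encoded by the $\mu(e)$ sum at the prime $p$. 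The lower-order powers $(\log X)^2,(\log X),1$ get absorbed into the $O_\varepsilon(X^2(\log X)^2(1+D^\varepsilon/\det\Lambda(\mathbf{D})))$ error, using that the coefficients of $P_e$ are $O_\varepsilon(D^\varepsilon \det\Lambda(\mathbf{D}')^{-1})$ and summing the resulting convergent series in $e$.

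The third step is the bookkeeping of error terms: one must truncate the $e$-sum at $e\leqslant X$ (for larger $e$ the region $(X/e)\mathcal{R}$ contains no lattice points), bound the tail contribution using $|h(n)|\leqslant 2/n$ — i.e. the smallness of $h=\Psi*\mu*\mu$ — so the tail is negligible, and sum the Theorem \ref{th:browning} error term $O_\varepsilon\big(\tfrac{D^\varepsilon L_\infty^{2+\varepsilon} r_\infty^\varepsilon}{\delta(\mathbf{D})}(r_\infty r'^{3/4}+r_\infty^2)(X/e)^{7/4+\varepsilon}\big)$ over $e\leqslant X$. With $L_\infty,r_\infty,r'$ all $O(1)$ for our fixed $L_i$ and $\mathcal{R}$, this geometric-type sum over $e$ gives $O_\varepsilon((D^\varepsilon/\delta(\mathbf{D})+1)X^{7/4+\varepsilon})$; the slightly worse exponent $\tfrac{23}{12}$ in \eqref{eq:Sstar} comes from the loss incurred when the shifted forms $M_i$ inside the inner sums have size growing with $e$ (one applies Theorem \ref{th:browning} with $\theta$ chosen to balance, as in \cite{B-B1}), so $7/4+\varepsilon$ degrades to $23/12+\varepsilon$ after the worst-case estimate on the $e$-dependence. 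The main obstacle is precisely this last point: keeping the dependence on $e$ (equivalently on the moduli $D_i$, $d_i$) explicit and uniform enough throughout the Möbius inversion so that the summation over $e$ converges with the stated power savings, and verifying that the resulting Euler product is exactly \eqref{eq:sigmapstar} rather than merely proportional to it. Once \eqref{eq:Sstar} is established, the two formulas \eqref{po:asymptoticforPsi} and \eqref{po:asymptoticforPsiquotient} of Proposition \ref{po:problemofdivisor} follow by taking $\mathbf{d}=\mathbf{D}=(1,1,1)$, writing $\Psi=\mathtt{1}*\mathtt{1}*h$ so that $\sum\Psi(x_1)\Psi(x_2)\Psi(x_2-x_1)=\sum_{\mathbf{k}}h(k_1)h(k_2)h(k_3)S^*(\cdots;\mathbf{k},\cdots)$, using $|h(k)|\leqslant 2/k$ to sum the error terms, and for \eqref{po:asymptoticforPsiquotient} inserting the weight $(ba^{1/2})^{-1}=(x_2 x_1^{1/2})^{-1}$ by partial summation over the region $\mathcal{R}(\tau_1,\tau_2)$, which replaces $\operatorname{vol}(\mathcal{R})X^2$ by $\big(\int_{\mathcal{R}}x_2^{-1}x_1^{-1/2}\big)X^{1/2}$ and yields the constant $3C_1(\tau_2^{-1/2}-\tau_1^{-1/2})$ after evaluating the integral.
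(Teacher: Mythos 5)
Your overall skeleton is the paper's: Möbius inversion over $e\mid\pgcd(x_1,x_2)$, rescaling $\mathbf{x}=e\mathbf{y}$ to a count over $(X/e)\mathcal{R}$, an application of the Théorème \ref{th:browning} to each inner sum, and identification of $\sum_e\mu(e)e^{-2}C(\mathbf{d},\mathbf{D},e)$ with the Euler product $\prod_p\sigma_p^*(\mathbf{d},\mathbf{D})$. (The paper does not expand $\tau(eL_i(\mathbf{y})/d_i)$ multiplicatively; it simply applies the theorem to the scaled forms $eL_i$ on the lattice $\Lambda_e(\mathbf{D})=\{\mathbf{y}:D_i\mid eL_i(\mathbf{y})\}$, which is the cleaner of your two hedged options.) However, your error analysis contains a genuine gap. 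For the rescaled sums the relevant forms are $eL_i$, so $L_\infty(eL_i)=r'(eL_i)=e$; these are \emph{not} $O(1)$, and Browning's error term for each $e$ is of size $\frac{D^\varepsilon}{\delta(\mathbf{D})}\,e\,X^{\frac{7}{4}+\varepsilon}$, whose sum over all $e\leqslant X$ is far larger than the main term, so your claim that this summation yields $O_\varepsilon((D^\varepsilon/\delta(\mathbf{D})+1)X^{7/4+\varepsilon})$ fails, and a "worst-case estimate on the $e$-dependence" does not by itself produce $\frac{23}{12}$. The missing device is the splitting of the $e$-range together with the trivial bound $S(X/e,\mathbf{d},\mathbf{D},eL_i)\ll e^\varepsilon(X/e)^{2+\varepsilon}$: one applies the theorem only for $e\leqslant X^{1/12}$ (contribution $\ll \frac{D^\varepsilon}{\delta(\mathbf{D})}X^{\frac{23}{12}+\varepsilon}$) and uses the trivial bound for $X^{1/12}<e\leqslant X$ (contribution $\ll X^{2+\varepsilon-\frac{1}{12}}$); balancing these is exactly where $\frac{23}{12}$ comes from. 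Note also that your appeal to $|h(n)|\leqslant 2/n$ to control the tail is misplaced here: $h$ only enters the subsequent deduction of la Proposition \ref{po:problemofdivisor}; in the lemma the weight is $\mu(e)$ and the tail must be killed by the trivial bound, not by the smallness of $h$.

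A second concrete error is the local identity you propose to compute the constant: $\varrho^*(p^{\nu_1},p^{\nu_2},p^{\nu_3})=\varrho(p^{\nu_1},p^{\nu_2},p^{\nu_3})-p^2\varrho(p^{\nu_1-1},p^{\nu_2-1},p^{\nu_3-1})$ with the subtracted term declared to vanish when some $\nu_i=0$. This is false: already $\varrho^*(p,1,1)=\varphi(p)=p-1\neq p=\varrho(p,1,1)$, and when all $\nu_i\geqslant 1$ the correct scaling factor in that normalisation is $p^4$, not $p^2$. Moreover, when $v_p(D)\geqslant 1$ the exponents occurring in $\sigma_p^*$ are $N_i=\max(v_p(D_i),\nu_i+v_p(d_i))$, so one cannot simply shift the exponents down by one; the inclusion–exclusion coming from the prime $p$ in the $\mu(e)$-sum must be carried out, as in the paper, by comparing $\varrho(p^{N_1},p^{N_2},p^{N_3};L_i)$ with $\varrho(p^{N_1},p^{N_2},p^{N_3};pL_i)$ (i.e.\ the count of points of the same box with $p\mid\pgcd(x_1,x_2)$), which is what yields exactly the two cases of \eqref{eq:sigmapstar} according to whether $v_p(D)\geqslant 1$ or $v_p(D)=0$. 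As written, your Euler product would not come out equal to $C^*(\mathbf{d},\mathbf{D})$.
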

\begin{proof}
Par une inversion de Möbius, on a
\begin{equation}\label{eq:mobiustransform}
S^*(X,\mathbf{d},\mathbf{D})=\sum_{e\in\NN_{\geqslant 1}} \mu(e)\sum_{\substack{\mathbf{x}\in X\mathcal{R}\cap \Lambda(\mathbf{D})\\ e|\pgcd(x_1,x_2)}} \tau\left(\frac{L_1(\mathbf{x})}{d_1}\right)\tau\left(\frac{L_2(\mathbf{x})}{d_2}\right)\tau\left(\frac{L_3(\mathbf{x})}{d_3}\right).
\end{equation}
On définit $y_i=x_i/e,i\in\{1,2,3\}$, et on note 
$$\Lambda_{e}(\mathbf{D})=\Lambda(\mathbf{D};eL_1,eL_2,eL_3)=\{\mathbf{x}\in\ZZ^2:D_i|eL_i(\mathbf{x}),i\in\{1,2,3\}\}.$$
Comme
$$\mathbf{x}\in X\mathcal{R}\cap\Lambda(\mathbf{D})\cap e\ZZ^2\quad \Leftrightarrow \quad \mathbf{y}\in (X/e)\mathcal{R}\cap \Lambda_{e}(\mathbf{D}),$$
en notant pour $e\in\NN_{\geqslant 1}$,
\begin{equation}\label{eq:sum3}
S(X/e,\mathbf{d},\mathbf{D};eL_1,eL_2,eL_3)= \sum_{\mathbf{x}\in\Lambda_{e}(\mathbf{D})\cap (X/e)\mathcal{R}}\tau\left(\frac{eL_1(\mathbf{x})}{d_1}\right)\tau\left(\frac{eL_2(\mathbf{x})}{d_2}\right)\tau\left(\frac{eL_3(\mathbf{x})}{d_3}\right),
\end{equation}
la somme \ref{eq:sumofdivisorsprimitive} s'écrit comme
\begin{equation}\label{eq:sum1}
S^*(X,\mathbf{d},\mathbf{D})=\sum_{e\in\NN_{\geqslant 1}} \mu(e) S(X/e,\mathbf{d},\mathbf{D};eL_1,eL_2,eL_3).
\end{equation}
On note 
\begin{equation}\label{eq:Cde}
C(\mathbf{d},\mathbf{D},e)=\prod_{p}\sigma_p(\mathbf{d},\mathbf{D};eL_1,eL_2,eL_3).
\end{equation}
On applique le Théorème \ref{th:browning} à $S(X/e,\mathbf{d},\mathbf{D};eL_1,eL_2,eL_3)$ \eqref{eq:sum3} en notant que 
$$L_\infty(eL_1,eL_2,eL_3)=e,\quad r^\prime(eL_1,eL_2,eL_3)=e,\quad r_\infty=1,$$ 
et l'on obtient, 
\begin{equation}\label{eq:sum2}
\begin{split}
S(X/e,\mathbf{d},\mathbf{D};eL_1,eL_2,eL_3)
&=C(\mathbf{d},\mathbf{D},e)\operatorname{vol}(\mathcal{R})\left(\frac{X}{e}\right)^2\left(\log\left(\frac{X}{e}\right)\right)^3\\ &\quad+O_\varepsilon\left(\frac{D^\varepsilon e^{2+\varepsilon} }{\delta(\mathbf{D})}(e^{\frac{3}{4}}+1)\left(\frac{X}{e}\right)^{\frac{7}{4}+\varepsilon}\right)+O_\varepsilon\left(X^2(\log X)^2\frac{\log e}{e^2}\frac{D^\varepsilon e^\varepsilon}{\operatorname{det}(\Lambda(\mathbf{D}))}\right)\\
&=C(\mathbf{d},\mathbf{D},e)\operatorname{vol}(\mathcal{R})\left(\frac{X}{e}\right)^2\left(\log\left(\frac{X}{e}\right)\right)^3\\ &\quad+O_\varepsilon\left(\frac{D^\varepsilon}{\delta(\mathbf{D})}eX^{\frac{7}{4}+\varepsilon}\right)+O_\varepsilon\left(X^2(\log X)^2\frac{\log e}{e^{2-\varepsilon}}\frac{D^\varepsilon }{\operatorname{det}(\Lambda(\mathbf{D}))}\right).
\end{split}
\end{equation}
En utilisant le Lemme \ref{le:browning} pour $C(\mathbf{d},\mathbf{D},e)$, on obtient
\begin{equation}\label{eq:upperboundCdD}
C(\mathbf{d},\mathbf{D},e)\ll_{\varepsilon}\frac{(De)^\varepsilon}{\det(\Lambda_{e}(\mathbf{D}))}\leqslant \frac{(De)^\varepsilon}{\det(\Lambda(\mathbf{D}))},
\end{equation}
d'où la convergence de la somme $\sum_{e=1}^{\infty} \frac{\mu(e)}{e^2}C(\mathbf{d},\mathbf{D},e)$.
On définit pour $k\in\NN_{\geqslant 1},\nu=(\nu_1,\nu_2,\nu_3)\in\ZZ^3$,
$$\bar{\varrho}_k(p^{\nu_1},p^{\nu_2},p^{\nu_3})=\frac{\sharp\{\mathbf{x}\in\mathopen[0,p^{\nu_1+\nu_2+\nu_3}\mathclose[^2:p^{\nu_i}|p^{v_p(k)}L_i(\mathbf{x}),1\leqslant i\leqslant 3\}}{p^{2(\nu_1+\nu_2+\nu_3)}}.$$
On calcule maintenant, en rappelant les notations \eqref{eq:sigmape} et \eqref{eq:Cde}, 
\begin{equation}\label{eq:cstar}
\begin{split}
\sum_{e=1}^{\infty} \frac{\mu(e)}{e^2}C(\mathbf{d},\mathbf{D},e)
&=\sum_{e\in\NN_{\geqslant 1}} \mu(e)\left(\frac{1}{e^2}\prod_{p} \left(1-\frac{1}{p}\right)^3\sum_{\mathbf{\nu}\in\ZZ^3_{\geqslant 0}}\bar{\varrho}_e(p^{N_1},p^{N_2},p^{N_3})\right)\\
&=\prod_p \left(1-\frac{1}{p}\right)^3\sum_{\mathbf{\nu}\in\ZZ^3_{\geqslant 0}}\sum_{k\in\{0,1\}}(-1)^k \frac{\bar{\varrho}_{p^k}(p^{N_1},p^{N_2},p^{N_3})}{p^{2k}}.
\end{split}
\end{equation}
Vérifions que les facteurs locaux dans \eqref{eq:cstar} sont donnés par \eqref{eq:sigmapstar}.
Fixons un nombre premier $p$. Si $v_p(D)\geqslant 1$, on définit le réseau 
$$\Lambda^\prime=\{\mathbf{x}\in\ZZ^2:p^{N_i}|pL_i(\mathbf{x}),1\leqslant i\leqslant 3\}.$$
Alors on a
$$\sharp(\Lambda^\prime\cap \mathopen[0,p^{N_1+N_2+N_3}\mathclose[^2)=\frac{p^{2(N_1+N_2+N_3)}}{\det(\Lambda^\prime)}=p^2 \sharp(\Lambda^\prime\cap \mathopen[0,p^{N_1+N_2+N_3-1}\mathclose[^2).$$
\begin{equation}\label{eq:sigmapD1}
\begin{split}
&\sum_{\mathbf{\nu}\in\ZZ^3_{\geqslant 0}}\sum_{k\in\{0,1\}}(-1)^k \frac{\bar{\varrho}_{p^k}(p^{N_1},p^{N_2},p^{N_3})}{p^{2k}}\\
&= \sum_{\mathbf{\nu}\in\ZZ^3_{\geqslant 0}}\left(\frac{\varrho(p^{N_1},p^{N_2},p^{N_3};L_1,L_2,L_3)}{p^{2(N_1+N_2+N_3)}}-\frac{\varrho(p^{N_1},p^{N_2},p^{N_3};pL_1,pL_2,pL_3)}{p^{2(N_1+N_2+N_3+1)}}\right)\\
& =\sum_{\mathbf{\nu}\in\ZZ^3_{\geqslant 0}}\left(\frac{\varrho(p^{N_1},p^{N_2},p^{N_3};L_1,L_2,L_3)}{p^{2(N_1+N_2+N_3)}}-\frac{\sharp(\Lambda^\prime\cap \mathopen[0,p^{N_1+N_2+N_3}\mathclose[^2)}{p^{2(N_1+N_2+N_3+1)}}\right)\\
& =\sum_{\mathbf{\nu}\in\ZZ^3_{\geqslant 0}}\left(\frac{\varrho(p^{N_1},p^{N_2},p^{N_3};L_1,L_2,L_3)}{p^{2(N_1+N_2+N_3)}}-\frac{\sharp(\Lambda^\prime\cap \mathopen[0,p^{N_1+N_2+N_3-1}\mathclose[^2)}{p^{2(N_1+N_2+N_3)}}\right)\\
& =\sum_{\mathbf{\nu}\in\ZZ^3_{\geqslant 0}}\left(\frac{\sharp\{\mathbf{x}\in\mathopen[0,p^{N_1+N_2+N_3}\mathclose[^2:p^{N_i}|L_i(\mathbf{x}),\forall i\}-\sharp\{\mathbf{y}\in\mathopen[0,p^{N_1+N_2+N_3}\mathclose[^2:p|y_i,p^{N_i}|L_i(\mathbf{y}),\forall i\}}{p^{2(N_1+N_2+N_3)}}\right)\\
&=\sum_{\mathbf{\nu}\in\ZZ^3_{\geqslant 0}}\frac{\varrho^*(p^{N_1},p^{N_2},p^{N_3})}{p^{2(N_1+N_2+N_3)}}.
\end{split}
\end{equation}
Si $v_p(D)=0$, on a dans ce cas pour $\mathbf{\nu}=(\nu_1,\nu_2,\nu_3)\in\ZZ^3_{\geqslant 0},N_i=\nu_i$ et donc
$$\sum_{\mathbf{\nu}\in\ZZ^3_{\geqslant 0}}\sum_{k\in\{0,1\}}(-1)^k\frac{\bar{\varrho}_{p^k}(1,1,1)}{p^{2k}}=1-\frac{1}{p^2}+\sum_{\substack{\mathbf{\nu}\in\ZZ^3_{\geqslant 0}\\\nu_1+\nu_2+\nu_3\geqslant 1}}\frac{\varrho^*(p^{\nu_1},p^{\nu_2},p^{\nu_3})}{p^{2(\nu_1+\nu_1+\nu_3)}}.$$
On en conclut que dans ces deux cas les facteur locaux sont bien $\sigma^*_p(\mathbf{d},\mathbf{D})$.

On divise la somme \eqref{eq:sum1} en deux parties, une pour les $e$ petits (i.e. $X^{\frac{1}{12}}\leqslant e\leqslant X$) et l'autre pour les $e$ grands (i.e. $e>X^\frac{1}{12}$). 
On peut majorer le terme d'erreur apparaissant dans \eqref{eq:sum2} comme
$$\sum_{e\leqslant X^{\frac{1}{12}}}\left(\frac{D^\varepsilon}{\delta(\mathbf{D})}e X^{\frac{7}{4}+\varepsilon}+X^2(\log X)^2\frac{\log e}{e^{2-\varepsilon}}\frac{D^\varepsilon }{\operatorname{det}(\Lambda(\mathbf{D}))}\right)\ll \frac{D^\varepsilon}{\delta(\mathbf{D})}X^{\frac{23}{12}+\varepsilon}+\frac{D^\varepsilon }{\operatorname{det}(\Lambda(\mathbf{D}))}X^2(\log X)^2.$$
En outre, on a une majoration (\cite{Browning}, (2.5))
\begin{equation}\label{upperboundfors}
	S(X/e,\mathbf{d},\mathbf{D};eL_1,eL_2,eL_3)\ll e^\varepsilon \left(\frac{X}{e}\right)^{2+\varepsilon}=\frac{X^{2+\varepsilon}}{e^2}.
\end{equation}
En reportant dans \eqref{eq:sum1}, en utilisant encore \eqref{eq:upperboundCdD},
\begin{equation*}
\begin{split}
	&S^*(X,\mathbf{d},\mathbf{D})=\left(\sum_{e\leqslant X^{\frac{1}{12}}}+\sum_{X^{\frac{1}{12}}\leqslant e\leqslant X} \right)\mu(e) S(X/e,\mathbf{d},\mathbf{D};eL_1,eL_2,eL_3)\\
	&=\sum_{e\leqslant X^{\frac{1}{12}}} \frac{\mu(e)}{e^2}C(\mathbf{d},\mathbf{D},e)\operatorname{vol}(\mathcal{R})X^2(\log X)^3 +O_\varepsilon\left(\frac{D^\varepsilon}{\delta(\mathbf{D})}X^{\frac{23}{12}+\varepsilon}+\frac{D^\varepsilon }{\operatorname{det}(\Lambda(\mathbf{D}))}X^2(\log X)^2+\sum_{X^\frac{1}{12}\leqslant e\leqslant X}\frac{X^{2+\varepsilon}}{e^2}\right)\\
	&=C^*(\mathbf{d},\mathbf{D})\operatorname{vol}(\mathcal{R})X^2(\log X)^3 + O_{\varepsilon}\left(\left(\frac{D^\varepsilon}{\delta(\mathbf{D})}+1\right)X^{\frac{23}{12}+\varepsilon}+\frac{D^\varepsilon }{\operatorname{det}(\Lambda(\mathbf{D}))}\left(X^2(\log X)^2+\sum_{e\geqslant X^{\frac{1}{12}}}\frac{X^2(\log X)^3}{e^{2-\varepsilon}}\right)\right)\\
	&=C^*(\mathbf{d},\mathbf{D})\operatorname{vol}(\mathcal{R})X^2(\log X)^3 +O_{\varepsilon}\left(\left(\frac{D^\varepsilon}{\delta(\mathbf{D})}+1\right)X^{\frac{23}{12}+\varepsilon}+\frac{D^\varepsilon }{\operatorname{det}(\Lambda(\mathbf{D}))}X^2(\log X)^2\right).
	\end{split}
\end{equation*}
\end{proof}
Maintenant on est prêt à déduire la formule asymptotique pour la fonction $\Psi$. 
\begin{proof}[Démonstration de la Proposition \ref{po:problemofdivisor}]
On voit que, d'après $h=\Psi*\mu* \mu$ et $\tau=\mathfrak{1}* \mathfrak{1}$, 
$$\Psi=\Psi* \mu*\mu*\mathfrak{1}*\mathfrak{1}=h*\mathfrak{1}*\mathfrak{1}=h*\tau.$$
On rappelle les trois formes linéaires \eqref{eq:threeforms}et la région $\mathcal{R}$ \eqref{eq:regionR}.
Alors
\begin{equation}\label{eq:sumofdisprimitive}
\begin{split}
&\sum_{\substack{\mathbf{x}\in X\mathcal{R}\\ \pgcd(x_1,x_2)=1}}\Psi(x_1)\Psi(x_2)\Psi(x_2-x_1)\\
&=\sum_{\substack{\mathbf{x}\in X\mathcal{R}\\ \pgcd(x_1,x_2)=1}}\ \sum_{\substack{\mathbf{d}\in \NN^3\\ d_1|x_1,d_2|x_2,d_3|x_2-x_1}} \prod_{i=1}^{3}h(d_i)\tau\left(\frac{x_1}{d_1}\right)\tau\left(\frac{x_2}{d_2}\right)\tau\left(\frac{x_2-x_1}{d_3}\right)\\
&=\sum_{\substack{\mathbf{d}=(d_1,d_2,d_3)\in\NN_{\geqslant 1}^3\\\forall i,j\in\{1,2,3\},i\neq j,\pgcd(d_i,d_j)=1}}\left(\prod_{i=1}^{3}h(d_i)\right)\sum_{\substack{\mathbf{x}\in X\mathcal{R}\cap\Lambda(\mathbf{d})\\\pgcd(x_1,x_2)=1}}\tau\left(\frac{x_1}{d_1}\right)\tau\left(\frac{x_2}{d_2}\right)\tau\left(\frac{x_2-x_1}{d_3}\right)\\
&=\sum_{\substack{\mathbf{d}=(d_1,d_2,d_3)\in\NN_{\geqslant 1}^3\\\forall i,j\in\{1,2,3\},i\neq j,\pgcd(d_i,d_j)=1}}\left(\prod_{i=1}^{3}h(d_i)\right)S^*(X,\mathbf{d},\mathbf{d}),
\end{split}
\end{equation} 
la quantité $S^*(X,\mathbf{d},\mathbf{d})$ étant nulle s'il existe $d_i$ tel que $d_i>X$.
D'après \eqref{eq:Sstar}, on a, en notant $C^*(\mathbf{d})=C^*(\mathbf{d},\mathbf{d})$, comme $\det(\Lambda(\mathbf{d}))=d_1d_2d_3$ pour $\pgcd(d_i,d_j)=1,\forall i,j\in\{1,2,3\},i\neq j$,
\begin{align*}
S^*(X,\mathbf{d},\mathbf{d})&=C^*(\mathbf{d})\operatorname{vol}(\mathcal{R})X^2(\log X)^3+ O_\varepsilon((d_1d_2d_3)^{\varepsilon-1}X^2 (\log X)^2)+O_\varepsilon((d_1d_2d_3)^\varepsilon X^{\frac{23}{12}+\varepsilon}).
\end{align*}
Le terme principal de la somme \eqref{eq:sumofdisprimitive} est 
$$C_1\operatorname{vol}(\mathcal{R})X^2(\log X)^3,$$ où d'après \eqref{eq:cstar},
\begin{equation}\label{eq:C1p}
C_1=\sum_{\substack{\mathbf{d}\in\NN_{\geqslant 1}^3\\\pgcd(d_i,d_j)=1}}\left(\prod_{i=1}^{3}h(d_i)\right)C^*(\mathbf{d}),
\end{equation}
dont la convergence résulte de \eqref{eq:hsmall} et \eqref{eq:upperboundCdD}.
Grâce à notre choix des formes linéaires \eqref{eq:threeforms}, fixons un nombre premier $p$,
on a que pour $\sharp \{i\in\{1,2,3\}:\nu_i\geqslant 1\}\geqslant 2$,
\begin{align*}
	\varrho^*(p^{\nu_1},p^{\nu_2},p^{\nu_3})
	=\sharp (\{\mathbf{x}\in\ZZ^2: p^{\nu_1}|x_1,p^{\nu_2}|x_2,p^{\nu_3}|x_2-x_1,p\nmid\pgcd(x_1,x_2)\}\cap \mathopen[0,p^{\sum_{i=1}^{3}\nu_i}\mathclose[^2)=0.
\end{align*}
On calcule pour $\nu\geqslant 1$,
\begin{align*}
	\varrho^*(p^{\nu},1,1)&=\sharp(\{\mathbf{x}\in\ZZ^2:p^{\nu}|x_1,p\nmid\pgcd(x_1,x_2)\}\cap\mathopen[0,p^{\nu}\mathclose[^2)\\
	&=\sharp(\{\mathbf{x}=(0,x_2):p\nmid x_2\}\cap \mathopen[0,p^{\nu}\mathclose[)=\varphi(p^{\nu})=p^{\nu}-p^{\nu-1}.
\end{align*}
De façon analogue on a
$$\varrho^*(1,p^{\nu},1)=	\varrho^*(1,1,p^{\nu})=p^\nu-p^{\nu-1}.$$
On en déduit les valeurs de $\sigma_p^*$ \eqref{eq:sigmapstar} comme suit.
\begin{align*}
	\sigma_p^*((1,1,1),(1,1,1))&=\left(1-\frac{1}{p}\right)^3 \left(1-\frac{1}{p^2}+\sum_{\nu\in\NN_{\geqslant 1}}\left(\frac{\varrho^*(p^{\nu},1,1)+\varrho^*(1,p^{\nu},1)+	\varrho^*(1,1,p^{\nu})}{p^{2\nu}}\right)\right)\\
	&=\left(1-\frac{1}{p}\right)^3 \left(1-\frac{1}{p^2}+3\left(1-\frac{1}{p}\right)\sum_{\nu\in\NN_{\geqslant 1}}\frac{1}{p^\nu}\right)=\left(1-\frac{1}{p}\right)^3 \left(1-\frac{1}{p^2}+\frac{3}{p}\right).
\end{align*}
\begin{align*}
	\sigma_p^*(p,1,1)&=\sigma_p^*(1,p,1)=\sigma_p^*(1,1,p)\\&= \left(1-\frac{1}{p}\right)^3 \sum_{\nu\in\NN_{\geqslant 1}}\frac{\varrho^*(p^{\nu},1,1)+\varrho^*(1,p^{\nu},1)+	\varrho^*(1,1,p^{\nu})}{p^{2\nu}}= \left(1-\frac{1}{p}\right)^3 \times \frac{3}{p}.
\end{align*}
Donc le facteur local \eqref{eq:C1p} se calcule, en rappelant que $h(p)=-\frac{2}{p+2}$, 
\begin{equation*}
\begin{split}
	&\sum_{\substack{\mathbf{\nu}\in\NN^3\\ \sharp\{i\in\{1,2,3\}:\nu_i\geqslant 1\}\leqslant 1}} \left(\prod_{i=1}^{3}h(p^{\nu_i})\right) \sigma_p^*(p^{\nu_1},p^{\nu_2},p^{\nu_3})\\
	&=h(1)\sigma_p^*(1,1,1)+h(p)\left(\sigma_p^*(p,1,1)+\sigma_p^*(1,p,1)+\sigma_p^*(1,1,p)\right)\\
&=\left(1-\frac{1}{p}\right)^3  \left(1+\frac{3}{p}-\frac{1}{p^2}-\frac{18}{p(p+2)}\right).
\end{split}
\end{equation*}
Donc
\begin{equation}
C_1=\prod_{p}\left(1-\frac{1}{p}\right)^3\left(1+\frac{3}{p}-\frac{1}{p^2}-\frac{18}{p(p+2)}\right).
\end{equation}
Or on a aussi une majoration pour $S^*(X,\mathbf{d},\mathbf{d})$ d'après \cite[(3.1)]{Browning}:
$$S^*(X,\mathbf{d},\mathbf{d})\leqslant S(X,\mathbf{d},\mathbf{d})\ll \frac{X^{2+\varepsilon}}{d_1d_2d_3}+X^{1+\varepsilon},$$
qui découle des majorations standards pour le nombre de diviseurs et pour le dénombrement des points sur un réseau.
Soit $\delta >0$ tel que $\delta>\varepsilon $ et $\varepsilon(1+3\delta)<\frac{1}{12}$. Comme l'on trait la somme \eqref{eq:sum1} précédemment, on décompose \eqref{eq:sumofdisprimitive} en deux parties 
$$\max_{1\leqslant i\leqslant 3}(d_i)\leqslant X^\delta\text{ et }\exists i , d_i>X^\delta $$ selon la taille de $\mathbf{d}$. Cela nous permet de contrôler les termes d'erreur, compte-tenu de \eqref{eq:hsmall},
\begin{align*}
	\sum_{\mathbf{d}\in\NN_{\geqslant 1}^3} \prod_{i=1}^{3}|h(d_i)|(d_1d_2d_3)^{\varepsilon-1}X^2(\log X)^2
	\ll \sum_{\mathbf{d}\in\NN_{\geqslant 1}^3} (d_1d_2d_3)^{\varepsilon-2}X^2(\log X)^2 \ll X^2(\log X)^2.
\end{align*}
\begin{align*}
	\sum_{\substack{\mathbf{d}\in\NN_{\geqslant 1}^3\\ \max (d_i)\leqslant X^\delta}}\prod_{i=1}^{3}|h(d_i)|(d_1d_2d_3)^\varepsilon X^{\frac{23}{12}+\varepsilon}
	\ll \sum_{\substack{\mathbf{d}\in\NN_{\geqslant 1}^3\\ \max (d_i)\leqslant X^\delta}}(d_1d_2d_3)^{\varepsilon-1}X^{\frac{23}{12}+\varepsilon}
	\ll X^{\frac{23}{12}+\varepsilon(1+3\delta)}.
\end{align*}
\begin{align*}
	\sum_{\substack{\mathbf{d}\in\NN_{\geqslant 1}^3\\\max(d_i)\leqslant X\\ \exists i:d_i> X^\delta}}\prod_{i=1}^{3}|h(d_i)|\left(\frac{X^{\varepsilon+2}}{d_1d_2d_3}+X^{1+\varepsilon}\right)
	\ll \sum_{\substack{\mathbf{d}\in\NN_{\geqslant 1}^3\\ \exists d_i> X^\delta}}\frac{X^{\varepsilon+2}}{(d_1d_2d_3)^2}+\sum_{\substack{\mathbf{d}\in\NN_{\geqslant 1}^3\\\max(d_i)\leqslant X}}\frac{X^{1+\varepsilon}}{d_1d_2d_3}
	\ll X^{2+\varepsilon-\delta}+X^{1+\varepsilon}(\log X)^3.
\end{align*}
D'après le Lemme \ref{le:browning} appliqué à $C(\mathbf{d},\mathbf{d})$, on a
$$C^*(\mathbf{d},\mathbf{d})\leqslant C(\mathbf{d},\mathbf{d})\ll_\varepsilon\frac{(d_1d_2d_3 )^{\varepsilon}}{\det(\Lambda(\mathbf{d}))}=(d_1d_2d_3)^{\varepsilon-1},$$ 
puisque $\det(\Lambda(\mathbf{d}))=d_1d_2d_3$ pour $\pgcd(d_i,d_j)=1,\forall i,j\in\{1,2,3\},i\neq j$ et $L_\infty=1$, d'où
\begin{align*}
\sum_{\substack{\mathbf{d}\in\NN_{\geqslant 1}^3\\\pgcd(d_i,d_j)=1\\\max(d_i)\leqslant X^\delta}}\prod_{i=1}^{3}h(d_i) C^*(\mathbf{d})&=C_1+O\left(\sum_{\substack{\mathbf{d}\in\NN_{\geqslant 1}^3\\\pgcd(d_i,d_j)=1\\\exists i:d_i>X^\delta}}\prod_{i=1}^{3}|h(d_i)| C(\mathbf{d},\mathbf{d})\right)\\
	&=C_1+O\left(\sum_{\substack{\mathbf{d}\in\NN_{\geqslant 1}^3\\\exists i:d_i>X^\delta}}\frac{1}{(d_1d_2d_3)^{2-\varepsilon}}\right)=C_1+O(X^{\delta(\varepsilon-1)}).
\end{align*}
On en conclut que \eqref{eq:sumofdisprimitive} peut se calculer comme
\begin{equation}\label{eq:sumofpsifunction}
\begin{split}
	&\sum_{\substack{\mathbf{x}\in X\mathcal{R}\\ \pgcd(x_1,x_2)=1}}\Psi(x_1)\Psi(x_2)\Psi(x_2-x_1)\\
	&=\sum_{\substack{\mathbf{d}\in\NN_{\geqslant 1}^3\\\pgcd(d_i,d_j)=1\\\max(d_i)\leqslant X^\delta}}+\sum_{\substack{\mathbf{d}\in\NN_{\geqslant 1}^3\\\pgcd(d_i,d_j)=1\\ \exists i,d_i>X^\delta}}\left(\prod_{i=1}^{3}h(d_i)\right) S^*(X,\mathbf{d},\mathbf{d})\\
	&=C_1\operatorname{vol}(\mathcal{R})X^2(\log X)^3+O(X^2(\log X)^2 + X^{\frac{23}{12}+\varepsilon(1+3\delta)})+O(X^{2-\delta(1-\varepsilon)}(\log X)^3+X^{2-(\delta-\varepsilon)})\\
	&=C_1\operatorname{vol}(\mathcal{R})X^2(\log X)^3+O(X^2(\log X)^2).
	\end{split}
\end{equation}
Cela démontre la formule \ref{po:asymptoticforPsi} de la Proposition \ref{po:problemofdivisor} en remarquant que 
$$\operatorname{vol}(\mathcal{R})=\frac{1}{2}\left(\frac{1}{\tau_2}-\frac{1}{\tau_1}\right).$$ 

Nous démonstration la formule \ref{po:asymptoticforPsiquotient}.
La méthode ressemble à une intégration par partie et s'inspire de \cite[\S 8.3]{B-B1}. Tout d'abord on traite la somme
\begin{equation}
T(\tau_1,\tau_2)=\sum_{\substack{\mathbf{x}\in X\mathcal{R}\\\pgcd(x_1,x_2)=1}}\frac{\Psi(x_1)\Psi(x_2)\Psi(x_2-x_1)}{x_2^{\frac{3}{2}}}.
\end{equation}
Comme
$$\frac{1}{x_2^{\frac{3}{2}}}=\frac{3}{2}\int_{x_2}^{X}\frac{\operatorname{d}t}{t^{\frac{5}{2}}}+\frac{1}{X^{\frac{3}{2}}},$$
on obtient, d'après le théorème de Fubini,
\begin{align*}
	T(\tau_1,\tau_2)&=\sum_{\substack{\mathbf{x}\in X\mathcal{R}\\\pgcd(x_1,x_2)=1}}\Psi(x_1)\Psi(x_2)\Psi(x_2-x_1)\left(\frac{3}{2}\int_{x_2}^{X}\frac{\operatorname{d}t}{t^{\frac{5}{2}}}+\frac{1}{X^{\frac{3}{2}}}\right)\\&=\int_{1}^{X}\frac{3}{2}\sum_{\substack{\mathbf{x}\in t\mathcal{R}\\\pgcd(x_1,x_2)=1}}\Psi(x_1)\Psi(x_2)\Psi(x_2-x_1)\frac{\operatorname{d}t}{t^{\frac{5}{2}}}+X^{-\frac{3}{2}}\sum_{\substack{\mathbf{x}\in X\mathcal{R}\\\pgcd(x_1,x_2)=1}}\Psi(x_1)\Psi(x_2)\Psi(x_2-x_1)\\
	&=\frac{3C_1}{2}\operatorname{vol}(\mathcal{R})\int_{1}^{X}\frac{(\log t)^3}{t^{\frac{1}{2}}}\operatorname{d}t+O\left(\int_{1}^{X}\frac{(\log t)^2}{t^\frac{1}{2}}\operatorname{d}t\right)+C_1\operatorname{vol}(\mathcal{R}) X^\frac{1}{2}(\log X)^3+O(X^{\frac{1}{2}}(\log X)^2).
\end{align*}
Comme
$$\int_{1}^{X}\frac{(\log t)^3}{t^{\frac{1}{2}}}\operatorname{d}t=2X^{\frac{1}{2}}(\log X)^3+O(X^{\frac{1}{2}}(\log X)^2),\quad \int_{1}^{X}\frac{(\log t)^2}{t^\frac{1}{2}}\operatorname{d}t =O(X^\frac{1}{2}(\log X)^2),$$
l'égalité ci-dessus s'écrit
\begin{equation}\label{eq:Ttau1tau2}
T(\tau_1,\tau_2)=4C_1\operatorname{vol}(\mathcal{R})X^{\frac{1}{2}}(\log X)^3+O(X^{\frac{1}{2}}(\log X)^2).
\end{equation}
Finalement on arrive à sommer 
$$\frac{\Psi(x_1)\Psi(x_2)\Psi(x_2-x_1)}{x_2x_1^{\frac{1}{2}}}.$$
On définit $f(t)=\sqrt{t}$ et on rappelle que
$$\mathcal{R}(t,\tau_1)=\{\mathbf{x}\in\RR^2:0<x_1,x_2\leqslant 1, t\leqslant \frac{x_2}{x_1}\leqslant \tau_1\}.$$
Alors
\begin{align*}
	&\sum_{\substack{\mathbf{x}\in X\mathcal{R}\\\pgcd(x_1,x_2)=1}} \frac{\Psi(x_1)\Psi(x_2)\Psi(x_2-x_1)}{x_2x_1^\frac{1}{2}}\\
	&=\sum_{\substack{\mathbf{x}\in X\mathcal{R}\\\pgcd(x_1,x_2)=1}} \frac{\Psi(x_1)\Psi(x_2)\Psi(x_2-x_1)}{x_2^{\frac{3}{2}}} f\left(\frac{x_2}{x_1}\right)\\
	&=\sum_{\substack{\mathbf{x}\in X\mathcal{R}\\\pgcd(x_1,x_2)=1}} \frac{\Psi(x_1)\Psi(x_2)\Psi(x_2-x_1)}{x_2^{\frac{3}{2}}}\left(f(\tau_2)+\int_{\tau_2}^{\frac{x_2}{x_1}}f^\prime(t)\operatorname{d}t\right)\\
	&=f(\tau_2)T(\tau_1,\tau_2)+\sum_{\substack{\mathbf{x}\in X\mathcal{R}\\\pgcd(x_1,x_2)=1}} \frac{\Psi(x_1)\Psi(x_2)\Psi(x_2-x_1)}{x_2^{\frac{3}{2}}}\int_{\tau_2}^{\frac{x_2}{x_1}}f^\prime(t)\operatorname{d}t.
\end{align*}
Il résulte de \eqref{eq:Ttau1tau2} que
$$f(\tau_2)T(\tau_1,\tau_2)=2C_1\left(\frac{1}{\sqrt{\tau_2}}-\frac{\sqrt{\tau_2}}{\tau_1}\right)X^{\frac{1}{2}}(\log X)^3+O(X^{\frac{1}{2}} (\log X)^2).$$
Il faut traiter le deuxième terme. D'après le théorème de Fubini et \eqref{eq:Ttau1tau2} en remplaçant $\tau_2$ par $t$,
\begin{align*}
&\sum_{\substack{\mathbf{x}\in X\mathcal{R}\\\pgcd(x_1,x_2)=1}} \frac{\Psi(x_1)\Psi(x_2)\Psi(x_2-x_1)}{x_2^{\frac{3}{2}}}\int_{\tau_2}^{\frac{x_2}{x_1}}f^\prime(t)\operatorname{d}t\\
&=\int_{\tau_2}^{\tau_1}\sum_{\substack{\mathbf{x}\in X\mathcal{R}(t,\tau_1)
		\\\pgcd(x_1,x_2)=1}}\frac{\Psi(x_1)\Psi(x_2)\Psi(x_2-x_1)}{x_2^{\frac{3}{2}}} f^\prime(t)\operatorname{d}t\\
&=4C_1X^{\frac{1}{2}}(\log X)^3\int_{\tau_2}^{\tau_1}\frac{1}{2}\left(\frac{1}{t}-\frac{1}{\tau_1}\right)\frac{1}{2\sqrt{t}}\operatorname{d}t+O(X^{\frac{1}{2}}(\log X)^2)\\
&=2C_1X^{\frac{1}{2}}(\log X)^3 \left(\frac{\sqrt{\tau_2}}{\tau_1}+\frac{1}{\sqrt{\tau_2}}-\frac{2}{\sqrt{\tau_1}}\right)+O(X^{\frac{1}{2}}(\log X)^2).
\end{align*}
On obtient finalement que
\begin{equation*}
\sum_{\substack{\mathbf{x}\in X\mathcal{R}\\\pgcd(x_1,x_2)=1}} \frac{\Psi(x_1)\Psi(x_2)\Psi(x_2-x_1)}{x_2\sqrt{x_1}}=4C_1\left(\frac{1}{\sqrt{\tau_2}}-\frac{1}{\sqrt{\tau_1}}\right) X^{\frac{1}{2}}(\log X)^3+O(X^{\frac{1}{2}}(\log X)^2).\qedhere
\end{equation*}
\end{proof}

\section*{Remerciement}
Je tiens à remercier Emmanuel Peyre pour m'avoir proposé ce problème et pour des conseils tout au long de mon travail. Régis de la Bretèche, Étienne Fouvry, Marc Hindry et Florent Jouve m'ont donné des indications très précieuses. De nombreux commentaires pertinents de l'arbitre anonyme ont beaucoup éclairci la lisibilité. Le présent texte est un effort en commun de tous. Je les en remercie chaleureusement. Des parties de ce travail ont été faites à l'université Paris-Diderot et à Beijing International Center for Mathematical Research dont l'atmosphère stimulante est sincèrement appréciée. L'auteur était supporté par le projet ANR Gardio, Riemann Fellowship et le budget DE1646/4-2 Deutsche Forschungsgemeinschaft.

	 \bigskip
	
	{\bfseries Current address}: \textsc{Institut für Algebra, Zahlentheorie und Diskrete Mathematik, Leibniz Universität Hannover,
		30167 Hannover, Deutschland}\par\nopagebreak

\end{document}